\newtheorem{theorem}{Theorem}[section]
\newtheorem{corollary}[theorem]{Corollary}   
\newtheorem{lemma}[theorem]{Lemma}   
\newtheorem{proposition}[theorem]{Proposition}
\newtheorem{definition}[theorem]{Definition}
\newtheorem{remark}[theorem]{Remark}   
\newcommand{\R}{\mathbb{R}}
\newcommand{\C}{\mathscr{C}}
\renewcommand{\P}{\mathcal{P}}
\renewcommand{\S}{\mathcal{S}}
\newcommand{\U}{\mathcal{U}}
\newcommand{\V}{\mathcal{V}}
\newcommand{\W}{\mathcal{W}}
\newcommand{\e}{\varepsilon}
\newcommand{\Area}{\mathrm{Area}}
\newcommand{\Vol}{\mathrm{Vol}}
\newcommand{\diam}{\mathrm{diam}}
\newcommand{\geucl}{g_{\mathbb{E}}}
\newcommand{\BL}{_{\mathrm{BL}}}
\newcommand{\RN}{_{\mathrm{RN}}}
\newcommand{\grad}{\overrightarrow{\mathrm{grad}}}
\newcommand{\dvol}{\mathrm{dvol}_{\geucl}}
\newcommand{\omegam}{\omega_{\mathrm{mass}}}
\newcommand{\omegae}{\omega_{\mathrm{el}}}
\title{Brill-Lindquist-Riemann sums and their limits}
\author{Tatyana Benko}
\address{University of Oregon}
\email{tbenko@uoregon.edu}
\author{Iva Stavrov Allen}
\address{Lewis \& Clark College}
\email{istavrov@lclark.edu}
\begin{document}

\begin{abstract} 
This article commences a study of convergence of discretized point-object configurations, which we call Brill-Lindquist-Riemann sums, towards a charged dust continuum from the perspective of relativistic initial data. We are motivated by the interpretation of Brill-Lindquist-Riemann sums as collections of relatively isolated astrophysical bodies such as stars and galaxies in the universe, and the interpretation of the dust continuum as the universe itself. Our work begins by establishing the existence and the uniqueness of horizons/minimal surfaces of Brill-Lindquist metrics in the vicinity of the point-sources (``stars"). We then study the geometries of the regions exterior to said minimal surfaces, and discuss their Gromov-Hausdorff and intrinsic flat limit. An interesting and purely geometric byproduct of our work are examples in which the scalar curvature jumps upon taking Gromov-Hausdorff and /or intrinsic flat limits.
\end{abstract}

\maketitle

\section{Introduction}\label{geometry-sec}
\subsection{Motivation}
Real life objects are often understood as being made out of a great number of smaller constituents whose cumulative behavior manifests itself in the behavior of the object as a whole. Keeping track of attributes of each individual constituent is, due to their great number, at least impractical if not impossible. To remedy this situation we replace the collection of great many individual small constituents by the concept of continuum. The idea here is to introduce averaged attributes which, in an infinitesimal form, are attached to idealized point-object constituents. One classic example is the concept of mass density, i.e. mass per unit volume, $\rho(x)$. We pretend that there is some idealized point-object at location $x$ with mass $\rho(x)\,\mathrm{dvol}_{x}$ and that the total mass of the greater object is simply the ``sum" of the masses of all the point-objects: $\int \rho(x)\,\mathrm{dvol}_{x}$. Another example would be momentum density. 

Studying the behavior of a fluid is best done by studying the behavior of the densitized quantities, and by ignoring the exact nature of the individual constituents. For example, one studies a body of water by studying the continuity and the Euler equations satisfied by the mass and momentum densities and not by addressing the details of the behavior of the individual molecules. This particular line of reasoning is in many ways a cornerstone of cosmology, where universe as a whole is commonly treated as a perfect fluid. In some very loose sense of the word what molecules are to a body of water, stars are to galaxies and better yet: galaxies are to the universe. 

When we study a compact object from a great distance, such as a star from the standpoint of a galaxy or a galaxy from the standpoint of the universe, the length scale of the object itself is very miniscule. A very natural simplification here would be to take the limit as the length scale of the small body approaches zero. The implementation of such an  approach in theories where the governing equations are linear is relatively straightforward and somewhat standard because one is able to successfully use Dirac delta functions to describe bodies with zero length scale. However, the work of \cite{NI} clearly shows that playing with the length scale in such a manner within the relativistic context is a very delicate business and can be quite subtle. The culprit here is the interaction energy and the lack of additivity of mass-energy. (See also Section \ref{BLintro} below.)

In place of taking the limit of the length scale of a body to zero we treat any deviations, inhomogeneities or lack of symmetry within a small body as negligible and employ a mathematically simple (if ``unrealistic") toy model with great deal of symmetry, in which the object under consideration is in some other way concentrated at a single point. Most often this is achieved by treating the point-object as being in what is otherwise vacuum. A classic example of this approach is the Schwarzschild body, a toy model we use to approximate stars while explaining relativistic phenomena such as perihelion of Mercury or gravitational lensing. Moreover, there are theorems which -- very roughly speaking -- confirm that many an (uncharged) isolated gravitational system can be treated as a Schwarzschild body from afar. (Precise formulations can be found in \cite{CD, corvino}.) 

Due to the very large void between stars, galaxies and other constituents of the universe it is perhaps appropriate to think of them as being point-objects in their own right. Thus, there is a great deal of wisdom in the perspective that an integral (i.e continuum) approximates a Riemann sum (i.e a ``realistic" object) as opposed to the standard standpoint that Riemann sums approximate the integral. This is in some ways related to how we can view the integral of mass density as approximating the ``Riemann sum" of individual masses.

With the exception of the so-called swiss-cheese models, cosmological models used to study inhomogeneities in the universe \cite{cosmology}, there is a very limited amount of work done in modeling the universe as a collection of point-objects. To the best of our knowledge, there are no results in the literature which treat relativistic continuum as a limit of (a sequence of) discretized point-object configurations! This relativistic situation lies in sharp contrast with much of the classical context where the mathematical underpinning of the passage from point-object configurations to continuum lies in elementary calculus\footnote{Admittedly, there are substantial difficulties in classical context as well. The issue of self-energy of a charged point source is perhaps a perfect example.}. For example, our description of mass (density) from the opening paragraph demonstrates that in the relativistic context we need much more sophisticated methodology than simple integration. The relativistic difficulties are once again rooted in the lack of additivity of relativistic mass-energy. 

In this paper we attempt to -- pun intended!-- fill the void in the literature and provide a study of ways in which a compactly supported conformally flat dust cloud can be seen as a limit of a de-facto Riemann sum of point-sources.

\subsection{Reissner-Nordstr\"om metrics}
A single charged relativistic point-object in otherwise empty space is commonly modeled by Reissner-Nordstr\"om time-symmetric initial data. Here the metric is given by  
$$g_{\RN}=\left(1+\frac{\alpha}{r}\right)^2\left(1+\frac{\beta}{r}\right)^2\geucl,\ \ \alpha,\beta\ge 0$$ 
and the electric field permits an electrostatic potential: 
$$\vec{E}_{\RN}=\grad_{g_{\RN}}\left(\ln\left(1+\frac{\alpha}{r}\right)-\ln\left(1+\frac{\beta}{r}\right)\right).$$
The metric $g_{\RN}$ and the electric field $\vec{E}_{\RN}$ satisfy the constraints
$$\mathrm{Scal}(g_{\RN})=2|\vec{E}_{\RN}|^2_{g_{\RN}} \text{\ \ and\ \ } \mathrm{div}_{g_{\RN}}(\vec{E}_{\RN})=0.$$
Reissner-Nordstr\"om metrics $g_{\RN}$ have non-negative scalar curvature. 

When the parameters $\alpha$ and $\beta$ are both positive, $g_{\RN}$ has two asymptotically Euclidean ends with the mass and the electric charge given by 
$$m_{_\mathrm{ADM}}(g_{\RN})=\alpha+\beta\text{\ \ and\ \ } Q(g_{\RN},\vec{E}_{\RN})= \beta-\alpha.$$
Please refer to the image in  the left half of Figure \ref{fig1}. Note that there is a minimal surface located at $r=\sqrt{\alpha\beta}$. The area of the minimal surface is given by $4\pi (\sqrt{\alpha}+\sqrt{\beta})^4$, while the length of the neck is computed to be on the order of $1+(\alpha+\beta)(1+|\ln(\alpha\beta)|)$. The latter is an important observation to make because the relationship between $\alpha$ and $\beta$ controls the presence of ``deep gravity wells"; these are crucial for our work in Section \ref{deepwells:section}. As the reader is about to experience, the presence of such deep gravity wells adds a substantial complication to our work. 

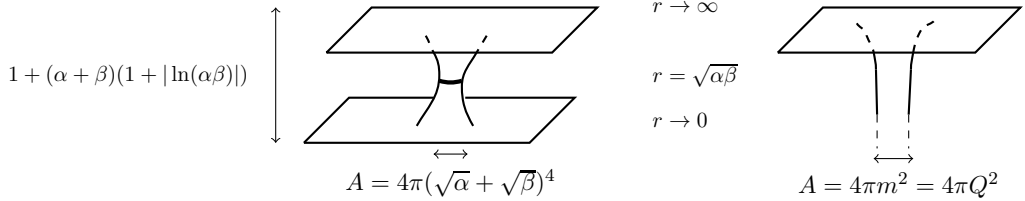
\begin{figure}[h]
\centering
\begin{tikzpicture}[scale=.3]
\draw[thick] (-5,-5) to (5, -5) to (7,-3) to (-3, -3) to (-5, -5);
\draw[thick] (-6,-9) to (4, -9) to (6,-7) to (1.15, -7); 
\draw[thick] (-0.25,-7) to (-4, -7) to (-6, -9);
\draw[thick, dashed] (-.6, -4.25) to [out=-60, in=120] (-0.25, -5);
\draw[thick] (-.25, -5) to [out=-60, in=90] (0, -6) to  [out=-90, in=60](-1, -8.25);
\draw[thick, dashed] (2.1, -4.25) to [out=-120, in=60] (1.65, -5);
\draw[thick] (1.65, -5) to [out=-120, in=90] (1, -6.5) to  [out=-90, in=120](1.5, -8.25);
\draw[ultra thick] (0,-6.25) to [out=-15, in=-165] (1,-6.25);
\node[right] at (9,-8) {\scalebox{0.75}{$r\to 0$}};
\node[right] at (9,-6) {\scalebox{0.75}{$r=\sqrt{\alpha\beta}$}};
\node[right] at (9,-3) {\scalebox{0.75}{$r\to \infty$}};

\draw[<->] (-7.25, -9) to (-7.25, -3);
\node[left] at (-8, -6) {\scalebox{0.75}{$1+(\alpha+\beta)(1+|\ln(\alpha\beta)|)$}};
\draw[<->] (-0.25,-9.5) to (1.25,-9.5);
\node[below] at (0.5, -9.75) {\scalebox{0.85}{$A=4\pi(\sqrt{\alpha}+\sqrt{\beta})^4$}};

%right
\draw[thick] (15,-5) to (23.75, -5) to (25.75,-3) to (17, -3) to (15, -5);

\draw[thick, dashed] (18.5, -3.7) to [out=-60, in=120] (18.9, -4.1) to [out=-80, in=100] (19.25, -5);
\draw[thick] (19.25, -5) to [out=-80, in=90] (19.35, -5.75);

\draw[thick] (19.35, -5.75) to (19.4, -7.75);

\draw[thin, dashed] (19.4, -7.75) to (19.4, -9.25);

\draw[thin, dashed] (20.8, -7.75) to (20.8, -9.25);

\draw[thick] (20.9, -5.75) to [out=90, in=-100] (21, -5);

\draw[thick] (20.9, -5.75) to (20.8, -7.75);

\draw[thick, dashed] (21, -5) to [out=80, in=-120] (21.35, -4) to [out=60, in=-120] (22, -3.5);

\draw[<->] (19.25,-9.7) to (21,-9.7);
\node[below] at (20.35, -9.85) {\scalebox{0.85}{$A=4\pi m^2=4\pi Q^2$}};

\end{tikzpicture}
\caption{$\alpha\beta\neq 0$ on the left; $\alpha\beta=0$ on the right.}\label{fig1}
\end{figure}

In the case when $\alpha=\beta$ we obtain the Schwarzschild point-mass $m=2\alpha=2\beta$. 
In the situation in which we permit exactly one of $\alpha$ and $\beta$ to be zero we have {\it extreme Reissner-Nordstr\"om} initial data where $m_{_\mathrm{ADM}}=|Q|$ and where there is no minimal surface. Instead, extreme Reissner-Nordstr\"om geometry features an asymptotically cylindrical end; see the image on the right of Figure \ref{fig1}. The area of the coordinate sphere at $r=0$ is $4\pi m^2$.

\subsection{Brill-Lindquist metrics}\label{BLintro}

A ``superposition" of point-objects of Reissner-Nordstr\"om-type is studied in detail in the work of D. Brill and R. Lindquist, \cite{BL}. The metric and the electric field for Brill-Lindquist data are given by 
\begin{equation}\label{BL-basic}
g_{\BL}=(\chi_{\BL}\psi_{\BL})^2\geucl,\ \ \vec{E}_{\BL}=\grad_{g_{\BL}}\left(\ln\left(\frac{\chi_{\BL}}{\psi_{\BL}}\right)\right)
\end{equation}
where the functions $\chi_{\BL}$ and $\psi_{\BL}$ take the form of 
\begin{equation}\label{BL-basic-2}
\chi_{\BL}(x)=1+\sum \frac{\alpha_i}{|x-p_i|}
\text{\ \ and\ \ } 
\psi_{\BL}(x)=1+\sum \frac{\beta_i}{|x-p_i|}
\end{equation}
for some $\alpha_i, \beta_i\ge 0$ and some \underline{\emph{finite}} set 
$\P_*$ of points $p_i$. The constraints satisfied by the metric $g_{\BL}$ and the electric field $\vec{E}_{\BL}$ are
$$\mathrm{Scal}(g_{\BL})=2|\vec{E}_{\BL}|^2_{g_{\BL}} \text{\ \ and\ \ } \mathrm{div}_{g_{\BL}}(\vec{E}_{\BL})=0.$$
In particular, $g_{\BL}$ is of non-negative scalar curvature. 

We always assume that at least one of $\alpha_i$ and $\beta_i$ is non-zero.
For geometric reasons which shall become apparent shortly, we often times need to distinguish the subset of sources 
$$\P_{**}=\{p_i\,\big{|} \alpha_i\beta_i\neq 0\}\subseteq \P_*$$
from the remaining sources. In addition to $\alpha_i$ and $\beta_i$ another quantity which substantially impacts the geometry near $p_i$ is  
$$\sigma_i:=\min_{j\neq i} |p_i-p_j|.$$ 

The metric $g_{\BL}$ has an asymptotically Euclidean end at $|x|\to \infty$. There one computes
\begin{equation}\label{massandchargeBL}
m_{_\mathrm{ADM}}(g_{\BL})=\sum (\alpha_i+\beta_i),\ \ Q(g_{\BL},\vec{E}_{\BL})=\sum (\beta_i-\alpha_i).
\end{equation}
Thus in some sense the \emph{effective mass} and the \emph{effective charge} of each individual point source are given by $\alpha_i+\beta_i$ and $\beta_i-\alpha_i$, respectively. 

For each $p_i\in \P_{**}$ we have an additional asymptotically Euclidean end at $x\to p_i$, with ADM mass and charge equal to 
\begin{equation}\label{BL-mass-charge}
\begin{aligned}
m_i&:=\alpha_i + \beta_i + \sum_{j\neq i} \frac{\alpha_j\beta_i + \alpha_i\beta_j}{|p_j-p_i|}\\
Q_i&:=\beta_i - \alpha_i + \sum_{j\neq i} \frac{\alpha_j \beta_i- \alpha_i \beta_j}{|p_j-p_i|}.
\end{aligned}
\end{equation}
For each $p_i\in \P_{*}\smallsetminus\P_{**}$ we have an asymptotically cylindrical end at $x\to p_i$. In this case the areas of concentric coordinate spheres converging to $p_i$ are found to approach $4\pi (m_i)^2=4\pi Q_i^2$ where either
\begin{equation}\label{extremeBL-mass-charge}
m_i=-Q_i=\alpha_i+\sum_{j\neq i} \frac{\alpha_i\beta_j}{|p_i-p_j|} \text{\ \ or\ \ } m_i=Q_i=\beta_i+\sum_{j\neq i} \frac{\alpha_j\beta_i}{|p_i-p_j|},
\end{equation}
depending of whether $\beta_i=0$ or (respectively) $\alpha_i=0$. Note that expressions \eqref{BL-mass-charge} still cover the expressions \eqref{extremeBL-mass-charge} as a special case. Overall, we see that (in some sense) Brill-Lindquist metrics \eqref{BL-basic} -- \eqref{BL-basic-2} serve as initial data for vacuum with Reissner-Nordst\"om-like point sources located at $p_i$ whose masses $m_i$ and charges $Q_i$ are described in \eqref{BL-mass-charge}. 

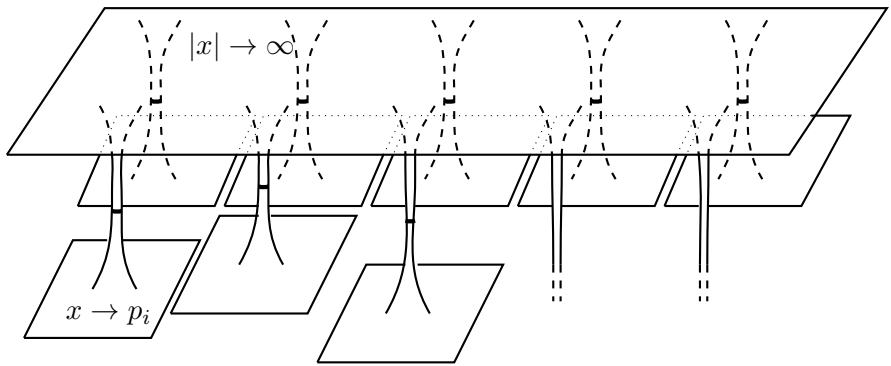
\begin{figure}[h]
\centering
\begin{tikzpicture}[scale=.65]
\draw[thick] (-8.5,-6) to (7.5, -6) to (9.5,-3) to (-6.5, -3) to (-8.5, -6);
\node[right] at (-5,-3.8) {$|x|\to \infty$};

%lower layer
\draw[thick] (-8.15,-9.75) to (-5.55, -9.75) to (-4.55,-7.75) to (-6.15, -7.75); 
\draw[thick] (-6.45, -7.75) to (-7.15, -7.75) to (-8.15, -9.75);
\draw[thick, dashed] (-6.6, -5) to [out=-60, in=95] (-6.35, -6);
\draw[thick] (-6.35, -6) to [out=-85, in=90] (-6.35, -6.75) to [out=-90, in=60](-6.75, -8.75);
\draw[thick, dashed] (-5.75, -5) to [out=-120, in=85] (-6.15, -6);
\draw[thick] (-6.15, -6) to [out=-95, in=90] (-6.15, -7) to [out=-90, in=120](-5.85, -8.75);
\draw[ultra thick] (-6.35,-7.15) to [out=-15, in=-165] (-6.15,-7.15);
\node[right] at (-7.5,-9.3) {$x\to p_i$};

\draw[thick] (-5.15,-9.25) to (-2.35, -9.25) to (-1.35,-7.25) to (-3.1, -7.25); 
\draw[thick] (-3.35, -7.25) to (-4.15, -7.25) to (-5.15, -9.25);
\draw[thick, dashed] (-3.6, -5) to [out=-60, in=95] (-3.35, -6);
\draw[thick] (-3.35, -6) to [out=-85, in=90] (-3.35, -6.25) to [out=-90, in=60](-3.75, -8.25);
\draw[thick, dashed] (-2.75, -5) to [out=-120, in=85] (-3.15, -6);
\draw[thick] (-3.15, -6) to [out=-95, in=90] (-3.15, -6.5) to [out=-90, in=120](-2.85, -8.25);
\draw[ultra thick] (-3.35,-6.65) to [out=-15, in=-165] (-3.15,-6.65);

\draw[thick] (-2.15,-10.25) to (0.65, -10.25) to (1.65,-8.25) to (-0.1, -8.25); 
\draw[thick] (-0.35, -8.25) to (-1.15, -8.25) to (-2.15, -10.25);
\draw[thick, dashed] (-0.6, -5) to [out=-60, in=90] (-0.35, -6);
\draw[thick] (-0.35, -6) to [out=-90, in=90] (-0.35, -6.25) to [out=-90, in=60](-0.75, -9.25);
\draw[thick, dashed] (0.25, -5) to [out=-120, in=90] (-0.15, -6);
\draw[thick] (-0.15, -6) to [out=-90, in=90] (-0.15, -6.5) to [out=-90, in=120](0.15, -9.25);
\draw[ultra thick] (-0.35,-7.35) to [out=-15, in=-165] (-0.15,-7.35);

\draw[thick, dashed] (2.4, -5) to [out=-60, in=95] (2.65, -6);
\draw[thick] (2.65, -6) to [out=-90, in=90] (2.675, -8.25);
\draw[thick, dashed] (2.675, -8.25) to (2.675, -9);
\draw[thick, dashed] (3.25, -5) to [out=-120, in=90] (2.85, -6);
\draw[thick] (2.85, -6) to [out=-90, in=90] (2.825, -8.25);
\draw[thick, dashed] (2.825, -8.25) to (2.825, -9);

\draw[thick, dashed] (5.4, -5) to [out=-60, in=95] (5.65, -6);
\draw[thick] (5.65, -6) to [out=-85, in=90] (5.675, -8.25);
\draw[thick, dashed] (5.675, -8.25) to (5.675, -9);
\draw[thick, dashed] (6.25, -5) to [out=-120, in=90] (5.85, -6);
\draw[thick] (5.85, -6) to [out=-90, in=90] (5.825, -8.25);
\draw[thick, dashed] (5.825, -8.25) to (5.825, -9);

%upper layer
\draw[thick] (-7.05,-7.05) to (-6.35, -7.05);
\draw[thick] (-6.15, -7.05) to (-4.25, -7.05) to (-3.8, -6);
\draw[dotted] (-3.8, -6) to (-3.4,-5.2) to (-6.25, -5.2) to (-6.6, -6);
\draw[thick] (-6.6, -6) to (-7.05, -7.05);
\draw[thick, dashed] (-5.8, -3.25) to [out=-60, in=95] (-5.55, -4.25) to [out=-85, in=90] (-5.55, -4.5) to [out=-90, in=60](-5.95, -6.5);
\draw[thick, dashed] (-4.95, -3.25) to [out=-120, in=85] (-5.35, -4.25) to [out=-95, in=90] (-5.35, -4.75) to [out=-90, in=120](-5.05, -6.5);
\draw[ultra thick] (-5.55,-4.9) to [out=-15, in=-165] (-5.35,-4.9);

\draw[thick] (-4.05,-7.05) to (-3.35, -7.05);
\draw[thick] (-3.15, -7.05) to (-1.25, -7.05) to (-0.8, -6);
\draw[dotted] (-0.8, -6) to (-0.4,-5.2) to (-3.25, -5.2) to (-3.6, -6);
\draw[thick] (-3.6, -6) to (-4.05, -7.05);
\draw[thick, dashed] (-2.8, -3.25) to [out=-60, in=95] (-2.55, -4.25) to [out=-85, in=90] (-2.55, -4.5) to [out=-90, in=60](-2.95, -6.5);
\draw[thick, dashed] (-1.95, -3.25) to [out=-120, in=85] (-2.35, -4.25) to [out=-95, in=90] (-2.35, -4.75) to [out=-90, in=120](-2.05, -6.5);
\draw[ultra thick] (-2.55,-4.9) to [out=-15, in=-165] (-2.35,-4.9);

\draw[thick] (-1.05,-7.05) to (-0.35, -7.05);
\draw[thick] (-0.15, -7.05) to (1.75, -7.05) to (2.2, -6);
\draw[dotted] (2.2, -6) to (2.6,-5.2) to (-0.25, -5.2) to (-0.6, -6);
\draw[thick] (-0.6, -6) to (-1.05, -7.05);
\draw[thick, dashed] (0.2, -3.25) to [out=-60, in=95] (0.45, -4.25) to [out=-85, in=90] (0.45, -4.5) to [out=-90, in=60](0.05, -6.5);
\draw[thick, dashed] (1.05, -3.25) to [out=-120, in=85] (0.65, -4.25) to [out=-95, in=90] (0.65, -4.75) to [out=-90, in=120](0.95, -6.5);
\draw[ultra thick] (0.45,-4.9) to [out=-15, in=-165] (0.65,-4.9);

\draw[thick] (1.95,-7.05) to (2.65, -7.05);
\draw[thick] (2.85, -7.05) to (4.75, -7.05) to (5.2, -6);
\draw[dotted] (5.2, -6) to (5.6,-5.2) to (2.75, -5.2) to (2.4, -6);
\draw[thick] (2.4, -6) to (1.95, -7.05);
\draw[thick, dashed] (3.2, -3.25) to [out=-60, in=95] (3.45, -4.25) to [out=-85, in=90] (3.45, -4.5) to [out=-90, in=60](3.05, -6.5);
\draw[thick, dashed] (4.05, -3.25) to [out=-120, in=85] (3.65, -4.25) to [out=-95, in=90] (3.65, -4.75) to [out=-90, in=120](3.95, -6.5);
\draw[ultra thick] (3.45,-4.9) to [out=-15, in=-165] (3.65,-4.9);

\draw[thick] (4.95,-7.05) to (5.65, -7.05);
\draw[thick] (5.85, -7.05) to (7.75, -7.05) to (8.75, -5.2) to (7.95, -5.2);
\draw[dotted] (7.95,-5.2) to (5.75, -5.2) to (5.4, -6);
\draw[thick] (5.4, -6) to (4.95, -7.05);
\draw[thick, dashed] (6.2, -3.25) to [out=-60, in=95] (6.45, -4.25) to [out=-85, in=90] (6.45, -4.5) to [out=-90, in=60](6.05, -6.5);
\draw[thick, dashed] (7.05, -3.25) to [out=-120, in=85] (6.65, -4.25) to [out=-95, in=90] (6.65, -4.75) to [out=-90, in=120](6.95, -6.5);
\draw[ultra thick] (6.45,-4.9) to [out=-15, in=-165] (6.65,-4.9);

\end{tikzpicture}
\caption{Geometry of Brill-Lindquist metrics}\label{fig2}
\end{figure}
 
There is a difference between the \underline{\emph{effective mass}} $\alpha_i + \beta_i$ and \underline{\emph{bare mass}} $m_i$ of the individual Reissner-Nordstr\"om particles. We have 
$$(\alpha_i + \beta_i) - m_i=-\sum_{j\neq i} \frac{\alpha_j \beta_i + \alpha_i\beta_j}{|p_j-p_i|}$$
so that the total ``effective mass" of the system is smaller than the sum of the individual ``bare masses":
\begin{equation}\label{BL-interactionenergy}
m_{_\mathrm{ADM}}-\sum_{i} m_i=-\sum_{i}\sum_{j\neq i} \frac{\alpha_j \beta_i+ \alpha_i \beta_j}{|p_j-p_i|}.
\end{equation}
The difference can be interpreted in terms of \emph{interaction energy}. 
More specifically, after restoring physical units\footnote{To perform this it suffices to introduce a multiplicative factor of $\frac{G}{c^2}$ in front of each of the parameters $\alpha_i$, $\beta_i$, $m_i$ and $q_i$.} in \eqref{BL-mass-charge} we discover that  
$\alpha_i + \beta_i =m_i+O(\frac{G}{c^2})$. The difference between the effective and the bare potential energy is the Newtonian potential energy 
$$m_{_\mathrm{ADM}}c^2 - \sum_{i} m_ic^2=-G\sum_{i,j,j\neq i} \frac{m_im_j-q_iq_j}{|p_j-p_i|}$$
up to a term of order $O(\frac{G}{c^2})$. For more details the reader is referred to \cite{BL}. It is worth emphasizing that charge does behave additively, $Q_{\BL}=\sum_{i} Q_i$.
 
We conclude this portion of the Introduction with the observation that the superposition of extreme Reissner-Nordstr\"om data evolves into the spacetime 
\begin{equation}\label{MP}
-\left(1+\sum \frac{m_i}{2|x-p_i|}\right)^{-2}dt^2+\left(1+\sum \frac{m_i}{2|x-p_i|}\right)^2\geucl
\end{equation}
which is in the family of Majumdar-Papapetrou spacetimes. Due to time-independency of the metric coefficients in \eqref{MP} these space-times are often interpreted as collections of charged black holes (extreme Reissner-Nordstr\"om bodies) in equilibrium. More information on this can, for example, be found in \cite{HartleHawking}.

\subsection{Brill-Lindquist-Riemann sums and charged dust clouds}\label{BLR-Sec}

\subsubsection{The definition of a Brill-Lindquist-Riemann sum}
The starting point are smooth non-negative functions $A(x)$ and $B(x)$ supported in some box 
$$[-D, D]^3\subseteq \mathbb{R}^3.$$ 
For each $n\in\mathbb{N}$ we form a subdivision of $[-D,D]^3$ into boxes of side length $\frac{D}{n}$ and consider the set  
$$\left\{\left(\frac{i+(1/2)}{n}D,\frac{j+(1/2)}{n}D,\frac{k+(1/2)}{n}D\right)\, \Big{|}\, -n\le i, j, k < n\right\}$$
of all the centers of all the boxes of side $D/n$. From now we enumerate and label the subdivision boxes as $\{V_{i,n}\}$ and their centers with $p_{i,n}$; note that their Euclidean volume satisfies 
$$\mathrm{Vol}(V_{i,n})=(D/n)^3.$$ 
In parallel to quantities $\alpha_i+\beta_i$ and $\beta_i-\alpha_i$ we think of $(A+B)\dvol$ and $(B-A)\dvol$ as playing the role of the \underline{\emph{effective}} mass and charge \emph{density} distribution. We now associate them to each point source. 

\begin{definition}\label{BLR}
By a \emph{Brill-Lindquist-Riemann sum of point sources} we mean the metric $g_n$ and the electric field $\vec{E}_n$ given by  
$$g_n=(\chi_n\psi_n)^2\geucl,\ \  \vec{E}_n=\grad_{g_n}\ln(\chi_n/\psi_n)$$
with
$$\chi_n(x) := 1+\sum_i\frac{\alpha_{i,n}}{|x-p_{i,n}|} \text{\ \ and \ \ } \psi_n(x) := 1+\sum_i\frac{\beta_{i,n}}{|x-p_{i,n}|},$$
where the family of parameters $\alpha_{i,n}$ and $\beta_{i,n}$ satisfies 
$$\begin{aligned}
&\C(\alpha,A):=\sup_{i,n} n^4|\alpha_{i,n}-A(p_{i,n})(D/n)^3|<\infty,\\   
&\C(\beta, B):=\sup_{i,n} n^4|\beta_{i,n}-B(p_{i,n})(D/n)^3|<\infty.
\end{aligned}$$
The phrase \emph{Brill-Lindquist-Riemann sum with no charge} assumes $A=B$ and $\alpha_{i,n}=\beta_{i,n}$ for all $n$ and $i$.
\end{definition}

The canonical choice for parameters $\alpha_{i,n}$ and $\beta_{i,n}$ is 
$$\alpha_{i,n}=A(p_{i,n})(D/n)^3,\ \beta_{i,n}=B(p_{i,n})(D/n)^3;$$
such a choice leads to what we henceforth refer to as the sequence of Brill-Lindquist-Riemann sums of midpoint type. However, just as in the case of classic Riemann integration we would like to permit the sample locations to vary within the subdivision box. Note that for a sample location $q_{i,n}$ within $V_{i,n}$ we have 
$$|A(q_{i,n})-A(p_{i,n})|\le \tfrac{D}{n}\|dA\|_{L^\infty},\ \ |B(q_{i,n})-B(p_{i,n})|\le \tfrac{D}{n}\|dB\|_{L^\infty},$$
so that the parameters
\begin{equation}\label{commonsensechoice}
\alpha_{i,n}=A(q_{i,n})(D/n)^3,\ \beta_{i,n}=B(q_{i,n})(D/n)^3
\end{equation}
do indeed satisfy 
$$\C(\alpha, A)\le D^4\|dA\|_{L^\infty}<\infty,\ \ \C(\beta, B)\le D^4\|dB\|_{L^\infty}<\infty.$$
In some other circumstances we might want to view the parameters $\alpha_{i,n}$ and $\beta_{i,n}$ as arising from approximations or ``measurements", and so building in a little room for error may pay huge dividends.   

Since we may disregard subdivision boxes where $\alpha_{i,n}=\beta_{i,n}=0$ we may, without any loss of generality, assume that $\mathcal{P}_{n,*}$ contains no $p_{i,n}$ where $\alpha_{i,n}=\beta_{i,n}=0$:
$$\mathcal{P}_{n,*}=\{p_{i,n}\big{|}\alpha_{i,n}+\beta_{i,n}\neq 0\}.$$
The separation parameter in this case is 
$$\sigma_{i,n}=D/n \text{\ \ for all\ \ } i, n.$$
Also observe that, by construction, we have $0\not\in \P_{n,*}$ and 
\begin{equation}\label{moreseparation}
|p_{i,n}|>\sigma_{i,n}/2 \text{\ \ for\ \ all\ \ } p_{i,n}\in \P_{n,*}.
\end{equation}

\subsubsection{Charged dust clouds}\label{TheIdea}
Ultimately the goal of this paper is to investigate the convergence of the sequence of Brill-Lindquist-Riemann sums towards a continuum, a charged dust cloud. The conformal factors $\chi_n$ and $\psi_n$ take the schematic form of Riemann sums for the integrals 
\begin{equation}\label{potentials1}
\chi(x)=1+\int_y \frac{A(y)}{|x-y|}\dvol,\ \ \psi(x)=1+\int_y \frac{B(y)}{|x-y|}\dvol.
\end{equation} 
The natural candidate for our continuum dust cloud is thus given by 
\begin{equation}\label{defng}
g=(\chi\psi)^2\geucl,\ \  \vec{E}=\grad_g\ln(\chi/\psi).
\end{equation}
The functions $\chi$ and $\psi$ solve the PDE's
\begin{equation}\label{potentials2}
\Delta_{\geucl}\chi=-4\pi A,\ \ \Delta_{\geucl}\psi=-4\pi B
\end{equation}
and satisfy the boundary conditions $\chi,\psi\to 1$ as $|x|\to \infty$. The constraints satisfied by $(g,\vec{E})$ are
\begin{equation}\label{chriscross}
\mathrm{Scal}(g)=16\pi \chi^{-3}\psi^{-3}(A\psi+B\chi)+2|\vec{E}|^2_{g} \text{\ \ and\ \ } \mathrm{div}_{g}(\vec{E})=-4\pi \chi^{-3}\psi^{-3}(A\psi-B\chi).
\end{equation}
The latter are constraints for electrostatic charged dust with mass-energy density $\rho_{\mathrm{mass}}$ and charge density $\rho_{\mathrm{el}}$ given by 
\begin{equation}\label{chriscross2}
\rho_{\mathrm{mass}}=\chi^{-3}\psi^{-3}(A\psi+B\chi),\ \ \rho_{\mathrm{el}}=\chi^{-3}\psi^{-3}(A\psi-B\chi).
\end{equation}
Since the functions $A$ and $B$ are non-negative we in addition have the inequality 
\begin{equation}\label{DEC}
|\rho_{\mathrm{el}}|\le \rho_{\mathrm{mass}}.
\end{equation}
This particular inequality is consistent with what has been identified in the literature as the dominant energy condition (DEC), and with results concerning DEC such as those in \cite{CRT} (Theorem 2.1), \cite{GHHP} (see its Section 4) and \cite{KW1} (Theorem 2).

Alternatively, mass-energy and charge involved in \eqref{chriscross2} can be expressed using $3$-forms: 
\begin{equation}\label{chriscross3}
\begin{cases}
&\omegam=\rho_{\mathrm{mass}}\mathrm{dvol}_g=\psi A\,\dvol + \chi B\,\dvol,\\  &\omegae=\rho_{\mathrm{el}}\mathrm{dvol}_g=\psi A\,\dvol - \chi B\,\dvol
\end{cases}
\end{equation}
There are some very compelling reasons which support the idea that charged dust clouds are better described in terms of $3$-forms $(\omega_{\mathrm{mass}},\omega_{\mathrm{el}})$ as opposed to densities $(\rho_{\mathrm{mass}},\rho_{\mathrm{el}})$. For example, consider the fact that the concept of density inherently involves the concepts of metric and volume. As a result, employment of densities in the formulations of the constraints makes it impossible to view the constraints as equations which inform us about the geometric responses to presence of matter. By expressing the ``amount" of matter present in a metric independent way, such as the one involving $3$-forms $\omega_{\mathrm{mass}}$ and $\omega_{\mathrm{el}}$, we are able to frame the solutions to the constraints as responses to presence of matter. 

Here is the precise meaning of the phrase charged dust cloud we use throughout this work.
\begin{definition}\label{matter}
By a charged dust cloud we mean:
\begin{enumerate}[label=\Roman*]
\item\label{omegadefn} The pair $(\omegam, \omegae)$ where $\omegam=\rho_{\mathrm{mass}}\dvol$ and $\omegae=\rho_{\mathrm{el}}\dvol$ are $3$-forms with $\rho_{\mathrm{mass}}\ge 0$ and $\rho_{\mathrm{el}}$ smooth and compactly supported. 
\medbreak
\item The solution $(g,\vec{E})$ of the constraint equations 
\begin{equation}\label{davidconstraints}
\mathrm{Scal}(g)\mathrm{dvol}_g=16\pi \omegam+2|\vec{E}|^2_{g}\mathrm{dvol}_g \text{\ \ and\ \ } \mathrm{div}_{g}(\vec{E})\mathrm{dvol}_g=-4\pi \omegae
\end{equation}
in the form of \eqref{defng} and with conformal factors $\chi$ and $\psi$ satisfying the asymptotic conditions
\begin{equation}\label{asymptotics}
\left|\partial_x^l\!\left(\chi(x)-1\right)\right|,\ \left|\partial_x^l\!\left(\psi(x)-1\right)\right|=O(|x|^{-l-1}),\ \ |x|\to \infty,\ \ l\ge0.
\end{equation}
\end{enumerate}
In addition, if \eqref{DEC} holds then we say that the charged dust cloud satisfies the dominant energy condition (DEC). 
\end{definition}

When expressed in terms of the conformal factors $\chi$, $\psi$ and $3$-forms $\omega_{\mathrm{mass}}$ and $\omega_{\mathrm{el}}$ the constraint equations for $g=(\chi\psi)^2 \geucl$ and $\vec{E}=\grad_{g}(\ln(\chi/\psi))$ read as follows:
\begin{equation}\label{TobySystem}
\begin{cases}
\psi \Delta_{\geucl}\chi\,\dvol=-2\pi(\omega_{\mathrm{mass}}+\omega_{\mathrm{el}})\\
\chi \Delta_{\geucl}\psi\,\dvol=-2\pi (\omega_{\mathrm{mass}}-\omega_{\mathrm{el}}).
\end{cases}
\end{equation} 
Note that the DEC makes the right hand sides of this system non-positive. The existence and the uniqueness of positive solutions  $(\chi, \psi)$ of this system is proven by T. Aldape in \cite{toby1}: 

\begin{theorem}\label{TobyThm}
Suppose $(\omegam, \omegae)$ satisfies the conditions \eqref{omegadefn} of Definition \ref{matter} and the dominant energy condition. Then there exist unique positive solutions $\chi$ and $\psi$ of \eqref{TobySystem}, subject to the asymptotic conditions \eqref{asymptotics}.
\end{theorem}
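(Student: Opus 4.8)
The plan is to exploit the dominant energy condition to make the whole scheme monotone, prove existence by a sub/supersolution iteration, and reduce uniqueness to the coincidence of a minimal and a maximal solution. First I would rewrite the system so that all signs are visible. Writing $\omega_{\mathrm{mass}}=\rho_{\mathrm{mass}}\dvol$, $\omega_{\mathrm{el}}=\rho_{\mathrm{el}}\dvol$ and setting $F:=\tfrac12(\rho_{\mathrm{mass}}+\rho_{\mathrm{el}})$ and $G:=\tfrac12(\rho_{\mathrm{mass}}-\rho_{\mathrm{el}})$, the DEC \eqref{DEC} forces $F,G\ge0$, and both are smooth and compactly supported; the system \eqref{TobySystem} becomes $\psi\,\Delta_{\geucl}\chi=-4\pi F$, $\chi\,\Delta_{\geucl}\psi=-4\pi G$, equivalently $\chi=1+\int\frac{F(y)/\psi(y)}{|x-y|}\,dy$ and $\psi=1+\int\frac{G(y)/\chi(y)}{|x-y|}\,dy$. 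From the maximum principle one gets, for any positive solution with $\chi,\psi\to1$, the a priori bounds $1\le\chi\le\chi^\sharp$ and $1\le\psi\le\psi^\sharp$, where $\chi^\sharp:=1+\int\frac{F(y)}{|x-y|}\,dy$ and $\psi^\sharp:=1+\int\frac{G(y)}{|x-y|}\,dy$ are the decoupled Newtonian potentials: use $\Delta_{\geucl}\chi\le0$ for the lower bound and $\psi\ge1$ (so $F/\psi\le F$) for the upper bound, and symmetrically for $\psi$.

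For existence I would iterate with respect to the \emph{twisted} partial order $(\chi_1,\psi_1)\preceq(\chi_2,\psi_2)$ meaning $\chi_1\le\chi_2$ and $\psi_1\ge\psi_2$ pointwise. Let $T(\chi,\psi)=(\tilde\chi,\tilde\psi)$ with $\tilde\chi:=1+\int\frac{F(y)/\psi(y)}{|x-y|}\,dy$ and $\tilde\psi:=1+\int\frac{G(y)/\chi(y)}{|x-y|}\,dy$. One checks directly that $T$ is order preserving for $\preceq$ (raising $\psi$ lowers $\tilde\chi$, lowering $\chi$ raises $\tilde\psi$); that $T$ maps the order interval $\{1\le\chi\le\chi^\sharp,\ 1\le\psi\le\psi^\sharp\}$, whose $\preceq$-bottom is $(1,\psi^\sharp)$ and $\preceq$-top is $(\chi^\sharp,1)$, into itself; that $(1,\psi^\sharp)$ is a $\preceq$-subsolution ($T(1,\psi^\sharp)=(\tilde\chi,\psi^\sharp)$ with $\tilde\chi\ge1$) and $(\chi^\sharp,1)$ a $\preceq$-supersolution; and that fixed points of $T$ are precisely the positive solutions of the system obeying \eqref{asymptotics}. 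Iterating $T$ from the two ends gives $\preceq$-monotone, hence pointwise convergent, sequences; passing to the limit in the integral representations by dominated convergence and bootstrapping elliptic regularity (and reading off the asymptotics from the Newtonian-potential formulas) produces a minimal solution $\underline X=(\underline\chi,\underline\psi)$ and a maximal solution $\overline X=(\overline\chi,\overline\psi)$ with $\underline X\preceq S\preceq\overline X$ for every positive solution $S$ with $\chi,\psi\to1$. This already proves existence, and reduces uniqueness to the single equality $\underline X=\overline X$.

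For uniqueness I would put $w:=\overline\chi-\underline\chi\ge0$ and $v:=\underline\psi-\overline\psi\ge0$. Subtracting the equations pairwise gives the linear coupled system $\Delta_{\geucl}w=-\tfrac{4\pi F}{\underline\psi\,\overline\psi}\,v\le0$ and $\Delta_{\geucl}v=-\tfrac{4\pi G}{\underline\chi\,\overline\chi}\,w\le0$, so $w$ and $v$ are non-negative, vanish at infinity at the rate \eqref{asymptotics}, and each is the Newtonian potential of a non-negative density supported in $\mathrm{supp}\,F$, respectively $\mathrm{supp}\,G$; the goal is $w\equiv v\equiv0$. I would argue via the strong maximum principle applied to the conformal-factor ratios $\overline\chi/\underline\chi$ and $\underline\psi/\overline\psi$: both are $\ge1$ and tend to $1$ at infinity, and if either exceeds $1$ its supremum is attained at an interior point which — since $w$ and $v$ are harmonic outside the compact matter region — must lie in that region; evaluating the two equations there and using $F,G\ge0$ (the DEC) forces a chain of equalities pinning the two ratios to a common constant $\lambda>1$ on the matter region, and propagating this outward along the harmonic continuation contradicts the decay at infinity. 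Hence $\overline\chi=\underline\chi$ and $\overline\psi=\underline\psi$.

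The main obstacle is exactly this last step. Reducing to the linear difference system is routine, but that linear system, for general non-negative compactly supported coefficients, genuinely admits non-trivial decaying solutions (it is a Schr\"odinger-type bound-state problem), so the argument must really use the non-negativity $w,v\ge0$ inherited from the minimal/maximal solutions together with the precise algebraic form of the coupling — the linearization alone is not enough. One must also be careful on $\partial(\mathrm{supp}\,F)$ and $\partial(\mathrm{supp}\,G)$ and at interior zeros of $F$ or $G$ (the ``extreme'' locus $\rho_{\mathrm{el}}=\pm\rho_{\mathrm{mass}}$), where the ellipticity of the ratio equations degenerates and the ``push the extremum into the interior'' step needs a separate limiting or unique-continuation argument. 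It is in taming these degeneracies, rather than in the existence half, that the real work lies; and the DEC is precisely what makes every sign in the scheme — non-negativity of $F$ and $G$, superharmonicity of $\chi,\psi,w,v$, and monotonicity of $T$ — come out right.
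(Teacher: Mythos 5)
The paper proves nothing for this statement on its own: it is cited from Aldape's work \cite{toby1}, so there is no in-paper argument to compare against. Assessing your proposal on its own, the existence half is correct. The rewriting with $F,G\ge0$ (from the DEC), the a priori bounds $1\le\chi\le\chi^\sharp$ and $1\le\psi\le\psi^\sharp$ from the maximum principle, the monotone iteration $T$ in the twisted order $\preceq$, and the passage to the limit via the Newtonian-potential representations all go through, and they produce minimal and maximal solutions $\underline X\preceq\overline X$ bracketing every positive solution with the stated asymptotics.

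The gap is in the uniqueness half, and you honestly flag it but do not close it. The ratio strong-maximum-principle argument does not reach a contradiction: writing $\sigma=\overline\chi/\underline\chi\ge1$ and $\tau=\underline\psi/\overline\psi\ge1$, a short computation gives $\Delta\sigma+2\nabla\ln\underline\chi\cdot\nabla\sigma=c_1(\sigma-\tau)$ and $\Delta\tau+2\nabla\ln\overline\psi\cdot\nabla\tau=-c_2(\sigma-\tau)$ with $c_1=4\pi F/(\underline\chi\underline\psi)\ge0$, $c_2=4\pi G/(\overline\chi\overline\psi)\ge0$; at an interior maximum of (say) $\sigma$ the resulting inequality $c_1(\sigma-\tau)\le0$ is perfectly compatible with $\tau$ also peaking at the same value, so nothing rules out a common plateau, and the degenerate locus where $F$ or $G$ vanishes only makes this worse. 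The natural energy identity $\int c_1 v^2=\int c_2 w^2$, which follows from $\int(v\Delta w-w\Delta v)=0$ and the $O(1/r)$, $O(1/r^2)$ decay of $w,v$ and their gradients, is likewise insufficient by itself. What closes the argument is a sweeping-constant (Brezis--Oswald type) trick exploiting the strict sub-linearity of $s\mapsto 1/s$: set $t^*=\sup\{t\in[0,1]:\ t(\overline\chi-1)\le\underline\chi-1\ \text{and}\ t(\underline\psi-1)\le\overline\psi-1\ \text{on}\ \mathbb{R}^3\}$. This set contains $0$, is closed, and $t^*$ is attained. If $t^*<1$, then $\overline\psi\ge(1-t^*)+t^*\underline\psi$ together with the uniform bound $\underline\psi\le\psi^\sharp$ gives $\overline\psi/\underline\psi\ge t^*+\delta$ with $\delta=(1-t^*)/\|\psi^\sharp\|_{L^\infty}>0$; feeding $1/\underline\psi\ge(t^*+\delta)/\overline\psi$ through $\underline\chi-1=\int F(y)/(\underline\psi(y)|x-y|)\,dy$ yields $\underline\chi-1\ge(t^*+\delta)(\overline\chi-1)$, and the symmetric bound on the $\psi$-side holds with some $\delta'>0$, contradicting maximality of $t^*$. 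Hence $t^*=1$, which together with $\underline X\preceq\overline X$ forces $\underline X=\overline X$. This is the missing ingredient; with it in place your scheme does prove the theorem.
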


Thus Theorem \ref{TobyThm} ensures that to each pair $(\omegam, \omegae)$ which satisfies the conditions \eqref{omegadefn} of Definition \ref{matter} and the DEC we can associate a unique solution of the constraints \eqref{davidconstraints}. It is in this sense of the word that every charged dust cloud satisfying DEC is of the form \eqref{defng}. Readers familiar with the difficulties surrounding the prescribed scalar curvature problem on $\mathbb{R}^3$ probably notice that this particular outcome of Theorem \ref{TobyThm} would no longer hold if instead of the pair of $3$-forms $(\omegam, \omegae)$ we used the pair of densities $(\rho_{\mathrm{mass}}, \rho_{\mathrm{el}})$.

In summary, the idea of this paper is to examine:
\begin{itemize}
\item If (and if so, in what sense) the sequence of Brill-Lindquist-Riemann sum converges to a charged dust cloud (satisfying DEC) and,
\medbreak
\item The extent to which every charged dust cloud (satisfying DEC) can be discretized through an approximation by a Brill-Lindquist-Riemann sum. 
\end{itemize}

\subsubsection{Difficulties with our idea}
The idea we just presented rests on the symbolic passage between a Riemann-sum-looking expression and an integral. Such a maneuver is more delicate than it might seem at first. For one, observe that the constraint equations satisfied by $(g_n, \vec{E}_n)$ are 
$$\mathrm{Scal}(g_n)=2|\vec{E}_n|^2_{g} \text{\ \ and\ \ } \mathrm{div}_{g_n}(\vec{E}_n)=0,$$
which when compared to \eqref{chriscross} suggests a jump in scalar curvature. (See also the following two paragraphs.) In particular, there can be no $C^2$-like convergence along the lines of $g_n\to g$. Furthermore, the set $\cup_n \P_{n,*}$ of locations of point-objects is dense in $[-D,D]^3$ making it so that there is no subset of $[-D,D]^3$ with a non-empty interior on which the statement $g_n\to g$ even makes sense. 

The exact nature of point-wise properties of $g_n$ and their convergence is investigated in Section \ref{ConvSec}. One of the conclusions of Section \ref{ConvSec} is that the metrics $g_n$ are well-approximated at the $C^1$-level by the metric $g$ but only over sets of the form
$$\mathbb{R}^3\smallsetminus \left(\cup_i B_{\geucl} (p_{i,n}, Dn^{-\nu}\right)$$
with $1<\nu<3/2$. Excising neighborhoods of $p_{i,n}$ is absolutely necessary for a very intuitive reason: there is a sense in which the metrics $g_n$ ought to be\footnote{Another genre of results we present in Section \ref{ConvSec} address this ``convergence" towards metrics of Reissner-Nordstr\"om-type near $p_{i,n}$.} ``like" Reissner-Nordstr\"om metrics near point-sources at $p_{i,n}$. Note that the electric field
$$\vec{E}_n=\grad_{g_n}\ln(\chi_n/\psi_n)$$
involves only the first derivatives of $\chi_n$ and $\psi_n$, and because of this we do have ``convergences" 
$$\vec{E}_n\approx \vec{E} \text{\ \ and\ \ } 2|\vec{E}_n|^2_{g_n}\approx 2|\vec{E}|^2_{g}$$ 
over sets of the form $\mathbb{R}^3\smallsetminus \left(\cup_i B_{\geucl} (p_{i,n}, Dn^{-\nu}\right)$ where $\vec{E}=\grad_g\ln(\chi/\psi)$.

As discussed above there can be no statement of the form $g_n\to g$ at the $C^2$-level, but it is worth noticing that (Euclidean) second derivatives of $g_n$ do permit uniform bounds to some small extent. In Section \ref{ConvSec} we prove that for each fixed value of $c\ll 1$ there is a uniform bound on second derivatives of $g_n$ over sets of the form 
$$\mathbb{R}^3\smallsetminus \left(\cup_i B_{\geucl} (p_{i,n}, cD/n\right).$$
In other words, even though curvatures of $g_n$ are not expected to converge to those of $g$ in any straightforward sense, at least they exhibit boundedness sufficiently away from sources $p_{i,n}$. The  same kind of comment applies to boundedness/lack of convergence of $\mathrm{div}_{g_n}(\vec{E}_n)$.

In summary, one could say that there is $C^1$-convergence $g_n\to g$ over any subset of the form  
\begin{equation}\label{cantor}
K\smallsetminus \left(\bigcup_{i,n\ge N} B_{\geucl} (p_{i,n}, Dn^{-\nu})\right),\ \ 1<\nu<3/2
\end{equation}
of a compact set $K\subseteq \mathbb{R}^3$. Interiors of sets \eqref{cantor} are disjoint from $[-D,D]^3$ but their Lebesgue measure can be made arbitrarily close to that of the compact set $K$ when $\nu>4/3$, due to 
$$\sum_{n\ge N} \sum_i \mathrm{Vol}_{\geucl} B_{\geucl} (p_{i,n}, Dn^{-\nu})\le CD^3 \sum_{n\ge N} n^{3-3\nu}= O(N^{4-3\nu}).$$
This particular Cantor-esque aspect of sets \eqref{cantor} makes it unclear if and how one could make use of results of Cheeger-Gromov theory (e.g \cite{MAnderson}) or -- more importantly -- the well-posedness of the Einstein equations (e.g \cite{KRS, Ringstrom}). It seems that any investigation of the idea we presented in Section \ref{TheIdea} has to be rooted in techniques of metric geometry and/or geometric measure theory. 

Theorem \ref{GHduhduhduh}, which we are about to state, is perhaps the kind of result we need as the foundation of our investigations; the theorem itself is proven in Section \ref{GH:sec}. Before we make the statement we have to make one disclaimer. Since the convergence of non-compact domains requires extra care we have simplified the situation by restricting ourselves to working within $B_{\geucl}(0,R)\smallsetminus \P_n$ and $B_{\geucl}(0,R)$ with $R>\sqrt{3}D$ fixed. The work of \cite{CD, corvino} implies that regions near infinity can be treated as being exactly of Reissner-Nordstr\"om type anyway, and so we do not feel that by restricting to finite $R$ we sacrificed a lot of generality. This particular simplification is implemented throughout our paper. 

\begin{theorem}\label{GHduhduhduh}
Fix $R>\sqrt{3}D$. The set 
$$\U_{n,R}:=B_{\geucl}(0,R)\smallsetminus \left(\bigcup_i B_{\geucl}(p_{i,n}, \tfrac{D}{n^2})\right)$$
equipped with the metric $g_n$ converges in the Gromov-Hausdorff sense as $n\to \infty$ to the set $B_{\geucl}(0,R)$ equipped with the metric $g$ introduced in \eqref{defng}-\eqref{potentials1}. \end{theorem}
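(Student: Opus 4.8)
\emph{Proof proposal.} The plan is to realize the convergence via the inclusion $\iota_n\colon\U_{n,R}\hookrightarrow B_{\geucl}(0,R)$, viewed as a map from $(\U_{n,R},g_n)$ — carrying the length metric $d_n$ induced by $g_n$ — to $(B_{\geucl}(0,R),g)$ with its length metric $d$; throughout one passes to metric completions, which are compact geodesic spaces by Hopf--Rinow. (For a metric $h$ write $\ell_h$ for the length it assigns a rectifiable curve.) It suffices to produce $\e_n\to0$ with $\sup_{x,y\in\U_{n,R}}|d_n(x,y)-d(x,y)|\le\e_n$ and $\sup_{z\in B_{\geucl}(0,R)}\inf_{w\in\U_{n,R}}d(z,w)\le\e_n$, since such $\e_n$-approximations witness Gromov--Hausdorff convergence. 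The second (density) estimate is immediate: $B_{\geucl}(0,R)\smallsetminus\U_{n,R}=\bigcup_i B_{\geucl}(p_{i,n},D/n^2)$, and since the potentials $\chi,\psi$ are bounded on $B_{\geucl}(0,R)$ each excised ball has $g$-diameter $O(n^{-2})$. Everything thus reduces to the two-sided distance comparison.

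I would rely on two pointwise facts about $g_n$, both contained in (or readily extracted from) the analysis of Section~\ref{ConvSec}. \emph{(i) Uniform equivalence to $\geucl$:} there is $C_0\ge1$, independent of $n$, with $1\le\chi_n\psi_n\le C_0$ on $\U_{n,R}$ and $1\le\chi\psi\le C_0$ on $B_{\geucl}(0,R)$; indeed, for $z\in\U_{n,R}$ the pole nearest $z$ lies at Euclidean distance $\ge D/n^2$ and so contributes at most $\alpha_{i,n}/(D/n^2)=O(n^{-1})$ to $\chi_n(z)$ by Definition~\ref{BLR}, while the remaining poles form a Riemann sum dominated by the bounded Newtonian potential $\int_{[-D,D]^3}A(y)\,|z-y|^{-1}\,\dvol$; the same applies to $\psi_n$ and, in the limit, to $\chi\psi$. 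Consequently $\geucl\le g_n\le C_0^2\,\geucl$ on $\U_{n,R}$ and $\geucl\le g\le C_0^2\,\geucl$ on $B_{\geucl}(0,R)$; in particular any $g$- or $g_n$-minimizing curve joining two points of $B_{\geucl}(0,R)$ has Euclidean length at most a uniform constant $C_1$. \emph{(ii) $C^0$-closeness off the poles:} fix once and for all some $\nu\in(1,3/2)$ and put $G_n:=B_{\geucl}(0,R)\smallsetminus\mathcal B_n$ with $\mathcal B_n:=\bigcup_i B_{\geucl}(p_{i,n},Dn^{-\nu})$; then $\|g_n-g\|_{C^0(G_n)}\to0$, so $(1-o(1))\,g\le g_n\le(1+o(1))\,g$ uniformly on $G_n$. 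Note the balls constituting $\mathcal B_n$ are pairwise disjoint and each lies inside its subdivision box $V_{i,n}$ (their radius being below the half-spacing $D/(2n)$), so a rectifiable curve of Euclidean length $\le L$ in $B_{\geucl}(0,R)$ meets at most $O(Ln)$ of them.

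\emph{Upper bound} $d_n(x,y)\le d(x,y)+o(1)$: take a minimizing $g$-geodesic $\gamma$ from $x$ to $y$ and, for each ball $B$ constituting $\mathcal B_n$ that $\gamma$ meets, reroute $\gamma$ around $B$, at an extra cost of $O(n^{-\nu})$ in $g$-length per ball (replace the portion of $\gamma$ within $B$ by a detour along, and if necessary radially out to, $\partial B$); over the $O(n)$ balls met this adds $O(n^{1-\nu})\to0$ in total. The resulting curve $\hat\gamma$ joins $x$ to $y$, lies in $G_n\subseteq\U_{n,R}$, and has $\ell_g(\hat\gamma)\le d(x,y)+O(n^{1-\nu})$; since $g_n\le(1+o(1))g$ on $G_n$ and $d(x,y)\le\diam_g B_{\geucl}(0,R)=O(1)$, we get $d_n(x,y)\le\ell_{g_n}(\hat\gamma)\le d(x,y)+o(1)$, uniformly. \emph{Lower bound} $d_n(x,y)\ge d(x,y)-o(1)$: let $\gamma_n$ be a $g_n$-minimizing curve in $\U_{n,R}$ from $x$ to $y$; by (i) it has Euclidean length $\le C_1$, so it meets $O(n)$ of the balls $B$ constituting $\mathcal B_n$. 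For each such $B$, the portion of $\gamma_n$ lying in $B$ is contained in a $g_n$-minimizing sub-arc whose endpoints can be joined within $\U_{n,R}$ by a path of $g_n$-length $O(n^{-\nu})$ (a detour around $B$), hence has $g_n$-length $O(n^{-\nu})$ itself — \emph{no count of re-entries is needed}. Thus $\gamma_n$ spends Euclidean length $O(n^{1-\nu})$, and so (via $g\le C_0^2\geucl$) $g$-length $o(1)$, inside $\mathcal B_n$; on $\gamma_n\cap G_n$ one has $g\le(1+o(1))g_n$ by (ii), whence
$$d(x,y)\le\ell_g(\gamma_n)=\ell_g(\gamma_n\cap G_n)+\ell_g(\gamma_n\cap\mathcal B_n)\le(1+o(1))\,\ell_{g_n}(\gamma_n)+o(1),$$
i.e. $d(x,y)\le(1+o(1))\,d_n(x,y)+o(1)$, which rearranges to the claimed inequality. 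Together with the density estimate this gives $d_{\mathrm{GH}}\bigl((\U_{n,R},g_n),(B_{\geucl}(0,R),g)\bigr)\to0$.

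The step I expect to be the main obstacle is establishing (i)--(ii) on the annular shells $D/n^2\le|x-p_{i,n}|\le Dn^{-\nu}$: these lie \emph{inside} the neighbourhoods where Section~\ref{ConvSec} controls $g_n$ versus $g$, so there only the crude bound $\geucl\le g_n\le C_0^2\,\geucl$ is available, not $g_n\approx g$. What rescues the argument is exactly the geometric information recorded after Figure~\ref{fig1}: on such a shell the singular term $\alpha_{i,n}/|x-p_{i,n}|$ never exceeds $O(n^{-1})$, and the minimal surface of the corresponding ``local Reissner--Nordstr\"om'' geometry — sitting at $|x-p_{i,n}|=\sqrt{\alpha_{i,n}\beta_{i,n}}\sim n^{-3}$, together with the ``deep gravity well'' it guards — has already been excised, so $\chi_n\psi_n$ stays bounded. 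Verifying the requisite uniform control of a single pole's contribution and of the Riemann-sum tail, for $z$ this close to a grid point, is the one genuinely technical input; granted it, the length bookkeeping above (in which the whole of $\gamma_n\cap B$ is absorbed into one $g_n$-minimizing sub-arc) proceeds without further difficulty.
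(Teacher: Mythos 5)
Your overall strategy—treat the inclusion $\iota_n\colon\U_{n,R}\hookrightarrow B_{\geucl}(0,R)$ as an approximate isometry, pay for escaping the excised balls with detours of controlled total length, and exploit $C^0$-closeness of $g_n$ to $g$ away from the sources—is essentially the paper's strategy. The paper simply organizes it as two separate approximate isometries, $(\U_{n,R},g_n)\to(\U_{n,R},g)$ via a metric-closeness lemma (Lemma~\ref{GHduh}) and $(\U_{n,R},g)\to(B_{\geucl}(0,R),g)$ via a detour/counting lemma (Lemma~\ref{distancecomparisonlemma} with $r_\P=D/n^2$, $\sigma_\P=D/n$), rather than interleaving the two ideas in a single length-comparison the way you do.

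The ``main obstacle'' you single out is not an obstacle, and this is worth correcting. Proposition~\ref{BLRcontrollemma}(2) gives the $C^0$ estimate $\left|\chi_n-\chi\right|+\left|\psi_n-\psi\right|\le C/n$ on all of $\mathbb{R}^3\smallsetminus\bigcup_iB_{\geucl}(p_{i,n},D/n^2)$, i.e.\ on all of $\U_{n,R}$, including the annular shells $D/n^2\le|x-p_{i,n}|\le Dn^{-\nu}$ that you worry about. You have conflated this with Proposition~\ref{BLRcontrollemmaC1}, which is the $C^1$ estimate and is the one that genuinely requires the larger exclusion radius $Dn^{-\nu}$; $C^1$ control is never needed for Theorem~\ref{GHduhduhduh}. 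Once you take Proposition~\ref{BLRcontrollemma}(2) at its stated strength, your intermediate region $G_n$, the auxiliary parameter $\nu$, and the entire last paragraph (which in effect re-derives the $C^0$ estimate the paper already proves) become unnecessary: the only balls that must be routed around are the excised ones of radius $D/n^2$, exactly as in the paper.

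One further point, repairable but currently implicit: in your upper bound, a $g$-minimizing geodesic $\gamma$ may re-enter the same ball $B$ several times, so ``the portion of $\gamma$ within $B$'' is a union of sub-arcs; rerouting each such sub-arc separately along $\partial B$ costs $O(n^{-\nu})$ \emph{per sub-arc}, with no a priori bound on the number of re-entries. You explicitly notice and handle precisely this in the lower bound via the first-entry-to-last-exit/minimizing-sub-arc device, but the upper bound needs the same device stated, not assumed. The paper's Lemma~\ref{distancecomparisonlemma} resolves the issue systematically by distinguishing the total number of punctures $Q$ from the number of distinct sphere-to-sphere transitions $Q'$, bounding only $Q'$ by the length divided by $\sigma_\P$, and charging $Q'+1$ detours rather than $Q$.
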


At first glance it appears that Theorem \ref{GHduhduhduh} achieves our goal. Once again, the idea of cutting out neighborhoods of locations in $\P_n$ (i.e $B_{\geucl}(p_{i,n}, \tfrac{D}{n^2})$) seems perfectly reasonable as we expect the geometries of $g_n$ to be more Reissner-Nordstr\"om type near $p_{i,n}$. 
The theorem achieves the physically worthwhile goal discussed in the opening section of this paper as it allows us to represent non-vacuum continuum as a limit of largely vacuum, discretized configurations. Stated in more geometric terms the theorem provides a source of examples for sequences of scalar-flat manifolds whose Gromov-Hausdorff limit is not scalar-flat. This particular take on Theorem \ref{GHduhduhduh} deserves to be stated explicitly as a corollary. The proof of the corollary is merely an application of \eqref{chriscross} within the context where there is no charge.

\begin{corollary}\label{BS}
Let $A\neq 0$ be a smooth non-negative function supported in $[-D,D]^3$, and let $\chi$ be the unique solution of 
$$\Delta_{\geucl}\chi=-4\pi A,\ \ \chi\big{|}_\infty=1.$$
Fix $R> \sqrt{3}D$. There exists a sequence of spaces $(\U_{n,R},g_n)$ which are scalar-flat but whose Gromov-Hausdorff limit is $(B_{\geucl}(0,R), \chi^4\geucl)$ and is of scalar curvature
$$\mathrm{Scal}(\chi^4\geucl)=32\pi \chi^{-5}A\neq 0.$$
\end{corollary}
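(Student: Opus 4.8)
The plan is to specialize Theorem \ref{GHduhduhduh} to the no-charge case and then read the scalar curvature of the limit off of \eqref{chriscross}; as the excerpt already hints, the corollary carries no content beyond that theorem.

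First I would set $B=A$ and choose any Brill-Lindquist-Riemann sum with no charge in the sense of Definition \ref{BLR} -- for concreteness the midpoint-type sequence $\alpha_{i,n}=\beta_{i,n}=A(p_{i,n})(D/n)^3$, which is admissible since then $\C(\alpha,A)=\C(\beta,B)=0$. Because $\chi_n=\psi_n$ we get $g_n=\chi_n^4\geucl$ and $\vec{E}_n=\grad_{g_n}\ln(\chi_n/\psi_n)=0$, so the constraint $\mathrm{Scal}(g_n)=2|\vec{E}_n|^2_{g_n}$ forces $\mathrm{Scal}(g_n)\equiv 0$; equivalently, $\chi_n$ is harmonic on $\R^3\smallsetminus\P_{n,*}\supseteq\U_{n,R}$, and a conformally Euclidean metric $u^4\geucl$ is scalar-flat precisely where $u$ is harmonic. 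Hence each $(\U_{n,R},g_n)$ is scalar-flat.

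Next I would apply Theorem \ref{GHduhduhduh} with $B=A$: it yields Gromov-Hausdorff convergence of $(\U_{n,R},g_n)$ to $(B_{\geucl}(0,R),g)$ with $g=(\chi\psi)^2\geucl$ and $\chi=\psi$ the Newtonian potential of \eqref{potentials1}. This $\chi$ solves $\Delta_{\geucl}\chi=-4\pi A$ with $\chi\to 1$ at infinity, and it is the \emph{unique} such solution: the difference of two of them is a bounded harmonic function on $\R^3$ vanishing at infinity, hence identically zero. So the limit is $(B_{\geucl}(0,R),\chi^4\geucl)$ for exactly the $\chi$ in the statement. Finally, specializing \eqref{chriscross} to $B=A$, $\psi=\chi$, $\vec{E}=0$ (equivalently, using the three-dimensional identity $\mathrm{Scal}(u^4\geucl)=-8u^{-5}\Delta_{\geucl}u$) gives
\[
\mathrm{Scal}(\chi^4\geucl)=16\pi\,\chi^{-6}\bigl(A\chi+A\chi\bigr)=32\pi\,\chi^{-5}A,
\]
which is not identically zero since $A\not\equiv 0$ and $\chi>0$ everywhere.

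I do not expect a genuine obstacle. The single substantive ingredient is the Gromov-Hausdorff convergence of Theorem \ref{GHduhduhduh}; everything else -- checking that the no-charge parameters are admissible, that $g_n$ is scalar-flat away from the sources, that the limiting conformal factor is the prescribed $\chi$, and the one-line curvature computation -- is routine. The only point deserving a moment of attention is that the no-charge specialization is compatible with the hypotheses of Theorem \ref{GHduhduhduh}, which it is, since those impose no relation between $A$ and $B$.
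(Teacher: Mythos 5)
Your proof is correct and takes essentially the same approach the paper intends: the paper's proof of the corollary is the one-line remark that it is ``merely an application of \eqref{chriscross} within the context where there is no charge,'' i.e.\ specialize Theorem \ref{GHduhduhduh} to $B=A$, $\alpha_{i,n}=\beta_{i,n}$, and read off the limit metric and its scalar curvature. You have simply filled in the routine verifications (admissibility of the midpoint no-charge parameters, scalar-flatness of $g_n$ via $\vec{E}_n=0$, uniqueness of $\chi$, and the conformal curvature identity), all of which are correct.
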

In relation to this corollary the reader may also want to consult Remark \ref{christinasthing} below.

However, $(\U_{n,R},g_n, \vec{E}_n)$ and $(B_{\geucl}(0,R)\smallsetminus \P_n ,g_n, \vec{E}_n)$ are extremely different when viewed as \emph{relativistic initial data} and one can make an argument that consideration of $(\U_{n,R},g_n)$ is highly physically unsatisfying! Metaphorically speaking, since $D/n^2\gg \alpha_{i,n}\pm \beta_{i,n}$ consideration of $\U_{n,R}$ in place of $B_{\geucl}(0,R)\smallsetminus \P_n$ means cutting off regions which are even remotely close to ``stars" (point-sources) of a ``galaxy" (dust cloud). By doing so we are removing regions which allow us to detect classic relativistic effects (e.g gravitational lensing) in the first place! In the space-time evolution of $\U_{n,R}$ there are plenty of signals which cannot reach certain destinations within $\U_{n,R}$ because of a highly artificial boundary raised at the (Euclidean) radius $D/n^2$. The far more natural boundaries for the purposes of space-time evolutions are horizons such as minimal surfaces suggested in the diagram in Figure \ref{fig2}; all signals crossing them are lost to hypothetical observers anyway. 

For this reason it is absolutely essential to address the existence of any horizons/minimal surfaces within the (Euclidean) radius $D/n^2$ of a ``star" (point-source). If the interpretation of Brill-Lindquist metrics as collections of (charged) point-sources and the corresponding reading of Figure \ref{fig2} are not completely misleading one ought to be able to associate a somehow canonical horizon/minimal surface (henceforth denoted by $\Sigma_{i,n}$) to each point-source $p_{i,n}\in \P_{n,**}$. A substantial portion of our paper is dedicated to resolving exactly this issue. 

\subsubsection{The plan}
Theorem \ref{minsurf:thm1} establishes existence, uniqueness and further geometric properties regarding minimal surfaces $\Sigma_{i,n}\subseteq B_{\geucl}(p_{i,n},D/(2n))$. Techniques involved in proving Theorem \ref{minsurf:thm1} extend to the setting of Brill-Lindquist metrics in general, and for this reason we develop a general theorem (Theorem \ref{minsurf:thm1-taos}) first; we then extract Theorem \ref{minsurf:thm1} as a corollary. For now the reader should note that these results apply only when $n$ is sufficiently large. The majority of convergence results in our article address the domain located inside the ball $B_{\geucl}(0,R)$ but \emph{outside} all of $\Sigma_{i,n}$. Here is a precise definition.

\begin{definition}\label{defn-mn}
Fix $n$ which is sufficiently large\footnote{The exact meaning of ``sufficiently large" is spelled out in Theorem \ref{minsurf:thm1}.} and consider minimal surfaces $\Sigma_{i,n}$ of Theorem \ref{minsurf:thm1}. By the \emph{outside} of $\Sigma_{i,n}$, denoted $\mathrm{Out}_R(\Sigma_{i,n})$, we mean the connected component of $B_{\geucl}(0,R)\smallsetminus \Sigma_{i,n}$ which does not contain $p_{i,n}$. Define
$$\V_{n,R}:=\left(\bigcap_{i}\, \mathrm{Out}_R(\Sigma_{i,n})\right)\smallsetminus \P_{n,*},$$
where the intersection is over all $p_{i,n}\in \P_{n,**}$. 
\end{definition}

The diagram in Figure \ref{fig3} conveys the appearance of $\V_{n,R}$. 

\begin{figure}[h]
\centering
\begin{tikzpicture}[scale=.6]
\draw[thick] (-8,-4.25) [out=-105, in=180] to (-0.5, -7) [out=0, in=-120] to (8.25, -4.25)[out =60, in=0] to (-1, -1.75)[out=180, in=75] to (-8, -4.25);

%lower layer
\begin{scope}[yshift=-0.25cm]
\draw[thick, dashed] (-6.6, -5) to [out=-60, in=95] (-6.35, -6.35);
\draw[thick] (-6.35, -6.35) to [out=-85, in=90] (-6.35, -7.15);
\draw[thick, dashed] (-5.75, -5) to [out=-120, in=85] (-6.15, -6.4);
\draw[thick] (-6.15, -6.4) to [out=-95, in=90] (-6.15, -7.15);
\draw[ultra thick] (-6.35,-7.15) to [out=-15, in=-165] (-6.15,-7.15);

\draw[thick, dashed] (-3.6, -5) to [out=-60, in=95] (-3.35, -6) to [out=-85, in=90] (-3.35, -6.45);
\draw[thick, dashed] (-2.75, -5) to [out=-120, in=85] (-3.15, -6) to [out=-95, in=90] (-3.15, -6.45);
\draw[ultra thick] (-3.35,-6.5) to [out=-15, in=-165] (-3.15,-6.5);

\draw[thick, dashed] (-0.6, -5) to [out=-60, in=90] (-0.35, -6.7);
\draw[thick] (-0.35, -6.7) to [out=-90, in=90] (-0.35, -7.35);
\draw[thick, dashed] (0.25, -5) to [out=-120, in=90] (-0.15, -6.7);
\draw[thick] (-0.15, -6.7) to [out=-90, in=90] (-0.15, -7.35);
\draw[ultra thick] (-0.35,-7.35) to [out=-15, in=-165] (-0.15,-7.35);

\node[left] at (-0.15,-7.75) {$\Sigma_{i,n}$};

\draw[thick, dashed] (2.4, -5) to [out=-60, in=95] (2.65, -6.75);
\draw[thick] (2.65, -6.75) to [out=-90, in=90] (2.675, -7.5);
\draw[thick, dashed] (2.675, -7.5) to (2.675, -8.25);
\draw[thick, dashed] (3.25, -5) to [out=-120, in=90] (2.85, -6.75);
\draw[thick] (2.85, -6.75) to [out=-90, in=90] (2.825, -7.5);
\draw[thick, dashed] (2.825, -7.5) to (2.825, -8.25);

\draw[thick, dashed] (5.4, -5) to [out=-60, in=95] (5.65, -6.35);
\draw[thick] (5.65, -6.35) to [out=-85, in=90] (5.675, -7.25);
\draw[thick, dashed] (5.675, -7.25) to (5.675, -8);
\draw[thick, dashed] (6.25, -5) to [out=-120, in=90] (5.85, -6.35);
\draw[thick] (5.85, -6.35) to [out=-90, in=90] (5.825, -7.25);
\draw[thick, dashed] (5.825, -7.25) to (5.825, -8);
\end{scope}

%upper layer

\begin{scope}[yshift=0.75cm]
\draw[thick, dashed] (-5.8, -3.25) to [out=-60, in=95] (-5.55, -4.25) to [out=-85, in=90] (-5.55, -4.9);
\draw[thick, dashed] (-4.95, -3.25) to [out=-120, in=85] (-5.35, -4.25) to [out=-95, in=90] (-5.35, -4.9);
\draw[ultra thick] (-5.55,-4.9) to [out=-15, in=-165] (-5.35,-4.9);

\draw[thick, dashed] (-2.8, -3.25) to [out=-60, in=95] (-2.55, -4.25) to [out=-85, in=90] (-2.55, -4.9);
\draw[thick, dashed] (-1.95, -3.25) to [out=-120, in=85] (-2.35, -4.25) to [out=-95, in=90] (-2.35, -4.9);
\draw[ultra thick] (-2.55,-4.9) to [out=-15, in=-165] (-2.35,-4.9);

\draw[thick, dashed] (0.2, -3.25) to [out=-60, in=95] (0.45, -4.25) to [out=-85, in=90] (0.45, -4.9);
\draw[thick, dashed] (1.05, -3.25) to [out=-120, in=85] (0.65, -4.25) to [out=-95, in=90] (0.65, -4.9);
\draw[ultra thick] (0.45,-4.9) to [out=-15, in=-165] (0.65,-4.9);

\draw[thick, dashed] (3.2, -3.25) to [out=-60, in=95] (3.45, -4.25) to [out=-85, in=90] (3.45, -4.9);
\draw[thick, dashed] (4.05, -3.25) to [out=-120, in=85] (3.65, -4.25) to [out=-95, in=90] (3.65, -4.9);
\draw[ultra thick] (3.45,-4.9) to [out=-15, in=-165] (3.65,-4.9);

\draw[thick, dashed] (6.2, -3.25) to [out=-60, in=95] (6.45, -4.25) to [out=-85, in=90] (6.45, -4.9);
\draw[thick, dashed] (7.05, -3.25) to [out=-120, in=85] (6.65, -4.25) to [out=-95, in=90] (6.65, -4.9);
\draw[ultra thick] (6.45,-4.9) to [out=-15, in=-165] (6.65,-4.9);
\end{scope}

\end{tikzpicture}
\caption{Depiction of $(\V_{n,R}, g_n)$}\label{fig3}
\end{figure}
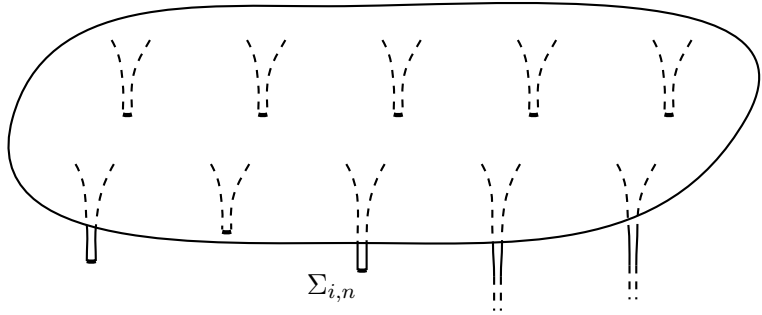

The overall plan of our paper is to study convergence of the sequence of metric spaces $(\V_{n,R}, g_n)$. We first address its Gromov-Hausdorff convergence or lack there of. We then investigate its intrinsic flat limit. Since the target audience for the article are researchers interested in relativity, we also provide a brief overview of both genres of limits in their respective sections.  

\subsection{Our main results}

Let us first introduce some terminology which helps in discussions of our results. 

\subsubsection{Control parameters and classes}\label{controlclasses}
At many places in our paper we rely on an assumption that some quantity associated to a Brill-Lindquist metric $(\chi_{\BL}\psi_{\BL})^2\geucl$ is ``sufficiently small". What exactly constitutes a sufficiently small quantity depends on parameters such as $\alpha_i$, $\beta_i$ or $\sigma_i$. The question of uniformity naturally and frequently comes up. Here is our language for this kind of a situation. 

\begin{definition}\label{defn-cs}
A constant of class $\C(i)$ is a polynomial expression in variables 
$$\hat{\chi}^{(i)}:=1+\sum_{j\neq i} \frac{\alpha_j}{|p_i-p_j|},\ \ \hat{\psi}^{(i)}:=1+\sum_{j\neq i} \frac{\beta_j}{|p_i-p_j|} \text{\ \ and\ \ } \frac{\alpha_i+\beta_i}{\sigma_i}$$
whose coefficients are some fixed (universal) non-negative real numbers. We say that a value $\e>0$ is small relative to $\C(i)$ if $\tfrac{1}{\e}$ is bounded from above by an element of class $\C(i)$.
\end{definition}

For example, our main result regarding minimal surfaces -- Theorem \ref{minsurf:thm1-taos} -- only applies when $\e>0$ is small relative to $\C(i)$.   

The reader surely notices that quantities $\hat{\chi}^{(i)}$ and $\hat{\psi}^{(i)}$ control the size of the functions 
$$\chi^{(i)}:=\chi_{\BL}-\tfrac{\alpha_i}{|x-p_i|} \text{\ \ and\ \ }\psi^{(i)}:=\psi_{\BL}-\tfrac{\beta_i}{|x-p_i|}$$ 
in the vicinity of $p_i$. Specifically, since $\partial^l(\tfrac{1}{|x-p_i|})$ can be bounded by a universal multiple of $\tfrac{1}{|x-p_i|^{l+1}}$ the following hold over 
$|x-p_i|\le \sigma_i/2$:
$$|\partial^l \chi^{(i)}|\le \frac{C}{\sigma_i^l}\hat{\chi}^{(i)} \text{\ \ and\ \ } |\partial^l \psi^{(i)}|\le \frac{C}{\sigma_i^l}\hat{\psi}^{(i)};$$
the constants $C$ depend on $l$ but are otherwise universal. 
 
When studying Brill-Lindquist-Riemann sums we are looking into an infinite family of Brill-Lindquist metrics. For this reason we are often interested in bounds on the quantities 
$$\hat{\chi}^{(i)}, \ \ \hat{\psi}^{(i)} \text{\ \ and\ \ } \frac{\alpha_i+\beta_i}{\sigma_i}$$
which are uniform across $i$ or, perhaps, a whole family of Brill-Lindquist metrics. The crucial point here is that in presence of such uniform bounds a constant of class $\C(i)$ or a value which is small relative to $\C(i)$ can be chosen independently of $i$ or the family of Brill-Lindquist metrics. In the special circumstance of the sequence of Brill-Lindquist-Riemann sums the quantities in question can be bounded uniformly by quantities $\|A\|D^2$, $\|B\|D^2$, $\C(\alpha, A)/D$ and $\C(\beta, B)/D$; for example, see discussion of Proposition \ref{BLRcontrollemma}. All the norms involved here are Euclidean $L^\infty(\mathbb{R}^3)$-norms.

\begin{definition}\label{defn-cs-2}
By constants of class $\C$ we mean polynomial expressions in variables $\|A\|D^2$, $\|B\|D^2$, $\C(\alpha, A)/D$ and $\C(\beta, B)/D$ whose coefficients are universal non-negative real numbers. We say that a property holds for all $n$ which are large enough relative to $\C$ if the property holds for all $n\ge N$ where $N$ is of class $\C$.
\end{definition}

For example, Theorem \ref{minsurf:thm1-taos} mentioned above, as well as Definition \ref{defn-mn} itself, apply when $n$ which is large enough relative to $\C$. The language we just introduced can also be helpful for keeping track of rates of convergences, especially if we modify our class of constants. 

\begin{definition}\label{defn-cs-3}
By constants of class $\C^+$ we mean polynomial expressions in variables $\|A\|D^2$, $\|B\|D^2$, $\|dA\|D^3$, $\|dB\|D^3$, $\C(\alpha, A)/D$ and $\C(\beta, B)/D$ whose coefficients are universal non-negative real numbers. We say that a property holds for all $n$ which are large enough relative to $\C^+$ if the property holds for all $n\ge N$ where $N$ is of class $\C^+$.
\end{definition}

The sequences of Brill-Lindquist-Riemann sums we study depend on real valued parameters other than the ones discussed thus far. In this paper the reader will encounter additional parameters $R$, $R'$ and $\lambda$. Many multiplicative constants, as well as the meaning of the phrase ``large enough", may depend on said parameters. For this reason we introduce an additional piece of terminology. 

\begin{definition}\label{defn-cs-4}
Let $\mathrm{Par}$ denote a set of real valued parameters. By constants of class $\C[\mathrm{Par}]$ (resp. $\C^+[\mathrm{Par}]$) we mean polynomial expressions in elements of $\mathrm{Par}$ whose coefficients are constants of class $\C$ (resp. $\C^+$). We say that a property holds for all $n$ which are large enough relative to $\C[\mathrm{Par}]$ (resp. $\C^+[\mathrm{Par}]$) if the property holds for all $n\ge N$ where $N$ is of class $\C[\mathrm{Par}]$ (resp. $\C^+[\mathrm{Par}]$). 
\end{definition}

For example, the statement of Theorem \ref{GHduhduhduh} we made above can be altered to include the claim that for each $\e>0$ there exists $N$ of the form $\tfrac{1}{\e}\C^+[R]$ such that  
$$d^{GH}((\U_{n,R},g_n), (B_{\geucl}(0,R),g))<\e$$
for all $n\ge N$.

\subsubsection{Our general result regarding Brill-Lindquist metrics}

The following is our main result regarding minimal surfaces of general Brill-Lindquist metrics. 

\begin{theorem}\label{minsurf:thm1-taos}\ 
\begin{enumerate}
\item Fix $p_i\in \P_{**}$. There exists a constant $C$ of class $\C(i)$ and a value of $\e>0$ which is small relative to $\C(i)$ with the following property: If $\frac{\alpha_i+\beta_i}{\sigma_i}<\e$ then there exists a function $\S_{i}:S^2\to (0,\infty)$ for which the image $\Sigma_{i}$ of 
$$\omega\mapsto p_i+\S_{i}(\omega)\omega$$
is a minimal surface for $g_{\BL}$. Furthermore, we have the following: 
\begin{itemize}
\item $\Sigma_{i}$ is located in the region 
$$\left((\hat{\chi}^{(i)}\hat{\psi}^{(i)})^{-1/2}-C\tfrac{\alpha_i+\beta_i}{\sigma_i}\right)\sqrt{\alpha_{i}\beta_{i}}\le |x-p_i|\le \left((\hat{\chi}^{(i)}\hat{\psi}^{(i)})^{-1/2}+C\tfrac{\alpha_i+\beta_i}{\sigma_i}\right)\sqrt{\alpha_{i}\beta_{i}}.$$
\medbreak
\item $\Sigma_{i}$ is the only minimal surface contained entirely within $|x-p_i|\le \sigma_i/C$.
\end{itemize}
\medbreak
\item Suppose $p_i\in \P_*\smallsetminus \P_{**}$. There exists a constant $C$ of class $\C(i)$ and a value of $\e>0$ which is small relative to $\C(i)$ with the following property: If $\frac{\alpha_i+\beta_i}{\sigma_i}<\e$ then there are no minimal surfaces which are contained entirely within $|x-p_i|\le \sigma_i/C$.
\end{enumerate}
\end{theorem}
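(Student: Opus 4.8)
The plan is to reduce both parts to a single quantitative analysis of the minimal surface equation written in terms of the radial graph function $\S : S^2 \to (0,\infty)$, treating $\chi^{(i)}$ and $\psi^{(i)}$ as small perturbations of the Reissner--Nordstr\"om/extreme Reissner--Nordstr\"om model centered at $p_i$. First I would center coordinates at $p_i$ and write $\chi_{\BL} = \frac{\alpha_i}{r} + \chi^{(i)}$, $\psi_{\BL} = \frac{\beta_i}{r} + \psi^{(i)}$ with $r = |x - p_i|$, recording the bounds $|\partial^l \chi^{(i)}| \le \frac{C}{\sigma_i^l}\hat\chi^{(i)}$, $|\partial^l \psi^{(i)}| \le \frac{C}{\sigma_i^l}\hat\psi^{(i)}$ valid on $r \le \sigma_i/2$, which are exactly the inputs the class $\C(i)$ language is designed to absorb. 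A hypersurface $\Sigma$ given as the radial graph $r = \S(\omega)$ is minimal for $g_{\BL} = (\chi_{\BL}\psi_{\BL})^2\geucl$ precisely when its mean curvature with respect to the conformal metric vanishes; expanding $H_{g_{\BL}} = 0$ gives $H_{\geucl} + 2\,\partial_\nu \log(\chi_{\BL}\psi_{\BL}) = 0$, where $\nu$ is the Euclidean outward unit normal and $H_{\geucl}$ the Euclidean mean curvature of the graph. This is a quasilinear second-order elliptic equation for $\S$, and the strategy is to solve it by a fixed-point/implicit-function argument in a suitable H\"older space of functions on $S^2$, perturbing off the explicit constant solution of the model.

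The key steps, in order. (1) Set up the model: when $\chi^{(i)}, \psi^{(i)}$ are replaced by their leading constant values $\hat\chi^{(i)}, \hat\psi^{(i)}$, the metric is exactly of Reissner--Nordstr\"om type $(\frac{\alpha_i}{r} + \hat\chi^{(i)})^2(\frac{\beta_i}{r} + \hat\psi^{(i)})^2\geucl$, whose minimal sphere sits at the constant radius $r_0 = (\hat\chi^{(i)}\hat\psi^{(i)})^{-1/2}\sqrt{\alpha_i\beta_i}$ in the case $\alpha_i\beta_i \neq 0$ (and degenerates, with no minimal surface, when $\alpha_i\beta_i = 0$). (2) Linearize the mean-curvature operator at the round sphere $\S \equiv r_0$: the linearization is $-\Delta_{S^2} + (\text{a positive zeroth-order term coming from the }\partial_r^2\log(\chi\psi)\text{ of the model})$, which is invertible on $C^{2,\gamma}(S^2)$ because the model geometry has the sphere as a \emph{strict} (nondegenerate) minimal surface. (3) Estimate the nonlinear remainder and the perturbation: the error terms are controlled by $\frac{\alpha_i+\beta_i}{\sigma_i}$ together with constants of class $\C(i)$, so a contraction-mapping argument on a ball of radius $C\frac{\alpha_i+\beta_i}{\sigma_i}$ in $C^{2,\gamma}(S^2)$ closes, provided $\e$ is small relative to $\C(i)$; this simultaneously yields existence of $\Sigma_i$ and the stated two-sided pinching of $|x-p_i|$ around $r_0\sqrt{\alpha_i\beta_i}$. (4) Uniqueness within $|x-p_i| \le \sigma_i/C$: any minimal surface trapped that deep is, by a standard argument, either a radial graph over $S^2$ (so captured by the fixed-point analysis, hence equal to $\Sigma_i$) or else must have a point of tangency forcing it out of the region; here one invokes that in the RN model the only compact minimal surface near the neck is the unique coordinate sphere, plus a barrier/foliation argument using the concentric coordinate spheres $r = \text{const}$, whose Euclidean mean curvature plus conformal correction has a definite sign on either side of the band in which $\Sigma_i$ lives. (5) The extreme case $p_i \in \P_* \setminus \P_{**}$: now either $\alpha_i = 0$ or $\beta_i = 0$, the model is extreme Reissner--Nordstr\"om which has \emph{no} minimal surface (the concentric spheres foliate by strictly mean-convex leaves toward the cylindrical end), and the same barrier computation — the sign of $H_{\geucl}(r) + 2\partial_r\log(\chi_{\BL}\psi_{\BL})$ on spheres $r = \text{const}$, which is strictly one-signed on $0 < r \le \sigma_i/C$ once $\frac{\alpha_i+\beta_i}{\sigma_i} < \e$ — rules out any closed minimal surface contained in that region via the usual touching argument.

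The main obstacle I expect is step (4)/(5): showing that a minimal surface which is \emph{a priori} only known to be contained in $|x - p_i| \le \sigma_i/C$ — not assumed to be a radial graph, connected, or embedded in any controlled way — must in fact be the radial graph $\Sigma_i$ (resp. cannot exist). The clean route is a maximum-principle comparison against the family of coordinate spheres: one computes that $r \mapsto H_{\geucl}(S_r) + 2\,\partial_r \log(\chi_{\BL}\psi_{\BL})|_{S_r}$, the ``model mean curvature of $S_r$,'' is positive for $r$ slightly below the band containing $\Sigma_i$ (pushing minimal surfaces outward) and negative for $r$ slightly above it (pushing them inward) in the non-extreme case, and is strictly one-signed throughout $0 < r \le \sigma_i/C$ in the extreme case; the perturbation terms from $\chi^{(i)}, \psi^{(i)}$ are again $O(\frac{\alpha_i+\beta_i}{\sigma_i})$ and do not destroy these signs once $\e$ is small relative to $\C(i)$. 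Then if $\Sigma$ were a second minimal surface in the region, sliding the spheres $S_r$ until they first touch $\Sigma$ from inside or outside produces a contradiction with the strong maximum principle unless $\Sigma$ lies in the narrow band, after which a second comparison — now using $\Sigma_i$ itself as the barrier, foliating a neighborhood of $\Sigma_i$ by nearby radial graphs with controlled-sign mean curvature, which is possible by the nondegeneracy in step (2) — forces $\Sigma = \Sigma_i$. Making the foliation and the ``first touching point'' argument fully rigorous, while keeping every threshold expressed honestly as ``small relative to $\C(i)$,'' is the part that requires genuine care; everything else is perturbation theory off an explicit model.
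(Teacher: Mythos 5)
Your high-level plan matches the paper's: rescale to non-dimensionalize near $p_i$, view the Brill--Lindquist conformal factor as a perturbation of a Reissner--Nordstr\"om model, solve the radial-graph minimal surface equation by a perturbative fixed-point argument, and then prove uniqueness by (i) a touching argument against coordinate spheres to trap any candidate minimal surface in a thin band around the model radius, followed by (ii) a second touching argument against a mean-convex foliation near $\Sigma_i$. The choice of $C^{2,\gamma}$ versus $H^l(S^2)$ is cosmetic, and your treatment of part (2) (the extreme case, where the sign of the coordinate-sphere mean curvature is one-signed throughout $0 < r \le \sigma_i/C$) is sound.

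There is, however, a genuine gap in the existence step, and it is precisely the one the paper flags in a footnote. You propose to linearize the minimal-surface operator directly at the model round sphere $\S \equiv r_0 = (\hat\chi^{(i)}\hat\psi^{(i)})^{-1/2}\sqrt{\alpha_i\beta_i}$ and close a contraction in a ball of radius $\nu \sim \frac{\alpha_i+\beta_i}{\sigma_i}$, asserting that the threshold for smallness of $\frac{\alpha_i+\beta_i}{\sigma_i}$ is of class $\C(i)$. But the zeroth-order coefficient in the linearization at the round RN sphere is comparable to $\frac{\tau_i}{\alpha_i+\beta_i}$ with $\tau_i := \sqrt{\alpha_i\beta_i}$, so the inverse of the linearized operator has norm comparable to $\frac{\alpha_i+\beta_i}{\tau_i}$ — a quantity \emph{not} controlled by class $\C(i)$, and which diverges as $\alpha_i$ and $\beta_i$ become disparate. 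Meanwhile, the quasilinear Hessian piece of the nonlinearity, $\frac{1}{1+|dh|^2}\mathrm{Hess}\,h(\grad h,\grad h)$, is of size $O(\|dh\|^3)$, and in your scheme $\|dh\|$ can be as large as the ball radius $\frac{\alpha_i+\beta_i}{\sigma_i}$. After applying the inverse the contribution is of order $\frac{\alpha_i+\beta_i}{\tau_i}\left(\frac{\alpha_i+\beta_i}{\sigma_i}\right)^3$, and making that smaller than $\frac{\alpha_i+\beta_i}{\sigma_i}$ requires $\frac{\alpha_i+\beta_i}{\sigma_i} \lesssim \sqrt{\tau_i/(\alpha_i+\beta_i)}$, a smallness condition that depends on $\tau_i/(\alpha_i+\beta_i)$ rather than on $\C(i)$ alone. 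So the contraction as you set it up does not close with the claimed uniformity.

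The paper's remedy is to first solve the intermediate \emph{semi-linear} problem $\Delta_{S^2} h + \Lambda_i(h) = 2$ by sub- and super-solutions, obtaining a solution $h_i$ which is of size $O(\frac{\alpha_i+\beta_i}{\sigma_i})$ in $L^\infty$ but whose \emph{gradient} is only $O(\tau_i/\sigma_i)$ — the point being that, after removing its near-constant mode, the semilinear solution varies across $S^2$ only at the much smaller rate $\tau_i/\sigma_i$. One then linearizes the full quasilinear equation at $h_i$ (not at the constant $r_0$) and contracts in a ball whose radius is much smaller than $\frac{\alpha_i+\beta_i}{\sigma_i}$. With $\|dh\|$ now controlled at size $O(\tau_i/\sigma_i)$, the Hessian nonlinearity is small enough for the operator norm $\frac{\alpha_i+\beta_i}{\tau_i}$ to be absorbed, and the threshold becomes genuinely of class $\C(i)$. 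If you want to keep a single-step argument you would have to separately track the constant and non-constant modes of $h$ and treat them with different weights, which amounts to the same idea.

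On the uniqueness side your two-barrier structure is the paper's, but your second barrier differs: you propose foliating a neighborhood of $\Sigma_i$ by nearby radial graphs produced via the nondegeneracy of the linearization (an implicit-function-theorem style construction), whereas the paper uses the explicit foliation by dilations $k\Sigma_i$ and a short monotonicity computation showing that $r \mapsto r\,\grad\chi_{\BL}\cdot\vec N/\chi_{\BL}$ is increasing near $\Sigma_i$, which directly yields the required sign of the mean curvature of $k\Sigma_i$ for $k\gtrless 1$. Your route is plausible but would again need to track how the nondegeneracy constant degenerates like $\tau_i/(\alpha_i+\beta_i)$; the paper's explicit dilations avoid that bookkeeping and are worth adopting.
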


We need to inform the reader that the uniqueness statement of this theorem can be improved in certain circumstances; Remark \ref{taos-afterthefact-remark1} has all the relevant details. Also note that we make absolutely no claim that the surfaces of Theorem \ref{minsurf:thm1-taos} are outermost minimal. In general these surfaces will \emph{not} be outermost minimal surfaces. The reader should perhaps contrast the situation in Figure \ref{fig2} with the situation depicted in Figure 3 of \cite{BL} where point sources in serious proximity of one another form a joint minimal surface. On the other hand, there are situations where it can be proven that surfaces of Theorem \ref{minsurf:thm1-taos} are indeed outermost minimal. For further information about one such situation the reader can consult \cite{SormaniStavrov}.

The proof of Theorem \ref{minsurf:thm1-taos} is subdivided between Sections \ref{MinSurfExist} and \ref{minsurf-sec}. In Section \ref{MinSurfExist} we explicitly construct the minimal surfaces by solving non-linear elliptic PDEs. We do so by observing that near point sources in $\P_{**}$ the geometry is well approximated by the Reissner-Nordstr\"om geometry, and by finding solutions of the relevant PDEs in the form of small perturbations of minimal surfaces of Reissner-Nordstr\"om geometry. Section \ref{minsurf-sec} is dedicated to the proof of uniqueness. To prove uniqueness we first construct foliations with well controlled sign of the mean curvature, and then use these foliations to narrow down potential locations of the minimal surfaces. Readers familiar with the subject of \cite{SormaniStavrov} surely notice that the methods presented here streamline several arguments regarding minimal surfaces in  \cite{SormaniStavrov}.

\subsubsection{Our results regarding Brill-Lindquist-Riemann sums}
In the case of Brill-Lindquist-Riemann sums the parameters $\hat{\chi}^{(i)}_n$ and $\hat{\psi}^{(i)}_n$ can be uniformly bounded by constants of class $\C$ (see Proposition \ref{BLRcontrollemma}), while the parameter $\frac{\alpha_{i,n}+\beta_{i,n}}{\sigma_{i,n}}$ can be estimated based on
$$\alpha_{i,n}+\beta_{i,n}\le (\|A\|+\|B\|)(D/n)^3+(\C(\alpha, A)+\C(\beta, B))/n^4;$$
ultimately we obtain
$$\frac{\alpha_{i,n}+\beta_{i,n}}{\sigma_{i,n}}\le \frac{C}{n^2}$$
for some constant $C$ of class $\C$. In particular, for a given $\e>0$ we can arrange that  
$$\frac{\alpha_{i,n}+\beta_{i,n}}{\sigma_{i,n}}<\e,$$ 
all provided $n$ is large enough relative to $\C$. Overall, Theorem \ref{minsurf:thm1-taos} applies to Brill-Lindquist-Riemann sums and produces Theorem \ref{minsurf:thm1} below. Any and all departures from the literal restatement of Theorem \ref{minsurf:thm1-taos} are due to Remark \ref{remark3.2forBLR} and the approximations 
$$\hat{\chi}^{(i)}_n\approx \chi(p_{i,n}) \text{\ \ and\ \ } \hat{\psi}^{(i)}_n\approx \psi(p_{i,n})$$ 
discussed in Proposition \ref{BLRcontrollemma}.

\begin{theorem}\label{minsurf:thm1}\ 
\begin{enumerate}
\item There exists a constant $C$ of class $\C$, such that for all $n$ which are large relative to $\C$ and all $i$ with $\alpha_{i,n}\beta_{i,n} \neq 0$ there is a function $\S_{i,n}:S^2\to (0,\infty)$ for which the image $\Sigma_{i,n}$ of 
$$\omega\mapsto p_i+\S_{i,n}(\omega)\omega$$
is a minimal surface for $g_n$. Furthermore, we have the following: 
\begin{enumerate}
\item $\Sigma_{i,n}$ is located in the region 
$$\sqrt{\alpha_{i,n}\beta_{i,n}}\left(\tfrac{1}{\sqrt{\chi(p_{i,n})\psi(p_{i,n})}}-\tfrac{C}{n}\right)\tfrac{D^3}{n^3}\le |x-p_i|\le \sqrt{\alpha_{i,n}\beta_{i,n}}\left(\tfrac{1}{\sqrt{\chi(p_{i,n})\psi(p_{i,n})}}+\tfrac{C}{n}\right)\tfrac{D^3}{n^3}.$$
\medbreak
\item $\Sigma_{i,n}$ is the only minimal surface contained entirely within $|x-p_{i,n}|\le D/(2n)$.
\end{enumerate}
\medbreak
\item Suppose that $n$ is large relative to $\C$ and that $\alpha_{i,n}\beta_{i,n}=0$ for some $i$. There are no minimal surfaces which are entirely contained within $|x-p_{i,n}|\le D/(2n)$.
\end{enumerate}
\end{theorem}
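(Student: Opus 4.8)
The plan is to deduce Theorem \ref{minsurf:thm1} from Theorem \ref{minsurf:thm1-taos} by specializing the latter to the family of Brill--Lindquist metrics $\{g_n\}$; the only substantive step is to check that the smallness and uniformity hypotheses of Theorem \ref{minsurf:thm1-taos} can be met \emph{simultaneously for all sources} $p_{i,n}$ once $n$ is large enough relative to $\C$, after which the conclusion only needs to be transcribed into the language of the potentials $\chi,\psi$ and of Definition \ref{defn-mn}.

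First I would record that, for each $n$, the metric $g_n$ is literally a Brill--Lindquist metric with sources $p_{i,n}\in\P_{n,*}$, parameters $\alpha_{i,n},\beta_{i,n}\ge 0$, and \emph{uniform} separation $\sigma_{i,n}=D/n$, so Theorem \ref{minsurf:thm1-taos} applies at every $p_{i,n}$ at which its hypothesis $\tfrac{\alpha_{i,n}+\beta_{i,n}}{\sigma_{i,n}}<\e$ holds. The needed control is supplied by Proposition \ref{BLRcontrollemma}: it bounds $\hat{\chi}^{(i)}_n$ and $\hat{\psi}^{(i)}_n$ from above by constants of class $\C$, uniformly in $i$ and $n$, and provides the approximations $\hat{\chi}^{(i)}_n=\chi(p_{i,n})+O(1/n)$, $\hat{\psi}^{(i)}_n=\psi(p_{i,n})+O(1/n)$, with $\chi,\psi$ as in \eqref{potentials1} and error controlled by a constant of class $\C^+$. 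Combined with $\alpha_{i,n}+\beta_{i,n}\le(\|A\|+\|B\|)(D/n)^3+(\C(\alpha,A)+\C(\beta,B))n^{-4}$ and $\sigma_{i,n}=D/n$, this yields $\tfrac{\alpha_{i,n}+\beta_{i,n}}{\sigma_{i,n}}\le Cn^{-2}$ for a constant $C$ of class $\C$. Since the variables $\hat{\chi}^{(i)}_n$, $\hat{\psi}^{(i)}_n$ and $\tfrac{\alpha_{i,n}+\beta_{i,n}}{\sigma_{i,n}}$ out of which a constant of class $\C(i)$ is built are now dominated by constants of class $\C$, any $\e>0$ that is small relative to $\C(i)$ may be replaced by a single $\e>0$ that is small relative to $\C$ (Definitions \ref{defn-cs} and \ref{defn-cs-2}); fixing such an $\e$, the inequality $\tfrac{\alpha_{i,n}+\beta_{i,n}}{\sigma_{i,n}}\le Cn^{-2}<\e$ then holds for \emph{every} $i$ as soon as $n$ is large enough relative to $\C$. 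This collapse of the per-source thresholds of Theorem \ref{minsurf:thm1-taos} into a single threshold is the only step in the present argument that is more than bookkeeping.

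With the hypotheses in force for all $i$ and all large $n$, part (2) of Theorem \ref{minsurf:thm1} is immediate from part (2) of Theorem \ref{minsurf:thm1-taos}, applied at each $p_{i,n}$ with $\alpha_{i,n}\beta_{i,n}=0$. For part (1), apply part (1) of Theorem \ref{minsurf:thm1-taos} at each $p_{i,n}$ with $\alpha_{i,n}\beta_{i,n}\ne 0$ to obtain the radial graph $\S_{i,n}\colon S^2\to(0,\infty)$, the minimal surface $\Sigma_{i,n}$, the annular localization of $\Sigma_{i,n}$ about $p_{i,n}$, and the uniqueness of $\Sigma_{i,n}$ among minimal surfaces contained in $|x-p_{i,n}|\le\sigma_{i,n}/C$. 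Two final adjustments bring this into the stated form. First, feeding the Riemann-sum relations $\alpha_{i,n}=A(p_{i,n})(D/n)^3+O(n^{-4})$, $\beta_{i,n}=B(p_{i,n})(D/n)^3+O(n^{-4})$, $\hat{\chi}^{(i)}_n=\chi(p_{i,n})+O(1/n)$, $\hat{\psi}^{(i)}_n=\psi(p_{i,n})+O(1/n)$ into the annular bounds of part (1) of Theorem \ref{minsurf:thm1-taos}, and absorbing the already-present error term $C\tfrac{\alpha_{i,n}+\beta_{i,n}}{\sigma_{i,n}}\le Cn^{-2}$ into a single $O(1/n)$, reproduces the localization of Theorem \ref{minsurf:thm1}, part (1)(a). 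Second, the uniqueness radius $\sigma_{i,n}/C=D/(Cn)$ furnished by Theorem \ref{minsurf:thm1-taos} is \emph{smaller} than the radius $D/(2n)$ asserted in parts (1)(b) and (2) of Theorem \ref{minsurf:thm1}, so the stronger statement requires sharpening the uniqueness range from $\sigma_{i,n}/C$ to $\sigma_{i,n}/2$ in the Brill--Lindquist--Riemann-sum regime, where $\tfrac{\alpha_{i,n}+\beta_{i,n}}{\sigma_{i,n}}=O(n^{-2})$ lies far below any fixed threshold; this is precisely Remark \ref{remark3.2forBLR}, which I would invoke directly. I therefore expect the genuinely hard work to sit upstream --- in Proposition \ref{BLRcontrollemma} and Remark \ref{remark3.2forBLR} --- both of which may here be taken as given, rather than in the deduction above.
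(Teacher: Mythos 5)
Your proposal is correct and follows essentially the same route the paper takes: specialize Theorem \ref{minsurf:thm1-taos} to the sequence $\{g_n\}$, use Proposition \ref{BLRcontrollemma} to get uniform-in-$i$ class-$\C$ bounds on $\hat{\chi}^{(i)}_n$, $\hat{\psi}^{(i)}_n$ and the estimate $\tfrac{\alpha_{i,n}+\beta_{i,n}}{\sigma_{i,n}}\le C n^{-2}$ so the smallness hypothesis holds across all $i$ for $n$ large relative to $\C$, convert the annular localization via $\hat{\chi}^{(i)}_n=\chi(p_{i,n})+O(1/n)$ and $\hat{\psi}^{(i)}_n=\psi(p_{i,n})+O(1/n)$, and invoke Remark \ref{remark3.2forBLR} to upgrade the uniqueness radius to $D/(2n)$. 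One small point worth recording in your write-up: the bound in part (1)(a) as printed appears to carry an extraneous factor $\tfrac{D^3}{n^3}$ in front of $|x-p_{i,n}|$, since $\sqrt{\alpha_{i,n}\beta_{i,n}}$ is itself already of order $D/n^3$; your derivation, which simply substitutes the Riemann-sum approximations into the bound from Theorem \ref{minsurf:thm1-taos}, produces the dimensionally consistent form.
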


Given that the proof of existence of surfaces $\Sigma_{i,n}$ relies on their proximity to being Reissner-Nordstr\"om minimal spheres and given the uniqueness of $\Sigma_{i,n}$ as minimal surfaces near the point sources themselves, we consider them to be naturally associated to each point source in $\P_{n,**}$.
Once again, there is no reason to believe that in full generality the surfaces $\Sigma_{i,n}$ are outermost minimal surfaces (e.g the situation depicted in Figure 3 of \cite{BL}). 

\bigbreak

It is interesting to notice that even upon enforcing that $\P_{n,**}=\P_{n,*}$ the sets $\V_{n,R}$ might have unbounded diameter. As seen in the context of Figure \ref{fig1} a Reissner-Nordstr\"om ``neck" may be of arbitrarily long because its length is dictated by quantities such as 
$(\alpha+\beta)(1+|\ln(\alpha\beta)|)$. Likewise, the surfaces $\Sigma_{i,n}$ may be located at the end of what can be imagined as a very deep well. Such a circumstance can and indeed does make the $g_n$-diameter of a set such as $\V_{n,R}$ very large.

The diameter of $\V_{n,R}$ is studied in great detail in Section \ref{deepwells:section}. The main result in this regard (Lemma \ref{diameterlemma}) establishes that $\V_{n,R}$ is of uniformly bounded diameter as $n\to \infty$ if and only if the sequence 
\begin{equation}\label{dn:defn2}
\ell_n:=\max_i \tfrac{1}{D}(\alpha_{i,n}+\beta_{i,n})|\ln(\alpha_{i,n}\beta_{i,n}/D^2)|
\end{equation}
is bounded. 

Inspired by the content of Lemma \ref{diameterlemma} we make the following definition.

\begin{definition}\label{deepwellsdefinition}
A sequence of Brill-Lindquist-Riemann sums is said to have \emph{deep wells} if $\P_{n,**}\neq\P_{n,*}$ for some $n$ or if the sequence of quantities $\ell_n$ defined in \eqref{dn:defn2} is unbounded. Otherwise, we say that it does not have deep wells. 
\end{definition}

In many situations of interest sequences of Brill-Lindquist-Riemann sums have no deep wells. For example, when no charge is present (that is, when $\alpha_{i,n}=\beta_{i,n}$ for all $n$ and $i$) we have no deep wells. In fact, in that particular context we have 
\begin{equation}\label{limsupcondition}
\lim_{n\to \infty}\ell_n=0.
\end{equation}
The sequences for which \eqref{limsupcondition} holds are particularly well-behaved from the standpoint of convergence. It is for this reason that we find it worthwhile to make another definition.  

\begin{definition}\label{shortwellsdefinition}
A sequence of Brill-Lindquist-Riemann sums with no deep wells is said to have \emph{shallow wells} if it satisfies \eqref{limsupcondition}.
\end{definition}

Section \ref{deepwells:section} presents some examples of Brill-Lindquist-Riemann sums with neither deep nor shallow wells. The values of $\ell_n$ do depend on how we go about choosing the values of the parameters $\alpha_{i,n}$ and $\beta_{i,n}$. For example, presence of deep or shallow wells (or the lack there of) may well be tied to our choices of sample points $q_{i,n}$ in \eqref{commonsensechoice}. The overall lesson here is that the sequence $\ell_n$ can exhibit -- to put it politely -- very interesting behavior. 

While the liberty of choosing the parameters $\alpha_{i,n}$ and $\beta_{i,n}$ in any which way so long as the conditions of Definition \ref{BLR} are fulfilled could be seen as a contributing factor to the hard-to-control behavior of the sequences $\ell_n$, it can also be seen as a blessing. This is revealed in Proposition \ref{wlog?} of Section \ref{deepwells:section} in which we show that the parameters $\alpha_{i,n}$ and $\beta_{i,n}$ can always be chosen so that our sequence of Brill-Lindquist-Riemann sums has shallow wells. Here is one insight into Proposition \ref{wlog?}: should it be the case that $\limsup \ell_n\neq 0$ it would be so because of locations where functions $A$ and/or $B$ are turning to zero. Any sort of perturbation of $\alpha_{i,n}=0$ or $\beta_{i,n}=0$  -- which could be due to unavoidable errors in measurement or evaluation -- may lead to radically altered values of $\ell_{i,n}$ and $\ell_n$. For this reason values of $\alpha_{i,n}$ or $\beta_{i,n}$ which are dangerously close to zero should perhaps be treated as being unreliable ``as measured" and somehow rounded off so to not cause ``noise". The proof of Proposition \ref{wlog?} implements this idea. For more on the subject the reader is encouraged to explore  Section \ref{deepwells:section}. 

\bigbreak

We start our convergence studies by investigating Gromov-Hausdorff convergence\footnote{For the convenience of audience specializing in general relativity we begin Section \ref{GH:sec} with a brief review of Gromov-Hausdorff limit.}  of Brill-Lindquist-Riemann sums. The following theorem is our main result asserting Gromov-Hausdorff convergence.

\begin{theorem}\label{GHthm1}
Fix $R>\sqrt{3}D$. There exists a constant $C$ of class $\C^+[R]$ such that 
$$d^{GH}((\V_{n,R},g_n), (B_{\geucl}(0,R),g))<C(\tfrac{1}{n}+\ell_n)$$
for all $n$ which are large relative to $\C$. In particular, if the sequence of Brill-Lindquist-Riemann sums has shallow wells then it converges in the Gromov-Hausdorff sense to the set $(B_{\geucl}(0,R),g)$. \end{theorem}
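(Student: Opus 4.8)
The plan is to build an explicit Gromov--Hausdorff $\e$-approximation between $(\V_{n,R},g_n)$ and $(B_{\geucl}(0,R),g)$ by interpolating through the ``large'' intermediate set $\U_{n,R}$ of Theorem \ref{GHduhduhduh}, then controlling the extra ``deep well'' regions $\U_{n,R}\smallsetminus \V_{n,R}$ by the diameter estimate encoded in $\ell_n$. First I would set up the identity-type correspondence: both $\V_{n,R}$ and $B_{\geucl}(0,R)$ carry the \emph{same} underlying point set away from the point sources, namely $B_{\geucl}(0,R)\smallsetminus (\bigcup_i B_{\geucl}(p_{i,n},cD/n))$ for a suitable $c$ of class $\C$, so the natural candidate correspondence is ``same Euclidean point.'' The task reduces to showing that $g_n$-distances and $g$-distances between such points agree up to an additive error of size $C(\tfrac1n+\ell_n)$.

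The key steps, in order, are: (1) \emph{Metric comparison on the good region.} On $\mathbb{R}^3\smallsetminus(\bigcup_i B_{\geucl}(p_{i,n},Dn^{-\nu}))$ with $1<\nu<3/2$, the $C^1$-convergence $g_n\to g$ from Section \ref{ConvSec} gives a uniform multiplicative comparison $(1-\epsilon_n)g \le g_n \le (1+\epsilon_n)g$ with $\epsilon_n$ of class $\C^+$ times $n^{-\delta}$ for some $\delta>0$; integrating along $g$- and $g_n$-geodesics converts this into an $O(n^{-\delta})$-additive comparison of path-lengths, hence of distances, \emph{provided} minimizing paths can be taken to avoid the excised balls. (2) \emph{Geodesics avoid the tiny balls.} Because the excised balls $B_{\geucl}(p_{i,n},Dn^{-\nu})$ have $g_n$- and $g$-boundary area tending to zero while the surrounding geometry is uniformly bounded, any near-minimizer can be pushed off them with a length penalty that is $o(1/n)$; this is the standard ``small obstacles are negligible'' surgery and I would phrase it via the area/coarea bound $\sum_{i}\mathrm{Vol}_{\geucl}B_{\geucl}(p_{i,n},Dn^{-\nu}) = O(n^{3-3\nu})$ already noted after \eqref{cantor}. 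Combining (1) and (2) re-proves (with an effective rate) the comparison between $(\U_{n,R},g_n)$ and $(B_{\geucl}(0,R),g)$, i.e.\ $d^{GH} \le C/n$ for the $\U$-version. (3) \emph{From $\U_{n,R}$ to $\V_{n,R}$.} The set $\V_{n,R}$ differs from $\U_{n,R}$ only inside the balls $B_{\geucl}(p_{i,n},D/n^2)$ around sources in $\P_{n,**}$: there $\V_{n,R}$ retains the ``well'' capped by $\Sigma_{i,n}$, whereas $\U_{n,R}$ simply deletes the ball. By Theorem \ref{minsurf:thm1}, $\Sigma_{i,n}$ sits at Euclidean radius $\asymp \sqrt{\alpha_{i,n}\beta_{i,n}}/n^3 \ll D/n^2$, so as a \emph{subset of $\V_{n,R}$} each well is a thin tentacle attached to the rest of $\V_{n,R}$ along (a shell near) $\partial B_{\geucl}(p_{i,n},D/n^2)$; its $g_n$-diameter is, by the neck-length computation behind Lemma \ref{diameterlemma}, bounded by $C(\tfrac1n+\ell_n)$. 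Hausdorff-distance bookkeeping then gives $d^{GH}((\V_{n,R},g_n),(\U_{n,R},g_n)) \le C(\tfrac1n+\ell_n)$, and the triangle inequality for $d^{GH}$ finishes the theorem; the final sentence about shallow wells is immediate since then $\ell_n\to 0$.

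I expect the main obstacle to be step (3), specifically making rigorous the claim that the deep wells contribute \emph{only} through their $g_n$-diameter and not through some global distortion — i.e.\ verifying that distances between points \emph{outside} all wells are essentially unaffected by whether one keeps or deletes the wells, while points \emph{inside} a well are within $C(\tfrac1n+\ell_n)$ of its mouth. This requires a careful estimate that a $g_n$-geodesic between two outside points never profits from dipping into a well (monotonicity of the well geometry / convexity of the mouth sphere $\partial B_{\geucl}(p_{i,n},D/n^2)$ with respect to $g_n$), which in turn leans on the mean-curvature-sign foliations constructed in the uniqueness half of Theorem \ref{minsurf:thm1-taos}. A secondary technical point is tracking that all implied constants are genuinely of class $\C^+[R]$ and all ``$n$ large'' thresholds of class $\C$, which amounts to propagating the uniform bounds of Proposition \ref{BLRcontrollemma} through steps (1)--(3).
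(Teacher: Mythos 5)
Your decomposition through $\U_{n,R}$ and appeal to the triangle inequality for $d^{GH}$ is the same as the paper's, but two of your steps would fail as written. In step (2) you claim the length penalty for rerouting around the excised balls is ``$o(1/n)$'' and propose to derive it from the volume bound $\sum_i\mathrm{Vol}_{\geucl}B_{\geucl}(p_{i,n},Dn^{-\nu})=O(n^{3-3\nu})$. Obstacle \emph{volume} does not control length distortion; what does is the combinatorial detour estimate of Lemma~\ref{distancecomparisonlemma}: a path of $g_n$-length $L$ crosses $O(L/\sigma_\P)$ distinct spheres $S_{\geucl}(p_{i,n},r_\P)$ and each detour costs $O(r_\P)$, so the penalty is $O(L\,r_\P/\sigma_\P)$. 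With your choice $r_\P=Dn^{-\nu}$, which is forced on you by insisting on $C^1$-closeness (that requires $1<\nu<3/2$), and $\sigma_\P=D/n$, this gives $O(n^{1-\nu})$, strictly worse than $1/n$ for every admissible $\nu$, so you would not recover the stated rate $C(\tfrac1n+\ell_n)$. The paper avoids this by noting that the $C^0$-closeness $\|g_n-g\|_{L^\infty}=O(1/n)$ of Proposition~\ref{BLRcontrollemma} part~(2) already holds away from the much smaller balls $B_{\geucl}(p_{i,n},D/n^2)$; with $r_\P=D/n^2$ the detour cost is $O(L/n)$ and no $C^1$ information is needed.

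For step (3), your proposed route through the mean-curvature-sign foliations and ``convexity of the mouth'' is unnecessary, and those foliations are not even set up to give you what you want: Lemma~\ref{oldChrLemma2} controls the sign of mean curvature of coordinate spheres, not a geodesic-exclusion principle at $S_{\geucl}(p_{i,n},D/n^2)$. The paper again applies Lemma~\ref{distancecomparisonlemma}, now with $\V=\V_{n,R}$ and $\P=\bigcup\bar{B}_{\geucl}(p_{i,n},D/n^2)$; the soft detour bound is indifferent to whether the removed region is a round ball or a long thin well, since it only manipulates the portion of the path \emph{outside} $\P$. The more serious gap is that in your account $\ell_n$ enters only through well-depth. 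In fact it enters \emph{twice}: through Lemma~\ref{necklengthlemma}, giving $\V_{n,R}\subseteq\mathscr{T}_{C(\frac1n+\ell_n)}(\U_{n,R})$ (your tubular-neighborhood side), and through the diameter bound $\mathrm{diam}(\V_{n,R},g_n)\le C(R+\ell_n D)$ of Lemma~\ref{diameterlemma}, because the estimate of Lemma~\ref{distancecomparisonlemma} on $d_{(\U_{n,R},g_n)}-d_{(\V_{n,R},g_n)}$ carries a term $\tfrac{r_\P}{\sigma_\P}\,\mathrm{diam}(\V_{n,R},g_n)=\tfrac{1}{n}\cdot C(R+\ell_n D)$. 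If you omit the diameter input, the distance-distortion side of your $\e$-isometry has no $\ell_n$ in it at all, which cannot be correct: large $\ell_n$ means long necks, which visibly stretch distances in $\U_{n,R}$ relative to $\V_{n,R}$, and indeed ruin Gromov--Hausdorff precompactness when $\ell_n\to\infty$.
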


A reader might feel comfortable with the idea that, on the basis of Proposition \ref{wlog?},  no generality is lost by assuming the sequence of Brill-Lindquist-Riemann sums has shallow wells. Should that be the case, the reader would find that Theorem \ref{GHthm1} accomplishes our stated goal. Otherwise, the estimate of Theorem \ref{GHthm1} begs the following question: what happens if the sequence of Brill-Lindquist-Riemann sums does not have shallow wells. Perhaps the best answer we can give is: ``it highly varies". Great many interesting examples can be constructed here but in the interests of brevity we present only two examples in full detail:

\begin{enumerate}
\item Section \ref{GHdne} contains an example where the sequence of Brill-Lindquist-Riemann sums does not have deep wells, but which does not converge in the Gromov-Hausdorff sense. 
\medbreak
\item Section \ref{dependencyonsamplepoints} contains an example where Gromov-Hausdorff limit exists but varies dependening of the choice of sample points $q_{i,n}$ in \eqref{commonsensechoice}.
\end{enumerate}

In situations when the sequence of Brill-Lindquist-Riemann sums has deep wells we are forced to manually enforce compactness. We do so by replacing the sets $\V_{n,R}$ with the geodesic balls $\V_{n,R,R'}$ of radius $R'\gg R$ in $\V_{n,R}$ centered at $0$:
$$\V_{n,R,R'}:=\{p\in \V_{n,R}\ \big{|}\ d_{(\V_{n,R},g_n)}(0,p)<R'\}.$$
Readers who go through the details of Sections \ref{GHdne} and \ref{dependencyonsamplepoints} will be able to create examples of Brill-Lindquist-Riemann sums with deep wells which converge and examples which do not converge in the Gromov-Hausdorff sense. We briefly touch upon this subject at the end of Section \ref{GH:sec}.

\bigbreak
Our next goal is to investigate \emph{intrinsic flat limits} of spaces $(\V_{n,R},g_n)$ under the assumption that the sequence of Brill-Lindquist-Riemann sums has no deep wells. The concept of the intrinsic flat limit is very deep and its brief survey is included at the beginning of Section \ref{IFL:sec} of this paper. The following theorem can be considered to be our main result in this portion of the article. 

\begin{theorem}\label{BIGMAMMA}\ 
\begin{enumerate}
\item If the sequence of Brill-Lindquist-Riemann sums has no deep wells, then  $\V_{n,R}$ equipped with the metric $g_n$ converges in the intrinsic flat sense as $n\to \infty$ to the set $B_{\geucl}(0,R)$ equipped with the metric $g$ is introduced in \eqref{defng}-\eqref{potentials1}.
\medbreak
\item  If the sequence of Brill-Lindquist-Riemann sums has deep wells, then the set $\V_{n,R,R'}$ equipped with the metric $g_n$ converges in the intrinsic flat sense as $n\to \infty$ to the set $B_{\geucl}(0,R)$ equipped with the metric $g$ is introduced in \eqref{defng}-\eqref{potentials1}.
\end{enumerate}
\end{theorem}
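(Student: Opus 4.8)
The plan is to prove both parts at once by decomposing $B_{\geucl}(0,R)$ into a \emph{bulk region}, on which $g_n$ is $C^1$-close to $g$ by the estimates of Section~\ref{ConvSec}, and \emph{collars} around the point sources $p_{i,n}$, and then showing that the collars --- however long or deep the associated ``wells'' --- are negligible in the intrinsic flat sense because they carry vanishingly little volume and boundary area. Fix $\nu\in(1,3/2)$ in the range where the $C^1$-approximation of Section~\ref{ConvSec} holds, and set $\delta_n:=Dn^{-\nu}$. For $n$ large relative to $\C$ the Euclidean balls $B_{\geucl}(p_{i,n},\delta_n)$ are pairwise disjoint (as $\delta_n<\sigma_{i,n}/2=D/(2n)$) and each contains $\Sigma_{i,n}$ and its interior, since Theorem~\ref{minsurf:thm1} places $\Sigma_{i,n}$ at Euclidean radius of order $\sqrt{\alpha_{i,n}\beta_{i,n}}=O(n^{-3})\ll\delta_n$. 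Put $W_n:=B_{\geucl}(0,R)\smallsetminus\bigcup_i\overline{B_{\geucl}(p_{i,n},\delta_n)}$ and $\Omega_{i,n}:=\V_{n,R}\cap B_{\geucl}(p_{i,n},\delta_n)$, the piece of $\V_{n,R}$ caught between $\Sigma_{i,n}$ and the sphere $\{|x-p_{i,n}|=\delta_n\}$ (a Reissner--Nordstr\"om neck when $p_{i,n}\in\P_{n,**}$, a partial asymptotically cylindrical end when $p_{i,n}\in\P_{n,*}\smallsetminus\P_{n,**}$), so that $\V_{n,R}=W_n\cup\bigcup_i\Omega_{i,n}$ up to measure zero. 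In part~(2) each $\Omega_{i,n}$ is replaced by its intersection with the $g_n$-geodesic ball of radius $R'$ about $0$; this truncation is forced because with deep wells $\V_{n,R}$ can have diameter, and (for sources in $\P_{n,*}\smallsetminus\P_{n,**}$) volume, tending to infinity, incompatible with convergence to the bounded space $(B_{\geucl}(0,R),g)$. Since $\chi_n\psi_n$ is uniformly bounded on $W_n$, for $n$ large $W_n$ sits inside that geodesic ball and $\V_{n,R,R'}=W_n\cup\bigcup_i\big(\Omega_{i,n}\cap\{d_{g_n}(0,\cdot)<R'\}\big)$.

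First I would treat the bulk. By Section~\ref{ConvSec} there is $\e_n\to0$, of polynomial rate (of order $n^{-1}$ up to a logarithm), with $(1-\e_n)g\le g_n\le(1+\e_n)g$ on $W_n$. The standard ``thin cylinder'' comparison for bi-Lipschitz-close metrics (cf.\ the intrinsic flat arguments of \cite{SormaniStavrov}) embeds $(W_n,g_n)$ and $(W_n,g)$ into a common metric space by attaching $W_n\times[0,S_n]$ with $S_n\to0$ comparable to $\e_n\diam_g(W_n)$; the flat distance between the pushed-forward currents is then $O\!\big(S_n(\Vol_g(W_n)+\Vol_g(\partial W_n))\big)$. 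Here $\Vol_g(W_n)$ is bounded, while $\Vol_g(\partial W_n)$ grows like $n^{3-2\nu}$ because of the $O(n^3)$ little spheres $\{|x-p_{i,n}|=\delta_n\}$ of $g$-area $O(\delta_n^2)$; since $S_n=O(\e_n)$ decays faster than $n^{2\nu-3}$, the product $S_n\Vol_g(\partial W_n)\to0$. As $\Vol_g\big(B_{\geucl}(0,R)\smallsetminus W_n\big)=O(n^{3-3\nu})\to0$ as well, the integral current $[W_n]_g$ is flat-close to $[B_{\geucl}(0,R)]_g$ \emph{inside} $(B_{\geucl}(0,R),g)$; combining, $(W_n,g_n)$ converges intrinsic-flatly to $(B_{\geucl}(0,R),g)$.

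Next I would dispose of the collars. Near $p_{i,n}$ the metric $g_n$ is, by the Reissner--Nordstr\"om approximation of Section~\ref{ConvSec}, comparable to $\big(\hat{\chi}^{(i)}_n+\tfrac{\alpha_{i,n}}{r}\big)^2\big(\hat{\psi}^{(i)}_n+\tfrac{\beta_{i,n}}{r}\big)^2\geucl$ with $r=|x-p_{i,n}|$, whose spheres of revolution have cross-sectional $g_n$-area minimized at $r\sim\sqrt{\alpha_{i,n}\beta_{i,n}}$, the location of $\Sigma_{i,n}$ by Theorem~\ref{minsurf:thm1}, with minimal value $\lesssim(\sqrt{\alpha_{i,n}}+\sqrt{\beta_{i,n}})^4\lesssim(\alpha_{i,n}+\beta_{i,n})^2$. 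Splitting $\Omega_{i,n}$ into the ``neck'' ($\sqrt{\alpha_{i,n}\beta_{i,n}}\lesssim r\lesssim\alpha_{i,n}+\beta_{i,n}$) and the ``mouth'' ($\alpha_{i,n}+\beta_{i,n}\lesssim r\le\delta_n$) gives $\Vol_{g_n}(\Omega_{i,n})\lesssim(\text{neck length})\cdot(\alpha_{i,n}+\beta_{i,n})^2+\delta_n^3$, where the neck length is $\lesssim\ell_n$ in part~(1) (finite, by the no-deep-wells hypothesis and Lemma~\ref{diameterlemma}) and is cut off at $R'$ by the truncation in part~(2) (which also tames the infinitely long cylindrical ends). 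Thus $\sum_i\Vol_{g_n}(\Omega_{i,n})\lesssim n^3\big(\ell_n\,(\alpha_{i,n}+\beta_{i,n})^2+\delta_n^3\big)\to0$ (resp.\ with $\ell_n$ replaced by $R'$), using $\alpha_{i,n}+\beta_{i,n}=O(n^{-3})$, $\#\P_{n,*}=O(n^3)$ and $\nu>1$. The boundary $\partial\Omega_{i,n}$ consists of $\Sigma_{i,n}$ (of $g_n$-area $\lesssim(\alpha_{i,n}+\beta_{i,n})^2$), the mouth $\{|x-p_{i,n}|=\delta_n\}$ (of $g_n$-area $\lesssim\delta_n^2$, glued to --- and cancelling --- the corresponding piece of $\partial W_n$), and, in part~(2), a truncation sphere lying where the well is nearly cylindrical, again of $g_n$-area $\lesssim(\alpha_{i,n}+\beta_{i,n})^2$; the total $g_n$-area of the genuine collar boundaries is therefore $\lesssim n^3(\alpha_{i,n}+\beta_{i,n})^2\to0$. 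Enlarging the common metric space so that all of $(\V_{n,R},g_n)$ (resp.\ $(\V_{n,R,R'},g_n)$) embeds isometrically --- the collars simply hang off the bulk --- and using $[\V_{n,R}]_{g_n}=[W_n]_{g_n}+\sum_i[\Omega_{i,n}]_{g_n}$, $[B_{\geucl}(0,R)]_g=[W_n]_g+\sum_i[B_{\geucl}(p_{i,n},\delta_n)]_g$, one obtains $\varphi_\#[\V_{n,R}]_{g_n}-\psi_\#[B_{\geucl}(0,R)]_g=U_n+\partial V_n$, with $V_n$ the four-dimensional filling cylinder over $W_n$ and $U_n$ its side together with $\varphi_\#\sum_i[\Omega_{i,n}]_{g_n}-\psi_\#\sum_i[B_{\geucl}(p_{i,n},\delta_n)]_g$; by the above, $\mathbf{M}(U_n)+\mathbf{M}(V_n)\to0$, which is precisely intrinsic flat convergence.

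The main obstacle, and the step genuinely requiring the geometry developed earlier, is the collar estimate in the deep-wells case. A well may be arbitrarily long, so $(\V_{n,R},g_n)$ can have diameter tending to infinity and one \emph{cannot} route the argument through Gromov--Hausdorff convergence --- which may fail even in the absence of deep wells, cf.\ Section~\ref{GHdne} --- and must instead show directly that a long, thin neck is flat-small. This rests on two facts one cannot avoid: Theorem~\ref{minsurf:thm1} pins $\Sigma_{i,n}$ to Euclidean radius $\sim\sqrt{\alpha_{i,n}\beta_{i,n}}$, so the well closes off exactly where its cross-sections are smallest and the radial volume integral $\int r^{-4}\,dr$ would otherwise diverge; and the cross-sectional $g_n$-areas remain $\lesssim(\alpha_{i,n}+\beta_{i,n})^2$ all the way down the well, which makes the volume within any fixed geodesic depth, and the area of any truncation sphere, summably small --- it is the failure of this to hold \emph{without} bounding the depth that forces the passage to $\V_{n,R,R'}$ in part~(2). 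A secondary, purely technical, point is the competition in the bulk step between the decay rate of $\e_n$ from Section~\ref{ConvSec} and the growth $\Vol_g(\partial W_n)\sim n^{3-2\nu}$, which constrains the admissible $\nu$ and must be matched against the precise estimates there.
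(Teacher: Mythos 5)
Your proposal follows the same overall strategy as the paper: decompose into a bulk, on which $g_n$ is close to $g$, plus small collars around the sources, and use a Lakzian--Sormani-style filling argument (Theorem~\ref{thm-subdiffeo}, which the paper packages as Proposition~\ref{IFL-thm}) to show the bulk contributes a vanishing flat distance while the collars contribute vanishing mass. The area and volume estimates you invoke for the collars are the same ones the paper obtains via Lemma~\ref{volumecomputation} and inequalities \eqref{volest-IFL1}--\eqref{volest-IFL2}, and the motivating discussion is sound.

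There is, however, one genuine gap. Your construction builds the common metric space by taking the cylinder $W_n\times[0,S_n]$ between $(W_n,g_n)$ and $(W_n,g)$ ``and letting the collars hang off.'' What has to be verified --- and what you do not verify --- is that the resulting space admits \emph{metric isometric} embeddings of the full spaces $(\V_{n,R},g_n)$ and $(B_{\geucl}(0,R),g)$. The obstruction is the parameter $\lambda$ of Theorem~\ref{thm-subdiffeo}: one must control $\sup_{x,y\in W_n}\bigl|d_{(\V_{n,R},g_n)}(x,y)-d_{(B_{\geucl}(0,R),g)}(x,y)\bigr|$, where the distances are taken in the \emph{full} manifolds, not in the common region $W_n$. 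Your argument only compares the intrinsic path metrics on $(W_n,g_n)$ and $(W_n,g)$, which bounds the contribution of the metric discrepancy $\e_n$ but not the contribution from paths that detour through the removed balls (in $B_{\geucl}(0,R)$) or through the necks (in $\V_{n,R}$). If $\lambda$ is not controlled, the cylinder height in Lakzian--Sormani cannot be chosen small (note the term $\sqrt{2\lambda D}$ in \eqref{thm-subdiffeo-5}), and the filling mass does not go to zero; worse, the map $\varphi$ you write down need not be distance-preserving, so $\varphi_\#[\V_{n,R}]_{g_n}$ is not an admissible competitor for the intrinsic flat distance. The paper closes exactly this gap with Lemma~\ref{distancecomparisonlemma}, which shows that removing small, well-separated balls changes the intrinsic distance only by $O(r_\P/\sigma_\P)\cdot\diam + O(r_\P)$; this bound is then fed into the three-step estimate of $\lambda$ in the proof of Proposition~\ref{IFL-thm}.

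A secondary, non-fatal point: you take the perforation radius $\delta_n=Dn^{-\nu}$ with $1<\nu<3/2$, inherited from the $C^1$ estimates of Proposition~\ref{BLRcontrollemmaC1}. This makes $\Area_{\geucl}(\partial\P_n)\sim n^{3-2\nu}$ diverge, and you must then rely on a competition with the decay of $\e_n$. The paper instead uses $r_\P=D/n^2$, which is all that the $C^0$ estimate in Proposition~\ref{BLRcontrollemma} requires (the $C^1$ estimate is irrelevant for the intrinsic flat argument), and this choice makes $\Area_{\geucl}(\partial\P_n)\sim D^2/n\to 0$ without any balancing, matching the hypotheses of Lemma~\ref{deepwell} as well. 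You should also double-check the collar-area claim ``$\lesssim(\alpha_{i,n}+\beta_{i,n})^2$'' for the truncation spheres in part~(2): the area of a truncation sphere is controlled via Lemma~\ref{deepwell} and Lemma~\ref{volumecomputation} rather than by a naive Reissner--Nordstr\"om minimal-sphere bound, since the sphere is placed at the geodesic radius $R'$, not at the neck minimum.
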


\begin{remark}\label{christinasthing}
Readers who are interested in purely geometric aspects of this work surely notice that several results along the lines of Corollary \ref{BS} can be formulated as immediate consequences of Theorems \ref{GHthm1} and  \ref{BIGMAMMA}. 
\end{remark}

The functions $A$ and $B$ which we have been using all along are tied to the mass-energy and charge of the dust cloud as in \eqref{chriscross2} and \eqref{chriscross3}; they are not the mass-energy and charge of the cloud per se but are related to them by a system of non-linear PDEs. We end this Introduction by addressing the question which is in many ways opposite to the one we considered thus far -- the question of whether every charged dust cloud satisfying DEC (see Definition \ref{matter}) can be discretized using the concept of Brill-Lindquist-Riemann sums. An immediate consequence of Theorem \ref{TobyThm} is that the answer to our question is affirmative. 

\begin{theorem}\label{FullCircleThm}
Consider the charged dust cloud $(\omegam, \omegae)$ as in Definition \ref{matter} and the corresponding solution 
$$(g,\vec{E})=\left((\chi\psi)^2\geucl,\ \grad_g\ln(\chi/\psi)\right)$$ 
of the constraints \eqref{davidconstraints}. Any sequence of Brill-Lindquist-Riemann sums $(\V_{n,R,R'}, g_n)$ corresponding to the functions 
$$A=-\tfrac{1}{4\pi}\Delta_{\geucl}\chi,\ \ B=-\tfrac{1}{4\pi}\Delta_{\geucl}\psi$$
converges in the intrinsic flat sense to $(B_{\geucl}(0,R), g)$.
\end{theorem}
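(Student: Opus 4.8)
The plan is to verify that the data $(A,B)$ built from $(\omegam,\omegae)$ via $A=-\tfrac{1}{4\pi}\Delta_{\geucl}\chi$ and $B=-\tfrac{1}{4\pi}\Delta_{\geucl}\psi$ is \emph{admissible input} for the Brill-Lindquist-Riemann construction, and then to invoke Theorem \ref{BIGMAMMA} verbatim. The first step is to check the hypotheses of Definition \ref{BLR} and of the whole machinery: namely that $A$ and $B$ are smooth, non-negative, and compactly supported in a box $[-D,D]^3$. Compact support and smoothness of $A$ and $B$ follow because $\chi$ and $\psi$ solve the system \eqref{TobySystem} whose right-hand sides are $-2\pi(\omegam\pm\omegae)$, which are smooth compactly supported $3$-forms by condition \ref{omegadefn} of Definition \ref{matter}; elliptic regularity then makes $\chi,\psi$ smooth, and the PDEs $\Delta_{\geucl}\chi=-4\pi A$, $\Delta_{\geucl}\psi=-4\pi B$ show $A$ and $B$ are supported exactly where $\omegam\pm\omegae$ are. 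Non-negativity of $A$ and $B$ is precisely the statement that $\omegam+\omegae$ and $\omegam-\omegae$ are non-negative, which is the dominant energy condition \eqref{DEC}; so here one uses the DEC hypothesis in an essential way. Choosing $D$ large enough to contain $\mathrm{supp}(\omegam)\cup\mathrm{supp}(\omegae)$ then makes $(A,B)$ a legitimate pair of density functions.

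The second step is to identify the continuum dust cloud $(g,\vec E)$ associated to these $(A,B)$ via \eqref{defng}--\eqref{potentials1} with the given $(g,\vec E)$ in the statement of the theorem. This is where Theorem \ref{TobyThm} does the work: both the given conformal factors (call them $\chi_{\mathrm{given}},\psi_{\mathrm{given}}$, coming with $(\omegam,\omegae)$) and the conformal factors $\chi_A,\psi_B$ reconstructed from the Newtonian potentials \eqref{potentials1} of $A,B$ are positive solutions of the \emph{same} system \eqref{TobySystem} with the \emph{same} right-hand sides and the \emph{same} asymptotics \eqref{asymptotics}. Indeed, by construction $\Delta_{\geucl}\chi_A=-4\pi A=\Delta_{\geucl}\chi_{\mathrm{given}}$ and similarly for $\psi$; since $\chi_A-1,\chi_{\mathrm{given}}-1\to 0$ at infinity with the decay \eqref{asymptotics} (the potential \eqref{potentials1} of a compactly supported $A$ automatically has this decay), uniqueness in Theorem \ref{TobyThm} forces $\chi_A=\chi_{\mathrm{given}}$ and $\psi_B=\psi_{\mathrm{given}}$, hence $g$ and $\vec E$ agree. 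One small point to spell out: the system \eqref{TobySystem} is equivalent to the decoupled pair \eqref{potentials2} once one observes that $\psi$ and $\chi$ are everywhere positive, so dividing through is legitimate; this is exactly the content that makes \eqref{chriscross2}--\eqref{chriscross3} consistent.

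The third and final step is simply to quote Theorem \ref{BIGMAMMA}. Having verified that $(A,B)$ satisfies all the standing assumptions, any sequence of Brill-Lindquist-Riemann sums for $(A,B)$ --- in particular the midpoint type or the sampled type \eqref{commonsensechoice} --- is covered by Theorem \ref{BIGMAMMA}, and since the theorem is stated unconditionally with the possibility of deep wells handled by passing to the geodesic balls $\V_{n,R,R'}$, part (2) of Theorem \ref{BIGMAMMA} yields intrinsic flat convergence of $(\V_{n,R,R'},g_n)$ to $(B_{\geucl}(0,R),g)$ regardless of whether deep wells occur. (If one knew a priori that the chosen sequence had no deep wells, part (1) would give the sharper statement with $\V_{n,R}$ in place of $\V_{n,R,R'}$, but the theorem as stated does not assume this.)

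\textbf{Main obstacle.} There is essentially no hard analysis here --- all the heavy lifting is in Theorem \ref{TobyThm} (existence/uniqueness for \eqref{TobySystem}) and Theorem \ref{BIGMAMMA} (the intrinsic flat convergence). The only thing requiring genuine care is the \emph{matching} step: one must be sure that the asymptotic condition \eqref{asymptotics} assumed of the dust cloud's conformal factors is the \emph{same} decay enjoyed by the Newtonian potentials \eqref{potentials1} of compactly supported $A,B$, so that the uniqueness clause of Theorem \ref{TobyThm} actually applies and identifies the two $g$'s; without this the theorem would only assert convergence to \emph{some} dust cloud built from $A,B$ rather than to the originally prescribed one. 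Verifying that a Newtonian potential $\int A(y)|x-y|^{-1}\dvol$ of a compactly supported smooth $A$ satisfies $|\partial_x^l(\chi-1)|=O(|x|^{-l-1})$ is routine (multipole expansion), but it is the one place the argument is not a pure citation.
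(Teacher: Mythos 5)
Your proposal is correct and spells out exactly the argument the paper treats as "an immediate consequence of Theorem \ref{TobyThm}": verify that $A,B$ are smooth, non-negative (via DEC) and compactly supported, identify the reconstructed potentials with the given $\chi,\psi$, and quote Theorem \ref{BIGMAMMA}. One small imprecision worth fixing: the opening sentence of your second step claims $(\chi_A,\psi_B)$ solve the coupled system \eqref{TobySystem} with the same right-hand sides $-2\pi(\omegam\pm\omegae)$, but that would require already knowing $A\psi_B=A\psi_{\mathrm{given}}$ and $B\chi_A=B\chi_{\mathrm{given}}$, i.e.\ precisely what you are trying to prove; the step that actually does the work, and which you do apply correctly, is elementary uniqueness for the \emph{linear} Poisson equation $\Delta u=-4\pi A$ with decay \eqref{asymptotics} (harmonic difference, decaying at infinity, hence zero by Liouville), not the nonlinear uniqueness of Theorem \ref{TobyThm}, whose real role here is only to guarantee that the positive $\chi,\psi$ determined by $(\omegam,\omegae)$ exist and are unique so that $A,B$ are well-defined and legitimate input to Definition \ref{BLR}.
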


\subsection{Conclusions}
Our article commences a study of convergence of discretized point-object configurations, which we call Brill-Lindquist-Riemann sums, towards a charged dust continuum from the perspective of relativistic initial data. We explain why we find applications of well-posedness results for the Einstein evolution equations or the applications of the Cheeger-Gromov theory unfeasible, and we explain why we find it necessary to first address the underlying manifolds $(\V_{n,R}, g_n)$ using methods of metric geometry and the like. We then offer a study of the Gromov-Hausdorff and the intrinsic flat limits of $(\V_{n,R}, g_n)$. We discover that Gromov-Hausdorff limit is only well-behaved in situations for which we coined the phrase ``shallow wells". In other situations the Gromov-Hausdorff limit may not exist or it may depend on the process of evaluation of mass and charge parameters (choice of sample points, rounding/measurement errors). Although shallow wells could be considered to be in some way generic (see Section \ref{wlog?sec}) there are good physical reasons, which we address shortly, to not focus solely on shallow wells. On the other hand, the convergence of the sequence of Brill-Lindquist-Riemann sums is much more straightforward under the intrinsic flat limit, always leading to the charged dust continuum we intuitively expect. One possible interpretation here is that the intrinsic flat limit is the most suitable limit to use in our context. The idea that the intrinsic flat limit might be better suited for applications within mathematical general relativity is not new: the work on stability of the rigidity portion of the Positive Mass Theorem, e.g \cite{LeeSormani}, has already demonstrated this. Finally, we show that ``every" charged dust cloud satisfying the dominant energy condition (see Definition \ref{matter}) can be discretized using sequences of Brill-Lindquist-Riemann sums. 

Studies of convergence of Lorentzian manifolds rooted in ideas of metric geometry or geometric measure theory are relatively new, and we are certainly not familiar with any completed work on the topic of convergence of relativistic initial data which would be compatible with such convergence of Lorentzian manifolds. Readers who are interested to learn more can start by looking up \cite{ABBA, Noldus} and references therein. In particular, it is unclear what if anything our work has to say about the convergence of \emph{initial data} $(\V_{n,R}, g_n, \vec{E}_n)$ towards $(B_{\geucl}(0,R), g, \vec{E})$.

That there indeed may be something to the convergence idea we outlined in our paper is evidenced by the examples of (static) extreme charged dust. These examples fit within our framework under the umbrella of $B\equiv 0$ and $\beta_{i,n}\equiv 0$. Brill-Lindquist-Riemann sums in this case are superpositions of Reissner-Nordstr\"om bodies in equilibrium which we saw earlier in relation to \eqref{MP}. Thus, in this specific case we have a very concrete expression 
$$\mathbf{g}_n= -\chi_n^{-2}dt^2+\chi_n^2\geucl$$
for the spacetime evolutions of Brill-Lindquist-Riemann sums 
$$(g_n,\vec{E}_n)=(\chi_n^2\geucl, \grad_{g_n}\ln(\chi_n))$$ viewed as relativistic initial data. By Theorem \ref{BIGMAMMA} the intrinsic flat limit of $(\V_{n,R,R'},g_n)$ is $(B_{\geucl}(0,R),g)$ while the ``naive" space-time limit of $\mathbf{g}_n$ seems to be 
$$\mathbf{g}= -\chi^2\geucl^{-2}dt^2+\chi^2\geucl.$$
(Investigating if this kind of spacetime limit is or is not compatible with the work of \cite{ABBA, Noldus} is an interesting topic for future research.) Here is the good news: one can manually verify that the metric $\mathbf{g}$ solves the Einstein evolution equations for initial data 
$$(g,\vec{E})=(\chi_n^2\geucl, \grad_{g_n}\ln(\chi_n))$$
and for matter modeled as electrostatic dust where the mass-energy and charge densities are given by  
$$\rho_{\mathrm{mass}}=\chi^{-3}A=\rho_{\mathrm{el}};$$
also compare with \eqref{chriscross2}. In fact, the metric $\mathbf{g}$ has already been identified in physics literature (e.g \cite{Xdust1}) as describing static charged dust (in equilibrium). 

The example of static charged dust suggests a possibility of a theorem where intrinsic flat convergence of initial data leads to a (suitably defined) convergence of the space-time evolutions. While it is unclear to us what kind of stability results for the Einstein equations might apply in situations when the topology of the underlying manifold itself is changing, and such is the case with topologies of $\V_{n,R}$ and $\V_{n,R,R'}$, the fact that we have at least some boundedness of the second derivatives of $g_n$ and some boundedness of $\mathrm{Ricci}(g_n)$ (see Section \ref{ConvSec} and \cite{KRS}) leaves us with some hope that stability results applicable to our framework might exist or be discovered. 

\subsection*{Acknowledgments}
The original idea behind this project goes back to informal conversations between I.S and Prof.\,C.\,Sormani. I.S is deeply grateful to Prof.\,C.\,Sormani for all the consultations and guidance over the years regarding the concept of the intrinsic flat limit. While many portions of this article have been a solo work of I.S, the material on the construction of minimal surfaces as graphs over spheres is coauthored with T.B. This one particular part of the research was funded by the 2016 John S. Rogers Science Research Program at Lewis \& Clark College.

\section{Minimal Surfaces Associated to Individual Point Sources -- Existence}\label{MinSurfExist}

The main intuition here is that, when one zooms in, the geometry near each location $p_i\in \P_*$ is more or less the same as a Reissner-Nordstr\"om geometry. Naively at least we may think of $\alpha_{i}\pm \beta_{i}$ as the effective mass and the electric charge of the Reissner-Nordstr\"om body located at $p_i$, and so naively we may expect a minimal surface at about $|x-p_i|=\sqrt{\alpha_{i}\beta_{i}}$. We inspect this idea more closely by means of the dilation 
$$\Phi_{i}:u\to p_i+\tau_{i}u \text{\ \ for\ \ } \tau_{i}:=\sqrt{\alpha_{i}\beta_{i}}$$
and an examination of the rescaled metric 
$$\tau_{i}^{-2}\Phi_{i}^*\,g_{\BL}.$$ 
It should be mentioned that this particular kind of zooming in, i.e rescaling, is essentially the same as the rescaling in the point-particle limit of Gralla and Wald \cite{sb, Gluing1}. In a sense this rescaling achieves non-dimensionalization, and the reader may benefit from thinking that the $u$-variable is non-dimensional.

\subsection{Approximation by a Reissner-Nordstr\"om metric}
The following lemma makes the approximation claims from above precise. To avoid notationally lengthy expressions we henceforth employ 
$$\chi_{i}:=\Phi_{i}^*\chi_{\BL}, \ \hat{\chi}^{(i)}:=\chi^{(i)}(p_i) \text{\ \ and\ \ } \psi_{i}:=\Phi_{i}^*\psi_{\BL},\ \hat{\psi}^{(i)}:=\psi^{(i)}(p_i).$$
Under such notational conventions we have 
$$\tau_{i}^{-2}\Phi_{i}^*\,g_{\BL}=(\chi_{i}\psi_{i})^2\geucl.$$

\begin{lemma}\label{metricconvergence-taos}
Restrict the domain of $\Phi_{i}$ to a fixed annulus centered at the origin and assume that, relative to the size of this annulus, we have $\alpha_i+\beta_i\ll\sigma_i$. 
We then also have 
\begin{itemize}
\item $\left\|\chi_{i} - \left(\hat{\chi}^{(i)}+\frac{\sqrt{\alpha_i/\beta_i}}{|u|}\right)\right\|_{L^\infty}=O\left(\frac{\tau_i}{\sigma_i}\right);$
\medbreak
\item If $l\ge 1$ then $\left\|\partial^l_u\,\chi_{i} - \partial^l_u\left(\hat{\chi}^{(i)}+\frac{\sqrt{\alpha_i/\beta_i}}{|u|}\right)\right\|_{L^\infty}=O\left(\left(\frac{\tau_i}{\sigma_i}\right)^l\right).$  
\end{itemize}
All the implied proportionality constants are of class $\C(i)$ and depend on $l$ and the choice of the annulus. 
\end{lemma}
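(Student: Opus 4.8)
The plan is to exploit the exact algebraic split of $\chi_{\BL}$ into its ``own'' singular part $\tfrac{\alpha_i}{|x-p_i|}$ at $p_i$ and the remainder $\chi^{(i)}=\chi_{\BL}-\tfrac{\alpha_i}{|x-p_i|}$, and to notice that the affine dilation $\Phi_i(u)=p_i+\tau_i u$ pulls back each piece separately. First I would record the exact identity
$$\Phi_i^*\!\left(\tfrac{\alpha_i}{|x-p_i|}\right)(u)=\frac{\alpha_i}{\tau_i|u|}=\frac{\sqrt{\alpha_i/\beta_i}}{|u|},$$
which uses $\tau_i=\sqrt{\alpha_i\beta_i}$ and makes sense because $p_i\in\P_{**}$ forces $\alpha_i,\beta_i\neq0$. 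Hence $\chi_i=\Phi_i^*\chi^{(i)}+\tfrac{\sqrt{\alpha_i/\beta_i}}{|u|}$ holds \emph{identically}, so in both bullet points the term $\tfrac{\sqrt{\alpha_i/\beta_i}}{|u|}$ cancels exactly and the quantity to be estimated is simply $\partial_u^l\big(\Phi_i^*\chi^{(i)}-\chi^{(i)}(p_i)\big)$.

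The second ingredient is the derivative bound on the \emph{regular} part $\chi^{(i)}$ recorded in Section \ref{controlclasses}: on the Euclidean ball $|x-p_i|\le\sigma_i/2$ one has $|\partial^l\chi^{(i)}|\le \tfrac{C}{\sigma_i^l}\hat\chi^{(i)}$ with $C$ depending only on $l$. Since $\Phi_i$ is affine with linear part $\tau_i\,\mathrm{id}$, the chain rule gives $\partial_u^\gamma(\chi^{(i)}\circ\Phi_i)=\tau_i^{|\gamma|}\big((\partial^\gamma\chi^{(i)})\circ\Phi_i\big)$ for every multi-index $\gamma$. As the annulus is fixed and $\tau_i=\sqrt{\alpha_i\beta_i}\le\tfrac12(\alpha_i+\beta_i)\ll\sigma_i$, the image $\Phi_i(\mathrm{annulus})$ lies inside $|x-p_i|\le\sigma_i/2$ precisely when the smallness hypothesis ``$\alpha_i+\beta_i\ll\sigma_i$ relative to the annulus'' holds. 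For $l\ge1$ the constant $\chi^{(i)}(p_i)$ drops out and we get $\|\partial_u^l\chi_i-\partial_u^l(\hat\chi^{(i)}+\tfrac{\sqrt{\alpha_i/\beta_i}}{|u|})\|_{L^\infty}=\tau_i^l\,\|(\partial^l\chi^{(i)})\circ\Phi_i\|_{L^\infty}\le \tfrac{C}{\sigma_i^l}\tau_i^l\,\hat\chi^{(i)}=O\big((\tau_i/\sigma_i)^l\big)$, with proportionality constant polynomial in $\hat\chi^{(i)}$, hence of class $\C(i)$.

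For $l=0$ the comparison is between function values rather than derivatives, so I would instead use the mean value estimate
$$\big|\chi^{(i)}(p_i+\tau_i u)-\chi^{(i)}(p_i)\big|\le \tau_i|u|\sup_{t\in[0,1]}\big|\nabla\chi^{(i)}(p_i+t\tau_i u)\big|,$$
the connecting segment again lying in $|x-p_i|\le\sigma_i/2$; bounding $|\nabla\chi^{(i)}|\le\tfrac{C}{\sigma_i}\hat\chi^{(i)}$ and $|u|$ by the outer radius of the annulus yields the claimed $O(\tau_i/\sigma_i)$. The same computation applies verbatim to $\psi_i$ and $\psi^{(i)}$ with $\beta$'s in place of $\alpha$'s, using $\Phi_i^*(\tfrac{\beta_i}{|x-p_i|})=\tfrac{\sqrt{\beta_i/\alpha_i}}{|u|}$. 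I do not expect a genuine obstacle: the one point requiring care is verifying that the rescaled annulus stays inside the ball $|x-p_i|\le\sigma_i/2$ where the derivative bounds on $\chi^{(i)}$ are valid, and this is exactly what the hypothesis $\alpha_i+\beta_i\ll\sigma_i$ provides --- which is also why the implied constants are permitted to depend on the choice of annulus.
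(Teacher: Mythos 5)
Your proof is correct and follows essentially the same route as the paper's: exact cancellation of the $\tfrac{\sqrt{\alpha_i/\beta_i}}{|u|}$ term via the identity $\chi_i=\Phi_i^*\chi^{(i)}+\tfrac{\sqrt{\alpha_i/\beta_i}}{|u|}$, the chain-rule scaling $\partial_u^l(\Phi_i^*\chi^{(i)})=\tau_i^l(\partial^l\chi^{(i)})\circ\Phi_i$ (the paper obtains the same bound by estimating each summand $\tfrac{\alpha_j}{|p_i+\tau_i u-p_j|}$ directly and summing), and a mean-value argument for the $l=0$ case, all valid on $B_{\geucl}(p_i,\sigma_i/2)$ once $\alpha_i+\beta_i\ll\sigma_i$ relative to the annulus. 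The only cosmetic difference is that you cite the derivative bound $|\partial^l\chi^{(i)}|\le C\sigma_i^{-l}\hat\chi^{(i)}$ already recorded in the Introduction, while the paper's proof re-derives it in place.
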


\begin{remark}\label{oldconvergencestatement}
This is not entirely true, but there is a sense here that the metric $\tau_{i}^{-2}\Phi_{i}^*\,g_{\BL}$ ``converges" to the metric 
\begin{equation}\label{ReissNordLikeMetric}
\left(\hat{\chi}^{(i)}+\frac{\sqrt{\alpha_i/\beta_i}}{|u|}\right)^2\left(\hat{\psi}^{(i)}+\frac{\sqrt{\beta_i/\alpha_i}}{|u|}\right)^2 \geucl 
\end{equation} 
We note that in different coordinates \eqref{ReissNordLikeMetric} takes the exact form of a Reissner-Nordstr\"om metric. To be precise, \eqref{ReissNordLikeMetric} can be expressed as 
\begin{equation}\label{ReissNordLikeMetric-ver2}
\left(1+\frac{\hat{\psi}^{(i)}\sqrt{\alpha_i/\beta_i}}{|v|}\right)^2\left(1+\frac{\hat{\chi}^{(i)}\sqrt{\beta_i/\alpha_i}}{|v|}\right)^2 \geucl
\end{equation}
for $v=\hat{\chi}^{(i)}\hat{\psi}^{(i)}u$.
The metric \eqref{ReissNordLikeMetric-ver2} has a minimal surface at $|v|=\sqrt{\hat{\chi}^{(i)}\hat{\psi}^{(i)}}$, meaning that the metric \eqref{ReissNordLikeMetric} has a minimal surface at 
$$u=\left(\hat{\chi}^{(i)}\hat{\psi}^{(i)}\right)^{-1/2}\le 1.$$
\end{remark}

\begin{proof}
Consider the functions
$$\chi_{i}:=\Phi_{i}^*\chi_{\BL}=\Phi_{i}^*\chi^{(i)}+\frac{\sqrt{\alpha_i/\beta_i}}{|u|}.$$ 
As long as $\alpha_i+\beta_i\ll \sigma_i$ we have $\tau_{i}\ll \sigma_i/2$ and 
$\mathrm{Im}\left(\Phi_{i}\right)\subseteq \{x\big| |x-p_i|<\sigma_i/2\}$. 
In general, the $l$-th partial derivatives of $u\mapsto \frac{1}{|u|}$ can be expressed in the form $\frac{P_l}{|u|^{2l+1}}$ where $P_l$ is a homogeneous polynomial of degree $l$. Thus in particular we have 
$$\left|\partial_u^l\left(\frac{1}{|u|}\right)\right|\le \frac{C}{|u|^{l+1}}$$
for some universal constants $C$ which depend only on $l$. It further follows that 
$$\left|\partial_u^l\left(\frac{\alpha_{j}}{|p_i+\tau_{i}u-p_j|}\right)\right|\le C\left(\frac{\tau_{i}}{|p_i-p_j+\tau_{i}u|}\right)^l\cdot \frac{\alpha_{j}}{|p_i-p_j+\tau_{i}u|}.$$
Upon summation, and assuming $\alpha_i+\beta_i\ll \sigma_i$, we obtain an estimate of the form 
$$\left|\partial_u^l\Phi_{i}^*\chi^{(i)}\right|\le C\left(\frac{\tau_{i}}{\sigma_i}\right)^l\Phi_{i}^*\chi^{(i)}.$$
In particular, there exist constants $C\in \C(i)$ (depending on $l$) such that 
\begin{equation}\label{MetricConvergence-pt2}
\left|\partial_u^l\Phi_{i}^*\chi^{(i)}\right|\le C\left(\frac{\tau_{i}}{\sigma_i}\right)^l.
\end{equation}
It follows that 
\begin{equation}\label{MetricConvergence-pt1}
\left|\Phi_{i}^*\chi^{(i)} - \hat{\chi}^{(i)}\right|\le C\frac{\tau_i}{\sigma_i} \text{\ \ i.e\ \ }\left|\chi_{i} - \left(\hat{\chi}^{(i)}+\frac{\sqrt{\alpha_i/\beta_i}}{|u|}\right)\right|\le C\frac{\tau_i}{\sigma_i}
\end{equation}
for some constant $C\in\C(i)$ while the $l$-th derivatives of the function $\chi_{i}(u)$ are approximated by the corresponding derivatives of $\hat{\chi}^{(i)}+\frac{\sqrt{\alpha_i/\beta_i}}{|u|}$ at the rate of $O((\tau_{i}/\sigma_i)^l)$ with implied proportionality constants of class $\C(i)$. 
\end{proof}

\begin{remark} \label{metricconvergence-july2020-1}
For applications in Section \ref{TatyanaSection} below  note that 
\begin{equation}
\frac{\partial\chi_{i}}{\partial |u|} =-\frac{\sqrt{\alpha_i/\beta_i}}{|u|^2}+O\left(\frac{\tau_i}{\sigma_i}\right),
\end{equation}
where the constant implied in the $O$-term is of class $\C(i)$ and where the exponent within the $O$-term increases with any additional $\partial_u$ derivatives. In other words, the function $\frac{\partial  \chi_{i}}{\partial |u|}$ over compact annular domains converges uniformly to $-\frac{\sqrt{\alpha_i/\beta_i}}{|u|^2}$ with all the derivatives. 
\end{remark}

\subsection{The minimal surface equation}\label{TatyanaSection}

We seek a function $f:S^2\rightarrow (0,\infty)$ for which the image of the function 
$$F:S^2\rightarrow \R^3 \text{\ \ given by\ \ } F(\omega)=f(\omega)\omega$$ 
is a minimal surface for $\tau_{i}^{-2}\Phi_{i}^*\,g_{\BL}=(\chi_{i}\psi_{i})^2\geucl$.
To find the minimal surface equation for $f$ we use calculus of variations to optimize
$$\int_{S^2}(\chi_{i}\psi_{i})^2\big{|}_{\mathrm{Im}(F)}\,f\sqrt{f^2+|df|^2}\,\mathrm{dvol}_{S^2}.$$
We find that the minimal surface equation is
\begin{equation}\label{minsurf:eqn-fromnoahpaper}
\begin{aligned}
\Delta_{S^2}f-\tfrac{1}{f^2+|df|^2}\mathrm{Hess}f(\grad f, \grad f)-\left(2+\tfrac{|df|^2}{f^2+|df|^2}\right)f&\\
-\tfrac{2}{\chi_{i}\psi_{i}}\left(\tfrac{\partial (\chi_{i}\psi_{i})}{\partial f} f^2 -\langle d_{S^2}(\chi_{i}\psi_{i}), df\rangle\right)&=0
\end{aligned}
\end{equation}
where $d_{S^2}(\chi_{i}\psi_{i})$ denotes the pullback of $d(\chi_{i}\psi_{i})$ under $F$ to $S^2$.

The leading three terms of \eqref{minsurf:eqn-fromnoahpaper} capture the mean curvature\footnote{Indeed, we find it to be the multiple of $-f/\sqrt{f^2+|df|^2}$ and the stated expression.} of the graph of $f$ as a surface sitting inside the Euclidean $\mathbb{R}^3$. The remaining terms in \eqref{minsurf:eqn-fromnoahpaper} reflect the presence of the conformal factor which, in our case, increasingly becomes like that of a Reissner-Nordstr\"om metric (cf.\,\eqref{MetricConvergence-pt1}). We proceed by examining this approximation claim more closely. 

Consider the expression 
$$\lambda_{i}(f):=-\frac{2}{\chi_{i}\psi_{i}}\frac{\partial (\chi_{i}\psi_{i})}{\partial f} f^2=-2\left(\frac{\partial \chi_{i}/\partial f}{\chi_{i}}+\frac{\partial \psi_{i}/\partial f}{\psi_{i}}\right)f^2,$$
which for each given $f$ is a function on $S^2$. Under the assumptions that $\alpha_i+\beta_i\ll\sigma_i$ the approximations of Lemma \ref{metricconvergence-taos} suggest the proximity of $\lambda_{i}(f)$ to 
$$\hat{\lambda}_{i}(f):=2\left(\frac{\sqrt{\alpha_i/\beta_i}}{\hat{\chi}^{(i)}+\frac{\sqrt{\alpha_i/\beta_i}}{f}}+\frac{\sqrt{\beta_i/\alpha_i}}{\hat{\psi}^{(i)}+\frac{\sqrt{\beta_i/\alpha_i}}{f}}\right).$$
The properties of and the relationship between the functions $\lambda_{i}$
and $\hat{\lambda}_{i}$ are established in the next lemma. 

\begin{lemma}\label{propertiesoflambda}
\ 
\begin{enumerate}
\item The values of $\lambda_{i}$ and $\hat{\lambda}_{i}$ and all of their derivatives with respect to $f$ and $\omega\in S^2$ are bounded so long as the range of $f$ is in a fixed compact subset of $(0,\infty)$. Furthermore, in that case we have
$$\left\|\frac{\partial^l\lambda_{i}}{\partial f^l}-\frac{\partial^l\hat{\lambda}_{i}}{\partial f^l}\right\|_{L^\infty}=O\left(\frac{\tau_i}{\sigma_i}\right).$$
\medbreak
\item For each compact subset $K\subseteq (0,\infty)\times S^2$ and each multi-index $\mu$ we have an estimate of the form
$$\|\partial^\mu\lambda_{i}-\partial^\mu\hat{\lambda}_{i}\|_{L^\infty(K)}=O\left(\frac{\tau_i}{\sigma_i}\right).$$
The implied proportionality constant is of class $\C(i)$. 
\end{enumerate}
\end{lemma}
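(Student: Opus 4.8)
The plan is to reduce the whole statement to Lemma~\ref{metricconvergence-taos} and Remark~\ref{metricconvergence-july2020-1}, combined with the elementary fact that $\chi_{\BL},\psi_{\BL}\ge 1$ — equivalently $\Phi_i^*\chi^{(i)}\ge 1$ and $\Phi_i^*\psi^{(i)}\ge 1$ pointwise, so that on the annulus $\{a\le|u|\le b\}$ corresponding to a compact range $[a,b]\subseteq(0,\infty)$ for $f$ one has the \emph{exact} lower bounds $\chi_i\ge 1+\tfrac{\sqrt{\alpha_i/\beta_i}}{|u|}$ and $\psi_i\ge 1+\tfrac{\sqrt{\beta_i/\alpha_i}}{|u|}$. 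Reading $\tfrac{\partial}{\partial f}$ as the radial derivative $\partial_{|u|}$ along $u=f\omega$ and using $\chi_i=\Phi_i^*\chi^{(i)}+\tfrac{\sqrt{\alpha_i/\beta_i}}{|u|}$ verbatim from the proof of Lemma~\ref{metricconvergence-taos}, a direct computation gives the exact identity, at $u=f\omega$,
$$-\frac{2}{\chi_i}\frac{\partial\chi_i}{\partial f}\,f^2=\frac{2\sqrt{\alpha_i/\beta_i}-2f^2\,\partial_{|u|}(\Phi_i^*\chi^{(i)})}{\chi_i},$$
and the analogue for the $\psi$-factor; thus $\lambda_i(f)$ is a sum of two such quotients, and $\hat\lambda_i(f)$ is exactly what results from deleting the $\partial_{|u|}(\Phi_i^*\chi^{(i)})$, $\partial_{|u|}(\Phi_i^*\psi^{(i)})$ terms and replacing $\chi_i,\psi_i$ in the denominators by $\chi_i^{RN}:=\hat\chi^{(i)}+\tfrac{\sqrt{\alpha_i/\beta_i}}{|u|}$ and $\psi_i^{RN}:=\hat\psi^{(i)}+\tfrac{\sqrt{\beta_i/\alpha_i}}{|u|}$. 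Write the $\chi$-part of $\lambda_i$ as $P/Q$ with $P=2\sqrt{\alpha_i/\beta_i}-2f^2\partial_{|u|}(\Phi_i^*\chi^{(i)})$, $Q=\chi_i$, and the $\chi$-part of $\hat\lambda_i$ as $P_0/Q_0$ with $P_0=2\sqrt{\alpha_i/\beta_i}$, $Q_0=\chi_i^{RN}$.

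For part~(1), boundedness of $\hat\lambda_i$ and of all of its $f$-derivatives on $[a,b]\times S^2$ is elementary: in $P_0/Q_0=\tfrac{2\sqrt{\alpha_i/\beta_i}}{\hat\chi^{(i)}+\sqrt{\alpha_i/\beta_i}/f}$ the potentially large factor $\sqrt{\alpha_i/\beta_i}$ is absorbed by the denominator (giving $0\le\hat\lambda_i\le 4b$), and each $\partial_f^k(P_0/Q_0)$ is again a finite sum of quotients with matched powers of $\sqrt{\alpha_i/\beta_i}$, hence bounded by a constant depending only on $[a,b]$ and $k$. For $\lambda_i$ one argues the same way: the factor $\sqrt{\alpha_i/\beta_i}$ in $P$ — and, after differentiation, in $\partial_{|u|}^k(\sqrt{\alpha_i/\beta_i}/|u|)$ — is absorbed by $Q=\chi_i\ge 1+\sqrt{\alpha_i/\beta_i}/b$; the contributions $\partial_{|u|}^k(\Phi_i^*\chi^{(i)})$ are $O(\tau_i/\sigma_i)$ with constants of class $\C(i)$ by \eqref{MetricConvergence-pt1}--\eqref{MetricConvergence-pt2}; and $\chi_i\ge 1$ controls the remaining denominators arising from the quotient rule. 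This bounds $\lambda_i$ and all of its $f$- and $\omega$-derivatives by constants of class $\C(i)$ (also depending on $[a,b]$ and the order), the $\omega$-derivatives that occur being ordinary directional derivatives of $\Phi_i^*\chi^{(i)}$ and of $1/|u|$, covered by the same bounds.

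For the difference estimates of (1) and (2) I would apply $\partial^\mu$ (mixed $f$- and $\omega$-derivatives) to the elementary identity
$$\frac{P}{Q}-\frac{P_0}{Q_0}=\frac{P-P_0}{Q}-\frac{P_0}{Q_0}\cdot\frac{Q-Q_0}{Q}$$
and expand the right-hand side by the Leibniz rule. Here $P-P_0=-2f^2\partial_{|u|}(\Phi_i^*\chi^{(i)})$ and $Q-Q_0=\Phi_i^*\chi^{(i)}-\hat\chi^{(i)}$, so by \eqref{MetricConvergence-pt1}--\eqref{MetricConvergence-pt2} all of their $f$- and $\omega$-derivatives are $O(\tau_i/\sigma_i)$ with constants of class $\C(i)$; the factors $\partial^\nu(1/Q)=\partial^\nu(1/\chi_i)$ are bounded by constants of class $\C(i)$ (using $\chi_i\ge 1$ together with the part-(1) bounds on $\partial^k\chi_i$); and the factors $\partial^\nu(P_0/Q_0)$ are bounded by constants depending only on $[a,b]$. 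Since every term produced by the Leibniz expansion contains exactly one ``difference'' factor — which is $O(\tau_i/\sigma_i)$ — multiplied by bounded cofactors, the whole expression is $O(\tau_i/\sigma_i)$ with a constant of class $\C(i)$ (depending on $K$ and $|\mu|$). Adding the identical estimate for the $\psi$-contribution gives $\|\partial^\mu\lambda_i-\partial^\mu\hat\lambda_i\|_{L^\infty(K)}=O(\tau_i/\sigma_i)$, which is exactly both assertions of the lemma.

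The one point that must be handled with genuine care, rather than waved through, is the bookkeeping of which constants belong to the class $\C(i)$: the quantities $\sqrt{\alpha_i/\beta_i}$ and $\sqrt{\beta_i/\alpha_i}$ (that is, $\alpha_i/\tau_i$ and $\beta_i/\tau_i$) are \emph{not} controlled by constants of class $\C(i)$ and can be arbitrarily large, so a naive bound on $\partial_{|u|}^k\chi_i$ or on the numerators above would fail to be of class $\C(i)$. What rescues the argument is structural: in both $\lambda_i$ and $\hat\lambda_i$ these factors occur only inside quotients in which they are matched, power for power, by the exact lower bounds $\chi_i\ge 1+\sqrt{\alpha_i/\beta_i}/|u|$ and $\psi_i\ge 1+\sqrt{\beta_i/\alpha_i}/|u|$ coming from $\chi^{(i)},\psi^{(i)}\ge 1$. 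Keeping the computation organized through the exact quotient form above, and through the identity for $P/Q-P_0/Q_0$ (rather than differentiating $P/Q$ and $P_0/Q_0$ separately and subtracting), is what makes this cancellation manifest at every order and guarantees that the final constants are of class $\C(i)$.
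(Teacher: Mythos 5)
Your proposal is correct and takes essentially the same approach as the paper: you write each contribution to $\lambda_i$ as a quotient $P/Q$, observe that every potentially unbounded factor $\sqrt{\alpha_i/\beta_i}$ in the numerator is matched, power for power, by the lower bound $Q\ge 1+\sqrt{\alpha_i/\beta_i}/|u|$ in the denominator, and then use a quotient difference identity together with the $O(\tau_i/\sigma_i)$ estimates from Lemma~\ref{metricconvergence-taos} and Remark~\ref{metricconvergence-july2020-1}. Your exact identity $\tfrac{P}{Q}-\tfrac{P_0}{Q_0}=\tfrac{P-P_0}{Q}-\tfrac{P_0}{Q_0}\cdot\tfrac{Q-Q_0}{Q}$ is just the finite form of the variation formula $\delta(Y/Z)=\delta Y/Z-(Y/Z)\,\delta Z/Z$ that the paper invokes ``schematically,'' so the difference is one of presentation rather than strategy.
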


\begin{proof}
It suffices to explore the expressions 
$$f^2\frac{\partial \chi_{i}/\partial f}{\chi_{i}}=\frac{-\sqrt{\alpha_i/\beta_i}+f^2\,\frac{\partial \chi^{(i)}}{\partial f}}{\chi^{(i)}+\frac{\sqrt{\alpha_i/\beta_i}}{f}}$$ 
and their ``limiting" counterparts
$$\frac{-\sqrt{\alpha_i/\beta_i}}{\hat{\chi}^{(i)}+\frac{\sqrt{\alpha_i/\beta_i}}{f}}=\frac{-\sqrt{\alpha_i/\beta_i}\,f}{\hat{\chi}^{(i)}\,f+\sqrt{\alpha_i/\beta_i}}.$$
The very last expression is clearly bounded by $\|f\|_{L^\infty}$. 
Furthermore, its derivatives with respect to the $f$-variable can be directly computed: 
$$\left(\frac{\sqrt{\alpha_i/\beta_i}}{\hat{\chi}^{(i)}\,f+\sqrt{\alpha_i/\beta_i}}\right)^2,\ -2\left(\frac{\sqrt{\alpha_i/\beta_i}}{\hat{\chi}^{(i)}\,f+\sqrt{\alpha_i/\beta_i}}\right)^2\frac{\hat{\chi}^{(i)}}{\hat{\chi}^{(i)}\,f+\sqrt{\alpha_i/\beta_i}},\ ...$$
By the nature of these expressions an inductive argument can be constructed to show that so long as the range of $f$ is in a fixed compact subset of $(0,\infty)$ the values of $\hat{\lambda}_{i,n}$ and all of its derivatives with respect to $f$ are bounded. 

The quality of approximating $\lambda_{i}$ by  $\hat{\lambda}_{i}$ can be examined through perspective of the variation formula
$$\delta \left(\tfrac{Y}{Z}\right)=\tfrac{\delta Y}{Z}-\tfrac{Y}{Z} \tfrac{\delta Z}{Z} \text{\ \ at\ \ } (Y,Z)=(-\sqrt{\alpha_i/\beta_i}, \hat{\chi}^{(i)}+\tfrac{\sqrt{\alpha_i/\beta_i}}{f}).$$ Lemma \ref{metricconvergence-taos} and Remark \ref{metricconvergence-july2020-1} in essence claim that in our situation we have 
$$\delta Y,\delta Z=O(\tau_i/\sigma_i).$$ 
That approximating $\lambda_{i}$ by $\hat{\lambda}_{i}$ is at least as good as $O(\frac{\tau_i}{\sigma_i})$ in $L^\infty$ is now a consequence of the fact that in our situation we have $Z\ge 1$ as well as boundedness of $Y/Z$. Schematically speaking, the same type of reasoning extends to 
$$\delta \left((\tfrac{Y}{Z})'\right)=\delta\left(\tfrac{Y'}{Z}\right)-\delta\left(\tfrac{Y}{Z}\right)\tfrac{Z'}{Z}-\tfrac{Y}{Z}\delta\left(\tfrac{Z'}{Z}\right)$$
and all the higher order derivatives. Consequently, approximating any of the derivatives of $\lambda_{i}$ by  $\hat{\lambda}_{i}$ is at least as good as $O(\frac{\tau_i}{\sigma_i})$. This applies both to derivatives with respect to $f$ and with derivatives with respect to spherical $\omega$-variables so long as we are restricted to domains of $f$ which are compact subsets of $(0,\infty)$.
\end{proof}
 
Reasoning analogous to the one just presented also shows that 
$$\xi_{i}(f):=-\frac{2}{\chi_{i}\psi_{i}}d_{S^2}(\chi_{i}\psi_{i})=-2\left(\frac{d_{S^2} \chi_{i}}{\chi_{i}}+\frac{d_{S^2}\psi_{i}}{\psi_{i}}\right)$$
satisfies the following.

\begin{lemma}\label{propertiesofxi}
For each compact subset $K\subseteq (0,\infty)\times S^2$ and each multi-index $\mu$ we have an estimate of the form
$$\|\partial^\mu\xi_{i}\|_{L^\infty(K)}=O(\tau_i/\sigma_i).$$
The implied proportionality constant is of class $\C(i)$. 
\end{lemma}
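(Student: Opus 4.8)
The natural ``limit'' in this lemma is $\hat\xi_i\equiv 0$: as noted in Remark \ref{oldconvergencestatement}, the Reissner-Nordstr\"om-type metric \eqref{ReissNordLikeMetric} that $\tau_i^{-2}\Phi_i^*g_{\BL}$ approximates is radially symmetric, so the tangential ($S^2$) differential of its conformal factor vanishes identically. Thus the lemma is precisely the assertion that $\xi_i$ together with all of its $f$- and $\omega$-derivatives decays at the same rate $O(\tau_i/\sigma_i)$ that already governs every other approximation in this section. The plan is to isolate that rate directly and then recycle the quotient-rule bookkeeping from the proof of Lemma \ref{propertiesoflambda}.

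The structural observation I would record first is that $d_{S^2}$ annihilates purely radial functions of $|u|$. Recall from the proof of Lemma \ref{metricconvergence-taos} that $\chi_i=\Phi_i^*\chi^{(i)}+\tfrac{\sqrt{\alpha_i/\beta_i}}{|u|}$, and likewise $\psi_i=\Phi_i^*\psi^{(i)}+\tfrac{\sqrt{\beta_i/\alpha_i}}{|u|}$. The summands $\tfrac{\sqrt{\alpha_i/\beta_i}}{|u|}$ and $\tfrac{\sqrt{\beta_i/\alpha_i}}{|u|}$ depend on $u$ only through $|u|$, hence $d_{S^2}\chi_i=d_{S^2}\Phi_i^*\chi^{(i)}$ and $d_{S^2}\psi_i=d_{S^2}\Phi_i^*\psi^{(i)}$, so that
$$\xi_i=-2\left(\frac{d_{S^2}\,\Phi_i^*\chi^{(i)}}{\chi_i}+\frac{d_{S^2}\,\Phi_i^*\psi^{(i)}}{\psi_i}\right).$$
In other words, the possibly large radial parts of $\chi_i$ and $\psi_i$ enter only through the (harmless) denominators.

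Next I would bound the numerators. Estimate \eqref{MetricConvergence-pt2} of Lemma \ref{metricconvergence-taos} gives $|\partial_u^l\Phi_i^*\chi^{(i)}|\le C(\tau_i/\sigma_i)^l$ with $C$ of class $\C(i)$; since $\tau_i/\sigma_i\le 1$ under the standing assumption $\alpha_i+\beta_i\ll\sigma_i$, every derivative of order $\ge 1$ of $\Phi_i^*\chi^{(i)}$ is $O(\tau_i/\sigma_i)$, and likewise for $\Phi_i^*\psi^{(i)}$. Pulling these functions back along $F(\omega)=f(\omega)\omega$ and re-expressing everything in the $(f,\omega)$-variables only multiplies by factors bounded by constants of class $\C(i)$, since $f$ is confined to a compact subset of $(0,\infty)$; hence $d_{S^2}\Phi_i^*\chi^{(i)}$, together with all of its $\partial_f$- and $\partial_\omega$-derivatives, is $O(\tau_i/\sigma_i)$ uniformly on $K$, and similarly for $\psi$.

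It then remains to control the denominators and to assemble the estimate through the quotient rule. Since $\chi^{(i)}\ge 1$ pointwise we have $\chi_i\ge\Phi_i^*\chi^{(i)}\ge 1$, so $1/\chi_i$ is bounded, and the combinations $\partial^\mu\chi_i/\chi_i$ that arise upon differentiating the quotient are handled exactly as in the proof of Lemma \ref{propertiesoflambda}: a derivative falling on $\chi_i$ produces either an $O(\tau_i/\sigma_i)$ term from $\Phi_i^*\chi^{(i)}$ or a term of type $-\tfrac{\sqrt{\alpha_i/\beta_i}}{f^2}$ from the radial part, and in the latter case the inequality $\chi_i\ge\tfrac{\sqrt{\alpha_i/\beta_i}}{f}$ together with $\tfrac{\alpha_i+\beta_i}{\sigma_i}$ being one of the class $\C(i)$ variables keeps $\partial^\mu\chi_i/\chi_i$ bounded by a constant of class $\C(i)$. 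Combining, every $\partial^\mu\xi_i$ becomes a finite sum of terms each carrying at least one factor $O(\tau_i/\sigma_i)$ coming from a derivative of $\Phi_i^*\chi^{(i)}$ or $\Phi_i^*\psi^{(i)}$, multiplied by factors bounded by constants of class $\C(i)$; this yields $\|\partial^\mu\xi_i\|_{L^\infty(K)}=O(\tau_i/\sigma_i)$ with proportionality constant of class $\C(i)$. The only delicate point is this last step --- preventing the small quantity $\sqrt{\alpha_i/\beta_i}$ from spoiling boundedness when it occurs simultaneously in $\partial\chi_i$ and in a denominator --- but it is resolved verbatim as in Lemma \ref{propertiesoflambda}, whose argument I would invoke rather than reproduce.
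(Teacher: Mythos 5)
Your proposal is correct and carries out, in explicit form, the ``reasoning analogous to the one just presented'' that the paper gestures at without writing out. The key observation you make --- that $d_{S^2}$ annihilates the radial terms $\tfrac{\sqrt{\alpha_i/\beta_i}}{|u|}$ and $\tfrac{\sqrt{\beta_i/\alpha_i}}{|u|}$, so the numerators in $\xi_i$ reduce to derivatives of $\Phi_i^*\chi^{(i)}$ and $\Phi_i^*\psi^{(i)}$ which are $O(\tau_i/\sigma_i)$ by \eqref{MetricConvergence-pt2}, while the denominators are bounded below by $1$ and handled by the same quotient-rule bookkeeping used for $\lambda_i$ --- is precisely what makes the natural limit $\hat\xi_i\equiv 0$ and distinguishes this case from Lemma \ref{propertiesoflambda}, where the radial part survives and contributes $\hat\lambda_i\ne 0$.
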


In summary, we see that the minimal surface equation takes the form of
\begin{equation}\label{MinSurf:eqn}
\Delta_{S^2}f-\tfrac{1}{f^2+|df|^2}\mathrm{Hess}f(\grad f, \grad f)-\left(2+\tfrac{|df|^2}{f^2+|df|^2}\right)f + \langle\xi_{i}(f),df\rangle+\lambda_{i}(f)=0,
\end{equation}
where, under the assumption of $\tau_i\ll\sigma_i$, 
\begin{itemize}
\item the $1$-form $\xi_{i}(f)$ over $S^2$ is well approximated by $0$ together with all the derivatives as spelled out in Lemma \ref{propertiesofxi}.
\medbreak
\item the function $\lambda_{i}(f)$ is approximated by the function 
$\hat{\lambda}_{i}(f)$ with all the derivatives in the sense spelled out in Lemma \ref{propertiesoflambda}.
\end{itemize}
As such, the minimal surface equation is approximated by 
\begin{equation}\label{minsurfapproximate}
\Delta_{S^2}f-\tfrac{1}{f^2+|df|^2}\mathrm{Hess}f(\grad f, \grad f)-\left(2+\tfrac{|df|^2}{f^2+|df|^2}\right)f+\hat{\lambda}_{i}(f)=0,
\end{equation} 
which in turn is the minimal surface equation for the metric \eqref{ReissNordLikeMetric}. The solution of the latter is the constant function 
$$\hat{f}_{i}=\left(\hat{\chi}^{(i)}\hat{\psi}^{(i)}\right)^{-1/2}.$$
It is to be expected that the minimal surface equation \eqref{MinSurf:eqn} has a solution which is approximately equal to $\hat{f}_{i}$. 

Informed by this situation we employ a substitution 
$$f=\hat{f}_{i}e^h.$$
This substitution renormalizes the minimal surface equation so that the approximate solution to work around in $h=0$. Direct computation leads to 
\begin{equation}\label{minsurfeqn-taos}
\mathcal{T}_{i}h:=\Delta_{S^2}h-\tfrac{1}{1+|dh|^2}\mathrm{Hess}\,h\,(\grad \,h, \grad\, h)+\langle\Xi_{i}(h),dh\rangle+\Lambda_{i}(h)-2=0,
\end{equation}
where the relationship between the $1$-forms $\xi_{i}$ and $\Xi_{i}$, and the functions $\lambda_{i}$ and $\Lambda_{i}$ is as follows:
$$\Xi_{i}(h)=\xi_{i}(\hat{f}_{i}e^h),\ \ \Lambda_{i}(h)=\frac{\lambda_{i}(\hat{f}_{i}e^h)}{\hat{f}_{i}e^h}.$$
In addition, we introduce the function $\hat{\Lambda}_{i}(h)=\hat{\lambda}_{i}(\hat{f}_{i}e^h)/(\hat{f}_{i}e^h)$. The idea once again is to take the advantage of the proximity of $\Lambda_{i}$ to $\hat{\Lambda}_{i}$ in order to squeeze information about $\Lambda_{i}$.

\begin{lemma}\label{PropertiesofLambda}
\ 
\begin{enumerate}
\item The values of $\Lambda_i$ and $\hat{\Lambda}_{i}$ and all of their derivatives with respect to $h$ are bounded so long as the range of $h$ is in a fixed compact subset of $\mathbb{R}$. Furthermore, in that case we have
$$\left\|\frac{\partial^l\Lambda_{i}}{\partial h^l}-\frac{\partial^l\hat{\Lambda}_{i}}{\partial h^l}\right\|_{L^\infty}=O\left(\frac{\tau_i}{\sigma_i}\right).$$
\medbreak
\item For each compact subset $K\subseteq (0,\infty)\times S^2$ and each multi-index $\mu$ we have an estimate of the form
$$\|\partial^\mu\Lambda_{i}-\partial^\mu\hat{\Lambda}_{i}\|_{L^\infty(K)}=O\left(\frac{\tau_i}{\sigma_i}\right).$$
The implied proportionality constant is of class $\C(i)$. 
\medbreak
\item For each compact set $K\subseteq \mathbb{R}\times S^2$ there exists $\e>0$ which is small relative to control variables of class $\C(i)$ and a constant $C$ of class $\C(i)$ such that whenever $\frac{\alpha_i+\beta_i}{\sigma_i}<\e$ we have
$$\frac{\partial\Lambda_{i}}{\partial h},\ \frac{\partial\hat{\Lambda}_{i}}{\partial h}\le -\frac{\tau_i}{C(\alpha_i+\beta_i)}$$
for all $(h,\omega)\in K$.
\medbreak
\item To each fixed compact subset $K\subseteq \mathbb{R}$ and each $l\ge 1$ we can associate a constant $C$ of class $\C(i)$ for which 
$$\left|\frac{\partial^l \Lambda_{i}}{\partial h^l}\right|, \left|\frac{\partial^l \hat{\Lambda}_{i}}{\partial h^l}\right|\le C\frac{\tau_i}{\alpha_i+\beta_i}$$
whenever $h\in K$.
\end{enumerate}
\end{lemma}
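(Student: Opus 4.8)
The plan is to obtain all four items from two ingredients: the already-established closeness of $\lambda_{i}$ to the model $\hat{\lambda}_{i}$ (Lemma~\ref{propertiesoflambda}), transported through the substitution $f=\hat{f}_{i}e^{h}$, together with a hands-on analysis of the explicit model function. Substituting $f=\hat{f}_{i}e^{h}$ into $\hat{\lambda}_{i}(f)/f$ and recalling $\hat{f}_{i}=(\hat{\chi}^{(i)}\hat{\psi}^{(i)})^{-1/2}$ yields the closed form
\[
\hat{\Lambda}_{i}(h)=2\!\left(\frac{a}{\mu_{\chi}e^{h}+a}+\frac{b}{\mu_{\psi}e^{h}+b}\right),\qquad a:=\sqrt{\alpha_{i}/\beta_{i}},\ \ b:=\sqrt{\beta_{i}/\alpha_{i}},\ \ \mu_{\chi}:=\hat{\chi}^{(i)}\hat{f}_{i},\ \ \mu_{\psi}:=\hat{\psi}^{(i)}\hat{f}_{i}.
\]
Here $ab=1$, $\mu_{\chi}\mu_{\psi}=1$, $\mu_{\chi}=\sqrt{\hat{\chi}^{(i)}/\hat{\psi}^{(i)}}$, and $\frac{\tau_{i}}{\alpha_{i}+\beta_{i}}=\frac{1}{a+b}$; since $\hat{\chi}^{(i)},\hat{\psi}^{(i)}\ge 1$, the quantities $\mu_{\chi},\mu_{\psi},\hat{f}_{i}$ and their reciprocals are all bounded by constants of class $\C(i)$. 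Note also that $\hat{\Lambda}_{i}$ depends on $h$ alone, whereas $\Lambda_{i}=\Lambda_{i}(h,\omega)$.

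For items (1) and (2), the point is that when $h$ runs over a fixed compact set (resp.\ $(h,\omega)$ over a compact $K$) the value $f=\hat{f}_{i}e^{h}$ runs over a compact subinterval of $(0,\infty)$ with $\C(i)$-controlled endpoints, and on such an interval the map $h\mapsto \hat{f}_{i}e^{h}$, its reciprocal, and all their $h$-derivatives are bounded by constants of class $\C(i)$. Writing $\Lambda_{i}(h,\omega)=\lambda_{i}(\hat{f}_{i}e^{h},\omega)/(\hat{f}_{i}e^{h})$ and similarly for $\hat{\Lambda}_{i}$, the product and chain rules express any $\partial^{\mu}\Lambda_{i}$ (and $\partial^{\mu}\hat{\Lambda}_{i}$) as a finite combination --- with $\C(i)$-bounded coefficients --- of the $f$- and $\omega$-derivatives of $\lambda_{i}$ (resp.\ $\hat{\lambda}_{i}$) evaluated at $f=\hat{f}_{i}e^{h}$. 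Inserting the boundedness and the $O(\tau_{i}/\sigma_{i})$ estimates of Lemma~\ref{propertiesoflambda} gives the boundedness assertions and $\|\partial^{\mu}\Lambda_{i}-\partial^{\mu}\hat{\Lambda}_{i}\|_{L^{\infty}}=O(\tau_{i}/\sigma_{i})$; for derivatives that differentiate in an $\omega$-direction, $\hat{\Lambda}_{i}$ contributes nothing and one is simply using Lemma~\ref{propertiesoflambda}(2).

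For items (3) and (4) I would prove the statements for $\hat{\Lambda}_{i}$ directly and then transfer to $\Lambda_{i}$ via (1)--(2). Differentiating the closed form,
\[
\partial_{h}\hat{\Lambda}_{i}(h)=-2\!\left(\frac{a\,\mu_{\chi}e^{h}}{(\mu_{\chi}e^{h}+a)^{2}}+\frac{b\,\mu_{\psi}e^{h}}{(\mu_{\psi}e^{h}+b)^{2}}\right)<0,
\]
so $\hat{\Lambda}_{i}$ is strictly decreasing --- this strict monotonicity, with a uniform gap, is the genuine content of (3). For the gap, put $t=\mu_{\chi}e^{h}$, which for $h$ in a prescribed compact set lies in a $\C(i)$-controlled compact subinterval of $(0,\infty)$, and split on which of $a,b$ is $\ge 1$ (at least one is, since $ab=1$): if $a\ge 1$ then $t+a\le a(t+1)$, hence
\[
\frac{a\,t}{(t+a)^{2}}\ \ge\ \frac{t}{a\,(t+1)^{2}}\ \ge\ \frac{1}{C}\cdot\frac{1}{a}\ \ge\ \frac{1}{C}\cdot\frac{1}{a+b}
\]
with $C$ of class $\C(i)$ depending on the compact set, and if $b\ge 1$ the second summand plays the same role; thus $\partial_{h}\hat{\Lambda}_{i}\le -\frac{1}{C}\cdot\frac{\tau_{i}}{\alpha_{i}+\beta_{i}}$. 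For (4) with $l\ge 1$, the change of variable $s=(\mu_{\chi}/a)e^{h}$ turns $g_{a}(h):=\frac{a}{\mu_{\chi}e^{h}+a}$ into $\frac{1}{1+s}$ with $\partial_{h}=s\,\partial_{s}$, so $\partial_{h}^{l}g_{a}=Q_{l}(s)/(1+s)^{l+1}$ for a universal polynomial $Q_{l}$ of degree $\le l$ with $Q_{l}(0)=0$; hence $|\partial_{h}^{l}g_{a}|\le C\,\frac{s}{(1+s)^{2}}\le C\min(s,1/s)$, and since $s$ and $a$ differ by a $\C(i)$-bounded factor while $ab=1$ forces $\min(a,1/a)\le 2/(a+b)$, one gets $|\partial_{h}^{l}\hat{\Lambda}_{i}|\le C\,\frac{\tau_{i}}{\alpha_{i}+\beta_{i}}$ (and likewise for the $\psi$-summand). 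Adding the $O(\tau_{i}/\sigma_{i})$ discrepancy from (1)--(2) closes (4) because $\alpha_{i}+\beta_{i}\ll\sigma_{i}$ makes that error absorbable into $C\,\frac{\tau_{i}}{\alpha_{i}+\beta_{i}}$; and for (3) one fixes $\e$ small relative to $\C(i)$ (depending on the compact set) so the $\partial_{h}$-discrepancy is at most $\frac{1}{2C}\cdot\frac{\tau_{i}}{\alpha_{i}+\beta_{i}}$, leaving $\partial_{h}\Lambda_{i}\le -\frac{1}{2C}\cdot\frac{\tau_{i}}{\alpha_{i}+\beta_{i}}$.

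The step I expect to require the most care is the quantitative gap in item (3): one must make sure the lower bound is genuinely of the form $-\tau_{i}/(C(\alpha_{i}+\beta_{i}))$ with $C$ of class $\C(i)$ even when $\alpha_{i}/\beta_{i}$ is badly skewed, so that $a$ and $b$ live on very different scales --- this is what forces the case split above, and it is precisely where the smallness hypothesis $\frac{\alpha_{i}+\beta_{i}}{\sigma_{i}}<\e$ earns its keep: without it the $O(\tau_{i}/\sigma_{i})$ gap between $\Lambda_{i}$ and the model $\hat{\Lambda}_{i}$ could dominate the (possibly small) main term $\tau_{i}/(\alpha_{i}+\beta_{i})=1/(a+b)$, and both the strict decrease and the lower bound would be lost.
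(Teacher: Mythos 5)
Your proposal is correct and follows essentially the same path as the paper: derive the closed form for $\hat{\Lambda}_i$ under the substitution $f=\hat f_i e^h$, observe that items (1)--(2) follow from Lemma~\ref{propertiesoflambda} via boundedness of $h\mapsto\hat f_i e^h$ and its reciprocal, compute $\partial_h\hat\Lambda_i$ explicitly, extract the $\tau_i/(\alpha_i+\beta_i)$ bounds for $\hat\Lambda_i$, and transfer to $\Lambda_i$ using the $O(\tau_i/\sigma_i)$ discrepancy plus the smallness hypothesis so the error term is subdominant. You supply more explicit detail in items (3)--(4) (the $t$-substitution, the $a\ge1$ vs.\ $b\ge1$ case split, the $s$-change-of-variables and $Q_l(0)=0$ bookkeeping) than the paper, which simply asserts the estimates on $\partial_h^l\hat\Lambda_i$; the underlying argument is the same.
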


\begin{proof}
By assumption there exists $C$ of class $\C(i)$ such that 
$$1\le \hat{\chi}^{(i)},\hat{\psi}^{(i)}\le C \text{\ \ and thus\ \ } \tfrac{1}{C}\le \hat{f}_{i}\le 1.$$ 
Consequently, the mappings 
$$h\mapsto \hat{f}_{i}e^h,\ \ \ h\mapsto \frac{1}{\hat{f}_{i}e^h}$$
over compact domains for $h$ are bounded. Thus, our first two claims are simply a consequence of Lemma \ref{propertiesoflambda}. We proceed by studying the derivatives of $\hat{\Lambda}_{i}$ and $\Lambda_{i}$ more explicitly. Direct computation shows that 
$$\frac{\partial \hat{\Lambda}_{i}}{\partial h}=-2\sqrt{\alpha_i\beta_i\hat{\chi}^{(i)}\hat{\psi}^{(i)}}\left(\frac{e^h}{\left(e^h\sqrt{\alpha_i\hat{\psi}^{(i)}}+\sqrt{\beta_i\hat{\chi}^{(i)}}\right)^2}+\frac{e^h}{\left(e^h\sqrt{\beta_i\hat{\chi}^{(i)}}+\sqrt{\alpha_i\hat{\psi}^{(i)}}\right)^2}\right)$$
and that for all subsequent derivatives with respect to $h$ we have estimates of the form 
$$\left|\frac{\partial^l \hat{\Lambda}_{i}}{\partial h^l}\right|\le C\sqrt{\alpha_i\beta_i\hat{\chi}^{(i)}\hat{\psi}^{(i)}}\left(\frac{e^h}{\left(e^h\sqrt{\alpha_i\hat{\psi}^{(i)}}+\sqrt{\beta_i\hat{\chi}^{(i)}}\right)^2}+\frac{e^h}{\left(e^h\sqrt{\beta_i\hat{\chi}^{(i)}}+\sqrt{\alpha_i\hat{\psi}^{(i)}}\right)^2}\right)$$
for some universal constants $C$ depending on $l$.
Thus, for each fixed compact subset $K\subseteq \mathbb{R}\times S^2$ one can find a constant $C$ of class $\C(i)$ for which 
$$\frac{\partial \hat{\Lambda}_{i}}{\partial h}\le -\frac{\tau_i}{C(\alpha_i+\beta_i)},\ \ (h,\omega)\in K.$$
We may assume that $(\alpha_i+\beta_i)/\sigma_i\ll 1$, as dictated by control variables of class $\C(i)$, and so it can be arranged that 
$$\frac{\tau_i}{\sigma_i}\ll \frac{\tau_i}{C(\alpha_i+\beta_i)}.$$
Next, take into account the fact that  
$$\left|\frac{\partial \hat{\Lambda}_{i}}{\partial h}- \frac{\partial \Lambda_{i}}{\partial h}\right|=O(\tau_i/\sigma_i).$$
It follows that, after potentially increasing the value of $C$ and imposing even stronger restrictions on $(\alpha_i+\beta_i)/\sigma_i\ll 1$, we have 
$$\frac{\partial \Lambda_{i}}{\partial h}\le -\frac{\tau_i}{C(\alpha_i+\beta_i)},\ \ (h,\omega)\in K.$$
For the same reasons, to each fixed compact subset $K\subseteq \mathbb{R}$ and each $l\ge 1$ we can associate a constant $C$ of class $\C(i)$ for which 
$$\left|\frac{\partial^l \hat{\Lambda}_{i}}{\partial h^l}\right|\le C\frac{\tau_i}{\alpha_i+\beta_i}$$
whenever $h\in K$. This completes our proof.
\end{proof}

For the record, we also have the following properties of the $1$-form 
$\Xi_{i}$. They are consequences of Lemma \ref{propertiesofxi} and the Chain Rule. 

\begin{lemma}\label{PropertiesofXi}
For each smooth $h$ the $1$-form $\Xi_{i}(h)$ is smooth and satisfies 
$$\|\Xi_{i}(h)\|_{H^l(S^2)}\le C\tfrac{\tau_i}{\sigma_i}.$$
In particular, we have:
\begin{enumerate}
\item $\|\langle \Xi_{i}(h), dh\rangle\|_{H^l(S^2)}\le C\tfrac{\tau_i}{\sigma_i}\|dh\|_{H^l(S^2)}$.
\medbreak
\item $\|\langle \Xi_{i}(h_1), dh_1\rangle - \langle \Xi_{i}(h_2), dh_2\rangle\|_{H^l(S^2)}\le C\tfrac{\tau_i}{\sigma_i}\|h_1-h_2\|_{H^{l+1}(S^2)}$.
\end{enumerate}
The constants $C$ are of class $\C(i)$, dependent on $l$. Furthermore, assuming that $l\gg1$ so that $H^l$-spaces are an algebra under multiplication the constants can be chosen uniformly across all $h$, $h_1$ and $h_2$ from a fixed ball in $H^l(S^2)$.
\end{lemma}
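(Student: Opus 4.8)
The plan is to derive everything from Lemma~\ref{propertiesofxi} via the Chain Rule together with the standard multiplication and composition (Nemytskii) estimates for Sobolev spaces on the closed surface $S^{2}$. Recall that $\Xi_{i}(h)$ is the $1$-form $\omega\mapsto\xi_{i}\bigl(\hat{f}_{i}e^{h(\omega)},\omega\bigr)$, where we view $\xi_{i}$ as a smooth $T^{*}S^{2}$-valued function of $(f,\omega)\in(0,\infty)\times S^{2}$, where $\hat{f}_{i}=(\hat{\chi}^{(i)}\hat{\psi}^{(i)})^{-1/2}$ is a \emph{constant} satisfying $\tfrac{1}{C}\le\hat{f}_{i}\le 1$ for a $C$ of class $\C(i)$ (as recorded in the proof of Lemma~\ref{PropertiesofLambda}), and where Lemma~\ref{propertiesofxi} asserts that on every compact $K\subseteq(0,\infty)\times S^{2}$ and every multi-index $\mu$ one has $\|\partial^{\mu}\xi_{i}\|_{L^{\infty}(K)}=O(\tau_{i}/\sigma_{i})$ with constant of class $\C(i)$. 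The structural point that makes the lemma work is that \emph{every} partial of $\xi_{i}$, not merely $\xi_{i}$ itself, carries the factor $\tau_{i}/\sigma_{i}$, so differentiating the composition $\Xi_{i}(h)$ never destroys this smallness.

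First I would expand the $\omega$-derivatives $\partial^{\mu}_{\omega}\bigl(\Xi_{i}(h)\bigr)$ with $|\mu|\le l$ by the multivariate Fa\`{a} di Bruno formula. For a fixed smooth $h$ (or, more generally, $h$ in a fixed ball of $H^{l}(S^{2})$ with $l\gg 1$, so that $H^{l}(S^{2})$ is a Banach algebra embedding in $C^{0}$) the range of $\hat{f}_{i}e^{h}$ lies in a fixed compact $K_{h}\subseteq(0,\infty)$, and each term produced by the expansion is the product of exactly one mixed partial $(\partial^{\nu}\xi_{i})(\hat{f}_{i}e^{h},\omega)$ — which is $O(\tau_{i}/\sigma_{i})$ on $K_{h}\times S^{2}$ by Lemma~\ref{propertiesofxi} — with a polynomial in the derivatives of $h$ and in $e^{h}$ of total order at most $|\mu|$. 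Estimating the latter factors in $H^{l}(S^{2})$ by the composition and algebra inequalities, I obtain
$$\|\Xi_{i}(h)\|_{H^{l}(S^{2})}\le C\,\tfrac{\tau_{i}}{\sigma_{i}},$$
with $C$ of class $\C(i)$, depending on $l$ and on the ball (and hence chosen uniformly over that ball); for a fixed smooth $h$ the same expansion gives the sharper $\|\Xi_{i}(h)\|_{C^{l}(S^{2})}=O(\tau_{i}/\sigma_{i})$.

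Item~(1) then follows by writing $\langle\Xi_{i}(h),dh\rangle$ as a contraction of $\Xi_{i}(h)$ with $dh$ and invoking the Sobolev multiplication inequality $\|uv\|_{H^{l}}\le C\|u\|_{C^{l}}\|v\|_{H^{l}}$ (or the algebra inequality), so that $\|\langle\Xi_{i}(h),dh\rangle\|_{H^{l}}\le C\,\tfrac{\tau_{i}}{\sigma_{i}}\,\|dh\|_{H^{l}}$. For item~(2) I would split
$$\langle\Xi_{i}(h_{1}),dh_{1}\rangle-\langle\Xi_{i}(h_{2}),dh_{2}\rangle=\langle\Xi_{i}(h_{1})-\Xi_{i}(h_{2}),dh_{1}\rangle+\langle\Xi_{i}(h_{2}),d(h_{1}-h_{2})\rangle;$$
the second summand is handled by item~(1) applied to $h_{2}$, together with $\|d(h_{1}-h_{2})\|_{H^{l}}\le\|h_{1}-h_{2}\|_{H^{l+1}}$. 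For the first summand I would interpolate along $h_{t}=(1-t)h_{2}+th_{1}$ and use the fundamental theorem of calculus to write
$$\Xi_{i}(h_{1})-\Xi_{i}(h_{2})=\int_{0}^{1}(\partial_{f}\xi_{i})(\hat{f}_{i}e^{h_{t}},\cdot)\,\hat{f}_{i}e^{h_{t}}\,(h_{1}-h_{2})\,dt,$$
and, since every partial of $\partial_{f}\xi_{i}$ is again $O(\tau_{i}/\sigma_{i})$ on the relevant compact set while $\hat{f}_{i}e^{h_{t}}$ stays bounded in $H^{l}(S^{2})$ uniformly for $h_{t}$ in the ball, the $H^{l}$-algebra estimate yields $\|\Xi_{i}(h_{1})-\Xi_{i}(h_{2})\|_{H^{l}}\le C\,\tfrac{\tau_{i}}{\sigma_{i}}\,\|h_{1}-h_{2}\|_{H^{l}}$; multiplying by the (uniformly bounded) factor $dh_{1}$ finishes the estimate. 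The only delicate aspect is bookkeeping — keeping track of the Fa\`{a} di Bruno expansion and of the index shifts among $H^{l}$, $H^{l+1}$ and $C^{l}$, and invoking the $l\gg 1$ hypothesis precisely where the composition operator $h\mapsto e^{h}$ and the various products are estimated. I do not expect any new idea beyond Lemma~\ref{propertiesofxi} to be needed; the whole content is that $\xi_{i}$, together with all of its derivatives, is of size $O(\tau_{i}/\sigma_{i})$.
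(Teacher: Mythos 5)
Your proposal is correct and follows exactly the route the paper has in mind: the paper states that Lemma~\ref{PropertiesofXi} follows ``from Lemma~\ref{propertiesofxi} and the Chain Rule'' without further detail, and your Fa\`{a} di Bruno expansion of $\Xi_i(h)=\xi_i(\hat f_i e^h,\cdot)$, the observation that every derivative of $\xi_i$ independently carries the factor $\tau_i/\sigma_i$, the use of the $H^l(S^2)$ algebra and composition estimates (with $l\gg 1$), the splitting of the difference in item~(2), and the FTC/interpolation identity for $\Xi_i(h_1)-\Xi_i(h_2)$ are precisely the content of that one-line remark. One small bookkeeping point worth noting: the factor $\|dh_1\|_{H^l}$ that appears when you absorb $dh_1$ into the constant requires $h_1$ to be controlled in $H^{l+1}$, not merely $H^l$; the paper implicitly handles this because in the applications (Lemma~\ref{approxlemma} and Section~\ref{solvingthepde}) the functions are taken from balls in $H^{l+2}(S^2)$, and the paper's own ``slightly stronger'' variant of the estimate, which keeps $\|dh_1\|_{H^l}$ and $\|dh_2\|_{H^l}$ explicit, confirms that your decomposition into $\langle\Xi_i(h_1)-\Xi_i(h_2),dh_1\rangle+\langle\Xi_i(h_2),d(h_1-h_2)\rangle$ is exactly the intended computation.
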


To be honest, the last estimate can be replaced with a slightly stronger one:
$$\|\langle \Xi_{i}(h_1), dh_1\rangle - \langle \Xi_{i}(h_2), dh_2\rangle\|
\le C\tfrac{\tau_i}{\sigma_i}\left((\|dh_1\|+\|dh_2\|)\|h_1-h_2\|+\|d(h_1-h_2)\|\right)$$
but we have not been able to make use of this stronger estimate.
All the norms here are $H^l(S^2)$-norms with $l\gg1$.

\subsection{The associated semi-linear problem} In the discussion above we hinted at using $h=0$ as an approximate solution of the minimal surface equation \eqref{minsurfeqn-taos}. Although this indeed is an option, it is possible to prove a stronger result\footnote{From our experience staying within the control class $\C(i)$ is at least very difficult if not impossible to achieve by using $h=0$ as an approximate solution.} regarding the location of minimal surfaces by using an improved approximate solution. Consider the semi-linear problem 
\begin{equation}\label{semilinearproblem}
\mathcal{R}_{i}h:=\Delta_{S^2}h +\Lambda_{i}(h)-2=0.
\end{equation}
The improved approximate solution we are alluding to is the solution $h_{i}$ of $\mathcal{R}_{i}h=0$ addressed in the following lemma.

\begin{lemma}\label{semilinearlemma}
There exists $\e>0$ which is small relative to control variables of class $\C(i)$ such that whenever $\frac{\alpha_i+\beta_i}{\sigma_i}<\e$ there exists a smooth solution $h_{i}$ of \eqref{semilinearproblem} with  
$$\|h_{i}\|_{L^\infty(S^2)} =O\left(\frac{\alpha_i+\beta_i}{\sigma_i}\right) \text{\ \ and\ \ } \|dh_{i}\|_{H^l(S^2)}=O\left(\frac{\tau_i}{\sigma_i}\right)$$
for each $l$. The implied proportionality constants are of class $\C(i)$.
\end{lemma}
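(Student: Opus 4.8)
The plan is to solve $\mathcal{R}_{i}h = \Delta_{S^2}h + \Lambda_{i}(h) - 2 = 0$ by a fixed-point/implicit-function-theorem argument on an appropriate Sobolev space $H^{l}(S^2)$ with $l\gg 1$, using $h=0$ as the starting approximate solution and exploiting the strong monotonicity of $\Lambda_{i}$ in $h$ furnished by Lemma \ref{PropertiesofLambda}(3). First I would record that $\mathcal{R}_{i}0 = \Lambda_{i}(0) - 2$, and observe from the explicit form of $\hat{\Lambda}_{i}$ together with Lemma \ref{PropertiesofLambda}(1)--(2) that $\Lambda_{i}(0) = \hat\Lambda_i(0) + O(\tau_i/\sigma_i)$; one then checks that the constant defect $\Lambda_i(0)-2$ has size controlled by $\tau_i/\sigma_i$ — more precisely, the ``base'' Reissner--Nordström value $\hat{\Lambda}_{i}(0)$ is built so that the minimal surface equation \eqref{minsurfapproximate} is satisfied by $\hat f_i$, so $\hat\Lambda_i(0)-2 = 0$ exactly and hence $\|\mathcal{R}_{i}0\|_{H^l(S^2)} = O(\tau_i/\sigma_i)$, with constant of class $\C(i)$. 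This is the quantitative smallness of the residual that drives everything.

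Next I would linearize. The linearization of $\mathcal{R}_{i}$ at a point $h$ is $L_{i,h}v = \Delta_{S^2}v + \Lambda_{i}'(h)v$, where $\Lambda_{i}'(h) = \partial\Lambda_{i}/\partial h$. By Lemma \ref{PropertiesofLambda}(3), on any fixed compact range of $h$ we have $\Lambda_{i}'(h) \le -\tau_i/(C(\alpha_i+\beta_i)) < 0$, so $L_{i,h}$ is $\Delta_{S^2}$ plus a strictly negative zeroth-order term; such an operator is invertible on $S^2$ with no kernel (integrate $vL_{i,h}v$ over $S^2$: $-\int |dv|^2 + \int \Lambda_i'(h)v^2 \le 0$ with equality only at $v\equiv 0$). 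I would then give the operator-norm bound $\|L_{i,h}^{-1}\|_{H^{l-2}\to H^{l}} \le C\,(\alpha_i+\beta_i)/\tau_i$, using elliptic regularity for the $\Delta_{S^2}$ part and the uniform negativity lower bound $|\Lambda_i'(h)| \gtrsim \tau_i/(\alpha_i+\beta_i)$ for the dominant low-frequency/constant part; the higher $h$-derivative bounds in Lemma \ref{PropertiesofLambda}(4), $|\partial_h^l\Lambda_i| \le C\tau_i/(\alpha_i+\beta_i)$, control how $\Lambda_i'(h)$ and the Nemytskii map $h\mapsto \Lambda_i(h)$ vary, giving local Lipschitz estimates for $L_{i,h}$ in $h$ with the same $\tau_i/(\alpha_i+\beta_i)$ weight.

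With the residual $\|\mathcal{R}_i 0\| = O(\tau_i/\sigma_i)$ and the inverse bound $\|L_{i,h}^{-1}\| = O((\alpha_i+\beta_i)/\tau_i)$, a Newton/contraction scheme on the ball $B = \{\|h\|_{H^l(S^2)} \le K(\alpha_i+\beta_i)/\sigma_i\}$ should close: the Newton correction has size $\lesssim \|L^{-1}\|\cdot\|\mathcal{R}_i 0\| \lesssim \frac{\alpha_i+\beta_i}{\tau_i}\cdot\frac{\tau_i}{\sigma_i} = \frac{\alpha_i+\beta_i}{\sigma_i}$, which is exactly the target $L^\infty$ (and $H^l$) bound, and the quadratic remainder is smaller by a further factor of $(\alpha_i+\beta_i)/\sigma_i \ll 1$ once $\vare$ is chosen small relative to $\C(i)$. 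Concretely I would define $\Psi(h) = h - L_{i,0}^{-1}\mathcal{R}_i h$ (or iterate $h_{k+1} = h_k - L_{i,h_k}^{-1}\mathcal{R}_i h_k$), show $\Psi: B\to B$ and $\Psi$ is a contraction using the Lipschitz estimates above, and invoke the Banach fixed point theorem to get $h_i\in B$ with $\mathcal{R}_i h_i = 0$; elliptic bootstrapping then upgrades $h_i$ to smoothness. To extract the sharper gradient bound $\|dh_i\|_{H^l} = O(\tau_i/\sigma_i)$ rather than the coarser $O((\alpha_i+\beta_i)/\sigma_i)$, I would apply $\nabla_{S^2}$ to the equation $\Delta_{S^2}h_i = 2 - \Lambda_i(h_i)$ and note that the right side is $2 - \Lambda_i(0) - \int_0^1 \Lambda_i'(sh_i)\,ds\cdot h_i$, whose gradient is controlled by $\|d(\Lambda_i(0))\| = O(\tau_i/\sigma_i)$ (from Lemma \ref{propertiesoflambda}(2), the $\omega$-derivatives of $\lambda_i$ and hence $\Lambda_i$ are $O(\tau_i/\sigma_i)$) plus $\frac{\tau_i}{\alpha_i+\beta_i}\|dh_i\|\cdot\|h_i\|$, and since $\|h_i\| = O((\alpha_i+\beta_i)/\sigma_i)$ this last term is $O(\frac{\tau_i}{\sigma_i})$; elliptic regularity for $\Delta_{S^2}$ on the mean-zero part then yields $\|dh_i\|_{H^l} = O(\tau_i/\sigma_i)$. \textbf{The main obstacle} I anticipate is bookkeeping the two genuinely different small parameters: the residual and the source terms scale like $\tau_i/\sigma_i = \sqrt{\alpha_i\beta_i}/\sigma_i$, whereas the inverse of the linearization is large, of size $(\alpha_i+\beta_i)/\tau_i$, so the net estimate is borderline and one must be careful that products of ``large $\times$ small'' land in the right ball — in particular the coefficient $\Lambda_i'(h)$ degenerates as $\sqrt{\alpha_i\beta_i}/(\alpha_i+\beta_i)$ can itself be small when $\alpha_i$ and $\beta_i$ are very unbalanced, so the invertibility bound must be stated with the honest weight and the smallness condition on $\vare$ chosen accordingly (this is presumably exactly why staying inside the class $\C(i)$ forces the use of the improved approximate solution rather than $h=0$ for the \emph{next} step, Section \ref{minsurf-sec}).
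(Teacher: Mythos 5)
Your proposal is essentially correct but follows a genuinely different route from the paper, which is worth noting. For the \emph{existence} and $L^\infty$ bound, the paper does not set up a Newton/contraction iteration at all; it observes that $\Lambda_i(0)=2+O(\tau_i/\sigma_i)$ together with the strong negativity $\partial_h\Lambda_i\le -\tau_i/(C(\alpha_i+\beta_i))$ implies that the constants $h_\pm=\pm C\tfrac{\alpha_i+\beta_i}{\sigma_i}$ are a super/sub-solution pair, and then quotes a standard sub-/super-solution theorem (Isenberg) to manufacture a smooth $h_i$ sandwiched between them. This is markedly more elementary than your fixed-point scheme: the $L^\infty$ bound falls out of the barrier construction with no Sobolev bookkeeping, no inverse operator norm, and no smallness-of-ball hypothesis. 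Your Banach fixed point/Newton approach works too — the crucial ledger you correctly identify is that the residual is $O(\tau_i/\sigma_i)$ while $\|L_{i,0}^{-1}\|=O((\alpha_i+\beta_i)/\tau_i)$, so the first Newton step lands at $O((\alpha_i+\beta_i)/\sigma_i)$ — but it is heavier machinery for this particular step; the paper reserves the contraction mapping for the full quasilinear equation \eqref{minsurfeqn-taos} in Section \ref{solvingthepde}, where sub-/super-solutions are no longer available because of the Hessian term.

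For the sharper gradient bound $\|dh_i\|_{H^l}=O(\tau_i/\sigma_i)$, the paper's argument is cleaner than what you sketch and sidesteps a real subtlety. The paper does \emph{not} differentiate the PDE and invert $\mathcal{L}_i$ — doing so costs a factor $(\alpha_i+\beta_i)/\tau_i$ and would only yield the coarser $O((\alpha_i+\beta_i)/\sigma_i)$. Instead it keeps the scalar equation $\Delta_{S^2}h_i=2-\Lambda_i(h_i)$, rewrites the right side as $(\hat\Lambda_i(0)-\hat\Lambda_i(h_i))+(\hat\Lambda_i(h_i)-\Lambda_i(h_i))$, checks both pieces are $O(\tau_i/\sigma_i)$ (the first because $|\hat\Lambda_i'|\lesssim \tau_i/(\alpha_i+\beta_i)$ multiplies $|h_i|\lesssim(\alpha_i+\beta_i)/\sigma_i$, the second by Lemma \ref{PropertiesofLambda}), and then inverts $\Delta_{S^2}$ on the mean-zero subspace — whose inverse is $O(1)$, not $O((\alpha_i+\beta_i)/\tau_i)$. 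The bootstrap to higher $H^l$ then works because at each stage the derivatives of the right side only involve $\|dh_i\|$ in one-fewer Sobolev norm, already controlled, multiplied by $\tau_i/(\alpha_i+\beta_i)\le 1$.

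In your version of the gradient argument there is a genuine gap: when you expand $\nabla\bigl(\int_0^1\Lambda_i'(sh_i)\,ds\cdot h_i\bigr)$, the dominant term is $\bigl(\int_0^1\Lambda_i'(sh_i)\,ds\bigr)\nabla h_i$, of size $O\bigl(\tfrac{\tau_i}{\alpha_i+\beta_i}\bigr)\|\nabla h_i\|$, and since $\tau_i/(\alpha_i+\beta_i)$ need only be $\le 1/2$ this coefficient is not small. Your sketch only accounts for the subdominant piece $\tfrac{\tau_i}{\alpha_i+\beta_i}\|dh_i\|\cdot\|h_i\|$ and drops the borderline one. The estimate can still be closed, but not for the reason you give: one must either (i) use that the borderline term multiplies a strictly lower-order Sobolev norm of $dh_i$ than the one being estimated and absorb via elliptic interpolation, or (ii) do what the paper does and invert $\Delta_{S^2}$ directly so that the $\Lambda_i'\nabla h_i$ term never appears with a large inverse in front of it. As written, the argument has a coefficient that need not be $<1$, and the self-referential inequality does not obviously close; you should flag and repair this step.
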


\begin{proof}
The existence of $h_{i}$ is established by means of the method of sub- and super-solutions. Since $f=\hat{f}_{i}$ solves \eqref{minsurfapproximate} we have that 
$$\hat{\Lambda}_{i}(h)\big{|}_{h=0}=2.$$
Lemma \ref{PropertiesofLambda} now implies that 
$$\Lambda_{i}(0)=2+O(\tau_i/\sigma_i).$$
In addition, the estimate on $\frac{\partial \Lambda_{i}}{\partial h}$ provided in Lemma \ref{PropertiesofLambda} ensures that for some constant $C$ of class $\C(i)$ we have 
$$\Lambda_{i}\left(-C\tfrac{\alpha_i+\beta_i}{\sigma_i}\right)-2>0 \text{\ \ and\ \ } \Lambda_{i}\left(C\tfrac{\alpha_i+\beta_i}{\sigma_i}\right)-2<0.$$
In particular, constants $h_{-}=-C\tfrac{\alpha_i+\beta_i}{\sigma_i}$ and $h_{+}=C\tfrac{\alpha_i+\beta_i}{\sigma_i}$ serve as sub- and super-solutions of \eqref{semilinearproblem}. The existence of a smooth solution $h_{i}$ with 
$$-C\tfrac{\alpha_i+\beta_i}{\sigma_i}<h_{i}<C\tfrac{\alpha_i+\beta_i}{\sigma_i}$$
now follows from, for example, \cite{jim}. 

To prove the estimates on $dh_{i}$ observe that 
\begin{equation}\label{forbootstrapping}
\Delta_{S^2}h_{i}=2-\Lambda_{i}(h_{i})=\left(\hat{\Lambda}_{i}(0)-\hat{\Lambda}_{i}(h_{i})\right)+\left(\hat{\Lambda}_{i}(h_{i})-\Lambda_{i}(h_{i})\right).
\end{equation}
It follows from Lemma \ref{PropertiesofLambda} that the latter is on the order of 
$O\left(\frac{\tau_i}{\sigma_i}\right)$ in $L^2(S^2)$. 
Working within the orthogonal complement of the subspace of constant functions in $L^2(S^2)$, i.e orthogonally to the kernel of $\Delta_{S^2}$, we see that 
$$\|h_{i}-\overline{h}_{i}\|_{L^2(S^2)}\le C \frac{\tau_i}{\sigma_i}$$
where $\overline{h}_{i}$ denotes the average value of $h_{i}$ and where $C$ is of class $\C(i)$. The Elliptic Regularity Estimate for the operator $\Delta_{S^2}$ now implies 
\begin{equation}\label{middleofbootstrapping}
\|h_{i}-\overline{h}_{i}\|_{H^2(S^2)}\le C \frac{\tau_i}{\sigma_i} \text{\ \ and consequently\ \ } \|dh_{i}\|_{H^1(S^2)} \le C \frac{\tau_i}{\sigma_i}.
\end{equation}
The plan now is to bootstrap further using Elliptic Regularity. Given the estimates on $\frac{\partial}{\partial h}\hat{\Lambda}_i$ and the derivatives of $\hat{\Lambda}_{i}-\Lambda_{i}$ presented in Lemma \ref{PropertiesofLambda} we see that the $H^1$-norm of the right hand side of \eqref{forbootstrapping} is bounded by a $\C(i)$-multiple of 
$$\frac{\tau_i}{\alpha_i+\beta_i}\|dh_{i}\|_{L^2(S^2)}+\frac{\tau_i}{\sigma_i}\le C\frac{\tau_i}{\sigma_i}.$$
Elliptic Regularity now implies the improved version of \eqref{middleofbootstrapping} in which $H^2$ and $H^1$ norms are replaced by $H^3$ and $H^2$ norms, respectively. 
Next we estimate the $H^2$-norm of the right hand side of \eqref{forbootstrapping}. Due to the fact that we are in dimension $3$, the Sobolev Embedding gives a bound which is a $\C(i)$-multiple of 
$$\frac{\tau_i}{\alpha_i+\beta_i}(\|\mathrm{Hess}\,h_{i}\|_{L^2}+\|dh_{i}\|^2_{H^2}+\|dh_{i}\|_{L^2})+\frac{\tau_i}{\sigma_i}\le C\frac{\tau_i}{\sigma_i}.$$
This ultimately leads to the improved version of \eqref{middleofbootstrapping} in which $H^2$ and $H^1$ norms are replaced by $H^4$ and $H^3$ norms, respectively. From here on an inductive argument based on the Sobolev Embedding can be constructed to show that 
$$\|dh_{i}\|_{H^l(S^2)}=O\left(\frac{\tau_i}{\sigma_i}\right)$$ for all $l$. 
\end{proof}

Under the assumption on small size of $dh_{i}$ the Hessian term 
$$\mathscr{H}(h):=\tfrac{1}{1+|dh|^2}\mathrm{Hess}\,h(\grad\,h, \grad\,h)$$
from  \eqref{minsurfeqn-taos} contributes very little -- at least as long as we don't deviate much from $h_{i}$. This is evidenced by the inequalities of the form 
$$\begin{aligned}
&\|\mathscr{H}(h)\|_{H^l(S^2)}\le C \|dh\|^3_{H^{l+1}(S^2)}\\
&\|\mathscr{H}(h_1)-\mathscr{H}(h_2)\|_{H^l(S^2)}\le C (\|dh_1\|^2_{H^{l+1}(S^2)}+\|dh_2\|^2_{H^{l+1}(S^2)})\|h_1-h_2\|_{H^{l+2}(S^2)}
\end{aligned}$$
which apply so long as $h$, $h_1$ and $h_2$ are from a fixed ball in $H^{l+2}(S^2)$ while $l\gg 1$. 
By Lemma \ref{PropertiesofXi} similar estimates apply to the inner-product term in \eqref{minsurfeqn-taos}. 

The overall idea here is that on small neighborhoods of $h_{i}$ the operator $\mathcal{R}_{i}$ approximates the operator $\mathcal{T}_{i}$ of \eqref{minsurfeqn-taos} very well. We record this observation in the precise form needed later.

\begin{lemma}\label{approxlemma}
Define
$$\mathcal{E}_{i}:=\mathcal{R}_{i}-\mathcal{T}_{i}.$$
 For each $l\gg 1$ there exists a constant $C$ of class $\C(i)$ such that  
\begin{enumerate}
\item $\left\|\mathcal{E}_{i}(h)\right\|_{H^l(S^2)}\le C\left(\frac{\tau_i}{\sigma_i}+\nu\right)^3+C\frac{\tau_i}{\sigma_i}\left(\frac{\tau_i}{\sigma_i}+\nu\right)$
\medbreak
\item $\left\|\mathcal{E}_{i}(h_1)-\mathcal{E}_{i}(h_2)\right\|_{H^l(S^2)}\le C\left(\left(\frac{\tau_i}{\sigma_i}+\nu\right)^2+\frac{\tau_i}{\sigma_i}\right)\|h_1-h_2\|_{H^{l+2}(S^2)}$
\end{enumerate}
for all $h$, $h_1$ and $h_2$ from a fixed ball in $H^{l+2}(S^2)$ with 
$$dh,\ dh_1,\ dh_2\in B_\nu(dh_{i})\subseteq H^{l+1}(S^2).$$ 
\end{lemma}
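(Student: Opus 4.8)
The plan is to begin by identifying $\mathcal{E}_i$ explicitly. Subtracting the two operators of \eqref{minsurfeqn-taos} and \eqref{semilinearproblem} directly, one sees that
\[
\mathcal{E}_i(h) = \mathcal{R}_i(h) - \mathcal{T}_i(h) = \mathscr{H}(h) - \langle \Xi_i(h), dh\rangle ,
\]
so the whole lemma reduces to controlling the Hessian nonlinearity $\mathscr{H}$ and the first-order term $\langle\Xi_i(\cdot),d(\cdot)\rangle$ on the prescribed neighborhood of $h_i$, and then adding the two contributions via the triangle inequality. The single extra ingredient needed to convert abstract bounds into the explicit powers appearing in the statement is the estimate $\|dh\|_{H^{l+1}(S^2)}\le\|dh_i\|_{H^{l+1}(S^2)}+\nu\le C\bigl(\tfrac{\tau_i}{\sigma_i}+\nu\bigr)$, which follows from the hypothesis $dh\in B_\nu(dh_i)\subseteq H^{l+1}(S^2)$ together with $\|dh_i\|_{H^{l+1}(S^2)}=O(\tau_i/\sigma_i)$ from Lemma~\ref{semilinearlemma}; the same reasoning gives $\|dh\|_{H^l(S^2)}\le C(\tfrac{\tau_i}{\sigma_i}+\nu)$.

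For the size estimate (1), I would combine the inequality $\|\mathscr{H}(h)\|_{H^l(S^2)}\le C\|dh\|_{H^{l+1}(S^2)}^3$ recorded just before the statement with part (1) of Lemma~\ref{PropertiesofXi}, namely $\|\langle\Xi_i(h),dh\rangle\|_{H^l(S^2)}\le C\tfrac{\tau_i}{\sigma_i}\|dh\|_{H^l(S^2)}$. Substituting the norm bounds above yields $\|\mathscr{H}(h)\|_{H^l(S^2)}\le C(\tfrac{\tau_i}{\sigma_i}+\nu)^3$ and $\|\langle\Xi_i(h),dh\rangle\|_{H^l(S^2)}\le C\tfrac{\tau_i}{\sigma_i}(\tfrac{\tau_i}{\sigma_i}+\nu)$, and adding gives exactly (1).

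For the Lipschitz-type estimate (2), I would use the second displayed inequality before the statement, $\|\mathscr{H}(h_1)-\mathscr{H}(h_2)\|_{H^l(S^2)}\le C(\|dh_1\|_{H^{l+1}(S^2)}^2+\|dh_2\|_{H^{l+1}(S^2)}^2)\|h_1-h_2\|_{H^{l+2}(S^2)}$, together with part (2) of Lemma~\ref{PropertiesofXi}, $\|\langle\Xi_i(h_1),dh_1\rangle-\langle\Xi_i(h_2),dh_2\rangle\|_{H^l(S^2)}\le C\tfrac{\tau_i}{\sigma_i}\|h_1-h_2\|_{H^{l+1}(S^2)}$. Inserting $\|dh_j\|_{H^{l+1}(S^2)}\le C(\tfrac{\tau_i}{\sigma_i}+\nu)$ into the first, bounding $\|h_1-h_2\|_{H^{l+1}(S^2)}\le\|h_1-h_2\|_{H^{l+2}(S^2)}$ in the second, and adding gives $\|\mathcal{E}_i(h_1)-\mathcal{E}_i(h_2)\|_{H^l(S^2)}\le C\bigl((\tfrac{\tau_i}{\sigma_i}+\nu)^2+\tfrac{\tau_i}{\sigma_i}\bigr)\|h_1-h_2\|_{H^{l+2}(S^2)}$, which is (2). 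The restriction to a fixed ball in $H^{l+2}(S^2)$ and the choice $l\gg1$ are precisely what make the $\mathscr{H}$- and $\Xi_i$-estimates available with constants uniform in $h,h_1,h_2$ (using that $H^l(S^2)$ is a Banach algebra in that range), and the resulting proportionality constants are of class $\C(i)$ since the inputs are.

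There is no deep obstacle here: the lemma is essentially a bookkeeping consolidation of estimates already established. The two points that demand attention are (i) lining up the Sobolev indices — the $\mathscr{H}$-estimate costs one derivative and its difference estimate costs two, which is exactly why $H^{l+1}$ and $H^{l+2}$ appear on the right-hand sides — and (ii) using Lemma~\ref{semilinearlemma} to trade the raw quantity $\|dh\|$ for $C(\tau_i/\sigma_i+\nu)$, which is what turns the generic polynomial-in-$\|dh\|$ bounds into the stated powers of $\tfrac{\tau_i}{\sigma_i}+\nu$.
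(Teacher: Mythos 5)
Your proof is correct and takes the same route the paper intends: the paper gives no separate proof environment for Lemma~\ref{approxlemma}, treating it as a direct consolidation of the two displayed $\mathscr{H}$-inequalities, Lemma~\ref{PropertiesofXi}, and the bound $\|dh_i\|_{H^{l+1}(S^2)}=O(\tau_i/\sigma_i)$ from Lemma~\ref{semilinearlemma}. Your write-up is precisely that bookkeeping, including the correct identification $\mathcal{E}_i(h)=\mathscr{H}(h)-\langle\Xi_i(h),dh\rangle$ and the trade of $\|dh\|_{H^{l+1}}$ for $C(\tau_i/\sigma_i+\nu)$.
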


Our strategy for solving $\mathcal{T}_{i}h=0$ is to solve the equivalent equation 
$$\mathcal{R}_{i}h = \mathcal{E}_{i}h$$
on a small ball centered at $h_{i}$. We do this by means of linearization of $\mathcal{R}_{i}$ at $h_{i}$.

\subsection{Linearization of the operator $\mathcal{R}_{i}$ at $h_{i}$}
We now study the linearization of the operator $\mathcal{R}_{i}$ at $h_{i}$. To do so we let $h= h_{i}+\e k$, and take the formal derivative of $\mathcal{R}_{i}(h)$ with respect to $\e$ at $\e = 0$. We arrive at the expression  
$$\mathcal{L}_{i} k:=\Delta_{S^2} k-c_{i}k$$
where, by Lemma \ref{PropertiesofLambda}, the constant function $c_{i}:=-\frac{\partial\Lambda_{i}}{\partial h}\Big{|}_{h=h_{i}}$ satisfies 
$$c_{i}\ge \frac{\tau_i}{C(\alpha_i+\beta_i)}$$
for some $C$ of class $\C(i)$. 

The operator $\mathcal{L}_{i}$ is self-adjoint elliptic and its kernel, by the Maximum Principle, is trivial. It follows that $\mathcal{L}_{i}$, viewed as an operator from $H^{l+2}(S^2)$ to $H^l(S^2)$, is invertible. The following lemma controls the norm of the inverse. 

\begin{lemma}\label{linftyinvertible}
There exists, for each $l$, a constant $C$ of class $\C(i)$ such that 
$$\|k\|_{H^{l+2}(S^2)}\le \frac{C(\alpha_i+\beta_i)}{\tau_i}\,\|\mathcal{L}_{i} k\|_{H^l(S^2)}.$$
\end{lemma}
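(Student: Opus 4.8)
The plan is to exploit that $c_i$ is a \emph{constant}, so that $\mathcal{L}_i=\Delta_{S^2}-c_i$ is simultaneously diagonalized with $\Delta_{S^2}$ by spherical harmonics; the estimate then reduces to a one-parameter family of elementary inequalities indexed by the frequency $\ell$.

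First I would fix notation: let $\pi_\ell$ be the $L^2(S^2)$-orthogonal projection onto the degree-$\ell$ spherical harmonics, on which $-\Delta_{S^2}$ acts as multiplication by $\mu_\ell:=\ell(\ell+1)$, and recall that $\|k\|_{H^s(S^2)}^2=\sum_{\ell\ge0}(1+\mu_\ell)^s\|\pi_\ell k\|_{L^2(S^2)}^2$ up to a fixed multiplicative constant depending only on $s$. Since $-\mathcal{L}_i$ acts on the degree-$\ell$ block as multiplication by $\mu_\ell+c_i>0$, this makes transparent both the injectivity of $\mathcal{L}_i$ (already noted above via the Maximum Principle) and the desired reduction: writing $\Lambda:=\tfrac{C(\alpha_i+\beta_i)}{\tau_i}$ and applying the two norm characterizations, the inequality $\|k\|_{H^{l+2}(S^2)}\le\Lambda\,\|\mathcal{L}_i k\|_{H^l(S^2)}$ follows — up to the fixed norm-equivalence constants, which I absorb into $C$ at the end — from the term-by-term bound
\begin{equation*}
(1+\mu_\ell)^{l+2}\ \le\ \Lambda^2\,(1+\mu_\ell)^{l}\,(\mu_\ell+c_i)^2,\qquad\text{equivalently}\qquad 1+\mu_\ell\ \le\ \Lambda\,(\mu_\ell+c_i),
\end{equation*}
required for every $\ell\ge0$.

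To verify this I would use two inputs. First, $\tau_i=\sqrt{\alpha_i\beta_i}\le\tfrac12(\alpha_i+\beta_i)$, so $\tfrac{\alpha_i+\beta_i}{\tau_i}\ge 2\ge1$ and hence $\Lambda\,\mu_\ell\ge C\mu_\ell$. Second, the lower bound $c_i\ge\tfrac{\tau_i}{C_0(\alpha_i+\beta_i)}$ recorded just before the statement — with $C_0$ of class $\C(i)$, coming from Lemma \ref{PropertiesofLambda} — gives $\Lambda\,c_i\ge\tfrac{C}{C_0}$. Adding these and taking $C:=\max\{1,C_0\}$ yields $\Lambda(\mu_\ell+c_i)\ge C\mu_\ell+\tfrac{C}{C_0}\ge\mu_\ell+1$, which is exactly the required bound. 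Finally I would restore the norm-equivalence constants (which depend only on $l$) and enlarge $C$ accordingly; since $C_0$ is of class $\C(i)$, so is the resulting $C$.

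There is no genuinely hard step here; the one point needing care is that the single prefactor $\Lambda=\tfrac{C(\alpha_i+\beta_i)}{\tau_i}$ must simultaneously control the low mode $\ell=0$ — where $\mathcal{L}_i$ is just multiplication by $-c_i$ and one needs $\Lambda c_i\ge1$, which is precisely where the lower bound on $c_i$ enters — and the high modes $\ell\gg1$, where the Laplacian dominates and the inequality is immediate, all while keeping the final constant of class $\C(i)$. (If one prefers to keep $c_i$ a genuine positive function — bounded below by $\tfrac{\tau_i}{C_0(\alpha_i+\beta_i)}$ and, by Lemma \ref{PropertiesofLambda}, above by a $\C(i)$-multiple of $\tfrac{\tau_i}{\alpha_i+\beta_i}$ with small $\omega$-derivatives — the same conclusion follows by pairing $\mathcal{L}_i k$ with $k$ in $L^2(S^2)$ to get $\|k\|_{L^2}\le c_i^{-1}\|\mathcal{L}_i k\|_{L^2}\le\Lambda\,\|\mathcal{L}_i k\|_{H^l}$, substituting $\Delta_{S^2}k=\mathcal{L}_i k+c_i k$ into the standard elliptic-regularity estimate for $\Delta_{S^2}$ on $S^2$, and bootstrapping on $l$.)
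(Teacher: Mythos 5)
Your proof is correct, and it takes a genuinely different route from the paper's. The paper proves an intermediate estimate $\|k\|_{H^{l+2}}\le C(1+\tfrac{1}{c_i})\|\mathcal{L}_i k\|_{H^l}$ by induction on $l$, invoking the abstract Elliptic Regularity Estimate at each step and using the spectral bound $\|k\|_{L^2}\le c_i^{-1}\|\mathcal{L}_i k\|_{L^2}$ to close the low modes, then at the very end converts $1+\tfrac{1}{c_i}$ into $\tfrac{C(\alpha_i+\beta_i)}{\tau_i}$ via the lower bound on $c_i$. You instead exploit the fact that $c_i$ is a constant and $\mathcal{L}_i$ is therefore diagonalized by spherical harmonics, reducing the entire estimate to the frequency-by-frequency inequality $1+\mu_\ell\le\Lambda(\mu_\ell+c_i)$, which you verify by splitting into the $\mu_\ell$ term (controlled by $\Lambda\ge 2$ via AM--GM) and the constant term (controlled by $\Lambda c_i\gtrsim 1$ via the lower bound on $c_i$). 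Your route is more elementary and bypasses both the induction and the invocation of elliptic regularity; it also sidesteps a small point the paper's induction relies on (the step $C(1+2c_i)\le 3C$ tacitly needs $c_i\le 1$, which requires an upper bound on $c_i$ that you never need). Both arguments share the same spectral observation at their core; you simply push it all the way rather than only using it on the zero mode. Your parenthetical alternative, which drops the assumption that $c_i$ is constant and substitutes the pairing $\langle\mathcal{L}_i k,k\rangle_{L^2}$ plus bootstrap, is essentially the paper's own argument and is a sensible hedge since, strictly speaking, $\Lambda_i$ does depend on $\omega\in S^2$ so $c_i$ being literally constant is itself a leading-order simplification.
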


\begin{proof}
It suffices to prove the estimate of the form  
\begin{equation}\label{miniinvertibilitylemma}
\|k\|_{H^{l+2}(S^2)}\le C\left(1+\frac{1}{c_{i}}\right)\|\mathcal{L}_{i} k\|_{H^l(S^2)};
\end{equation}
we do so using induction on $l$. By the Elliptic Regularity there exists a universal constant $C$ such that 
$$\|k\|_{H^{2}(S^2)}\le C\left(\|\Delta_{S^2} k\|_{L^2(S^2)}+\|k\|_{L^2(S^2)}\right)\le C\left(\|\mathcal{L}_{i} k\|_{H^2(S^2)}+(c_{i}+1)\|k\|_{L^2(S^2)}\right).$$
A direct examination of the eigenvalues of $\mathcal{L}_{i}$ shows that 
\begin{equation}\label{liz}
\|k\|_{L^2(S^2)}\le \tfrac{1}{c_{i}} \|\mathcal{L}_{i} k\|_{L^2(S^2)},
\end{equation} 
which in turn further implies
$$\|k\|_{H^2(S^2)}\le C(2+\tfrac{1}{c_{i}})\|\mathcal{L}_{i} k\|_{L^2(S^2)}.$$
Absorbing the factor of $2$ into the constant $C$ completes the proof of the base case. 

For the induction step assume the estimate of the form \eqref{miniinvertibilitylemma} and consider the Elliptic Regularity Estimate
$$\begin{aligned}
\|k\|_{H^{l+3}(S^2)}\le &C\left(\|\Delta_{S^2} k\|_{H^{l+1}(S^2)}+\|k\|_{L^2(S^2)}\right)\\
\le &C\left(\|\mathcal{L}_{i} k\|_{H^{l+1}(S^2)}+c_{i}\|k\|_{H^{l+1}(S^2)}+\|k\|_{L^2(S^2)}\right).
\end{aligned}$$
Since $c_{i}\,C(2+\tfrac{1}{c_{i}})=C(1+2c_{i})\le 3C$ the induction hypothesis implies
$$c_{i}\|k\|_{H^{l+1}(S^2)}\le 3C\|\mathcal{L}_{i} k\|_{H^l(S^2)}.$$
Overall, we have 
$$\|k\|_{H^{l+3}(S^2)}\le C\left((1+3C)\|\mathcal{L}_{i} k\|_{H^{l+1}(S^2)}+\|k\|_{L^2(S^2)}\right).$$
Applying \eqref{liz} and increasing the value of $C$ to $C(1+3C)$ proves the induction step. 
\end{proof}

\subsection{The quadratic error term $\mathcal{Q}_{i}$}
We continue by analyzing the error term $\mathcal{Q}_{i}$ defined by 
$$\mathcal{R}_{i}(h) = \mathcal{L}_{i}(h-h_{i})+\mathcal{Q}_{i}(h).$$
\begin{lemma}\label{Qn:eqn}
For each $l\gg 1$ there exists a constant $C$ of class $\C(i)$ independent of $\nu<1$ such that 
$$\|h_1-h_{i}\|_{H^{l+2}(S^2)}, \|h_2-h_{i}\|_{H^{l+2}(S^2)}<\nu$$
implies
$$\|\mathcal{Q}_{i}(h_1)-\mathcal{Q}_{i}(h_2)\|_{H^l(S^2)}\le C\nu\|h_1-h_2\|_{H^{l+2}(S^2)}.$$
In particular, we have 
$$\|\mathcal{Q}_{i}(h)\|_{H^l(S^2)}\le C\nu^2$$
for all $h$ with $\|h-h_{i}\|_{H^{l+2}(S^2)}\le \nu$.
\end{lemma}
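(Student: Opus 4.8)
The plan is to first unwind the definition of $\mathcal{Q}_i$ and notice that it is \emph{purely zeroth order}. Indeed, $\mathcal{R}_i$ is the semilinear operator of \eqref{semilinearproblem}, $\mathcal{R}_i h=\Delta_{S^2}h+\Lambda_i(h)-2$, while $\mathcal{L}_i k=\Delta_{S^2}k-c_ik$ with $c_i=-\partial_h\Lambda_i\big|_{h=h_i}$; since $h_i$ solves $\mathcal{R}_i(h_i)=0$, i.e.\ $\Delta_{S^2}h_i-2=-\Lambda_i(h_i)$, the Laplacian terms cancel and
\begin{equation*}
\mathcal{Q}_i(h)=\mathcal{R}_i(h)-\mathcal{L}_i(h-h_i)=\Lambda_i(h)-\Lambda_i(h_i)-\partial_h\Lambda_i(h_i)\,(h-h_i).
\end{equation*}
So, for each $\omega\in S^2$, $\mathcal{Q}_i(h)(\omega)$ is exactly the second-order Taylor remainder about $h_i(\omega)$ of the smooth function $\Lambda_i(\,\cdot\,,\omega)$, evaluated at $h(\omega)$ --- which is precisely why it deserves the name ``quadratic error term''. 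This algebraic identity is the heart of the matter; everything after it is Sobolev bookkeeping.

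Next I would record integral (Taylor) representations. Because $\|h_i\|_{L^\infty(S^2)}$ is small by Lemma \ref{semilinearlemma} and $\|h_j-h_i\|_{H^{l+2}(S^2)}<\nu<1$, the Sobolev embedding $H^{l+2}(S^2)\hookrightarrow C^l(S^2)$ puts $h_i,h_1,h_2$ in one fixed ball of $H^{l+2}(S^2)$ and keeps every convex combination of their \emph{values} inside a fixed compact interval $K\subseteq\mathbb{R}$ of size controlled by class $\C(i)$; on $K\times S^2$, Lemma \ref{PropertiesofLambda} then gives uniform bounds, of class $\C(i)$ and independent of $\nu$, on $\partial_h^2\Lambda_i$ and on all of its further $h$- and $\omega$-derivatives. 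With that in hand I would write
\begin{equation*}
\mathcal{Q}_i(h)(\omega)=(h-h_i)^2(\omega)\int_0^1(1-t)\,\partial_h^2\Lambda_i\!\bigl(h_i(\omega)+t(h-h_i)(\omega),\,\omega\bigr)\,dt,
\end{equation*}
and, for the difference, iterate the fundamental theorem of calculus --- once along the segment from $h_2$ to $h_1$ and once from $h_i$ --- to obtain
\begin{equation*}
\mathcal{Q}_i(h_1)-\mathcal{Q}_i(h_2)=(h_1-h_2)\int_0^1\!\!\int_0^1\bigl(h_2+s(h_1-h_2)-h_i\bigr)\,\partial_h^2\Lambda_i\!\bigl(h_i+t(h_2+s(h_1-h_2)-h_i),\,\omega\bigr)\,dt\,ds.
\end{equation*}

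The estimates then follow by working in $H^l(S^2)$ with $l\gg1$ and combining four standard facts: Minkowski's integral inequality to move the $H^l$-norm inside the $t,s$-integrals; that $H^l(S^2)$ is a Banach algebra; a Moser-type composition estimate giving $\|\partial_h^2\Lambda_i(\xi(\cdot),\cdot)\|_{H^l(S^2)}\le C$ of class $\C(i)$ for every $\xi$ in the fixed $H^{l+2}$-ball; and the elementary bounds $\|(h-h_i)^2\|_{H^l}\le C\|h-h_i\|_{H^l}^2\le C\nu^2$ and $\|h_2+s(h_1-h_2)-h_i\|_{H^l}\le\max_j\|h_j-h_i\|_{H^l}\le\nu$. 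Multiplying these through yields $\|\mathcal{Q}_i(h)\|_{H^l(S^2)}\le C\nu^2$ and $\|\mathcal{Q}_i(h_1)-\mathcal{Q}_i(h_2)\|_{H^l(S^2)}\le C\nu\,\|h_1-h_2\|_{H^l(S^2)}\le C\nu\,\|h_1-h_2\|_{H^{l+2}(S^2)}$, with $C$ of class $\C(i)$ and manifestly independent of $\nu$.

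The one step that takes genuine care --- and what I expect to be the main obstacle --- is the composition estimate: showing that $\omega\mapsto\partial_h^2\Lambda_i(\xi(\omega),\omega)$ has $H^l$-norm bounded by a $\C(i)$-constant \emph{uniformly} over $\xi$ in the fixed $H^{l+2}$-ball, with no $\nu$-dependence. This is where one must use both the algebra property of $H^l(S^2)$ and the full strength of Lemma \ref{PropertiesofLambda}, in particular the boundedness of the mixed $h,\omega$-derivatives of $\Lambda_i$ (which in turn rests on $\hat{\Lambda}_i$ being $\omega$-independent and on $\Lambda_i-\hat{\Lambda}_i$, together with its derivatives, being $O(\tau_i/\sigma_i)$). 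It is worth noting that the argument actually produces the stronger Lipschitz bound with $\|h_1-h_2\|_{H^l}$ in place of $\|h_1-h_2\|_{H^{l+2}}$ on the right; the weaker form stated is simply the one that lines up with the other estimates of Section \ref{MinSurfExist}.
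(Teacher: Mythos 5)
Your proposal is correct and follows essentially the same route as the paper's proof: both recognize that $\mathcal{Q}_i$ is the second-order Taylor remainder of $\Lambda_i$ about $h_i$ (the paper encodes this as $\mathcal{Q}_i\big|_{h=h_i}=(\partial_h\mathcal{Q}_i)\big|_{h=h_i}=0$), both lean on the boundedness of all $h$- and $\omega$-derivatives of $\Lambda_i$ from Lemma~\ref{PropertiesofLambda}, and both then pass from pointwise bounds to $H^l$-bounds, ending with the stronger estimate having $\|h_1-h_2\|_{H^l}$ on the right. Your explicit integral-remainder formulas and the Minkowski/algebra/Moser bookkeeping are simply a more spelled-out version of what the paper compresses into ``in combination with the Chain Rule,'' and your remark that the uniform composition estimate hinges on $\hat{\Lambda}_i$ being $\omega$-independent is exactly the point hiding behind the paper's appeal to Lemma~\ref{PropertiesofLambda}.
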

\begin{proof}
Recall that the functions $\Lambda_{i}$ have bounded derivatives (cf. Lemma \ref{PropertiesofLambda}). It then follows from 
$$\mathcal{Q}_{i}\big{|}_{h=h_{i}}=\left(\partial_h \mathcal{Q}_{i}\right)\big{|}_{h=h_{i}}=0$$ 
that there is a constant $C$ uniform in $\nu<1$ and $\omega \in S^2$ such that 
$$\big{|}\mathcal{Q}_{i}(h_1, \omega)-\mathcal{Q}_{i}(h_2, \omega)\big{|}\le C\nu\big{|}h_1-h_2\big{|},$$
for all $h_1$ and $h_2$ with $\|h_1-h_{i}\|_{L^\infty}, \|h_2-h_{i}\|_{L^\infty}<\nu$. For the same reasons we have  
$$\big{|}\partial_h\mathcal{Q}_{i}(h_1, \omega)-\partial_h\mathcal{Q}_{i}(h_2, \omega)\big{|}\le C\big{|}h_1-h_2\big{|}$$
and the existence, for each $l\ge 2$, of constants $C$ such that 
$\big{|}\partial_h^l\mathcal{Q}_{i}(h,\omega)\big{|}\le C$, $l\ge 2$.
The same types of estimates apply to derivatives of $\mathcal{Q}_{i}(h,\omega)$ with respect to spherical, $\omega$-variables.
In combination with the Chain Rule all these estimates combined show that for each $l\gg 1$ one can find constants $C$ of class $\C(i)$ for which  
$$\|\mathcal{Q}_{i}(h_1)-\mathcal{Q}_{i}(h_2)\|_{H^l(S^2)}\le C\nu\|h_1-h_2\|_{H^{l}(S^2)}$$
so long $\|h_1-h_{i}\|_{H^{l}(S^2)}, \|h_2-h_{i}\|_{H^{l}(S^2)}<\nu$.
\end{proof}

\subsection{Solving the minimal surface equation}\label{solvingthepde}
We first re-write the minimal surface equation \eqref{minsurfeqn-taos} as 
$$\mathcal{R}_{i}h=\mathcal{E}_{i}h,$$
where the operator $\mathcal{E}_{i}$ is addressed in Lemma \ref{approxlemma}. Using linearization at $h=h_{i}$ the latter can be alternatively expressed as a fixed point problem $h = F_{i}h$ where 
$$F_{i}h:= h_{i}-\mathcal{L}_{i}^{-1}\left(\mathcal{E}_{i}h +\mathcal{Q}_{i}(h)\right).$$
Our strategy now is to apply the Banach Fixed Point Theorem to the mapping(s) $F_{i}$. 

We start by showing that for a fixed $l$, sufficiently small $\tfrac{\alpha_i+\beta_i}{\sigma_i}$ and sufficiently small $\nu$ the mapping $F_{i}$ takes a ball $B_\nu(h_{i})\subseteq H^{l+2}(S^2)$ of small radius $\nu<1$ into itself. To this end let $h\in B_\nu(h_{i})$. Lemmas \ref{approxlemma}, \ref{linftyinvertible} and \ref{Qn:eqn} guarantee that $\|\mathcal{L}_{i}^{-1}\left(\mathcal{E}_{i}h +\mathcal{Q}_{i}(h)\right)\|_{H^{l+2}(S^2)}$ can be bounded by a $\C(i)$-multiple of 
$$\frac{\alpha_i+\beta_i}{\tau_i}\left(\left(\frac{\tau_i}{\sigma_i}+\nu\right)^3+\frac{\tau_i}{\sigma_i}\left(\frac{\tau_i}{\sigma_i}+\nu\right)+\nu^2\right)$$
or, more simply, a $\C(i)$-multiple of 
$$\frac{\tau_i(\alpha_i+\beta_i)}{\sigma_i^2}+\frac{\alpha_i+\beta_i}{\sigma_i}\nu+\frac{\alpha_i+\beta_i}{\tau_i}\nu^2.$$
Assuming that $\frac{\alpha_i+\beta_i}{\sigma_i}\le \frac{1}{3C}$ is sufficiently small, we can find values of $\nu\ll1$ such that  
\begin{equation}\label{n-vs-nu}
3C\left(\frac{\alpha_i+\beta_i}{\sigma_i}\right)^2\le\frac{\alpha_i+\beta_i}{\tau_i}\nu\le \frac{1}{3C};
\end{equation} 
it is for such values of $\nu\ll 1$ that we now have 
$$\|F_{i}h-h_{i}\|_{H^{l+2}(S^2)}\le \nu \text{\ \ i.e\ \ }F_{i}:B_\nu(h_{i})\to B_\nu(h_{i}).$$
Likewise, Lemmas \ref{approxlemma}, \ref{linftyinvertible} and \ref{Qn:eqn} imply   
$$\begin{aligned}
\|F_{i}h_1 - F_{i}h_2\|_{H^{l+2}(S^2)}\\
\le & C\frac{\alpha_i+\beta_i}{\tau_i}\left(\left(\frac{\tau_i}{\sigma_i}+\nu\right)^2+\frac{\tau_i}{\sigma_i}+\nu\right) \, \|h_1-h_2\|_{H^{l+2}(S^2)}\\
\le &C\left(\frac{\alpha_i+\beta_i}{\sigma_i}+\frac{\alpha_i+\beta_i}{\tau_i}\nu\right) \, \|h_1-h_2\|_{H^{l+2}(S^2)}
\end{aligned}$$
for all $h_1, h_2 \in B_\nu(h_{i})$.
Bounds on $\frac{\alpha_i+\beta_i}{\sigma_i}$ and $\nu$ of type \eqref{n-vs-nu} with a larger value of $C$ ensure that the multiplicative factor above is no more than $1/2$,
meaning that $F_{i}:B_\nu(h_{i})\to B_\nu(h_{i})$ is a contraction. We now see from the Banach Fixed Point Theorem that $F_{i}$ has a unique fixed point in $B_\nu(h_{i})$. In particular, by choosing the smallest possible $\nu$ in \eqref{n-vs-nu}, with $n$ sufficiently large relative to control variables of class $\C(i)$,
we obtain a solution $h$ of $\mathcal{T}_{i}h=0$ with 
$$\|h-h_{i}\|_{H^l(S^2)}=O\left(\frac{\tau_i(\alpha_i+\beta_i)}{\sigma_i^2}\right).$$
The reader should note that we do have the freedom to use larger values of $\nu$ such as $\nu=\e\tau_i/(\alpha_i+\beta_i)$ where $\e$ is small enough - as determined by control variables of class $\C(i)$. Such a choice then proves the uniqueness of solutions $h$ within a larger class: 
\begin{equation}\label{hunique}
\|h-h_{i}\|_{H^l(S^2)}\le \e\tau_i/(\alpha_i+\beta_i).
\end{equation}
Also worthy of notice is the fact that Lemma \ref{semilinearlemma} implies that $\|h\|_{L^\infty}=O\left(\frac{\alpha_i+\beta_i}{\sigma_i}\right)$ and $\|dh\|_{H^l(S^2)}=O\left(\frac{\tau_i}{\sigma_i}\right)$. 
Overall, we have proven the following theorem.

\begin{theorem}\label{actualminsurfthm}
If $\frac{\alpha_i+\beta_i}{\sigma_i}$ is sufficiently small relative to control variables of class $\C(i)$, then the minimal surface equation \eqref{MinSurf:eqn} has a smooth solution $f$ with 
$$\|f-\hat{f}_{i}\|_{L^\infty(S^2)}=O\left(\frac{\alpha_i+\beta_i}{\sigma_i}\right).$$
In addition, for each $l$ the function $f$ satisfies an estimate of the form 
$$\|df\|_{H^l(S^2)}=O\left(\frac{\tau_i}{\sigma_i}\right).$$
The implied proportionality constants are of class $\C(i)$.
\end{theorem}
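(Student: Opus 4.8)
The plan is to solve the minimal surface equation \eqref{MinSurf:eqn} by first passing to the renormalized equation $\mathcal{T}_{i}h=0$ of \eqref{minsurfeqn-taos} through the substitution $f=\hat{f}_{i}e^h$, and then producing $h$ as the fixed point of a contraction built around the semi-linear comparison operator $\mathcal{R}_{i}$ and its solution $h_{i}$ from Lemma \ref{semilinearlemma}. Fix a large integer $l$ throughout, so that $H^{l}(S^2)$ and $H^{l+2}(S^2)$ are Banach algebras; the constants produced below will all be tracked to be of class $\C(i)$.

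First I would rewrite $\mathcal{T}_{i}h=0$ equivalently as $\mathcal{R}_{i}h=\mathcal{E}_{i}h$, where $\mathcal{E}_{i}=\mathcal{R}_{i}-\mathcal{T}_{i}$ collects the Hessian term $\mathscr{H}$ together with the inner-product term $\langle\Xi_{i}(h),dh\rangle$ and is controlled by Lemma \ref{approxlemma}. Using the linearization $\mathcal{L}_{i}k=\Delta_{S^2}k-c_{i}k$ of $\mathcal{R}_{i}$ at $h_{i}$ and the splitting $\mathcal{R}_{i}(h)=\mathcal{L}_{i}(h-h_{i})+\mathcal{Q}_{i}(h)$, the problem becomes the fixed-point equation $h=F_{i}h$ with $F_{i}h:=h_{i}-\mathcal{L}_{i}^{-1}\!\left(\mathcal{E}_{i}h+\mathcal{Q}_{i}(h)\right)$; this is well posed because $\mathcal{L}_{i}\colon H^{l+2}(S^2)\to H^{l}(S^2)$ is invertible, with $\|\mathcal{L}_{i}^{-1}\|\lesssim(\alpha_i+\beta_i)/\tau_i$ by Lemma \ref{linftyinvertible}. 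I would then check the two Banach-fixed-point hypotheses on a ball $B_\nu(h_{i})\subseteq H^{l+2}(S^2)$ with $\nu<1$ to be chosen. Combining the inverse bound with the $\mathcal{E}_{i}$-estimate of Lemma \ref{approxlemma} and the $\mathcal{Q}_{i}$-estimate of Lemma \ref{Qn:eqn} bounds $\|F_{i}h-h_{i}\|_{H^{l+2}(S^2)}$ by a $\C(i)$-multiple of $\frac{\tau_i(\alpha_i+\beta_i)}{\sigma_i^2}+\frac{\alpha_i+\beta_i}{\sigma_i}\nu+\frac{\alpha_i+\beta_i}{\tau_i}\nu^2$, and bounds the Lipschitz constant of $F_{i}$ by a $\C(i)$-multiple of $\frac{\alpha_i+\beta_i}{\sigma_i}+\frac{\alpha_i+\beta_i}{\tau_i}\nu$. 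Imposing that $(\alpha_i+\beta_i)/\sigma_i$ be small relative to $\C(i)$ and choosing $\nu$ in the window \eqref{n-vs-nu}, i.e.\ $3C(\frac{\alpha_i+\beta_i}{\sigma_i})^2\le\frac{\alpha_i+\beta_i}{\tau_i}\nu\le\frac{1}{3C}$, forces $F_{i}$ to map $B_\nu(h_{i})$ into itself with Lipschitz constant at most $1/2$.

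The Banach Fixed Point Theorem then gives a unique $h\in B_\nu(h_{i})$ solving $\mathcal{T}_{i}h=0$; taking the smallest admissible $\nu$ yields $\|h-h_{i}\|_{H^l(S^2)}=O\!\left(\tau_i(\alpha_i+\beta_i)/\sigma_i^2\right)$, and since $l$ was arbitrary, elliptic bootstrapping makes $h$ smooth, so that $f=\hat{f}_{i}e^h$ is a smooth solution of \eqref{MinSurf:eqn}. Finally I would read off the stated estimates: Lemma \ref{semilinearlemma} gives $\|h_{i}\|_{L^\infty}=O\!\left(\frac{\alpha_i+\beta_i}{\sigma_i}\right)$ and $\|dh_{i}\|_{H^l}=O\!\left(\frac{\tau_i}{\sigma_i}\right)$, and combining these with the above bound on $h-h_{i}$ (and a Sobolev embedding, valid in dimension $3$) gives $\|h\|_{L^\infty}=O\!\left(\frac{\alpha_i+\beta_i}{\sigma_i}\right)$ and $\|dh\|_{H^l}=O\!\left(\frac{\tau_i}{\sigma_i}\right)$, whence $\|f-\hat{f}_{i}\|_{L^\infty(S^2)}=O\!\left(\frac{\alpha_i+\beta_i}{\sigma_i}\right)$ and $\|df\|_{H^l(S^2)}=O\!\left(\frac{\tau_i}{\sigma_i}\right)$, all with constants of class $\C(i)$.

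The hard part will be the bookkeeping of the two competing small parameters. The operator $\mathcal{L}_{i}^{-1}$ has norm of order $(\alpha_i+\beta_i)/\tau_i$, which is \emph{large}; the self-map and contraction properties therefore hinge on the smallness gained from $\mathcal{E}_{i}$ (which is $O\!\left((\frac{\tau_i}{\sigma_i}+\nu)^3+\frac{\tau_i}{\sigma_i}(\frac{\tau_i}{\sigma_i}+\nu)\right)$) and from $\mathcal{Q}_{i}$ (which is $O(\nu^2)$) being spent exactly against that large factor, and the window \eqref{n-vs-nu} for $\nu$ is precisely the bookkeeping device that makes this balance work. Equally delicate is verifying that every constant appearing along the way — in the inverse estimate, in $\mathcal{E}_{i}$, $\mathcal{Q}_{i}$, and the Sobolev constants — remains polynomial in the control variables $\hat{\chi}^{(i)}$, $\hat{\psi}^{(i)}$ and $(\alpha_i+\beta_i)/\sigma_i$, i.e.\ genuinely of class $\C(i)$, so that the smallness hypothesis can be stated uniformly; the preparatory Lemmas \ref{propertiesoflambda}--\ref{Qn:eqn} are organized so that this is automatic once invoked in the right order.
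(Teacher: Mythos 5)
Your proposal is correct and follows essentially the same route as the paper's own argument: the same rewriting $\mathcal{R}_{i}h=\mathcal{E}_{i}h$, the same fixed-point map $F_{i}h=h_{i}-\mathcal{L}_{i}^{-1}(\mathcal{E}_{i}h+\mathcal{Q}_{i}(h))$, the same invocation of Lemmas \ref{approxlemma}, \ref{linftyinvertible} and \ref{Qn:eqn}, the same window \eqref{n-vs-nu} for $\nu$, and the same final deduction via Lemma \ref{semilinearlemma}. The only cosmetic slip (which also appears in the paper's proof of Lemma \ref{semilinearlemma}) is the reference to ``dimension $3$'' for the Sobolev embedding on $S^2$, which is a $2$-dimensional manifold; this does not affect the argument, since $H^{l}(S^2)\hookrightarrow L^\infty(S^2)$ for $l>1$.
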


The existence portion of Theorem \ref{minsurf:thm1-taos} is simply a streamlined (and weaker) version of our Theorem \ref{actualminsurfthm}.

\section{Minimal Surfaces Associated to Individual Points Sources -- Uniqueness}\label{minsurf-sec}

The strategy in this section is to use judiciously chosen foliations to gradually narrow down the locations of any and all minimal surfaces within $B_{\geucl}(p_i,\sigma_i/2)$. The foliations are chosen so we can have a very well controlled sign of the mean curvature along the leaves. Throughout the section we use the formula which relates the mean curvature $H_{g_{\BL}}$ of a surface computed with respect to the ambient metric $g_{\BL}$ and the mean curvature $H_{\geucl}$ of the same surface computed with respect to the ambient Euclidean metric: 
\begin{equation}\label{genH}
H_{g_{\BL}}=(\chi_{\BL}\psi_{\BL})^{-1}H_{\geucl}+2(\chi_{\BL}\psi_{\BL})^{-2}\grad\left(\chi_{\BL}\psi_{\BL}\right)\cdot \vec{N}_{\geucl}.
\end{equation}

\subsection{Foliation by spheres} 
The following formula is immediate from \eqref{genH} and the fact that $|x-p_i|=\varrho$ has the mean curvature of $H_{\geucl}=\tfrac{2}{\varrho}$. 

\begin{lemma}\label{meancurvformula} The mean curvature $H_{g_{\BL}}$ of the coordinate sphere $|x-p_i|=\varrho$ computed with respect to the metric $g_{\BL}$ and the outward pointing normal satisfies 
$$\begin{aligned}
\frac{(\chi_{\BL}\psi_{\BL})^2}{2} H_{g_{\BL}}=&\grad(\chi^{(i)}\psi^{(i)})\cdot \vec{N}_{\geucl}\\
+&\frac{1}{\varrho}\left(\chi^{(i)}\psi^{(i)}+\alpha_{i}\,\grad\,\psi^{(i)}\cdot \vec{N}_{\geucl}+\beta_{i}\,\grad\,\chi^{(i)}\cdot  \vec{N}_{\geucl}\right)-\frac{\alpha_{i}\beta_{i}}{\varrho^3},
\end{aligned}$$
with all the gradients and all the unit normals on the right hand sides are computed with respect to the Euclidean metric. 
\end{lemma}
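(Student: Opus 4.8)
The plan is to substitute directly into \eqref{genH} and expand. The coordinate sphere $|x-p_i|=\varrho$ carries the outward Euclidean unit normal $\vec{N}_{\geucl}=(x-p_i)/\varrho$ and has Euclidean mean curvature $H_{\geucl}=2/\varrho$, so \eqref{genH} immediately yields
$$\frac{(\chi_{\BL}\psi_{\BL})^2}{2}\,H_{g_{\BL}}=\frac{\chi_{\BL}\psi_{\BL}}{\varrho}+\grad(\chi_{\BL}\psi_{\BL})\cdot \vec{N}_{\geucl}.$$
It therefore remains only to rewrite the two terms on the right in terms of $\chi^{(i)}$, $\psi^{(i)}$, and $\varrho$.

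First I would use $\chi_{\BL}=\chi^{(i)}+\alpha_{i}/|x-p_i|$ and $\psi_{\BL}=\psi^{(i)}+\beta_{i}/|x-p_i|$ to expand the product
$$\chi_{\BL}\psi_{\BL}=\chi^{(i)}\psi^{(i)}+\frac{\alpha_{i}\psi^{(i)}+\beta_{i}\chi^{(i)}}{|x-p_i|}+\frac{\alpha_{i}\beta_{i}}{|x-p_i|^2},$$
which, restricted to $|x-p_i|=\varrho$, takes care of the term $\chi_{\BL}\psi_{\BL}/\varrho$. For the gradient term I would combine the product rule with the elementary identities $\grad(1/|x-p_i|)=-\vec{N}_{\geucl}/|x-p_i|^2$ and $\grad(1/|x-p_i|^2)=-2\vec{N}_{\geucl}/|x-p_i|^3$, and then pair the result with the unit vector $\vec{N}_{\geucl}$, obtaining on $|x-p_i|=\varrho$
\begin{align*}
\grad(\chi_{\BL}\psi_{\BL})\cdot \vec{N}_{\geucl}
&=\grad(\chi^{(i)}\psi^{(i)})\cdot \vec{N}_{\geucl}
+\frac{\alpha_{i}\,\grad\psi^{(i)}\cdot\vec{N}_{\geucl}+\beta_{i}\,\grad\chi^{(i)}\cdot\vec{N}_{\geucl}}{\varrho}\\
&\qquad-\frac{\alpha_{i}\psi^{(i)}+\beta_{i}\chi^{(i)}}{\varrho^2}-\frac{2\alpha_{i}\beta_{i}}{\varrho^3}.
\end{align*}

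Adding $\chi_{\BL}\psi_{\BL}/\varrho$ to this last expression, the two $\varrho^{-2}$ contributions cancel exactly, the $\varrho^{-3}$ contributions combine to $-\alpha_{i}\beta_{i}/\varrho^3$, and the surviving terms assemble into precisely the right-hand side claimed in the lemma. The only point meriting a moment of care is this cancellation of the $\varrho^{-2}$ terms; everything else is routine bookkeeping. There is no genuine obstacle here — the statement is a direct corollary of \eqref{genH} — so I would simply present the computation in the order indicated above.
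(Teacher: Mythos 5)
Your computation is correct, and it is exactly the route the paper has in mind: the lemma is stated as an immediate consequence of \eqref{genH} together with $H_{\geucl}=2/\varrho$, and the only work is the straightforward expansion and cancellation you carry out. Nothing to add.
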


Our next goal is to establish that far enough out the coordinate spheres have positive mean curvature while close in their mean curvature is negative. 

\begin{lemma}\label{oldChrLemma2} 
There exist a constant $C$ of class $\C(i)$ and a value of $\e>0$ which is small relative to control variables of class $\C(i)$ such that the following hold whenever $\frac{\alpha_i+\beta_i}{\sigma_i}<\e$. 
\medbreak
\begin{enumerate}
\item\label{part3/2minsurfprop-ver2} If $p_i\in \P_{**}$ the annular region 
$$0<|x-p_i|<\left((\hat{\chi}^{(i)}\hat{\psi}^{(i)})^{-1/2}-C\tfrac{\alpha_i+\beta_i}{\sigma_i}\right)\sqrt{\alpha_{i}\beta_{i}}$$
is foliated by coordinate spheres of negative outward / positive inward mean curvature. 
\medbreak
\item\label{part2minsurfprop-ver2} The annular region
$$\left((\hat{\chi}^{(i)}\hat{\psi}^{(i)})^{-1/2}+C\tfrac{\alpha_i+\beta_i}{\sigma_i}\right)\sqrt{\alpha_{i}\beta_{i}}< |x-p_i|<\sigma_i/C$$
are foliated by coordinate spheres of positive (outward) mean curvature. 
\end{enumerate}
\end{lemma}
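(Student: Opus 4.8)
The plan is to extract from Lemma~\ref{meancurvformula} a dominant term whose sign is manifest, and to absorb everything else into an error controlled by the hypothesis $\frac{\alpha_i+\beta_i}{\sigma_i}<\e$. On the sphere $|x-p_i|=\varrho$ write $\vec{N}_{\geucl}=(x-p_i)/\varrho$; then Lemma~\ref{meancurvformula} becomes
$$\frac{(\chi_{\BL}\psi_{\BL})^2}{2} H_{g_{\BL}}=\frac{\hat{\chi}^{(i)}\hat{\psi}^{(i)}}{\varrho}-\frac{\alpha_i\beta_i}{\varrho^3}+E(\varrho,\omega),$$
where $E$ gathers $\grad(\chi^{(i)}\psi^{(i)})\cdot\vec{N}_{\geucl}$, $\tfrac{1}{\varrho}\bigl(\chi^{(i)}\psi^{(i)}-\hat{\chi}^{(i)}\hat{\psi}^{(i)}\bigr)$, $\tfrac{\alpha_i}{\varrho}\grad\psi^{(i)}\cdot\vec{N}_{\geucl}$ and $\tfrac{\beta_i}{\varrho}\grad\chi^{(i)}\cdot\vec{N}_{\geucl}$. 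First I would bound $E$: since $\chi^{(i)}(p_i)=\hat{\chi}^{(i)}$, $\psi^{(i)}(p_i)=\hat{\psi}^{(i)}$ and $|\partial^l\chi^{(i)}|\le\frac{C}{\sigma_i^l}\hat{\chi}^{(i)}$, $|\partial^l\psi^{(i)}|\le\frac{C}{\sigma_i^l}\hat{\psi}^{(i)}$ on $\{|x-p_i|\le\sigma_i/2\}$ (recalled in Section~\ref{controlclasses}), one gets $|\chi^{(i)}\psi^{(i)}-\hat{\chi}^{(i)}\hat{\psi}^{(i)}|\le\frac{C\varrho}{\sigma_i}\hat{\chi}^{(i)}\hat{\psi}^{(i)}$ and $|\grad(\chi^{(i)}\psi^{(i)})|\le\frac{C}{\sigma_i}\hat{\chi}^{(i)}\hat{\psi}^{(i)}$, hence (using $\hat{\chi}^{(i)},\hat{\psi}^{(i)}\ge1$) the uniform-in-$\omega$ estimate
$$|E(\varrho,\omega)|\le\frac{C'}{\sigma_i}\,\hat{\chi}^{(i)}\hat{\psi}^{(i)}\Bigl(1+\tfrac{\alpha_i+\beta_i}{\varrho}\Bigr),\qquad 0<\varrho\le\sigma_i/2,$$
with $C'$ of class $\C(i)$; note this holds regardless of whether $\alpha_i\beta_i=0$.

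Next I would factor the main term as $L(\varrho):=\frac{\hat{\chi}^{(i)}\hat{\psi}^{(i)}}{\varrho^3}\bigl(\varrho^2-\varrho_*^2\bigr)$ with $\varrho_*:=\tau_i(\hat{\chi}^{(i)}\hat{\psi}^{(i)})^{-1/2}=\hat{f}_i\tau_i$ (this is where the limiting Reissner--Nordstr\"om metric of Remark~\ref{oldconvergencestatement} has its minimal sphere), so that $L<0$ for $0<\varrho<\varrho_*$ and $L>0$ for $\varrho>\varrho_*$. The key heuristic is that near the transition $\varrho\approx\varrho_*\approx\tau_i$ one has, up to factors of class $\C(i)$, $|L(\varrho)|\sim|\varrho-\varrho_*|/\tau_i^2$ while $|E|\sim(\alpha_i+\beta_i)/(\sigma_i\tau_i)$; thus $|L|$ dominates $|E|$ exactly once $|\varrho-\varrho_*|\gtrsim(\alpha_i+\beta_i)\tau_i/\sigma_i$, which is precisely the width of the collar excised in the statement. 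Making this rigorous with constants of class $\C(i)$ is what the two parts amount to.

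For part~(1) (so $\alpha_i\beta_i\neq0$) I would take $0<\varrho<\varrho_*-C\tfrac{\alpha_i+\beta_i}{\sigma_i}\tau_i$, estimate $\varrho_*^2-\varrho^2\ge(\varrho_*-\varrho)\varrho_*\ge C\tfrac{\alpha_i+\beta_i}{\sigma_i}\tau_i\varrho_*$, and use $\hat{\chi}^{(i)}\hat{\psi}^{(i)}\varrho_*=\tau_i(\hat{\chi}^{(i)}\hat{\psi}^{(i)})^{1/2}$ to get $-L(\varrho)\ge\frac{C(\alpha_i+\beta_i)\tau_i^2(\hat{\chi}^{(i)}\hat{\psi}^{(i)})^{1/2}}{\sigma_i\varrho^3}$; bounding $\varrho^2,\varrho^3$ above by $\varrho_*^2,\varrho_*^3$ and using $\tau_i\le\tfrac{\alpha_i+\beta_i}{2}$ turns the error bound into $|E|\le\frac{3C'}{2}\cdot\frac{(\alpha_i+\beta_i)\tau_i^2(\hat{\chi}^{(i)}\hat{\psi}^{(i)})^{1/2}}{\sigma_i\varrho^3}$, so $C>\tfrac32 C'$ forces $L+E<0$, i.e.\ negative outward (equivalently positive inward) mean curvature. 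For part~(2) I would split the region: when $\varrho\ge2\varrho_*$ --- which is all of it when $\alpha_i\beta_i=0$ --- then $\varrho^2-\varrho_*^2\ge\tfrac34\varrho^2$ gives $L(\varrho)\ge\frac{3\hat{\chi}^{(i)}\hat{\psi}^{(i)}}{4\varrho}$, which beats $|E|$ once $\tfrac{\varrho}{\sigma_i}$ (small because $\varrho<\sigma_i/C$ with $C$ large) and $\tfrac{\alpha_i+\beta_i}{\sigma_i}$ (small by hypothesis) are small; and in the band $\varrho_*+C\tfrac{\alpha_i+\beta_i}{\sigma_i}\tau_i<\varrho\le2\varrho_*$ (which is nonempty only if $\tau_i>0$) the same bookkeeping as in part~(1), now with $\varrho\sim\varrho_*\sim\tau_i$, again shows $L$ dominates $E$. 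Finally I would set $C$ equal to the largest of the finitely many constants of class $\C(i)$ that appear, and take $\e$ small relative to $\C(i)$ so that the collar $C\tfrac{\alpha_i+\beta_i}{\sigma_i}\tau_i$ stays below $\varrho_*$ and the cutoff $\sigma_i/C$ stays above $\varrho_*+C\tfrac{\alpha_i+\beta_i}{\sigma_i}\tau_i$; this closes both parts.

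The hard part is not conceptual but the bookkeeping just indicated: one must keep the powers of $\varrho$, $\tau_i$, $\alpha_i+\beta_i$ and $\hat{\chi}^{(i)}\hat{\psi}^{(i)}$ aligned well enough to see that the $O\bigl(\tfrac{\alpha_i+\beta_i}{\sigma_i}\bigr)$ multiplicative collar on the radius is exactly (up to the universal constant $C$) what is needed for the sign of $L$ to survive the error $E$, and to verify that every implied constant stays within the class $\C(i)$.
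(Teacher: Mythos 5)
Your proof is correct and follows the same strategy as the paper: decompose the mean curvature from Lemma~\ref{meancurvformula} into a Reissner--Nordstr\"om-type dominant term whose sign changes at $\varrho_*=\hat{f}_i\tau_i$ plus an error term controlled by constants of class $\C(i)$, and show the dominant term's sign survives outside a collar of radial width $O\bigl(\tfrac{\alpha_i+\beta_i}{\sigma_i}\bigr)\tau_i$. The bookkeeping differs only cosmetically --- you work directly in $\varrho$ and factor the dominant term as $\tfrac{\hat{\chi}^{(i)}\hat{\psi}^{(i)}}{\varrho^3}(\varrho^2-\varrho_*^2)$, the paper rescales to $K=\varrho/\tau_i$ and tracks sign changes of the shifted functions $f_\pm(K)$, and you fold the $\alpha_i\beta_i=0$ case into the $\varrho\ge 2\varrho_*$ branch (with $\varrho_*=0$) where the paper devotes a separate short paragraph to it.
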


\begin{remark}\label{taos-afterthefact-remark1}
The result \eqref{part2minsurfprop-ver2} may be improved in circumstances when we have further information about $|\grad \chi^{(i)}|+|\grad \psi^{(i)}|$ near $p_i$. As is, we only know that over the ball $|x-p_i|\le \sigma_i/2$ we have 
$$|\grad\chi^{(i)}|+|\grad\psi^{(i)}|\le C/\sigma_i$$
for some constant $C$ of class $\C(i)$. Replacing $\tfrac{\sqrt{\alpha_i\beta_i}}{\sigma_i}$ in the proof below by 
$$\sqrt{\alpha_i\beta_i} \left\||\grad\chi^{(i)}|+|\grad \psi^{(i)}|\right\|_{L^\infty(B_{\geucl}(p_i,\sigma_i/2))}$$ proves that annular regions
$$\left((\hat{\chi}^{(i)}\hat{\psi}^{(i)})^{-1/2}+C\tfrac{\alpha_i+\beta_i}{\sigma_i}\right)\sqrt{\alpha_{i}\beta_{i}}< |x-p_i|<R_i,$$
where $R_i\le \sigma_i/2$ is any value for which
$$C\,\left\||\grad\chi^{(i)}|+|\grad \psi^{(i)}|\right\|_{L^\infty(B_{\geucl}(p_i,\sigma_i/2))} \le 1/R_i,$$
are foliated by coordinate spheres of positive (outward) mean curvature.
\end{remark}

\begin{proof} Our proof splits into two cases depending of whether $p_i\in \P_{**}$ or $p_i\not\in \P_{**}$. In both cases the strategy is to control the sign of the expressions $\mathscr{H}_{\pm}$ defined as follows:
\begin{equation}\label{aux-ineq1-oct2020}
\mathscr{H}_\pm:=\pm |\grad(\chi^{(i)}\psi^{(i)})|_{\geucl}+\tfrac{1}{\varrho}\left(\chi^{(i)}\psi^{(i)}\pm \beta_{i}|\grad(\chi^{(i)})|_{\geucl}\pm\alpha_{i}|\grad(\psi^{(i)})|_{\geucl}\right)-\tfrac{\alpha_{i}\beta_{i}}{\varrho^3}.
\end{equation}
By Lemma \ref{meancurvformula} we have $\mathscr{H}_-\le \frac{(\chi_{\BL}\psi_{\BL})^2}{2} H_{g_{\BL}}\le \mathscr{H}_+$; thus positivity of $\mathscr{H}_-$ implies positivity of $H$ and negativity of $\mathscr{H}_+$ implies negativity of $H_{g_{\BL}}$. 
\medbreak
{\sc The case of $p_i\in \P_{**}$ i.e $\alpha_{i}\beta_{i}\neq 0$.} The idea is to set 
\begin{equation}\label{defnK}
\varrho=|x-p_i|=\tau_iK\le \sigma_i/2
\end{equation} 
for $\tau_i=\sqrt{\alpha_i\beta_i}$ and a judicious choice of $K$. Note that, by the  Mean Value Theorem, we have:
$$|\chi^{(i)}(x)\psi^{(i)}(x)-\hat{\chi}^{(i)}\hat{\psi}^{(i)}|\le \frac{C}{\sigma_i}|x-p_i|$$
for all $x$ with $|x-p_i|\le \sigma_i/2$.
We get 
$$\tau_i\,\mathscr{H}_\pm=\pm\tau_i\left|\grad(\chi^{(i)}\psi^{(i)})\right|+\frac{1}{K}\left(\chi^{(i)}\psi^{(i)}\pm \alpha_{i}|\grad\,\psi^{(i)}|\pm \beta_{i}\,|\grad\,\chi^{(i)}|\right)-\frac{1}{K^3},$$
from which we obtain 
$$\tau_i\,\mathscr{H}_\pm- \left(\frac{\hat{\chi}^{(i)}\hat{\psi}^{(i)}}{K}-\frac{1}{K^3}\right)\in \frac{\tau_i}{\sigma_i}\C(i)+\frac{1}{K}\frac{\alpha_{i}\,+\beta_{i}}{\sigma_i}\C(i).$$
It follows there is some constant $C\in \C(i)$ for which 
\begin{equation}\label{meancurv-mainestimate}
\begin{aligned}
&\tau_i\,\mathscr{H}_{+}<\left(\frac{\hat{\chi}^{(i)}\hat{\psi}^{(i)}+C(\alpha_i+\beta_i)/\sigma_i}{K}-\frac{1}{K^3}\right)+C\frac{\tau_i}{\sigma_i},\\
&\tau_i\,\mathscr{H}_{-}>\left(\frac{\hat{\chi}^{(i)}\hat{\psi}^{(i)}-C(\alpha_i+\beta_i)/\sigma_i}{K}-\frac{1}{K^3}\right)- C\frac{\tau_i}{\sigma_i}.
\end{aligned}
\end{equation}

The function 
$$f_+(K)=\frac{\hat{\chi}^{(i)}\hat{\psi}^{(i)}+C(\alpha_i+\beta_i)/\sigma_i}{K}-\frac{1}{K^3}$$ 
has a root at 
$$K_0=\left(\hat{\chi}^{(i)}\hat{\psi}^{(i)}+C(\alpha_i+\beta_i)/\sigma_i\right)^{-1/2},$$
and is increasing and concave down on $(0,\sqrt{3}K_0)$. Thus there is an interval of the form $(0,K_0-C'\tfrac{\tau_i}{\sigma_i})$ on which $f_+(K)<-C\tfrac{\tau_i}{\sigma_i}$. In particular, if $(\alpha_i+\beta_i)/\sigma_i$ is sufficiently small (relative to control variables of class $\C(i)$) then for some constant $C'$ of class $\C(i)$ and all 
$$K<\left(\hat{\chi}^{(i)}\hat{\psi}^{(i)}\right)^{-1/2}\!\!\!-C'\frac{\alpha_i+\beta_i}{\sigma_i}$$
the quantities $\mathscr{H}_+$ and $H_{g_{\BL}}$ are both negative. 

Similar analysis applies to the function 
$$f_-(K)=\frac{\hat{\chi}^{(i)}\hat{\psi}^{(i)}-C(\alpha_i+\beta_i)/\sigma_i}{K}-\frac{1}{K^3},$$
although admittedly one also has to pay a little extra attention to the fact that $f_-(K)\to 0$ as $K\to \infty$. For example, we may want to impose a restriction that $(\alpha_i+\beta_i)/\sigma_i$ be small enough so that 
\begin{equation}\label{aux-ineq2-oct2020}
\hat{\chi}^{(i)}\hat{\psi}^{(i)}-C(\alpha_i+\beta_i)/\sigma_i>1/2,
\end{equation}
which in turn ensures that 
$$f_-(K)>1/(4K) \text{\ \ when\ \ } K>2.$$ 
Thus if $(\alpha_i+\beta_i)/\sigma_i$ is sufficiently small we have $f_-(K)>C\tfrac{\tau_i}{\sigma_i}$ whenever $K$ belongs to an interval of the form 
$$\left(\hat{\chi}^{(i)}\hat{\psi}^{(i)}\right)^{-1/2}+C'\frac{\alpha_i+\beta_i}{\sigma_i}<K<\frac{\sigma_i}{4C\tau_i}.$$
In particular, for values of $K$ in this range the quantities $\mathscr{H}_-$ and $H_{g_{\BL}}$ are both positive. 

{\sc The case when $p_i\not\in \P_{**}$ i.e $\alpha_{i}\beta_{i}=0$.} Going back to \eqref{aux-ineq1-oct2020} and arguing as above leads to  
$$\mathscr{H}_{-}>\frac{\hat{\chi}^{(i)}\hat{\psi}^{(i)}-C(\alpha_i+\beta_i)/\sigma_i}{\varrho}- \frac{C}{\sigma_i}.$$
Under the assumption of type \eqref{aux-ineq2-oct2020} the latter becomes 
\begin{equation}\label{meancurv-mainestimate-ver2}
\mathscr{H}_{-}>\frac{1/2}{\varrho}- \frac{C}{\sigma_i}.
\end{equation}
The right hand side of this inequality is clearly positive for all $0<\varrho<\frac{\sigma_i}{2C}$. In other words, the signs of $\mathscr{H}_-$ and $H_{g_{\BL}}$ are both positive. This completes our proof.
\end{proof}

Equipped with the knowledge of the mean curvature along the leaves of the spherical foliation allows us to narrow down possible locations of the minimal surfaces. 

\begin{lemma}\label{BLR-minsurf} There exists a constant $C$ of class $\C(i)$ and a value of $\e>0$ small relative to control variables of class $\C(i)$ such that the following hold whenever $\frac{\alpha_i+\beta_i}{\sigma_i}$. 
\begin{enumerate}
\item\label{BLR-part3/2minsurfprop} Let $p_i\in \P_{**}$. Any (smooth, immersed) minimal surface which is contained in $B_{\geucl}(p_i, \sigma_i/C)$ is necessarily contained in  
$$B_{\geucl}\left(p_i, \left((\hat{\chi}^{(i)}\hat{\psi}^{(i)})^{-1/2}+C\tfrac{\alpha_i+\beta_i}{\sigma_i}\right)\sqrt{\alpha_{i}\beta_{i}}\right).$$
\medbreak
\item\label{BLR-part2minsurfprop} Let $p_i\in \P_{*}\smallsetminus\P_{**}$. There are no minimal surfaces which are completely contained within $B_{\geucl}(p_i, \sigma_i/C)$.
\end{enumerate}
\end{lemma}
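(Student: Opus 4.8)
The plan is to deduce both parts of Lemma~\ref{BLR-minsurf} from Lemma~\ref{oldChrLemma2} by a standard ``touching'' argument built on the geometric maximum principle (the strong maximum principle for the quasilinear mean-curvature operator). First I would record the form I intend to use: if $\Omega\subseteq(\mathbb{R}^3\smallsetminus\P_*,g_{\BL})$ is a domain whose boundary has strictly positive mean curvature with respect to the normal pointing \emph{out} of $\Omega$ (the sign convention of Lemma~\ref{meancurvformula}, under which the round sphere bounding a ball has $H=2/\varrho$), then no closed minimal surface whose image lies in $\overline{\Omega}$ can meet $\partial\Omega$. The justification is the usual one: at a hypothetical touching point $q_0$ the minimal surface is locally an embedded disk tangent to $\partial\Omega$ and lying on the $\Omega$-side; writing both surfaces as graphs over their common tangent plane and comparing Hessians at $q_0$ forces $H_{\partial\Omega}(q_0)\le H_\Sigma(q_0)=0$ with respect to the outward normal, contradicting strict positivity.

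Next I would set up the common skeleton of both arguments. Enlarging $C$ and shrinking $\e$ (staying within class $\C(i)$) so that the conclusions of Lemma~\ref{oldChrLemma2} hold with these constants, I take a (closed, smooth, immersed) minimal surface $\Sigma$ for $g_{\BL}$ whose image lies in $B_{\geucl}(p_i,\sigma_i/C)$. Since $\Sigma$ is a submanifold of the manifold $\mathbb{R}^3\smallsetminus\P_*$ on which $g_{\BL}$ is defined, its image avoids $p_i$, so $\varrho_0:=\max_{x\in\Sigma}|x-p_i|$ is a well-defined number with $0<\varrho_0<\sigma_i/C$, attained at some $q_0\in\Sigma$; then the image of $\Sigma$ lies in $\overline{B_{\geucl}(p_i,\varrho_0)}$ and meets the coordinate sphere $\{|x-p_i|=\varrho_0\}$ at $q_0$. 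For part~\eqref{BLR-part2minsurfprop} ($p_i\in\P_*\smallsetminus\P_{**}$), the estimate~\eqref{meancurv-mainestimate-ver2} from the case $\alpha_i\beta_i=0$ of Lemma~\ref{oldChrLemma2} shows that every coordinate sphere of radius in $(0,\sigma_i/C)$ has strictly positive outward mean curvature, so the comparison principle applied to $\Omega=B_{\geucl}(p_i,\varrho_0)$ contradicts $q_0\in\Sigma\cap\partial\Omega$; hence no such $\Sigma$ exists. For part~\eqref{BLR-part3/2minsurfprop} ($p_i\in\P_{**}$), if $\varrho_0$ exceeded $\big((\hat{\chi}^{(i)}\hat{\psi}^{(i)})^{-1/2}+C\tfrac{\alpha_i+\beta_i}{\sigma_i}\big)\sqrt{\alpha_i\beta_i}$, then together with $\varrho_0<\sigma_i/C$ it would lie in the range of part~\eqref{part2minsurfprop-ver2} of Lemma~\ref{oldChrLemma2}, where coordinate spheres again have strictly positive outward mean curvature; the same contradiction then yields the desired bound $\varrho_0\le\big((\hat{\chi}^{(i)}\hat{\psi}^{(i)})^{-1/2}+C\tfrac{\alpha_i+\beta_i}{\sigma_i}\big)\sqrt{\alpha_i\beta_i}$, i.e. containment of $\Sigma$ in the asserted ball.

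I do not expect a serious obstacle here: all the analytic content sits in Lemma~\ref{oldChrLemma2}, and the remaining work is bookkeeping. The points to be careful about are: (i) the sign conventions in the maximum principle, so that ``minimal surface inside a mean-convex ball'' genuinely produces a contradiction; (ii) choosing the constant $C$ of class $\C(i)$ at least as large as the one in Lemma~\ref{oldChrLemma2} and the threshold $\e$ at least as small; and (iii) the (easy) remark that $\Sigma$ avoids $p_i$ and is compact, so that the maximum $\varrho_0$ is attained. It is worth noting in the write-up that the argument never needs to know the sign of the mean curvature inside the thin ``transition annulus'' of width $\sim C\tfrac{\alpha_i+\beta_i}{\sigma_i}\sqrt{\alpha_i\beta_i}$ about $|x-p_i|=(\hat{\chi}^{(i)}\hat{\psi}^{(i)})^{-1/2}\sqrt{\alpha_i\beta_i}$: if $\varrho_0$ lands there or below, $\Sigma$ is already inside the asserted ball, and if $\varrho_0$ is larger, Lemma~\ref{oldChrLemma2} supplies the needed positivity. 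Finally, since only an upper bound on $\varrho_0$ is claimed, it suffices to inspect the outermost point of $\Sigma$; the inner foliation of part~\eqref{part3/2minsurfprop-ver2} of Lemma~\ref{oldChrLemma2} is not needed for this lemma (it will be used elsewhere to pin the minimal surface down uniquely).
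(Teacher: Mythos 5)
Your proposal is correct and essentially identical to the paper's proof: both take the outermost point of $\Sigma$, compare mean curvatures of $\Sigma$ and the tangent coordinate sphere there (via the second-derivative/touching argument), translate to $H_{g_{\BL}}$ using the conformal formula, and conclude from Lemma \ref{oldChrLemma2} that the outermost radius is bounded (part \eqref{BLR-part3/2minsurfprop}) or that no such surface exists (part \eqref{BLR-part2minsurfprop}). The only cosmetic difference is that you package the touching step as a strong maximum principle for mean-convex domains, whereas the paper states the pointwise Hessian comparison at the extremal point directly; your closing observation that the inner (negative-$H$) foliation of Lemma \ref{oldChrLemma2} is not needed here matches the paper's use of the lemma.
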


\begin{remark}\label{taos-afterthefact-remark3}
The value of $\sigma_i/C$ could in certain circumstances be increased -- please refer to Remark \ref{taos-afterthefact-remark1} for further details.
\end{remark}

\begin{proof}
Suppose $\Sigma$ is a minimal surface contained entirely in $B_{\geucl}(p_i, \sigma_i/C)$. Furthermore, suppose $\max_{x\in \Sigma}|x-p_i|=\varrho_{\mathrm{out}}$ is reached at a point $x_0$. Then at $x_0$ we must have 
$$H_{\geucl}(\Sigma)\ge H_{\geucl}(\{|x-p_i|=\varrho_{\mathrm{out}}\})=\tfrac{2}{\varrho_{\mathrm{out}}}.$$
Since $\vec{N}_{\geucl}$ for $\Sigma$ at $x_0$ is the same as it would be for the sphere $|x-p_i|=\varrho_{\mathrm{out}}$ it follows that 
$$0=H_{g_{\BL}}(\Sigma)\ge H_{g_{\BL}}(\{|x-p_i|=\varrho_{\mathrm{out}}\})$$
i.e that the coordinate sphere $|x-p_i|=\varrho_{\mathrm{out}}$ has nonpositive outward mean curvature $H_{g_{\BL}}$ at $x_0$. Thus, the coordinate sphere $|x-p_i|=\varrho_{\mathrm{out}}$ cannot have everywhere positive outward mean curvature $H_{g_{\BL}}$. In the case when $p_i\in\P_{**}$ Lemma \ref{oldChrLemma2} implies that 
$$\varrho_{\mathrm{out}}< \left((\hat{\chi}^{(i)}\hat{\psi}^{(i)})^{-1/2}+C\tfrac{\alpha_i+\beta_i}{\sigma_i}\right)\sqrt{\alpha_{i}\beta_{i}},$$
while in the case of $p_i\not\in\P_{**}$ we have a contradiction. 
\end{proof}

\subsection{An alternative foliation}
The idea now is to consider foliations of close neighborhoods of $p_i$ by surfaces which are dilated versions of the minimal surface $\Sigma_{i}$. We shall abuse the notation and let $k\Sigma_{i}$ denote the surface determined by the function $k\cdot S_{i}$ where $S_{i}$ is the function of Theorem \ref{minsurf:thm1-taos}. In view of Lemma \ref{BLR-minsurf} we are mainly interested in values of $k$ near $k\le C$. The intuition behind the next several steps is as follows. Concentric spheres in Reissner-Nordstr\"om geometry form a foliation in which the outward mean curvature transitions from being negative to positive as we go from the inside towards the outside of the minimal surface. The geometry near $\Sigma_{i}$ is approximately that of Reissner-Nordstr\"om body with $\Sigma_{i}$ corresponding to the minimal sphere in the middle of the Reissner-Nordstr\"om ``neck". Thus for $k<1$ we expect $k\Sigma_{i}$ to have negative outward mean curvature and for $k>1$ we expect it to have positive outward mean curvature. The proof of this fact is based on a following computation. 

\begin{lemma}\label{lastminutecomputation}
Consider a vector field $\vec{N}$ which is parallel, in the Euclidean sense of the word, in the $\partial_r$-direction stemming from $p_i\in \P_{**}$. If $\partial_r\cdot \vec{N}>0$ then the functions
$$r\,\frac{\grad \chi_{\BL}\cdot \vec{N}}{\chi_{\BL}} \text{\ \ and\ \ } r\,\frac{\grad \psi_{\BL}\cdot \vec{N}}{\psi_{\BL}}$$
are increasing in $r=|x-p_i|$, all assuming $\tfrac{\alpha_i+\beta_i}{\sigma_i}$ is sufficiently small and 
$r\le C\sqrt{\alpha_{i}\beta_{i}}$
with $C\in \C(i)$. 
\end{lemma}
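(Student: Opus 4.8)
The plan is to expand the quotient along a fixed ray emanating from $p_i$ and to treat $\chi^{(i)}$ as a slowly varying perturbation of its value $\hat{\chi}^{(i)}$ at $p_i$, so that the monotonicity reduces to the elementary fact that $r\mapsto r\,\partial_r(\hat{\chi}^{(i)}+\alpha_i/r)/(\hat{\chi}^{(i)}+\alpha_i/r)$ is increasing; this last is exactly the radial derivative of the relevant quantity for the Reissner--Nordstr\"om model metric \eqref{ReissNordLikeMetric}. I would argue the $\chi$-statement and note that the $\psi$-statement follows verbatim after replacing $(\chi^{(i)},\alpha_i,\hat{\chi}^{(i)})$ by $(\psi^{(i)},\beta_i,\hat{\psi}^{(i)})$. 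Fix a ray $x=p_i+r\omega$. Since $\vec N$ is Euclidean-parallel in the radial direction it is constant in $r$ along this ray, so after rescaling it (which only rescales the quantity under study and hence cannot affect whether it is increasing) we may take $|\vec N|=1$ and write $\mu:=\partial_r\cdot\vec N=\omega\cdot\vec N>0$, a constant along the ray. Set $\phi:=\chi^{(i)}$, $\phi_r:=\grad\chi^{(i)}\cdot\partial_r$, $\nu:=\grad\chi^{(i)}\cdot\vec N$ and $\nu_r:=\mathrm{Hess}\,\chi^{(i)}(\partial_r,\vec N)$, all viewed as functions of $r$ along the ray; note that $\tfrac{d}{dr}\phi=\phi_r$ and $\tfrac{d}{dr}\nu=\nu_r$ precisely because $\vec N$ is $r$-parallel. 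From the derivative bounds recalled right after Definition \ref{defn-cs} (valid on $|x-p_i|\le\sigma_i/2$), together with $\phi\ge 1$ and the Mean Value Theorem, one obtains constants $C$ of class $\C(i)$ with $1\le\phi\le C$, $|\phi_r|,|\nu|\le C/\sigma_i$ and $|\nu_r|\le C/\sigma_i^2$ throughout $|x-p_i|\le\sigma_i/2$; this region contains our range $r\le C_0\sqrt{\alpha_i\beta_i}=C_0\tau_i$ once $\tfrac{\alpha_i+\beta_i}{\sigma_i}$ is small relative to $C_0\in\C(i)$, since $\tau_i\le\tfrac12(\alpha_i+\beta_i)$.

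The next step is a direct differentiation. Using $\chi_{\BL}=\phi+\alpha_i/r$ and $\grad\chi_{\BL}\cdot\vec N=\nu-\alpha_i\mu/r^2$ we have
$$r\,\frac{\grad\chi_{\BL}\cdot\vec N}{\chi_{\BL}}=\frac{r^2\nu-\alpha_i\mu}{r\phi+\alpha_i},$$
and clearing the strictly positive factor $(r\phi+\alpha_i)^2$ shows that the sign of the $r$-derivative is the sign of
$$N_1:=\alpha_i\mu\,(\phi+r\phi_r)+r^2\nu\phi+2r\alpha_i\nu+r^3\nu_r\phi+r^2\alpha_i\nu_r-r^3\nu\phi_r.$$
The first summand is the Reissner--Nordstr\"om model term: since $r|\phi_r|\le C_0\tau_i\cdot C/\sigma_i\le\tfrac12 CC_0\,\tfrac{\alpha_i+\beta_i}{\sigma_i}<\tfrac12$ once $\tfrac{\alpha_i+\beta_i}{\sigma_i}$ is small relative to $\C(i)$, we get $\phi+r\phi_r\ge\tfrac12$ and hence $\alpha_i\mu(\phi+r\phi_r)\ge\tfrac12\alpha_i\mu$. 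Each of the five remaining summands I would bound by substituting $r\le C_0\tau_i$, $\tau_i^2=\alpha_i\beta_i$, $\tau_i\le\tfrac12(\alpha_i+\beta_i)$, $\beta_i\le\alpha_i+\beta_i$ and the derivative bounds above: for instance $|r^2\nu\phi|\le Cr^2/\sigma_i\le CC_0^2\alpha_i\beta_i/\sigma_i\le\alpha_i\cdot CC_0^2\,\tfrac{\alpha_i+\beta_i}{\sigma_i}$ and $|2r\alpha_i\nu|\le\alpha_i\cdot CC_0\,\tfrac{\alpha_i+\beta_i}{\sigma_i}$, while the cubic terms $r^3\nu_r\phi$, $r^3\nu\phi_r$ and the term $r^2\alpha_i\nu_r$ come out of order $\alpha_i\big(\tfrac{\alpha_i+\beta_i}{\sigma_i}\big)^2$, hence still bounded by $\alpha_i\cdot C'\,\tfrac{\alpha_i+\beta_i}{\sigma_i}$ for a constant $C'$ of class $\C(i)$ (using $\tfrac{\alpha_i+\beta_i}{\sigma_i}<1$ to absorb the squared terms).

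Assembling these bounds yields $N_1\ge\tfrac12\alpha_i\mu-\alpha_i C'\,\tfrac{\alpha_i+\beta_i}{\sigma_i}$, which is strictly positive once $\tfrac{\alpha_i+\beta_i}{\sigma_i}$ is small relative to $\C(i)$ and to the value $\mu=\partial_r\cdot\vec N$; the latter dependence is harmless in the intended application, where $\vec N$ is the Euclidean unit normal to the near-round sphere $k\Sigma_i$ and so $\partial_r\cdot\vec N$ is close to $1$ by Theorem \ref{actualminsurfthm}. Therefore $\partial_r\big(r\,\tfrac{\grad\chi_{\BL}\cdot\vec N}{\chi_{\BL}}\big)>0$ along every such ray, and the identical estimate with $(\psi^{(i)},\beta_i,\hat{\psi}^{(i)})$ in place of $(\chi^{(i)},\alpha_i,\hat{\chi}^{(i)})$ gives the monotonicity of $r\,\tfrac{\grad\psi_{\BL}\cdot\vec N}{\psi_{\BL}}$. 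I expect the main obstacle to be purely one of bookkeeping: one must check that every term of $N_1$ other than the model term $\alpha_i\mu(\phi+r\phi_r)$ acquires, after invoking the \emph{sharp} cutoff $r\le C_0\sqrt{\alpha_i\beta_i}$ rather than merely $r\le\sigma_i/2$, a genuine gain of $\tfrac{\alpha_i+\beta_i}{\sigma_i}$ over a universal multiple of $\alpha_i$ — which is precisely why the radius restriction in the hypothesis is $r\le C\sqrt{\alpha_i\beta_i}$ and not the cruder $r\le\sigma_i/2$.
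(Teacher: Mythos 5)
Your proof is correct and follows essentially the same route as the paper's: decompose $\chi_{\BL}=\alpha_i/r+\chi^{(i)}$, apply the quotient rule, and then show that everything except the model term $\alpha_i(\partial_r\cdot\vec N)(\chi^{(i)}+r\,\partial_r\chi^{(i)})$ is of order $\alpha_i\,O\bigl(\tfrac{\alpha_i+\beta_i}{\sigma_i}\bigr)$ using the bounds $|\grad\chi^{(i)}|\le C/\sigma_i$, $|\mathrm{Hess}\,\chi^{(i)}|\le C/\sigma_i^2$ together with the sharp restriction $r\lesssim\sqrt{\alpha_i\beta_i}$. Your explicit flag that the smallness threshold on $\tfrac{\alpha_i+\beta_i}{\sigma_i}$ must be taken relative to $\mu=\partial_r\cdot\vec N$ is a real subtlety the paper absorbs into the phrase ``bounded away from zero,'' but as you observe it is harmless in the sole application (Lemma \ref{alternativefoliation}), where $\mu=f/\sqrt{f^2+|df|^2}$ is near $1$ by Theorem \ref{actualminsurfthm}.
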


\begin{proof}
Decompose $\chi_{\BL}=\frac{\alpha_{i}}{r}+\chi^{(i)}$. We then have:
$$r\,\frac{\grad \chi_{\BL}\cdot \vec{N}}{\chi_{\BL}}=\frac{-\alpha_{i}(\partial_r\cdot \vec{N})+r^2\,\grad \chi^{(i)}\cdot \vec{N}}{\alpha_{i}+r\chi^{(i)}}.$$
To proceed we need estimates on $r\chi^{(i)}$, $\partial_r(r\chi^{(i)})$, $r^2\,\grad \chi^{(i)}\cdot \vec{N}$ and 
$\partial_r(r^2\,\grad \chi^{(i)}\cdot \vec{N})$ over the region where $r=O(\sqrt{\alpha_{i}\beta_{i}})$. Such estimates can be obtained from the observation that $\chi^{i}$ is bounded on the ball $B_{\geucl}(p_i, \sigma_i/2)$, which in turn means that
$$\|\grad \chi^{(i)}\|_{L^\infty(B_{\geucl}(p_i, \sigma_i/2))}\le \tfrac{C}{\sigma_i} \text{\ \ and\ \ } \|\mathrm{Hess} \chi^{(i)}\|_{L^\infty(B_{\geucl}(p_i, \sigma_i/2))}\le \tfrac{C}{\sigma_i^2}$$
for some constant $C$ of class $\C(i)$. Specifically, we have:
\begin{itemize}
\item $r\chi^{(i)}=O(\sqrt{\alpha_{i}\beta_{i}})$;
\medbreak
\item $\partial_r(r\chi^{(i)})=\chi^{(i)}+O(\tfrac{\sqrt{\alpha_{i}\beta_{i}}}{\sigma_i})$;
\medbreak
\item $r^2\,\grad \chi^{(i)}\cdot \vec{N}=O(\tfrac{\alpha_{i}\beta_{i}}{\sigma_i})$;
\medbreak
\item $\partial_r(r^2\,\grad \chi^{(i)}\cdot \vec{N})=O(\tfrac{\sqrt{\alpha_{i}\beta_{i}}}{\sigma_i}) + O(\tfrac{\alpha_{i}\beta_{i}}{\sigma_i^2})=O(\tfrac{\sqrt{\alpha_{i}\beta_{i}}}{\sigma_i})$.
\end{itemize}
The sign of $\partial_r\left(r\,\frac{\grad \chi_{\BL}\cdot \vec{N}}{\chi_{\BL}}\right)$ is determined by 
$$\begin{aligned}
&\partial_r(r^2\,\grad \chi^{(i)}\cdot \vec{N})(\alpha_{i}+r\chi^{(i)}) - (-\alpha_{i}(\partial_r\cdot \vec{N})+r^2\,\grad \chi^{(i)}\cdot \vec{N})\partial_r(r\chi^{(i)})\\
=&\alpha_{i}\left(\chi^{(i)}(\partial_r\cdot \vec{N})+O(\tfrac{\alpha_i+\beta_i}{\sigma_i})\right)
\end{aligned}$$
The leading term $\chi^{(i)}(\partial_r\cdot \vec{N})$ is positive and bounded away from zero, while $O(\tfrac{\alpha_i+\beta_i}{\sigma_i})$ can be made as small as needed. Thus  
$$\partial_r\left(r\,\frac{\grad \chi_{\BL}\cdot \vec{N}}{\chi_{\BL}}\right)>0$$
assuming $\tfrac{\alpha_i+\beta_i}{\sigma_i}$ is small enough relative to $\C(i)$.
\end{proof}

\begin{lemma}\label{alternativefoliation}
Let $k\Sigma_{i}$ denote the surface determined by the function $k\cdot S_{i}$ as in the statement of Theorem \ref{minsurf:thm1-taos}. There exists a constant $C$ of class $\C(i)$ and a value of $\e>0$ small relative to $\C(i)$ such that the following hold whenever $\frac{\alpha_i+\beta_i}{\sigma_i}<\e$: 
\begin{itemize}
\item $k\Sigma_{i}$ has positive mean curvature for all $1<k\le C$ and
\medbreak
\item $k\Sigma_{i}$ has negative mean curvature for all $0<k<1$.
\end{itemize}
\end{lemma}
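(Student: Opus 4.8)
The plan is to compare the mean curvature of $k\Sigma_{i}$ with that of $\Sigma_{i}$, which vanishes, by tracking how each ingredient of \eqref{genH} transforms under the Euclidean dilation $D_{k}\colon x\mapsto p_i+k(x-p_i)$ centered at the source $p_i$. Throughout I write $g:=\chi_{\BL}\psi_{\BL}$ and recall from Theorem \ref{actualminsurfthm} that $\Sigma_{i}$ is the radial graph $|x-p_i|=\S_{i}(\omega)$ with $\S_{i}=\hat{f}_{i}e^{h}$ and $\|dh\|_{H^{l}(S^2)}=O(\tau_i/\sigma_i)$, while by Theorem \ref{minsurf:thm1-taos} we have $\S_{i}(\omega)\le C\sqrt{\alpha_i\beta_i}$ with $C$ of class $\C(i)$. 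Since $\|dh\|$ is small, the outward Euclidean unit normal $\vec{N}_{\geucl}$ to $\Sigma_{i}$ differs from the radial direction $\partial_r=\omega$ by $O(\tau_i/\sigma_i)$, so $\partial_r\cdot\vec{N}_{\geucl}\ge \tfrac12$ once $\tfrac{\alpha_i+\beta_i}{\sigma_i}$ is small relative to $\C(i)$. The first step is the elementary observation that $D_{k}$ carries $\Sigma_{i}$ onto $k\Sigma_{i}$, preserves the outward normal direction, and scales Euclidean mean curvature by $1/k$: at $q=p_i+\S_{i}(\omega)\omega$ and $D_{k}q=p_i+k\S_{i}(\omega)\omega$ one has $\vec{N}_{\geucl}(k\Sigma_{i})(D_{k}q)=\vec{N}_{\geucl}(\Sigma_{i})(q)$ and $H_{\geucl}(k\Sigma_{i})(D_{k}q)=\tfrac1k H_{\geucl}(\Sigma_{i})(q)$.

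Next I would feed this into \eqref{genH} applied to $k\Sigma_{i}$ at $D_{k}q$ and eliminate $H_{\geucl}(\Sigma_{i})(q)$ using the minimality of $\Sigma_{i}$, which by \eqref{genH} reads $H_{\geucl}(\Sigma_{i})(q)=-2\,g(q)^{-1}\grad g(q)\cdot\vec{N}_{\geucl}$. Introducing, along the ray $r\mapsto p_i+r\omega$ with $\vec{N}$ the Euclidean-parallel extension of $\vec{N}_{\geucl}(\Sigma_{i})(q)$, the function
$$w(r):=\frac{\grad g\cdot\vec{N}}{g}=\frac{\grad\chi_{\BL}\cdot\vec{N}}{\chi_{\BL}}+\frac{\grad\psi_{\BL}\cdot\vec{N}}{\psi_{\BL}},$$
a short computation yields
$$\frac{g(D_{k}q)^{2}}{2}\,H_{g_{\BL}}(k\Sigma_{i})\big|_{D_{k}q}=\frac{g(D_{k}q)}{k\,\S_{i}(\omega)}\Big[\Theta\big(k\S_{i}(\omega)\big)-\Theta\big(\S_{i}(\omega)\big)\Big],\qquad \Theta(r):=r\,w(r).$$
Since $\chi_{\BL},\psi_{\BL}>0$ forces $g(D_{k}q)>0$, the sign of the outward mean curvature of $k\Sigma_{i}$ at $D_{k}q$ is therefore exactly the sign of $\Theta(k\S_{i}(\omega))-\Theta(\S_{i}(\omega))$.

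The decisive input is Lemma \ref{lastminutecomputation}: because $\partial_r\cdot\vec{N}>0$ and, for all $0<k\le C$, every radius $k\S_{i}(\omega)$ in play is $\le C'\sqrt{\alpha_i\beta_i}$ with $C'$ of class $\C(i)$, both $r\,\tfrac{\grad\chi_{\BL}\cdot\vec{N}}{\chi_{\BL}}$ and $r\,\tfrac{\grad\psi_{\BL}\cdot\vec{N}}{\psi_{\BL}}$ are strictly increasing on $(0,C'\sqrt{\alpha_i\beta_i}\,]$, so $\Theta$ is strictly increasing there. Consequently $\Theta(k\S_{i}(\omega))-\Theta(\S_{i}(\omega))$ is positive for every $1<k\le C$ and negative for every $0<k<1$, uniformly in $\omega\in S^2$, which is precisely the assertion of the lemma.

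The one point that needs genuine care — and the part I expect to be the main obstacle — is checking that the hypotheses of Lemma \ref{lastminutecomputation} hold along the entire family $\{k\Sigma_{i}:0<k\le C\}$ at once. Concretely, one must invoke the a priori bounds of Theorems \ref{actualminsurfthm} and \ref{minsurf:thm1-taos} to guarantee simultaneously that $\Sigma_{i}$ — and hence each $k\Sigma_{i}$ with $k\le C$ — sits inside the radius-$C'\sqrt{\alpha_i\beta_i}$ ball on which the monotonicity of $\Theta$ is valid (here one also uses $\tau_i=\sqrt{\alpha_i\beta_i}\ll\sigma_i$), and that $\vec{N}_{\geucl}$ remains uniformly transverse to the coordinate spheres so that $\partial_r\cdot\vec{N}_{\geucl}$ stays bounded below; both are arranged by taking $\e$ small and $C'$ large relative to $\C(i)$. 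Everything else is routine manipulation of \eqref{genH} together with the scaling of Euclidean mean curvature under dilations.
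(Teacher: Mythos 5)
Your proposal is correct and follows essentially the same route as the paper's proof: use the dilation to scale $H_{\geucl}$ by $1/k$ while fixing $\vec N_{\geucl}$, substitute into \eqref{genH}, eliminate $H_{\geucl}(\Sigma_i)$ using minimality, and reduce the sign question to the monotonicity furnished by Lemma \ref{lastminutecomputation}. The only cosmetic difference is that you combine the $\chi_{\BL}$ and $\psi_{\BL}$ contributions into a single function $\Theta(r)=r\,\grad(\chi_{\BL}\psi_{\BL})\cdot\vec N/(\chi_{\BL}\psi_{\BL})$ and invoke the monotonicity of the sum, whereas the paper keeps the two summands separate and applies Lemma \ref{lastminutecomputation} to each; your careful remark about keeping the whole family $k\Sigma_i$, $k\le C$, inside the radius-$C'\sqrt{\alpha_i\beta_i}$ ball is exactly the hypothesis check that the paper's proof relies on implicitly.
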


\begin{proof}
In our application of Lemma \ref{lastminutecomputation} the vector field $\vec{N}=\vec{N}_{\geucl}$ is the Euclidean unit normal vector field to the surfaces $k\Sigma_{i}$. It can be computed that 
$$\partial_r\cdot \vec{N}_{\geucl}=\tfrac{f}{\sqrt{f^2+|df|^2}}>0$$
for $f$ from Theorem \ref{actualminsurfthm}. We see from \eqref{genH} that 
\begin{equation}
0=\frac{\chi_{\BL}\psi_{\BL}}{2}H_{g_{\BL}}(\Sigma_{i})=\frac{1}{2}H_{\geucl}(\Sigma_{i})+\frac{\grad \chi_{\BL}\cdot \vec{N}_{\geucl}}{\chi_{\BL}}\big{|}_{\Sigma_{i}}+ \frac{\grad \psi_{\BL}\cdot \vec{N}_{\geucl}}{\psi_{\BL}}\big{|}_{\Sigma_{i}}.
\end{equation}
Since $H_{\geucl}(k\Sigma_{i})=\tfrac{1}{k}H_{\geucl}(\Sigma_{i})$, we further have:
\begin{equation}
\begin{aligned}
\frac{\chi_{\BL}\psi_{\BL}}{2}H_{g_{\BL}}(k\Sigma_{i})=&\frac{1}{2k}H_{\geucl}(\Sigma_{i})+\frac{\grad \chi_{\BL}\cdot \vec{N}_{\geucl}}{\chi_{\BL}}\Big{|}_{k\Sigma_{i}}+ \frac{\grad \psi_n\cdot \vec{N}_{\geucl}}{\psi_{\BL}}\Big{|}_{k\Sigma_{i}}\\
=&\frac{1}{kS_{i}}\left(kS_{i}\cdot \frac{\grad \chi_{\BL}\cdot \vec{N}_{\geucl}}{\chi_{\BL}}\Big{|}_{k\Sigma_{i}}-S_{i}\frac{\grad \chi_{\BL}\cdot \vec{N}_{\geucl}}{\chi_{\BL}}\Big{|}_{\Sigma_{i}}\right)\\
&+\frac{1}{kS_{i}}\left(kS_{i}\cdot \frac{\grad \psi_{\BL}\cdot \vec{N}_{\geucl}}{\psi_{\BL}}\Big{|}_{k\Sigma_{i}}-S_{i}\frac{\grad \psi_{\BL}\cdot \vec{N}_{\geucl}}{\psi_{\BL}}\Big{|}_{\Sigma_{i}}\right)
\end{aligned}
\end{equation}
The $\pm$ signs of the differences in the last two lines are addressed in Lemma \ref{lastminutecomputation}: they are positive when $k>1$ and negative when $k<1$.
\end{proof}

\subsection{The proof of uniqueness of $\Sigma_{i}$}
The following is the proof of the uniqueness portion of Theorem \ref{minsurf:thm1-taos}. The reader should note that the uniqueness statement can be improved in certain circumstances; Remark \ref{taos-afterthefact-remark1} has all the relevant details. 

\begin{proof}
Suppose that $\Sigma$ is any (other) minimal surface contained in $B_{\geucl}(p_i, \sigma_i/C)$. By Lemma \ref{BLR-minsurf} the surface is actually contained within a ball $B_{\geucl}(p_i, C\sqrt{\alpha_{i}\beta_{i}})$ with $C\in\C(i)$. 

Let 
$$k_+=\inf\{k\,\big{|}\, \Sigma\subseteq \mathrm{Int}(k\Sigma_{i})\},$$
where $\mathrm{Int}$ denotes the connected component of the complement of $k\Sigma_{i}$ containing $p_i$. The surface $k_+\Sigma_{i}$ is tangential to the surface $\Sigma$ with 
$$\Sigma\subseteq \overline{\mathrm{Int}}(k_+\Sigma_{i}).$$
At the point of tangency we thus have 
$$H_{\geucl}(k_+\Sigma_{i})\le H_{\geucl}(\Sigma) \text{\ \ and thus\ \ } 
H_{g_{\BL}}(k_+\Sigma_{i})\le H_{g_{\BL}}(\Sigma)=0.$$
It follows from Lemma \ref{alternativefoliation} that $k_+\le 1$ i.e that  
$$\Sigma\subseteq \overline{\mathrm{Int}}(\Sigma_{i}).$$

Now let
$$k_-=\sup\{k\,\big{|}\, \Sigma\subseteq \mathrm{Out}(k\Sigma_{i})\},$$
where $\mathrm{Out}$ denotes the connected component of the complement of $k\Sigma_{i}$ not containing $p_i$. The surface $k_-\Sigma_{i}$ is tangential to the surface $\Sigma$ with 
$$\Sigma\subseteq \overline{\mathrm{Out}}(k_-\Sigma_{i}).$$
In what follows let the $\pm$-sign be in correspondence to whether $\Sigma$ does or does not contain $p_i$, respectively.
At the point of tangency of $\Sigma$ and $k_-\Sigma_{i}$ we have 
$$\pm H_{\geucl}(\Sigma)\le  H_{\geucl}(k_-\Sigma_{i})\text{\ \ and thus\ \ } 
\pm H_{g_{\BL}}(\Sigma)=0\le H_{g_{\BL}}(k_-\Sigma_{i}),$$
It follows from Lemma \ref{alternativefoliation} that $k_-\ge 1$ i.e that  
$$\Sigma\subseteq \overline{\mathrm{Out}}(\Sigma_{i}).$$
Since we already established $\Sigma\subseteq \overline{\mathrm{Int}}(\Sigma_{i})$, our proof is now complete.
\end{proof}

\section{Brill-Lindquist-Riemann sums: point-wise convergence}\label{ConvSec}

The conformal factors $\chi_n$, $\psi_n$ for Brill-Lindquist-Riemann sums were introduced in Definition \ref{BLR}. Recall that the broad goal of this paper is to examine geometric consequences of convergences 
$$\chi_n\to \chi,\ \ \psi_n\to \psi,$$
where functions $\chi$ and $\psi$ are defined as in \eqref{potentials1} -- \eqref{potentials2}. We begin this process by investigating point-wise behavior of the sequence of functions $\chi_n$ and $\psi_n$ and their derivatives. 

\subsection{Behavior at the $C^0$-level}\label{C0Sec}

This portion of Section \ref{ConvSec} is dedicated to the analysis of the $C^0$-behavior of $\chi_n$, $\psi_n$ and $g_n$. We investigate two different regimes. One regime addresses locations which are ``far enough" from individual point-objects even though they are (potentially) within the dust cloud itself; the other regime is about locations which are ``pretty close" to one individual point-object. The reason for this  has to do with the relative sizes of terms such as 
$$\frac{\alpha_{i,n}}{|x-p_{i,n}|} \text{\ \ and\ \ } \chi^{(i)}_n(x):=\chi_n(x)-\frac{\alpha_{i,n}}{|x-p_{i,n}|}$$
within a small neighborhood of $p_{i,n}$, as well as the relative sizes of their derivatives. 

\begin{proposition}\label{BLRcontrollemma}
\ 
\begin{enumerate}
\item\label{boundednessoffactors} 
There exists a constant $C$ of class $\C$ such that 
$$\begin{cases}
\chi_n(x) +\psi_n(x)\le C & \text{if}\ \ x\not \in \bigcup_i B(p_{i,n}, \frac{D}{n^2}),\\
\chi_n^{(i)}(x)+ \psi_n^{(i)}(x)\le C & \text{if}\ \ x \in B(p_{i,n}, \frac{D}{2n}).
\end{cases}$$
for all $n$ and all $i$.
\medbreak
\item\label{qualityofapproximation}
There exists a constant $C$ of class $\C^+$ for which 
$$\begin{cases}
\left|\chi_n(x) -\chi(x) \right|+ \left|\psi_n(x) -\psi(x) \right|\le \frac{C}{n} & \text{if}\ \ x\not \in \bigcup_i B(p_{i,n}, \frac{D}{n^2}),\\
\left|\chi_n^{(i)}(x) -\chi(x) \right|+\left|\psi_n^{(i)}(x) -\psi(x) \right|\le\frac{C}{n} & \text{if}\ \ x \in B(p_{i,n}, \frac{D}{2n}).
\end{cases}$$
for all $n$ and all $i$.
\end{enumerate}
\end{proposition}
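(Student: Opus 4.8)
The plan is to compare the Brill--Lindquist sums with the integrals defining $\chi$ and $\psi$ in a box-by-box fashion. Since the $V_{j,n}$ tile $[-D,D]^3$, which contains the support of $A$, one may write $\chi_n(x)-\chi(x)=\sum_j\big(\tfrac{\alpha_{j,n}}{|x-p_{j,n}|}-\int_{V_{j,n}}\tfrac{A(y)}{|x-y|}\dvol\big)$, and similarly for $\psi_n-\psi$. For each fixed $x$ I split the index set into the ``nearby'' boxes, those $V_{j,n}$ with $|x-p_{j,n}|<\sqrt3\,D/n$ --- of which there is a universally bounded number --- and the remaining ``far'' boxes. The facts that do all the work are: (i) for a far box, $\tfrac12|x-p_{j,n}|\le|x-y|\le\tfrac32|x-p_{j,n}|$ for every $y\in V_{j,n}$, so $\sum_{j\text{ far}}\Vol(V_{j,n})\,|x-p_{j,n}|^{-s}$ is bounded by a universal multiple of $\int_{[-D,D]^3}|x-y|^{-s}\dvol$; and (ii) since $[-D,D]^3$ has Euclidean diameter $2\sqrt3 D$, $\int_{[-D,D]^3}|x-y|^{-1}\dvol\le CD^2$ and $\int_{[-D,D]^3}|x-y|^{-2}\dvol\le CD$ uniformly in $x\in\R^3$. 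I will also use throughout that $\alpha_{j,n}\le\|A\|(D/n)^3+\C(\alpha,A)/n^4$, and likewise for $\beta_{j,n}$.

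For part \ref{boundednessoffactors} I bound $\chi_n$ directly, without reference to $\chi$, so that only control variables of class $\C$ enter. For $x\notin\bigcup_iB(p_{i,n},D/n^2)$ every nearby box satisfies $|x-p_{j,n}|\ge D/n^2$ --- this is the one place the hypothesis is used, namely for the box that actually contains $x$ --- so each such summand is at most $\|A\|D^2+\C(\alpha,A)/D$; and for the far boxes $\sum_{j\text{ far}}\tfrac{\alpha_{j,n}}{|x-p_{j,n}|}$ is dominated, via (i)--(ii), by a $\C$-multiple of $\|A\|D^2+\C(\alpha,A)/D$. Hence $\chi_n(x)\le C$ with $C$ of class $\C$, and likewise $\psi_n$. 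When $x\in B(p_{i,n},D/(2n))$ the identical argument applies to $\chi_n^{(i)}(x)=1+\sum_{j\ne i}\tfrac{\alpha_{j,n}}{|x-p_{j,n}|}$, the only new ingredient being that $|x-p_{j,n}|\ge\sigma_{i,n}-D/(2n)=D/(2n)$ for all $j\ne i$, which plays exactly the role the hypothesis played in the first case.

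For part \ref{qualityofapproximation}, first take $x\notin\bigcup_iB(p_{i,n},D/n^2)$. For a nearby box both $\tfrac{\alpha_{j,n}}{|x-p_{j,n}|}$ and $\int_{V_{j,n}}\tfrac{A(y)}{|x-y|}\dvol$ are individually $O(1/n)$ --- the first by the crude bound just used, the second because $V_{j,n}\subseteq B(x,CD/n)$ forces $\int_{V_{j,n}}|x-y|^{-1}\dvol\le CD^2/n^2$. For a far box I insert the intermediate quantity $A(p_{j,n})(D/n)^3/|x-p_{j,n}|=\int_{V_{j,n}}A(p_{j,n})\,|x-p_{j,n}|^{-1}\dvol$; the defect $|\alpha_{j,n}-A(p_{j,n})(D/n)^3|\le\C(\alpha,A)/n^4$ contributes $\C(\alpha,A)n^{-4}|x-p_{j,n}|^{-1}$, while the remaining difference is controlled by the Lipschitz bound on $A$ and the estimate $\big|\tfrac{1}{|x-p_{j,n}|}-\tfrac{1}{|x-y|}\big|\le\tfrac{|y-p_{j,n}|}{|x-p_{j,n}|\,|x-y|}$:
$$\left|\int_{V_{j,n}}\!\Big(\tfrac{A(p_{j,n})}{|x-p_{j,n}|}-\tfrac{A(y)}{|x-y|}\Big)\dvol\right|\le C\Big(\tfrac{\|dA\|D}{n}\cdot\tfrac{(D/n)^3}{|x-p_{j,n}|}+\tfrac{\|A\|D}{n}\cdot\tfrac{(D/n)^3}{|x-p_{j,n}|^2}\Big),$$
using $|y-p_{j,n}|\le\sqrt3 D/(2n)$ and $|x-y|\ge\tfrac12|x-p_{j,n}|$. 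Summing over far $j$ with (i)--(ii) gives $\tfrac1n\big(C\,\C(\alpha,A)/D+C\|dA\|D^3+C\|A\|D^2\big)$, whence $|\chi_n(x)-\chi(x)|\le C/n$ with $C$ of class $\C^+$ (it is the $\|dA\|D^3$ term that forces $\C^+$ rather than $\C$), and the same for $\psi_n$. For $x\in B(p_{i,n},D/(2n))$ one has $\chi_n^{(i)}(x)-\chi(x)=\sum_{j\ne i}\big(\tfrac{\alpha_{j,n}}{|x-p_{j,n}|}-\int_{V_{j,n}}\tfrac{A(y)}{|x-y|}\dvol\big)-\int_{V_{i,n}}\tfrac{A(y)}{|x-y|}\dvol$; the extra integral is $\le\|A\|\int_{B(x,2D/n)}|x-y|^{-1}\dvol\le C\|A\|D^2/n^2$ --- finite despite $x$ possibly sitting almost on top of $p_{i,n}\in V_{i,n}$, which is precisely why subtracting off $\tfrac{\alpha_{i,n}}{|x-p_{i,n}|}$ was necessary --- and the sum over $j\ne i$ is estimated exactly as before, now with $|x-p_{j,n}|\ge D/(2n)$ for the nearby boxes $j\ne i$. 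This gives $|\chi_n^{(i)}(x)-\chi(x)|\le C/n$, class $\C^+$, and likewise for $\psi_n^{(i)}$; all bounds are uniform in $n$ and $i$.

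The step needing the most care is the bookkeeping of control classes. Part \ref{boundednessoffactors} must land in $\C$ and so cannot simply be read off from part \ref{qualityofapproximation}; this is why it is proved by the direct Riemann-sum domination above, never touching $dA$ or $dB$. Relatedly, one must resist replacing the first-order (Lipschitz) box-by-box comparison by the sharper midpoint-quadrature estimate: the latter would introduce $\|d^2A\|$ and $\|d^2B\|$, which belong to neither $\C$ nor $\C^+$, whereas the Lipschitz comparison is exactly strong enough to produce the $O(1/n)$ rate while staying inside $\C^+$.
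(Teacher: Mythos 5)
Your proof is correct and follows essentially the same route as the paper's: the near/far split of the subdivision boxes, the observation that the hypothesis $x\notin\bigcup_iB(p_{i,n},D/n^2)$ only needs to control the single box containing $x$, the universal bounds on $\int_{[-D,D]^3}|x-y|^{-s}\dvol$ for $s=1,2$, the Lipschitz decomposition of $\tfrac{A(p_{j,n})}{|x-p_{j,n}|}-\tfrac{A(y)}{|x-y|}$ for far boxes, and the separate accounting of the defect $|\alpha_{j,n}-A(p_{j,n})(D/n)^3|\le\C(\alpha,A)/n^4$. The only structural difference is that for part \ref{boundednessoffactors} you dominate the far-box Riemann sum directly by a universal multiple of the integral (using $\tfrac12|x-p_{j,n}|\le|x-y|\le\tfrac32|x-p_{j,n}|$), whereas the paper establishes the same boundedness as a corollary of the explicit error estimate in its Lemma \ref{firstRSlemma}; your one-sided domination is a slight simplification that suffices because part \ref{boundednessoffactors} asks only for a bound, not a rate, and your closing remark about why part \ref{boundednessoffactors} cannot simply be read off from part \ref{qualityofapproximation} (since $\|dA\|$ would leak into the control class) correctly identifies the bookkeeping constraint that the paper's two-lemma structure is also designed to respect.
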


One significance of Proposition \ref{BLRcontrollemma} is that it establishes the existence of a constant $C$ of class $\C$ such that 
$$\hat{\chi}_n^{(i)}:=\chi_n^{(i)}(p_{i,n})\le C \ \ \text{and}\ \ \hat{\psi}_n^{(i)}:=\psi_n^{(i)}(p_{i,n})\le C,$$
as was promised in part \ref{controlclasses} of the Introduction. In addition, we may assume that 
$$\left|\hat{\chi}_n^{(i)} -\chi(p_{i,n}) \right|+\left|\hat{\psi}_n^{(i)} -\psi(p_{i,n}) \right|\le\tfrac{C}{n}.$$

While part \eqref{qualityofapproximation} of Proposition \ref{BLRcontrollemma} indicates that $g_n\approx g$ far away from point-objects, one has to be much more careful with the statement that $g_n$ is well approximated by Reissner-Norstr\"om metrics near point-objects. As tempting as it may be to claim that near $p_{i,n}$ we have $g_n\approx g_{\RN,i}$ for
$$g_{\RN,i}:=\left(\chi(p_{i,n}) +\frac{\alpha_{i,n}}{|x-p_{i,n}|}\right)^2\left(\psi(p_{i,n}) +\frac{\beta_{i,n}}{|x-p_{i,n}|}\right)^2\geucl,$$
this may not actually be true! The source of difficulty here lies in potential unboundedness of terms $\frac{\alpha_{i,n}}{|x-p_{i,n}|}$ and $\frac{\beta_{i,n}}{|x-p_{i,n}|}$.
(What is true, however, is that 
$$(1-\tfrac{C}{n})^4 g_{\RN,i}\le g_n\le (1+\tfrac{C}{n})^4 g_{\RN,i}$$
for some constant $C$ of class $\C^+$.) 

\bigbreak

We begin our proof of Proposition \ref{BLRcontrollemma} by recording universal bounds on integrals which frequently appear in our arguments. The proof of Lemma \ref{universalboundslemma} is a simple integration exercise. 

\begin{lemma}\label{universalboundslemma}
The following hold for all $x\in \mathbb{R}^3$:
\begin{enumerate}
\item $\int_{y\in [-D,D]^3}\frac{1}{|x-y|}\,\dvol \le 100D^2$;
\medbreak
\item $\int_{y\in [-D,D]^3}\frac{1}{|x-y|^2}\,\dvol \le 70D$;
\end{enumerate}
\end{lemma}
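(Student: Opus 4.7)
The plan is to reduce both estimates to a single computation on a ball centered at $x$, after which the integrals become elementary in spherical coordinates. The common reduction is a bathtub / Hardy--Littlewood rearrangement: for any radially symmetric decreasing function $g(y)=\phi(|x-y|)$ and any measurable set $A \subseteq \mathbb{R}^3$ of Lebesgue volume $V$,
$$\int_A g(y)\,\dvol \le \int_{B(x, r_V)} g(y)\,\dvol, \qquad r_V = \left(\frac{3V}{4\pi}\right)^{1/3},$$
which is immediate from the pointwise inequalities $g(y)\le \phi(r_V)$ outside $B(x,r_V)$ and $g(y)\ge \phi(r_V)$ inside, combined with the equal measure of the two symmetric differences $A \setminus B(x,r_V)$ and $B(x,r_V) \setminus A$.

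Applying this with $A=[-D,D]^3$, so that $V=8D^3$ and $r_V=D(6/\pi)^{1/3}$, both estimates collapse to one-variable integrals over the radial coordinate. The first reduces to $2\pi r_V^2 = 2\pi(6/\pi)^{2/3}\,D^2$, which is comfortably under $100\,D^2$; the second reduces to $4\pi r_V = 4\pi(6/\pi)^{1/3}\,D$, comfortably under $70\,D$.

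There is no serious obstacle here: the statement leaves so much slack in the constants $100$ and $70$ that even much cruder bounds would suffice. For instance, when $x \in [-D,D]^3$ one may use the containment $[-D,D]^3 \subseteq B(x,2D\sqrt{3})$ directly, while for $x$ outside the cube the integrals in question are harmonic in $x$ and decay at infinity, so the maximum principle transfers the bound from the boundary. The rearrangement route is preferred simply because it handles every $x \in \mathbb{R}^3$ uniformly in a single step and avoids any case split.
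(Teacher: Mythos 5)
Your rearrangement proof is correct, and it gives constants considerably sharper than the lemma's $100$ and $70$: with $r_V = D(6/\pi)^{1/3}$ the two integrals are bounded by $2\pi(6/\pi)^{2/3}D^2 \approx 9.7\,D^2$ and $4\pi(6/\pi)^{1/3}D \approx 15.6\,D$. The bathtub step is verified exactly as you describe: the symmetric differences $A\setminus B(x,r_V)$ and $B(x,r_V)\setminus A$ have equal measure, the integrand is at least $\phi(r_V)$ on the latter and at most $\phi(r_V)$ on the former, and the comparison follows. The paper itself supplies no proof, only declaring the lemma a simple integration exercise, so there is no source argument to compare against; the most naive version of such an exercise (split the cube into $B(x,\rho)$ and its complement, bound the near part by the radial integral over the ball, bound the far part by $8D^3\,\phi(\rho)$, then choose $\rho$) produces the same conclusion with somewhat larger constants, and your rearrangement is simply the optimal instance of that split, applied uniformly in $x$.

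One imprecision in your closing aside: for $x$ outside the cube, the potential $\int_{[-D,D]^3} |x-y|^{-2}\,\dvol$ is \emph{not} harmonic in $x$, since $\Delta(r^{-2}) = 2r^{-4} > 0$ in $\mathbb{R}^3$; it is only subharmonic. The maximum principle for subharmonic functions on the exterior domain, combined with decay at infinity, still places the supremum on $\partial\bigl([-D,D]^3\bigr)$, so the alternate route you sketch can be salvaged, but the word ``harmonic'' should be ``subharmonic'' in the $|x-y|^{-2}$ case. Since you chose the rearrangement route as your actual proof, this slip does not affect the argument you rely on.
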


One consequence of Lemma \ref{universalboundslemma}, for example, is that 
$$\chi\le 1+100\|A\|D^2,\ \ \psi\le 1+100\|B\|D^2,\ \ \text{etc.}$$
Thus, there exists a constant $C$ of class $\C$ such that 
\begin{equation}\label{usedtobe39}
\geucl\le g\le C^2\geucl.
\end{equation}

\begin{lemma}\label{firstRSlemma}
\  
\begin{enumerate}
\item\label{RSlemmaPT1}  Assume that $x\in\mathbb{R}^3\smallsetminus \left(\bigcup_i B_{\geucl}(p_{i,n}, \tfrac{D}{n^2})\right)$. We then have 
$$\left|\sum_i \frac{(D/n)^3}{|x-p_{i,n}|} - \int_{y\in[-D,D]^3} \frac{1}{|x-y|}\,\dvol\right|\le 700\frac{D^2}{n}.$$
\medbreak
\item\label{RSlemmaPT2} Assume that $x\in B_{\geucl}(p_{i,n}, \tfrac{D}{2n})$ for some $p_{i,n}$. We then have
$$\left|\sum_{j\neq i} \frac{(D/n)^3}{|x-p_{j,n}|} - \int_{y\in[-D,D]^3} \frac{1}{|x-y|}\,\dvol\right|\le 700\frac{D^2}{n}.$$
\end{enumerate}
\end{lemma}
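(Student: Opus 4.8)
The plan is to regard $f(y):=\tfrac{1}{|x-y|}$ as the integrand of a midpoint Riemann sum over the boxes $V_{i,n}$ of side $D/n$, and to split the boxes according to whether their centre $p_{i,n}$ is \emph{near} $x$ (say $|x-p_{i,n}|<3D/n$) or \emph{far} ($|x-p_{i,n}|\ge 3D/n$). Because the centres form a lattice of spacing $D/n$, the number of near boxes is bounded by a universal constant $C_0$. On a far box $f$ is smooth: for every $y\in V_{i,n}$ one has $\tfrac23|x-p_{i,n}|\le |x-y|\le \tfrac43|x-p_{i,n}|$, hence $|\nabla_y f(y)|=|x-y|^{-2}\le \tfrac94|x-p_{i,n}|^{-2}$, and by the Mean Value Theorem $|f(y)-f(p_{i,n})|\le \tfrac94|x-p_{i,n}|^{-2}\cdot\tfrac{\sqrt3 D}{2n}$ throughout $V_{i,n}$. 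Integrating and summing over all far boxes,
$$\sum_{\text{far }i}\Big|\int_{V_{i,n}}f\,\dvol-f(p_{i,n})(D/n)^3\Big|\le \frac{9\sqrt3\,D}{8n}\sum_{\text{far }i}\frac{(D/n)^3}{|x-p_{i,n}|^2};$$
and since $|x-y|\le\tfrac43|x-p_{i,n}|$ on far boxes gives $(D/n)^3|x-p_{i,n}|^{-2}\le\tfrac{16}{9}\int_{V_{i,n}}|x-y|^{-2}\dvol$, the remaining sum is at most $\tfrac{16}{9}\int_{[-D,D]^3}|x-y|^{-2}\dvol\le \tfrac{16}{9}\cdot 70D$ by Lemma \ref{universalboundslemma}(2). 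Thus the contribution of the far boxes to the error is a universal multiple of $D^2/n$.

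It remains to handle the near boxes, which is where the two parts — and the excision hypotheses — enter. In part \eqref{RSlemmaPT1} every near box contributes to both the sum and the integral; the union of near boxes lies inside $B_{\geucl}(x,4D/n)$, so $\sum_{\text{near }i}\int_{V_{i,n}}f\,\dvol\le \int_{B_{\geucl}(x,4D/n)}|x-y|^{-1}\dvol=O(D^2/n^2)$, while the hypothesis $x\notin\bigcup_i B_{\geucl}(p_{i,n},D/n^2)$ forces $|x-p_{i,n}|\ge D/n^2$ for every $i$, so each near-box sample term obeys $\tfrac{(D/n)^3}{|x-p_{i,n}|}\le\tfrac{(D/n)^3}{D/n^2}=\tfrac{D^2}{n}$; as there are at most $C_0$ such terms, the near-box error is also $O(D^2/n)$, and summing with the far-box estimate and chasing constants yields the bound $700D^2/n$. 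In part \eqref{RSlemmaPT2} the box $V_{i,n}$ itself is near but its term is \emph{absent} from $\sum_{j\neq i}$; since $B_{\geucl}(p_{i,n},D/(2n))\subseteq\overline{V_{i,n}}$ we have $x\in\overline{V_{i,n}}$ and bound $\int_{V_{i,n}}f\,\dvol\le\int_{B_{\geucl}(x,2D/n)}|x-y|^{-1}\dvol=O(D^2/n^2)$, whereas for $j\neq i$ near, $|p_{j,n}-p_{i,n}|\ge D/n$ together with $|x-p_{i,n}|<D/(2n)$ gives $|x-p_{j,n}|\ge D/(2n)$, so each such term is at most $\tfrac{(D/n)^3}{D/(2n)}=\tfrac{2D^2}{n}$; the far-box estimate is unchanged, and again one obtains $700D^2/n$.

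The one genuinely delicate point is the behaviour at the edge of the excised ball: the nearest sample term $\tfrac{(D/n)^3}{|x-p_{i,n}|}$ is the only quantity that could blow up, and the argument works precisely because each hypothesis neutralises it — in part 1 by the lower bound $|x-p_{i,n}|\ge D/n^2$, in part 2 by omitting the offending index — while the integrable singularity of $|x-y|^{-1}$ keeps the matching piece of the integral of size $O(D^2/n^2)$. Everything else is the standard first-order midpoint-rule estimate for a function with one integrable derivative; I would carry out the final numerical accounting to confirm the constant $700$, but foresee no conceptual obstacle there.
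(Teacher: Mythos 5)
Your proof follows essentially the same route as the paper's: split the boxes into near and far, estimate the far contribution with the Mean Value Theorem combined with the $\int |x-y|^{-2}\dvol$ bound of Lemma \ref{universalboundslemma}(2), and use the excision hypothesis (resp.\ the omission of the $i$-th term) to neutralize the near sample terms, with only a cosmetic difference in the near/far cutoff ($3D/n$ rather than $\approx D/n$). One small arithmetic slip in part (2): $\tfrac{(D/n)^3}{D/(2n)}=\tfrac{2D^2}{n^2}$, not $\tfrac{2D^2}{n}$ --- which is actually a tighter bound than you wrote and only helps the constant.
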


\begin{proof} 
We start the proof of part \eqref{RSlemmaPT1} by observing that  
$$\left|\sum_i \frac{(D/n)^3}{|x-p_{i,n}|} - \int_{y\in[-D,D]^3} \frac{1}{|x-y|}\,\dvol\right|\le \sum_i \int_{y\in V_{i,n}}\left|\frac{1}{|x-p_{i,n}|}-\frac{1}{|x-y|}\right|\,\dvol.$$
We break the sum into two constituents: one being the summation over $i$'s for which $x$ and $p_{i,n}$ are ``close" and the other being the summation over $i$'s for which $x$ and $p_{i,n}$ are ``not close". The latter case could arise from situations when $x$ is still within the vicinity of $[-D,D]^3$, and it could arise from situations when $x$ is far from $[-D,D]^3$ altogether. 

{\sc The case of $p_{i,n}$'s which are close to $x$.}
Specifically, there are at most $27$ boxes $V_{i,n}$ where $|x-y|< \tfrac{D}{n}$ for some $y\in V_{i,n}$. Summation over such $V_{i,n}$ yields 
$$\sum \int_{y\in V_{i,n}}\frac{1}{|x-y|}\,\dvol\le 4\pi \left(\int_0^{2\sqrt{3}D/n}\!\!r\,dr\right)\le 24\pi\frac{D^2}{n^2}.$$
With the exception of at most one box we may estimate $|x-p_{i,n}|\ge \tfrac{D}{2n}$ and the summation over such boxes yields
$$\sum \int_{y\in V_{i,n}}\frac{1}{|x-p_{i,n}|}\,\dvol\le 54\frac{D^2}{n^2}.$$
However, in the exceptional case when $x\in V_{i,n}$ for some $i$ and yet $\tfrac{D}{n^2}\le |x-p_{i,n}|$ the most dominant term is 
$$\int_{y\in V_{i,n}}\frac{1}{|x-p_{i,n}|}\,\dvol\le\frac{D^2}{n}.$$
Overall, in this case we have $\sum_i \int_{y\in V_{i,n}}\left|\frac{1}{|x-p_{i,n}|}-\frac{1}{|x-y|}\right|\,\dvol\le  100\frac{D^2}{n}$.

\medbreak
{\sc The case of $p_{i,n}$'s which are not close to $x$.} On the remaining boxes $V_{i,n}$ we employ the Mean Value Theorem:
$$\left|\frac{1}{|x-p_{i,n}|}-\frac{1}{|x-y|}\right|\le \frac{1}{|x-z|^2}\cdot\frac{D\sqrt{3}}{2n}$$
where $z$ is some point on the line segment joining $p_{i,n}$ and $y$. We now have that
$$y,z\in V_{i,n} \text{\ \ with\ \ } |x-z|,|x-y|\ge \tfrac{D}{n}.$$
It then follows that 
$$|x-z|\le |x-y|+\sqrt{3}\tfrac{D}{n}\le 3|x-y| \text{\ \ and\ \ } \tfrac{1}{3}|x-y|\le |x-z|\le 3|x-y|,$$
which further results in 
$$\sum \int_{y\in V_{i,n}}\left(\frac{1}{|x-p_{i,n}|}-\frac{1}{|x-y|}\right)\,\dvol\le \frac{D\sqrt{3}}{2n} \int_{y\in [-D,D]^3}\frac{9}{|x-y|^2}\,\dvol\le 630\frac{D^2}{n}.$$
Our proof of part \eqref{RSlemmaPT1} is now complete. 

The proof of part \eqref{RSlemmaPT2} relies on the same idea. By setting aside the integration over the very $V_{i,n}$ we obtain 
$$\left|\sum_{j\neq i} \frac{(D/n)^3}{|x-p_{j,n}|} - \int_y \frac{1}{|x-y|}\,\dvol\right|\le 630\frac{D^2}{n}+\int_{y\in V_{i,n}} \frac{1}{|x-y|}\,\dvol.$$
Using spherical coordinates the value of the latter integral is found to satisfy 
$$\int_{y\in V_{i,n}} \frac{1}{|x-y|}\,\dvol\le 4\pi \left(\int_0^{\sqrt{3}D/n}\!\!r\,dr\right)\le 20 \frac{D^2}{n^2}.$$
Overall, we obtain $\left|\sum_{j\neq i} \frac{(D/n)^3}{|x-p_{j,n}|} - \int_y \frac{1}{|x-y|}\,\dvol\right|\le 650\frac{D^2}{n}$.
\end{proof}

When combined Lemma \ref{universalboundslemma} and Lemma \ref{firstRSlemma} provide us with  bounds on the Riemann sums themselves. Specifically, we can now prove part \eqref{boundednessoffactors} of Proposition \ref{BLRcontrollemma}.

\subsubsection*{Proof of part \eqref{boundednessoffactors} of Proposition \ref{BLRcontrollemma}}
Lemmas \ref{universalboundslemma} and \ref{firstRSlemma} provide the following bounds on Riemann sums:
\begin{equation}\label{RSrndestimate}
\begin{cases}
\sum_i \frac{(D/n)^3}{|x-p_{i,n}|} \le 800D^2 & \text{if}\ \ x\not \in \bigcup_i B(p_{i,n}, \frac{D}{n^2}),\\
\sum_{j\neq i} \frac{(D/n)^3}{|x-p_{j,n}|} \le 800D^2 & \text{if}\ \ x \in B(p_{i,n}, \frac{D}{2n}).
\end{cases}
\end{equation}
On the other hand, it follows from Definition \ref{BLR} that 
$$\alpha_{i,n}\le \left(\|A\|+\frac{\C(\alpha, A)}{nD^3}\right)\left(\frac{D}{n}\right)^3.$$
Therefore, we have 
$$\chi_{n}(x)\le 1+\left(\|A\|+\frac{\C(\alpha, A)}{nD^3}\right) \sum_i \frac{(D/n)^3}{|x-p_{i,n}|}\le 1+800\left(\|A\|D^2+\frac{\C(\alpha, A)}{D}\right)$$
for all $n$, so long as $x\not \in \bigcup_iB(p_{i,n}, \frac{D}{n^2})$. The estimate on $\chi_n^{(i)}(x)$ when $x \in B(p_{i,n}, \frac{D}{2n})$ can be proven in the exactly same way. \qed

\bigbreak

Next, we have a Product Rule-inspired application of Lemmas \ref{universalboundslemma} and \ref{firstRSlemma}. 

\begin{lemma}\label{secondRSlemma}
Let $A$ be a smooth function supported on $[-D,D]^3$. 
\begin{enumerate}
\item If $x\in\mathbb{R}^3\smallsetminus \left(\bigcup_i \bar{B}_{\geucl}(p_{i,n}, \tfrac{D}{n^2})\right)$ then  
$$\left|\sum_i \frac{A(p_{i,n})}{|x-p_{i,n}|} (D/n)^3- \int_{y\in[-D,D]^3} \frac{A(y)}{|x-y|}\,\dvol\right|\le \frac{700}{n}\left(\|A\|D^2+\|dA\|D^3\right).$$
\medbreak
\item If $x\in B_{\geucl}(p_{i,n}, \tfrac{D}{2n})$ for some $p_{i,n}$ then $$\left|\sum_{j\neq i} \frac{A(p_{j,n})}{|x-p_{j,n}|} (D/n)^3 - \int_{y\in[-D,D]^3} \frac{A(y)}{|x-y|}\,\dvol\right|\le \frac{700}{n}\left(\|A\|D^2+\|dA\|D^3\right).$$
\end{enumerate}
All the norms involved are $L^\infty(\mathbb{R}^3)$-norms. 
\end{lemma}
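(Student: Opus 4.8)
The plan is to mimic the proof of Lemma \ref{firstRSlemma}, but now tracking how $A$ varies over each subdivision box $V_{i,n}$ using both its size and its gradient. Write the difference as a sum of box-by-box integrals
\[
\left|\sum_i \frac{A(p_{i,n})}{|x-p_{i,n}|}(D/n)^3 - \int_{[-D,D]^3}\frac{A(y)}{|x-y|}\,\dvol\right|\le \sum_i \int_{y\in V_{i,n}}\left|\frac{A(p_{i,n})}{|x-p_{i,n}|}-\frac{A(y)}{|x-y|}\right|\dvol,
\]
and split the integrand using the triangle inequality into $|A(p_{i,n})|\,\bigl|\tfrac{1}{|x-p_{i,n}|}-\tfrac{1}{|x-y|}\bigr|$ plus $\tfrac{1}{|x-y|}\,|A(p_{i,n})-A(y)|$. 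For the second piece one uses $|A(p_{i,n})-A(y)|\le \tfrac{\sqrt{3}D}{n}\|dA\|$ for $y\in V_{i,n}$, and then $\sum_i \int_{V_{i,n}}\tfrac{1}{|x-y|}\dvol = \int_{[-D,D]^3}\tfrac{1}{|x-y|}\dvol \le 100D^2$ by Lemma \ref{universalboundslemma}(1); this contributes a term bounded by a universal multiple of $\tfrac{1}{n}\|dA\|D^3$.

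For the first piece, the argument is essentially the one already carried out in Lemma \ref{firstRSlemma}, except that each occurrence of $\tfrac{1}{|x-p_{i,n}|}-\tfrac{1}{|x-y|}$ or $\tfrac{1}{|x-p_{i,n}|}$ now carries a factor $|A(p_{i,n})|\le \|A\|$ out front. Splitting into the $p_{i,n}$'s close to $x$ (at most $27$ boxes, where one estimates the integrals of $\tfrac{1}{|x-y|}$ and $\tfrac{1}{|x-p_{i,n}|}$ directly in spherical coordinates, as in Lemma \ref{firstRSlemma}) and the $p_{i,n}$'s not close to $x$ (where the Mean Value Theorem gives $\bigl|\tfrac{1}{|x-p_{i,n}|}-\tfrac{1}{|x-y|}\bigr|\le \tfrac{\sqrt{3}D}{2n}\cdot\tfrac{1}{|x-z|^2}$, with $\tfrac13|x-y|\le|x-z|\le 3|x-y|$, followed by Lemma \ref{universalboundslemma}(2)) reproduces verbatim the bound of Lemma \ref{firstRSlemma} multiplied by $\|A\|$; this gives a term bounded by a universal multiple of $\tfrac{1}{n}\|A\|D^2$. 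Adding the two contributions yields a bound of the form $\tfrac{700}{n}(\|A\|D^2+\|dA\|D^3)$ after consolidating the universal constants. The special exceptional box where $x\in V_{i,n}$ but $|x-p_{i,n}|\ge D/n^2$ is handled exactly as in Lemma \ref{firstRSlemma}, now with the extra $\|A\|$ factor.

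Part (2) is identical in spirit: one simply omits the box $V_{i,n}$ containing (a neighborhood of) $x$, bounds the resulting missing integral $\int_{V_{i,n}}\tfrac{A(y)}{|x-y|}\dvol$ by $\|A\|$ times $\int_{V_{i,n}}\tfrac{1}{|x-y|}\dvol\le 20\tfrac{D^2}{n^2}$, and applies the estimates above to the remaining sum over $j\ne i$. There is no genuine obstacle here — the only point requiring mild care is bookkeeping of the universal constants so that the stated constant $700$ (rather than something slightly larger) comes out, which one can arrange by being a little generous in the close-box count exactly as in the proof of Lemma \ref{firstRSlemma}; if preferred one can simply replace $700$ by a larger explicit universal constant without affecting anything downstream.
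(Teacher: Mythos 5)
Your proof is correct and follows essentially the same route as the paper: the same pointwise split of $\bigl|\tfrac{A(p_{i,n})}{|x-p_{i,n}|}-\tfrac{A(y)}{|x-y|}\bigr|$ into a $\|A\|$-piece and a $\|dA\|$-piece, with the $\|dA\|$-piece controlled by Lemma \ref{universalboundslemma} and the $\|A\|$-piece controlled by the argument of Lemma \ref{firstRSlemma}. The only cosmetic difference is that you unfold the Lemma \ref{firstRSlemma} argument with the $\|A\|$ factor carried along, whereas the paper factors $\|A\|$ out and cites Lemma \ref{firstRSlemma} as a black box; the two are equivalent (and, if anything, your version is slightly more careful, since what is actually needed — and what the proof of Lemma \ref{firstRSlemma} in fact establishes — is the bound on $\sum_i\int_{V_{i,n}}\bigl|\tfrac{1}{|x-p_{i,n}|}-\tfrac{1}{|x-y|}\bigr|\,\dvol$ rather than on the difference of the sum and the integral as stated).
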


\begin{proof} 
The proof here relies on the observation that 
$$\begin{aligned}
\left|\frac{A(p_{i,n})}{|x-p_{i,n}|} - \frac{A(y)}{|x-y|}\right|
\le &A(p_{i,n})\left|\frac{1}{|x-p_{i,n}|} - \frac{1}{|x-y|}\right| + \frac{1}{|x-y|}\left|A(p_{i,n})-A(y)\right|\\
\le &\|A\|\left|\frac{1}{|x-p_{i,n}|} - \frac{1}{|x-y|}\right| + \|dA\|\frac{|p_{i,n}-y|}{|x-y|}.
\end{aligned}$$
For the purposes of integration with respect to $y\in V_{i,n}$ the last expression can be replaced by 
$$\|dA\|\,\frac{\sqrt{3}D/n}{|x-y|}.$$
Upon integration over $V_{i,n}$ and summing over $i$ we obtain 
$$\begin{aligned}
&\left|\sum_i \frac{A(p_{i,n})}{|x-p_{i,n}|} (D/n)^3- \int_y \frac{A(y)}{|x-y|}\,\dvol\right|\\
\le &\|A\| \left|\sum_i \frac{(D/n)^3}{|x-p_{i,n}|} - \int_{y\in[-D,D]^3} \frac{1}{|x-y|}\,\dvol\right| + \|dA\| \frac{\sqrt{3}D}{n}  \int_{y\in[-D,D]^3} \frac{1}{|x-y|}\,\dvol.
\end{aligned}$$
Our result is now a consequence of Lemmas \ref{universalboundslemma} and \ref{firstRSlemma}.
\end{proof}

\subsubsection*{Proof of part \eqref{qualityofapproximation} of Proposition \ref{BLRcontrollemma}}
Using Definition \ref{BLR} we have 
$$\left|\sum \frac{\alpha_{i,n}}{|x-p_{i,n}|} - \sum \frac{A(p_{i,n})}{|x-p_{i,n}|}\right|\le \frac{\C(\alpha, A)}{nD^3}\sum_i \frac{(D/n)^3}{|x-p_{i,n}|}.$$
The summation appearing at the end is the Riemann sum which can, by Lemmas \ref{universalboundslemma} and \ref{firstRSlemma}, be bounded by $800D^2$; compare with step \eqref{RSrndestimate} in the proof of part \eqref{boundednessoffactors} of this very proposition. In particular, we have: 
$$\begin{cases}
\left|\sum_i \frac{\alpha_{i,n}}{|x-p_{i,n}|} - \sum_i \frac{A(p_{i,n})(D/n)^3}{|x-p_{i,n}|} \right|\le \frac{800}{nD}\C(\alpha, A) & \text{if}\ \ x\not \in \bigcup_i B(p_{i,n}, \frac{D}{n^2}),\\
\left|\sum_{j\neq i} \frac{\alpha_{j,n}}{|x-p_{j,n}|} - \sum_{j\neq i} \frac{A(p_{j,n})(D/n)^3}{|x-p_{j,n}|} \right|\le \frac{800}{nD}\C(\alpha, A) & \text{if}\ \ x\in B(p_{i,n}, \frac{D}{2n}).
\end{cases}$$
Statements made in part \eqref{qualityofapproximation} of Proposition \ref{BLRcontrollemma} are now immediate from Lemma \ref{secondRSlemma}. \qed

\subsection{$C^1$-behavior}\label{C1Sec}

Lemma \ref{firstRSlemma} is largely about convergence of Riemann sums towards to the integral $\int_{y\in[-D,D]^3} \tfrac{1}{|x-y|}\,\dvol$. The integral $\int_{y\in[-D,D]^3} \tfrac{1}{|x-y|^2}\,\dvol$ is also convergent and one may wonder about the corresponding statement regarding the latter integral. Modifying the strategy from the proof of Lemma \ref{firstRSlemma} to accommodate for the larger exponents we can prove the following:
\begin{enumerate}
\item There is some universal constant $C$ such that  
$$\left|\sum_i \frac{(D/n)^3}{|x-p_{i,n}|^2} - \int_{y\in[-D,D]^3} \frac{1}{|x-y|^2}\,\dvol\right|\le C\frac{D}{n^{3-2\nu}}$$
so long as
$$x\not\in \bigcup_i B_{\geucl}\left(p_{i,n}, Dn^{-\nu}\right)$$
for some $1<\nu<3/2$;
\medbreak
\item If $x\in B_{\geucl}(p_{i,n}, D/(2n))$ then  
$$\left|\sum_{j\neq i} \frac{(D/n)^3}{|x-p_{j,n}|^2} - \int_{y\in[-D,D]^3} \frac{1}{|x-y|^2}\,\dvol\right|\le CD\frac{\ln(n)}{n}$$
for some universal constant $C$.
\end{enumerate}
Note that, relative to Lemma \ref{firstRSlemma}, the $C^1$-context requires us to cut out somewhat larger neighborhoods of locations $p_{i,n}$. The only ``non-obvious" modification in the proof of Lemma \ref{firstRSlemma} happens towards the very end when estimating $\sum \int_{y\in V_{i,n}}\left(\frac{1}{|x-p_{i,n}|^2}-\frac{1}{|x-y|^2}\right)$. At that stage we employ the fact that said sum is bounded by \begin{equation}\label{lnestimate}
\frac{D\sqrt{3}}{2n} \int_{D/n\le |x-y|\le 2D}\frac{27}{|x-y|^3}\,\dvol=O(D\ln(n)/n).
\end{equation}
Owing to the fact that one can take the derivative of $\chi$ and $\psi$ under the integral sign once, the remaining arguments of Section \ref{C0Sec} follow a predictable course. They lead to:

\begin{proposition}\label{BLRcontrollemmaC1}
Fix a parameter $\nu$ with $1<\nu<3/2$.
\begin{enumerate}
\item
There exists a constant $C$ of class $\C$ such that 
$$\begin{cases}
\left|d\chi_n(x)\right| +\left|d\psi_n(x)\right|\le C/D & \text{if}\ \ x\not \in \bigcup_i B(p_{i,n}, Dn^{-\nu}),\\
\left|d\chi_n^{(i)}(x)\right|+\left|d\psi_n^{(i)}(x)\right|\le C/D & \text{if}\ \ x \in B(p_{i,n}, D/(2n)).
\end{cases}$$
for all $n$ and all $i$.
\medbreak
\item
There exists a constant $C$ of class $\C^+$ for which 
$$\begin{cases}
\left|d\chi_n(x) -d\chi(x) \right|+ \left|d\psi_n(x) -d\psi(x) \right|\le \frac{C/D}{n^{3-2\nu}} & \text{if}\ \ x\not \in \bigcup_i B(p_{i,n}, Dn^{-\nu}),\\
\left|d\chi_n^{(i)}(x) - d\chi(x) \right|+\left|d\psi_n^{(i)}(x) - d\psi(x) \right|\le\frac{C/D}{n} & \text{if}\ \ x \in B(p_{i,n}, D/(2n)).
\end{cases}$$
for all $n$ and all $i$.
\end{enumerate}
\end{proposition}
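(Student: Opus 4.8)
The plan is to reduce the $C^1$ statements to the $C^0$ machinery of Section \ref{C0Sec}, now applied to the differentiated kernel $|x-y|^{-2}$, whose Riemann-sum estimates are precisely the two displays recorded just above the Proposition. First I would differentiate under the integral sign: the formal $y$-derivative of $A(y)/|x-y|$ is $-A(y)(x-y)/|x-y|^3$, of magnitude $\le \|A\|\,|x-y|^{-2}$, and $|x-y|^{-2}$ is locally integrable on $\mathbb{R}^3$, so the standard theorems give $d\chi(x)=-\int_{[-D,D]^3}A(y)\tfrac{x-y}{|x-y|^3}\,\dvol$ and likewise for $d\psi$, while $d\chi_n(x)=-\sum_i\alpha_{i,n}\tfrac{x-p_{i,n}}{|x-p_{i,n}|^3}$ is a plain finite sum and $d\chi_n^{(i)}$ is this sum with the $i$-th term dropped. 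In particular $|d\chi_n(x)|\le\sum_i\alpha_{i,n}|x-p_{i,n}|^{-2}$ and $|d\chi(x)|\le\int A(y)|x-y|^{-2}\,\dvol$, with the analogous bounds for $\psi$ and for the punctured factors.

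Part (1) is then immediate: writing $\alpha_{i,n}=A(p_{i,n})(D/n)^3+O(\C(\alpha,A)/n^4)$ from Definition \ref{BLR} gives $|d\chi_n(x)|\le\big(\|A\|+\tfrac{\C(\alpha,A)}{nD^3}\big)\sum_i\tfrac{(D/n)^3}{|x-p_{i,n}|^2}$, and the recorded $C^1$ Riemann-sum estimate together with Lemma \ref{universalboundslemma}(2) bounds $\sum_i(D/n)^3|x-p_{i,n}|^{-2}$ by a universal multiple of $D$ on $\mathbb{R}^3\smallsetminus\bigcup_iB(p_{i,n},Dn^{-\nu})$, and by the same after removing the $i$-th term on $B(p_{i,n},D/(2n))$. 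Hence $|d\chi_n(x)|\le C(\|A\|D+\C(\alpha,A)/(nD^2))$, which is $C/D$ with $C$ of class $\C$ since $\|A\|D^2$ and $\C(\alpha,A)/D$ are building blocks of class $\C$; the bounds for $d\psi_n$ and for the punctured factors are verbatim.

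For part (2) I would split, exactly as in the proof of Proposition \ref{BLRcontrollemma}(2),
$$d\chi_n(x)-d\chi(x)=-\sum_i\big(\alpha_{i,n}-A(p_{i,n})(D/n)^3\big)\tfrac{x-p_{i,n}}{|x-p_{i,n}|^3}-\Big(\sum_iA(p_{i,n})(D/n)^3\tfrac{x-p_{i,n}}{|x-p_{i,n}|^3}-\int A(y)\tfrac{x-y}{|x-y|^3}\,\dvol\Big).$$
The first sum is at most $\tfrac{\C(\alpha,A)}{nD^3}\sum_i(D/n)^3|x-p_{i,n}|^{-2}\le\tfrac{C/D}{n}$ with $C$ of class $\C$. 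The second, genuine Riemann-sum-versus-integral, term I would handle by the Product-Rule splitting of Lemma \ref{secondRSlemma}: on each box $V_{i,n}$,
$$\Big|A(p_{i,n})\tfrac{x-p_{i,n}}{|x-p_{i,n}|^3}-A(y)\tfrac{x-y}{|x-y|^3}\Big|\le\|A\|\,\Big|\tfrac{x-p_{i,n}}{|x-p_{i,n}|^3}-\tfrac{x-y}{|x-y|^3}\Big|+\|dA\|\,\tfrac{\sqrt3D/n}{|x-y|^2},$$
so after integrating and summing the $\|dA\|$-part is $\le\|dA\|\tfrac{\sqrt3D}{n}\int|x-y|^{-2}\,\dvol\le\|dA\|D^3\cdot\tfrac{C/D}{n}$, of class $\C^+$, while the $\|A\|$-part is $\|A\|$ times the recorded Riemann-sum estimate for $|x-y|^{-2}$ — of rate $Dn^{-(3-2\nu)}$ outside the enlarged balls and of rate $D/n$ inside — giving $\tfrac{C/D}{n^{3-2\nu}}$ and $\tfrac{C/D}{n}$ respectively with $C$ of class $\C$. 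Collecting the pieces yields the two bounds, and the $\psi$-bounds are identical.

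The main obstacle, and the only real departure from Section \ref{C0Sec}, is the larger exponent in the kernel: the Mean Value Theorem applied to $y\mapsto(x-y)/|x-y|^3$ produces an error of size $O(|x-y|^{-3})$, and $\int_{\{|x-y|\ge\delta\}\cap[-D,D]^3}|x-y|^{-3}\,\dvol$ grows like $\ln(1/\delta)$ as $\delta\to0$ rather than staying bounded. This is why the excision radius must be enlarged from $D/n^2$ (as in Lemma \ref{firstRSlemma}) to $Dn^{-\nu}$ with $1<\nu<3/2$: for $\nu>1$ the dominant contribution comes from the single box adjacent to $x$, whose singularity is cut off only at scale $Dn^{-\nu}$, producing the rate $n^{-(3-2\nu)}$ which swallows the logarithmic contribution of the far boxes. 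In the interior region the box containing $x$ has been removed from $\chi_n^{(i)}$ and is estimated separately as $O(\|A\|D/n)$, and one suppresses the would-be logarithm from the far boxes by retaining the first-order Taylor term, which integrates to zero over the symmetric box $V_{i,n}$ — the midpoint-rule cancellation built into the enumeration of Definition \ref{BLR}. Apart from these points — the partition into near and far boxes, the counting of near boxes, and the bookkeeping of control constants — the argument runs on autopilot following the proofs of Lemmas \ref{firstRSlemma} and \ref{secondRSlemma} and of Proposition \ref{BLRcontrollemma}.
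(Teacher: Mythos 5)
Your overall approach --- differentiate under the integral sign to reduce to the kernel $|x-y|^{-2}$, apply the recorded Riemann-sum estimates for that kernel, and mirror the product-rule splitting of Lemma \ref{secondRSlemma} --- is exactly the paper's. The exterior-case accounting is correct: the single box nearest $x$, cut off only at radius $Dn^{-\nu}$, produces the rate $n^{-(3-2\nu)}$, which for $1<\nu<3/2$ dominates the logarithmic contribution of the far boxes, and the constants are tracked into the right control classes.

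Where the write-up breaks down is the interior case. You quote the recorded Riemann-sum estimate for $|x-y|^{-2}$ as being ``of rate $D/n$ inside,'' but the paper's recorded rate there is $CD\ln(n)/n$; the logarithm comes from $\frac{D\sqrt{3}}{2n}\int_{D/n\le |x-y|\le 2D}\frac{27}{|x-y|^3}\,\dvol = O(D\ln(n)/n)$. Your main display, which inserts termwise absolute values of the form $\left|\frac{x-p_{i,n}}{|x-p_{i,n}|^3}-\frac{x-y}{|x-y|^3}\right|$ before integrating, can therefore yield at best $\frac{C/D}{n}\ln(n)$, not the claimed $\frac{C/D}{n}$. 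The midpoint cancellation you describe in the last paragraph is the right idea --- the linear Taylor term integrates to zero over each centered box $V_{j,n}$, and the quadratic remainder sums to $\sum_j (D/n)^5\,|x-p_{j,n}|^{-4}=O(D/n)$ --- but that cancellation is destroyed the moment the absolute value is placed inside the $y$-integral, so the two halves of your interior argument are incompatible as written. To repair this, drop the pointwise absolute-value split and instead decompose the integrand over each far box $V_{j,n}$ as $A(p_{j,n})\left(\frac{x-p_{j,n}}{|x-p_{j,n}|^3}-\frac{x-y}{|x-y|^3}\right)+\left(A(p_{j,n})-A(y)\right)\frac{x-y}{|x-y|^3}$, integrate the first summand using the cancellation, and estimate the second via $\|dA\|$; the handful of boxes adjacent to $V_{i,n}$ are still treated crudely, each contributing $O(D/n)$.

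Finally, observe that the paper's own sketch carries the same tension: the displayed interior Riemann-sum rate is $CD\ln(n)/n$, whereas the Proposition asserts $\frac{C/D}{n}$. The cancellation you identify is precisely what removes the logarithm and closes that gap, so making it explicit and carrying it through consistently would sharpen the argument beyond what the text spells out.
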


Once again, we can see that there is a very good approximation of Euclidean derivatives $\partial g_n\approx \partial g$ so long we are far enough away from point-objects. 

\begin{remark}\label{remark3.2forBLR}
Remarks \ref{taos-afterthefact-remark1} and \ref{taos-afterthefact-remark3} make reference to situations where an improvement can be made to the uniqueness portion of Lemma \ref{BLR-minsurf} / Theorem \ref{minsurf:thm1-taos}. In the context of Brill-Lindquist-Riemann sums the value $\sigma_{i,n}/C$ of Lemma \ref{BLR-minsurf} / Theorem \ref{minsurf:thm1-taos} could, for example, be replaced by $\sigma_{i,n}/2=D/(2n)$ provided we can arrange 
$$C\||d\chi_n^{(i)}|+|d\psi_n^{(i)}|\| \le 2n/D$$
for $n$ which are large relative to $\C$ and for all $i$. Proposition \ref{BLRcontrollemmaC1} above assures us that indeed is the case. 
\end{remark}

\subsection{$C^2$-behavior}
The second and the higher order derivatives of $\chi_n$ and $\psi_n$ are not as well behaved. The strongest statement we can make (and prove) here is that over sets of the form $\mathbb{R}^3\smallsetminus\left(\cup_i B_{\geucl}(p_{i,n}, cD/n)\right)$ with $c$ (small and) fixed we have uniform boundedness of the second derivatives of $\chi_n$ and $\psi_n$. No ``convergence" statement towards second derivatives of $\chi$ or $\psi$ is expected or even possible: the functions $\chi_n$ and $\psi_n$ are harmonic far away from sources $p_{i,n}$ while $\chi$ and $\psi$ satisfy 
$$\Delta_{\geucl} \chi = -4\pi A \text{\ \ and\ \ }  \Delta_{\geucl} \psi = -4\pi B.$$

Consider $x\not\in \cup_i B_{\geucl}(p_{i,n}, cD/n)$, with $c\ll 1$ fixed. Since there is nothing to be concerned about if $x\not\in[-D,D]^3$, assume $x\in V_{i,n}$ for some $i$. Note that under such a premise the term $\mathrm{Hess}\left(\frac{\alpha_{i,n}}{|x-p_{i,n}|}\right)=O\left(\frac{\alpha_{i,n}}{|x-p_{i,n}|^3}\right)$ and the corresponding $\beta$-term are both bounded. In fact, the same applies for sources $p_{j,n}$ which are in immediate vicinity of $V_{i,n}$. Thus in the remaining discussion we may assume that $j\neq i$ is such that $|x-y|\ge D/n$ for all $y\in V_{j,n}$. Also note that 
$$\begin{aligned}
\left|\mathrm{Hess}_{\geucl}\Big{|}^{\xi=x}_{\xi=p_{i,n}}\left(\sum_{j\neq i}\frac{\alpha_{j,n}}{|\xi-p_{j,n}|}\right)\right|\le &C\sum_{j\neq i} \frac{\alpha_{j,n}}{|p_{i,n}-p_{j,n}|^4}\cdot \frac{D\sqrt{3}}{2n}\\
\le &\frac{CD\sqrt{3}}{2n} \int_{D/n\le |x-y|\le 2D}\frac{1}{|x-y|^4}\,\dvol=O(1).
\end{aligned}$$
This means that the second derivatives of $\chi_n$ at $x$ permit a uniform bound if and only if the second derivatives of $\chi_n^{(i)}$ at $p_{i,n}$ do. As the unboundedness concern arises from the sources $p_{j,n}$ which are close to $p_{i,n}$, our situation here can be additionally simplified by assuming that the parameters $\alpha_{j,n}$ are non-zero only when $p_{j,n}$ are in a fixed small ball around $p_{i,n}$.  An even further simplification consists of the replacement of said parameters $\alpha_{j,n}$ with a constant, namely $A(p_{i,n})(D/n)^3$. To see that such a simplification is appropriate consider the fact that 
$$\left|\alpha_{j,n}-A(p_{i,n})(D/n)^3\right|\le \|dA\| (D/n)^3 |p_{i,n}-p_{j,n}| +\C(\alpha, A)/n^4$$
and that, by Proposition \ref{BLRcontrollemmaC1} and estimation as in \eqref{lnestimate}, the Hessian  
$$\begin{aligned}
&\left|\mathrm{Hess}_{\geucl}\left(\sum_{j\neq i}\frac{\|dA\| (D/n)^3 |p_{i,n}-p_{j,n}| +\C(\alpha, A)/n^4}{|p_{i,n}-p_{j,n}|}\right)\right|\\
\le&\|dA\|\sum_{j\neq i} \frac{(D/n)^3}{|p_{i,n}-p_{j,n}|^2} +C\frac{\C(\alpha, A)}{D^3} \frac{\ln(n)}{n}
\end{aligned}$$
is bounded. Overall, it remains to investigate boundedness of 
\begin{equation}\label{iwantthistobeover}
\mathrm{Hess}_{\geucl}\Big{|}_{\xi=p_{i,n}}\left(\sum_{j\neq i} \frac{1}{|\xi-p_{j,n}|}\right),
\end{equation}
with $p_{j,n}$ only ranging in a fixed small ball around $p_{i,n}$.

Let $\vec{\nu}$ denote any unit vector based at $p_{i,n}$. Direct computation shows that 
$$\mathrm{Hess}_{\geucl}\Big{|}_{\xi=p_{i,n}}\left(\frac{1}{|\xi-p_{j,n}|}\right)(\vec{\nu}, \vec{\nu})=3\frac{((p_{i,n}-p_{j,n})\cdot \vec{\nu})^2}{|p_{i,n}-p_{j,n}|^5}-\frac{1}{|p_{i,n}-p_{j,n}|^3}.$$
In principle, we are to insert this expression into \eqref{iwantthistobeover}. Before doing so note that, 
for symmetry reasons, the Hessian in \eqref{iwantthistobeover} is invariant under replacement of $p_{j,n}$ with $p_{j,n}'$ or $p_{j,n}''$ where the latter two are such that 
$$\{p_{i,n}-p_{j,n},\ p_{i,n}-p_{j,n}',\ p_{i,n}-p_{j,n}''\}$$
form an orthogonal basis of vectors of equal length. Since 
$$\begin{aligned}
((p_{i,n}-p_{j,n})\cdot \vec{\nu})^2+&((p_{i,n}-p_{j,n}')\cdot \vec{\nu})^2+((p_{i,n}-p_{j,n}'')\cdot \vec{\nu})^2=|p_{i,n}-p_{j,n}|^2.
\end{aligned}$$
we see that the Hessian in \eqref{iwantthistobeover} vanishes! 

In conclusion, there exists a uniform bound on the second derivatives of $\chi_n$ and $\psi_n$ over sets of the form $\mathbb{R}^3\smallsetminus\left(\cup_i B_{\geucl}(p_{i,n}, cD/n)\right)$ with $c\ll 1$ fixed\footnote{The bound depends on $c$.}. Such uniform bounds ensure uniform bounds on the curvature of $g_n$, as noted in the Introduction.

\section{Brill-Lindquist-Riemann sums: The spaces $(\V_{n,R},g_n)$ and $(\V_{n,R,R'},g_n)$}\label{deepwells:section}

Recall the definition of the set $\V_{n,R}$ from Definition \ref{defn-mn}.
It follows from Theorem \ref{minsurf:thm1} that 
\begin{equation}\label{inclusionsMn-ver2}
B_{\geucl}(0,R)\!\smallsetminus\!\left(\cup \bar{B}_{\geucl}(p_{i,n},C\sqrt{\alpha_{i,n}\beta_{i,n}})\right) \!\subseteq\! \V_{n,R}\! \subseteq\! B_{\geucl}(0,R)\!\smallsetminus\! \left(\cup \bar{B}_{\geucl}(p_{i,n},\sqrt{\alpha_{i,n}\beta_{i,n}}/C)\right).
\end{equation}
for some constant $C$ of class $\C$; note that the stated inclusions apply even if $p_{i,n}\not\in\P_{n,**}$.
Throughout the remainder of this article we assume that $n$ is suitably large not only so that Theorem \ref{minsurf:thm1} applies but also so that 
$$B_{\geucl}(0,R)\smallsetminus \left(\bigcup B_{\geucl}(p_{i,n},\tfrac{D}{n^2})\right) \subseteq \V_{n,R}.$$
This is possible by virtue of the fact that $\sqrt{\alpha_{i,n}\beta_{i,n}}=O(\tfrac{D}{n^3})$ with the implied proportionality constant of class $\C$.
In fact, what is true in this situation is that 
\begin{equation}\label{towardsIFL}
B_{\geucl}(0,R)\smallsetminus \left(\bigcup B_{\geucl}(p_{i,n},\tfrac{D}{n^2})\right) = \V_{n,R} \smallsetminus \left(\bigcup B_{\geucl}(p_{i,n},\tfrac{D}{n^2})\right) 
\end{equation}
This is a good moment to point out to the reader that $0\in \V_{n,R}$ for all $n$ due to the fact that $0$ is simply a corner of some of the subdivision boxes.

\subsection{Length and diameter estimates}

The front row of point sources in the Figures \ref{fig2} and \ref{fig3} illustrates the fact that the individual ``necks" could be quite long - even in situations where we have $\mathcal{P}_{n,**}=\P_{n,*}$ for all $n$. The following lemma provides explicit estimates on this length.

\begin{lemma}\label{necklengthlemma}
Let $p_{i,n}\in \P_{n,**}$ and let\footnote{The factor of $\tfrac{1}{D}$ is included so to non-dimensionalize as many terms as possible in the long run.} 
$$\ell_{i,n}:= \tfrac{1}{D}(\alpha_{i,n}+\beta_{i,n})|\ln(\alpha_{i,n}\beta_{i,n}/D^2)|.$$
Furthermore, assume the points $q,q'\in \V_{n,R}$ are collinear with $p_{i,n}$ and that $q$ is between $p_{i,n}$ and $q'$.
\begin{enumerate}
\item If $|q-p_{i,n}|\le 2 \sqrt{\alpha_{i,n}\beta_{i,n}}$ and $|q'-p_{i,n}|\ge \tfrac{D}{2n^2}$, and if $n$ is large relative to $\C$ then 
$$d_{g_n}(q,q')\ge \tfrac{D}{12}(\tfrac{1}{n^2}+\ell_{i,n});$$
\medbreak
\item There is a constant $C$ of class $\C$ such that if 
$$|q-p_{i,n}|\ge \tfrac{1}{2}\sqrt{\alpha_{i,n}\beta_{i,n}} \text{\ \ and\ \ } |q'-p_{i,n}|\le 2\tfrac{D}{n^2},$$ 
and if $n$ is large relative to $\C$ then 
$$d_{g_n}(q,q')\le CD(\tfrac{1}{n^2}+\ell_{i,n}).$$
\end{enumerate}
\end{lemma}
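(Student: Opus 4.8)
The plan is to reduce the distance estimates to one-dimensional integrals of the conformal factor $\chi_n\psi_n$ along the radial segment joining $q$ and $q'$, and then to analyze these integrals using the already-established $C^0$-control on $\chi_n^{(i)},\psi_n^{(i)}$ (Proposition \ref{BLRcontrollemma}) together with the explicit $r$-dependence of the singular parts $\alpha_{i,n}/r$ and $\beta_{i,n}/r$. Writing $r=|x-p_{i,n}|$ and using $\sigma_{i,n}=D/n$, on the segment in question we have $\chi_n(x)=\tfrac{\alpha_{i,n}}{r}+\chi_n^{(i)}(x)$ with $\chi_n^{(i)}$ pinched between two constants of class $\C$ (and likewise for $\psi_n$), so
\begin{equation}\label{necklength:sandwich}
\left(\tfrac{\alpha_{i,n}}{r}+c_1\right)\left(\tfrac{\beta_{i,n}}{r}+c_1\right)\le \chi_n\psi_n\le \left(\tfrac{\alpha_{i,n}}{r}+c_2\right)\left(\tfrac{\beta_{i,n}}{r}+c_2\right)
\end{equation}
for constants $0<c_1\le c_2$ of class $\C$, valid on $r\le D/(2n)$. (For $r\in[D/(2n),D/n^2]$ one is outside $B_{\geucl}(p_{i,n},D/(2n))$ but still can bound $\chi_n\psi_n$ above and below by constants of class $\C$ via Proposition \ref{BLRcontrollemma}; this contributes only an $O(D/n^2)$ term which is harmless.)

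For the \textbf{lower bound} (part (1)), I would estimate $d_{g_n}(q,q')$ from below by the $g_n$-length of the straight radial segment restricted appropriately — but since geodesics may wander, the cleanest route is to use the projection/monotonicity trick: the nearest-point radial retraction onto the ray through $p_{i,n}$ is $1$-Lipschitz for $\geucl$, hence the $g_n$-length of \emph{any} path from $q$ to $q'$ is at least $\int$ of the conformal factor along the radial image. Concretely, $d_{g_n}(q,q')\ge \int_{|q-p_{i,n}|}^{|q'-p_{i,n}|} (\chi_n\psi_n)\,dr$ evaluated with the lower sandwich in \eqref{necklength:sandwich}. Splitting the integral at $r=\sqrt{\alpha_{i,n}\beta_{i,n}}$ and at $r=D/(2n)$: near $r=\sqrt{\alpha_{i,n}\beta_{i,n}}$ the factor $\tfrac{\alpha_{i,n}\beta_{i,n}}{r^2}$ dominates and integrating $\tfrac{\alpha_{i,n}\beta_{i,n}}{r^2}$ from $\sqrt{\alpha_{i,n}\beta_{i,n}}$ up to, say, $\tfrac{1}{2}(\alpha_{i,n}+\beta_{i,n})$ produces a term of size $\gtrsim \sqrt{\alpha_{i,n}\beta_{i,n}}$; the genuinely logarithmic contribution comes from the cross terms $\tfrac{c_1\alpha_{i,n}}{r}$ and $\tfrac{c_1\beta_{i,n}}{r}$ integrated over the range $r\in[\sqrt{\alpha_{i,n}\beta_{i,n}},\,\text{const}]$, giving $\gtrsim (\alpha_{i,n}+\beta_{i,n})\big|\ln(\alpha_{i,n}\beta_{i,n}/D^2)\big|=D\,\ell_{i,n}$ up to constants; and integrating the constant part $c_1^2$ over $r\in[\text{const},D/n^2]$ gives $\gtrsim D/n^2$. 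Adding these and absorbing constants of class $\C$ (and the requirement ``$n$ large relative to $\C$'' to push the cutoffs into the right regime) yields $d_{g_n}(q,q')\ge \tfrac{D}{12}(\tfrac1{n^2}+\ell_{i,n})$; the explicit numeric constant $\tfrac{1}{12}$ is then a matter of bookkeeping the $c_1$'s and the $\pm O(\tfrac{\alpha_{i,n}+\beta_{i,n}}{\sigma_{i,n}})$ errors.

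For the \textbf{upper bound} (part (2)), I would simply take the path to be the radial segment from $q$ to $q'$ (concatenated, if necessary, with short arcs staying on fixed coordinate spheres to move $q$, $q'$ to the ray; those arcs have length $O(\sqrt{\alpha_{i,n}\beta_{i,n}})=O(D/n^3)$, negligible) and bound its $g_n$-length from above via the \emph{upper} sandwich in \eqref{necklength:sandwich}. Exactly the same three pieces appear: $\int \tfrac{\alpha_{i,n}\beta_{i,n}}{r^2}\,dr\lesssim \sqrt{\alpha_{i,n}\beta_{i,n}}$, the cross terms $\int \tfrac{c_2(\alpha_{i,n}+\beta_{i,n})}{r}\,dr\lesssim (\alpha_{i,n}+\beta_{i,n})|\ln(\alpha_{i,n}\beta_{i,n}/D^2)|=D\ell_{i,n}$, and $\int c_2^2\,dr\lesssim D/n^2$; summing gives $d_{g_n}(q,q')\le CD(\tfrac1{n^2}+\ell_{i,n})$ with $C$ of class $\C$.

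The main obstacle is the \textbf{lower bound}: one must rule out the possibility that a $g_n$-geodesic between $q$ and $q'$ somehow shortcuts the ``well'' by swinging around $p_{i,n}$. This is handled by the radial-retraction (nearest-point projection to the ray) argument noted above — it is $\geucl$-$1$-Lipschitz and, because $\chi_n\psi_n$ restricted to a fixed coordinate sphere is bounded below by the same lower sandwich, it does not increase $g_n$-length in the relevant annulus; hence any competitor path is at least as long as the radial one. Making this retraction argument airtight (in particular checking it only needs the lower bound $\chi_n\psi_n\gtrsim \tfrac{\sqrt{\alpha_{i,n}\beta_{i,n}}}{r}$-type control, which holds on $r\le D/(2n)$, and patching with the crude bound of Proposition \ref{BLRcontrollemma} on $r\in[D/(2n),D/n^2]$) is the one place where some care is required; everything else is the elementary computation of $\int_a^b(\tfrac{\alpha}{r}+c)(\tfrac{\beta}{r}+c)\,dr$.
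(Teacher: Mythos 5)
Your overall strategy is the same as the paper's: compare $g_n$ to a Reissner--Nordstr\"om reference metric centred at $p_{i,n}$ and reduce everything to the one--dimensional integral $\int (1+\alpha/r)(1+\beta/r)\,dr$, whose $(\alpha+\beta)\ln$ term produces $\ell_{i,n}$. For the lower bound, the paper simply uses $g_n\ge (1+\tfrac{\alpha_{i,n}}{r})^2(1+\tfrac{\beta_{i,n}}{r})^2\geucl$ (because $\chi_n^{(i)},\psi_n^{(i)}\ge 1$) and the fact that for a metric spherically symmetric about $p_{i,n}$ the minimizing path between two points collinear with $p_{i,n}$ is the radial segment; your ``radial retraction'' is the same idea in less precise language (it is not the \emph{nearest-point} projection that is relevant, but the observation $|\dot\gamma|_{\geucl}\ge|\dot r|$ with $r=|\gamma-p_{i,n}|$, which gives $L_{g_{\mathrm{RN}}}(\gamma)\ge\int\phi(r)\,|\dot r|\,dt\ge\int\phi(r)\,dr$ over the radial range). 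So the lower bound is fine once you clean that up; the sandwich with constants $c_1,c_2$ is unnecessary since the crude bound $\chi_n^{(i)}\ge1$ already suffices.

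The genuine gap is in the upper bound. The distance $d_{g_n}(q,q')$ is the intrinsic distance in $\V_{n,R}$, which is the ball $B_{\geucl}(0,R)$ with the regions \emph{enclosed by the minimal surfaces} $\Sigma_{j,n}$ removed (Definition \ref{defn-mn}), not a Euclidean ball with coordinate balls removed. Before you may write $d_{g_n}(q,q')\le L_{g_n}(\gamma)$ for the radial segment $\gamma$, you must check that $\gamma\subseteq\V_{n,R}$, i.e.\ that $\gamma$ never enters the side of $\Sigma_{i,n}$ containing $p_{i,n}$, nor any $\mathrm{Int}(\Sigma_{j,n})$ for $j\neq i$. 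This is not automatic: $q$ can be as close as $\tfrac12\sqrt{\alpha_{i,n}\beta_{i,n}}$ to $p_{i,n}$, which is comparable to the scale of $\Sigma_{i,n}$, and a priori a short path from $q$ to $q'$ might have to be ``quite roundabout'' (the paper says exactly this). The paper resolves the issue by invoking Theorem \ref{minsurf:thm1}: $\Sigma_{i,n}$ is a graph $\omega\mapsto p_{i,n}+\S_{i,n}(\omega)\omega$ over $S^2$, so the region it encloses is star-shaped about $p_{i,n}$; since $q\in\V_{n,R}$ already lies outside it and $|x-p_{i,n}|$ increases along $\gamma$, the radial segment never re-enters, and the neighboring $\Sigma_{j,n}$ are too far away to matter. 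Your proposal does not mention this step, and the aside about ``arcs on coordinate spheres'' addresses a nonissue (the lemma already assumes $q,q',p_{i,n}$ collinear) while leaving the actual containment question untouched. Everything else in your computation is correct bookkeeping.
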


\begin{proof}
We begin by proving the lower bound on $d_{g_n}(q,q')$. Since 
$$g_n\ge (1+\tfrac{\alpha_{i,n}}{|x-p_{i,n}|})^2(1+\tfrac{\beta_{i,n}}{|x-p_{i,n}|})^2\geucl,$$
the $g_n$-distance between $q$ and $q'$ is not less than the distance between $q$ and $q'$ with respect to said Reissner-Nordstr\"om metric. The latter is spherically symmetric about $p_{i,n}$ and thus the minimizing geodesic connecting $q$ and $q'$ follows the Euclidean (radial) line segment with endpoints at $q$ and $q'$. It follows that 
$$\begin{aligned}
d_{g_n}(q,q')\ge&\int_{r=2\sqrt{\alpha_{i,n}\beta_{i,n}}}^{r=D/(2n^2)} (1+\tfrac{\alpha_{i,n}}{r})(1+\tfrac{\beta_{i,n}}{r})\,dr\\
=&\tfrac{D}{2n^2}-(\alpha_{i,n}+\beta_{i,n})\ln\left(\tfrac{4n^2}{D}\sqrt{\alpha_{i,n}\beta_{i,n}}\right)-\tfrac{3}{2}\sqrt{\alpha_{i,n}\beta_{i,n}}-\tfrac{2n^2}{D}\alpha_{i,n}\beta_{i,n}.
\end{aligned}$$
To proceed observe that the inequalities
$$\begin{aligned}
&\tfrac{D}{2n^2}-\tfrac{3}{2}\sqrt{\alpha_{i,n}\beta_{i,n}}-\tfrac{2n^2}{D}\alpha_{i,n}\beta_{i,n}\ge \tfrac{D}{12n^2}\\
&4n^2\le (\alpha_{i,n}\beta_{i,n})^{-5/12}D^{5/6}=\left(A(p_{i,n})B(p_{i,n})D^4\right)^{-5/12} \cdot n^{5/2}
\end{aligned}$$ 
apply when $n$ is sufficiently large relative to $\C$. Overall, we obtain 
$$d_{g_n}(q,q')\ge \tfrac{D}{12n^2} -\tfrac{1}{12}(\alpha_{i,n}+\beta_{i,n})\ln(\alpha_{i,n}\beta_{i,n}/D^2)= \tfrac{D}{12}(\tfrac{1}{n^2}+\ell_{i,n}).$$

We continue by proving the upper bound on $d_{g_n}(q,q')$. Here is an absolutely crucial (and yet very delicate) observation: Theorem \ref{minsurf:thm1} implies\footnote{Had the minimal surface $\Sigma_{i,n}$ not been as controllable as Theorem \ref{minsurf:thm1} makes it, the shortest path from $q$ to $q'$ within $\V_{n,R}$ could be quite roundabout. This would make it hard to place an upper bound on its length, and it is for this reason that we decided to solve the minimal surface equation in the way we did.} that the Euclidean line segment $\gamma$ which joins $q$ and $q'$ is contained in $\V_{n,R}$! It is for this reason that we have 
$$d_{g_n}(q,q')\le L(\gamma).$$
To estimate $L(\gamma)$ we make use of Proposition \ref{BLRcontrollemma}, according to which there exists a constant $C$ of class $\C$ with 
$$g_n\le C^2(1+\tfrac{\alpha_{i,n}}{|x-p_{i,n}|})^2(1+\tfrac{\beta_{i,n}}{|x-p_{i,n}|})^2\geucl.$$
along $\gamma$. It follows that 
$$L(\gamma)\le C\int_{r=\frac{1}{2}\sqrt{\alpha_{i,n}\beta_{i,n}}}^{r=2D/n^2} (1+\tfrac{\alpha_{i,n}}{r})(1+\tfrac{\beta_{i,n}}{r})\,dr.$$
Evaluation of the integral leads to 
$$\begin{aligned}
&\tfrac{2D}{n^2}-(\alpha_{i,n}+\beta_{i,n})\ln\left(\tfrac{n^2}{4D}\sqrt{\alpha_{i,n}\beta_{i,n}}\right)+\tfrac{3}{2}\sqrt{\alpha_{i,n}\beta_{i,n}}-\tfrac{n^2}{2D}\alpha_{i,n}\beta_{i,n}\\
\le&\tfrac{3D}{n^2}-\tfrac{1}{2}(\alpha_{i,n}+\beta_{i,n})\ln(\alpha_{i,n}\beta_{i,n}/D^2),
\end{aligned}$$
provided $n$ is large relative to $\C$. Overall, we have
$$d_{g_n}(q,q')\le L(\gamma)\le C\left(\tfrac{D}{n^2}-(\alpha_{i,n}+\beta_{i,n})\ln(\alpha_{i,n}\beta_{i,n}/D^2)\right)\le CD\left(\tfrac{1}{n^2}+\ell_{i,n}\right)$$
for some (larger) constant $C$ of class $\C$. 
\end{proof}

\begin{remark}\label{goingaround}
This is a great moment to point to a related but complementary (pun!) computation: Suppose $q$ and $q'$ are points in $\V_{n,R}\cap  B_{\geucl}(p_{i,n}, \tfrac{2D}{n^2})$ with $|q-p_{i,n}|=|q'-p_{i,n}|$ and suppose that the circular arc joining $q$ and $q'$ is contained in $\V_{n,R}$ we have that 
$$d_{g_n}(q,q') \le C\pi (1+\tfrac{\alpha_{i,n}}{\varrho})(1+\tfrac{\beta_{i,n}}{\varrho})\varrho,$$
where $\varrho$ is the shared value of $|q-p_{i,n}|=|q'-p_{i,n}|$. Given that $\tfrac{1}{2}\sqrt{\alpha_{i,n}\beta_{i,n}}\le \varrho \le \tfrac{2D}{n^2}$, the maximum of the stated expression in $\varrho$ is achieved when $\varrho=\tfrac{2D}{n^2}$ and is on the order of $O(\tfrac{D}{n^2})$. If one or both of the points $q$ and / or $q'$ are in $\V_{n,R}\cap B_{\geucl}(p_{i,n}, 2\sqrt{\alpha_{i,n}\beta_{i,n}})$ we may need to append one or two radial connectors to reach the circular arc $|x-p_{i,n}|=2\sqrt{\alpha_{i,n}\beta_{i,n}}$. The length of these connectors is no more than 
$$\left(1+\frac{2\alpha_{i,n}}{\sqrt{\alpha_{i,n}\beta_{i,n}}}\right)\left(1+\frac{2\beta_{i,n}}{\sqrt{\alpha_{i,n}\beta_{i,n}}}\right)\cdot \frac{3}{2}\sqrt{\alpha_{i,n}\beta_{i,n}}\le 12(\alpha_{i,n}+\beta_{i,n})$$
and therefore the estimate 
$$d_{g_n}(q,q')=O(D/n^2)$$
still applies. Overall, the point is that even though the ``necks" associated with individual point sources could be quite long at least they are very thin. 
\end{remark}

Lemma \ref{necklengthlemma} suggests that the diameter of the set obtained by truncating the asymptotically Euclidean end $|x|\to \infty$ (compare with Figure \ref{fig3}) is governed by the quantity
\begin{equation}\label{dn:defn}
\ell_n:=\begin{cases}
\tfrac{1}{D}\max_{i}(\alpha_{i,n}+\beta_{i,n})|\ln(\alpha_{i,n}\beta_{i,n}/D^2)| & \text{if\ \ } \P_{n,**}=\P_{n,*};\\
\infty &\text{if\ \ } \P_{n,**}\neq \P_{n,*},
\end{cases}
\end{equation} 
at least for very large $n$. Investigating this point further is crucial to get any study of convergence of Brill-Lindquist-Riemann sums off the ground. 

\begin{lemma}\label{diameterlemma}
Suppose a Brill-Lindquist-Riemann sum with $\mathcal{P}_{n,**}=\P_{n,*}$ for all $n$. There exist a constant $C$ of class $\C$ for which  
$$\tfrac{1}{C}(R+\ell_nD)\le \mathrm{diam}_{g_n}(\V_{n,R})\le C(R+\ell_nD)$$
for all $n$ which are large relative to $\C$.
\end{lemma}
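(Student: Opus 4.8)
The plan is to prove the two inequalities separately: the lower bound from the trivial estimate $g_n\ge\geucl$ together with Lemma \ref{necklengthlemma}(1), and the upper bound by building, for any two points of $\V_{n,R}$, an explicit connecting curve whose $g_n$-length we can control.

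For the lower bound I would first use that $\chi_n,\psi_n\ge 1$, so $g_n\ge\geucl$ and hence $d_{g_n}(p,q)\ge|p-q|_{\geucl}$ for all $p,q\in\V_{n,R}$; since by \eqref{inclusionsMn-ver2} the set $\V_{n,R}$ contains points within Euclidean distance $O(D/n^3)$ of every point of $B_{\geucl}(0,R)$, one may choose $p,q\in\V_{n,R}$ with $|p-q|_{\geucl}\ge R$, which already gives $\mathrm{diam}_{g_n}(\V_{n,R})\ge R$. For the $\ell_nD$ part I would fix an index $i$ realizing the maximum in \eqref{dn:defn} — this exists and has $p_{i,n}\in\P_{n,**}$ precisely because $\P_{n,**}=\P_{n,*}$, so that $\ell_{i,n}=\ell_n$ — and take $q,q'$ on a common ray issuing from $p_{i,n}$ with $|q-p_{i,n}|_{\geucl}=2\sqrt{\alpha_{i,n}\beta_{i,n}}$ and $|q'-p_{i,n}|_{\geucl}=D/n^2$. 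By Theorem \ref{minsurf:thm1} the surface $\Sigma_{i,n}$ sits at Euclidean radius $O(\sqrt{\alpha_{i,n}\beta_{i,n}})=O(D/n^3)$ about $p_{i,n}$, while the minimal surfaces about all other sources lie at Euclidean distance $\ge D/(2n)$ from $p_{i,n}$; hence, for $n$ large relative to $\C$, both $q$ and $q'$ belong to $\V_{n,R}$ and $q$ lies between $p_{i,n}$ and $q'$, so Lemma \ref{necklengthlemma}(1) gives $d_{g_n}(q,q')\ge\tfrac{D}{12}(\tfrac1{n^2}+\ell_{i,n})\ge\tfrac{D}{12}\ell_n$. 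Combining, $\mathrm{diam}_{g_n}(\V_{n,R})\ge\max\bigl(R,\tfrac{D}{12}\ell_n\bigr)\ge\tfrac1{24}(R+\ell_nD)$.

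For the upper bound the idea is to connect two arbitrary points $q_1,q_2\in\V_{n,R}$ in three stages, working relative to the exterior region $W_n:=\V_{n,R}\smallsetminus\bigcup_iB_{\geucl}(p_{i,n},2D/n^2)$, on which $g_n\le C\geucl$ for a constant $C$ of class $\C$ by Proposition \ref{BLRcontrollemma}. Stage one pushes each $q_j$ into $W_n$: if $q_j$ lies in some $B_{\geucl}(p_{i,n},2D/n^2)$, a radial push outward — legitimate by \eqref{inclusionsMn-ver2} and the star-shapedness of $\Sigma_{i,n}$ — first brings it to Euclidean radius $\tfrac12\sqrt{\alpha_{i,n}\beta_{i,n}}$ at $g_n$-cost $O(D/n^3)$, and then Lemma \ref{necklengthlemma}(2) along the same ray brings it to a point $\tilde q_j\in W_n$ with $|\tilde q_j-p_{i,n}|_{\geucl}=2D/n^2$ at additional cost $\le CD(\tfrac1{n^2}+\ell_{i,n})\le CD(\tfrac1{n^2}+\ell_n)$; if $q_j$ is in no such ball, set $\tilde q_j:=q_j$. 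Stage two joins $\tilde q_1$ to $\tilde q_2$ inside $W_n$ by following the Euclidean segment between them, detoured around each ball $B_{\geucl}(p_{i,n},2D/n^2)$ the segment meets by a circular arc of radius $2D/n^2$ of $g_n$-length $O(D/n^2)$ (cf.\ Remark \ref{goingaround}); since the $p_{i,n}$ form a grid of spacing $D/n$ and the balls are pairwise disjoint, a segment of length $\le 2R$ meets at most $O(Rn/D)$ of them, so this stage costs $g_n$-length $\le C\cdot 2R+O(Rn/D)\cdot O(D/n^2)=O(R)$. Stage three adds up: using $D/n^2\le D<R$ one gets $d_{g_n}(q_1,q_2)\le C(R+\ell_nD)$ for a constant $C$ of class $\C$, and taking the supremum over $q_1,q_2$ finishes the bound.

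I expect the main obstacle to be the middle stage of the upper bound — steering through $W_n$, which, although conformally within a bounded factor of $\geucl$, is riddled with roughly $n^3$ tiny excised balls. One has to verify that a bounded-length Euclidean path meets only $O(n)$ of these balls and that circumnavigating each of them costs only $O(D/n^2)$ in the $g_n$-metric, so that the cumulative detour length stays comparable to $R$; this is where the uniform boundedness of $\chi_n,\psi_n$ away from the $D/n^2$-balls (Proposition \ref{BLRcontrollemma}) and the regular grid structure of $\P_{n,*}$ are both essential. The only other fussy point, the initial radial push in stage one (needed only when the constant in \eqref{inclusionsMn-ver2} exceeds $2$), is harmless because its cost is $O(D/n^3)$.
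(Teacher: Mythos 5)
Your proof is correct and follows essentially the same route as the paper's: the lower bound comes from $g_n\ge\geucl$ together with Lemma \ref{necklengthlemma}(1), and the upper bound splits into a radial estimate near a source (via Lemma \ref{necklengthlemma}(2) and Remark \ref{goingaround}) plus traversal of the exterior region where $g_n$ is uniformly comparable to $\geucl$, the only cosmetic difference being that the paper uses an axis-parallel broken path in $\V_{n,R}\smallsetminus\bigcup_i B_{\geucl}(p_{i,n},D/n^2)$ where you use a straight segment with circular detours. A small aside: your cumulative detour cost $O(Rn/D)\cdot O(D/n^2)$ actually evaluates to $O(R/n)$, which is even better than the $O(R)$ you record.
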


\begin{proof}
To prove the lower bound we exhibit a pair of points whose $g_n$-distance is at least (a $\C$-multiple) of $R$, and a pair of points whose $g_n$-distance is at least $\tfrac{1}{12}\ell_n$. For the first pair consider the points where the line containing an edge of the box $[-D,D]^3$ pierces the Euclidean sphere whose radius is (just short of) $R$. Since $g_n\ge \geucl$ the $g_n$-length of any path in $\V_{n,R}$ joining these two points is not less than its Euclidean length, which in turn is not less than the Euclidean length of the Euclidean line segment joining the two points. In other words, the $g_n$ distance between these two points is at least $2R\cos(\arctan(\sqrt{2}))=O(R)$. For the second pair of points consider $p_{i,n}$ for which the maximum in the definition of $\ell_n$ is reached. Let $q$ be a point on the Euclidean sphere of radius $2\sqrt{\alpha_{i,n}\beta_{i,n}}$ centered at $p_{i,n}$ (cf. inclusions of \eqref{inclusionsMn-ver2}) and let $q'$ be the point where the ray from $p_{i,n}$ towards $q$ pierces the Euclidean sphere of radius $D/(2n^2)$ centered at $p_{i,n}$. By Lemma \ref{necklengthlemma} we have 
$$d_{g_n}(q,q')\ge \tfrac{D}{12}\ell_{i,n}=\tfrac{D}{12}\ell_n.$$

To prove the upper bound recall that by part \eqref{boundednessoffactors} of Proposition \ref{BLRcontrollemma} the metrics $g_n$ are uniformly equivalent to the Euclidean metric outside $\cup_i B_{\geucl}(p_{i,n}, \tfrac{D}{n^2})$. Since any two points in $p,q\in \V_{n,R}\smallsetminus \left(\cup_i B_{\geucl}(p_{i,n}, \tfrac{D}{n^2})\right)$ can be connected by a broken line segment contained in $\V_{n,R}\smallsetminus \left(\cup_i B_{\geucl}(p_{i,n}, \tfrac{D}{n^2})\right)$ whose sides are parallel to the coordinate axes we have
$$d_{g_n}(p,q)\le CR$$
for some $C$ of class $\C$. Thus, it suffices to prove an estimate of the form 
$$d_{g_n}(p,q)\le CD\left(1+\ell_{i,n}\right)$$
when $p\in B_{\geucl}(p_{i,n}, \tfrac{D}{n^2})$ for some $i$. In fact, it suffices to focus solely on the configuration described in Lemma \ref{necklengthlemma} in which $q$ is the point where the ray from $p_{i,n}$ towards $p$ pierces the Euclidean sphere of radius $D/n^2$ centered at $p_{i,n}$. At this stage the upper bound we need is an immediate consequence of Lemma \ref{necklengthlemma}.
\end{proof}

This is a very good moment to remind the reader of Definitions \ref{deepwellsdefinition} and \ref{shortwellsdefinition} made in the Introduction:  A sequence of Brill-Lindquist-Riemann sums is said to have \emph{deep wells} if $\P_{n,**}\neq\P_{n,*}$ for some $n$ or if the sequence of quantities $\ell_n$ defined in \eqref{dn:defn} is unbounded. Otherwise, the sequence is said to have no deep wells. Furthermore, a sequence which has no deep wells is said to have \emph{shallow wells} provided $\lim_{n\to \infty}\ell_n=0$.

\subsection{Behavior of the quantity $\ell_n$ with respect to parameters $\alpha_{i,n}$ and $\beta_{i,n}$}

Boundedness of expressions such as $x\ln(x)$ over $(0,\infty)$ proves that in situations when no charge is present (i.e when $\alpha_{i,n}=\beta_{i,n}$ for all $n$ and all $i$) we have no deep wells. In fact, since $\alpha_{i,n}=\beta_{i,n}=O(\tfrac{D}{n^3})$ while $\lim_{x\to 0^+} x\ln(x)=0$, the sequence of Brill-Lindquist-Riemann sums has shallow wells. 

It might be tempting to think that the long-term behavior of $\ell_n$ could be addressed in terms of functions $A$ and $B$ alone. In some simple situations this indeed is true. For example, for Brill-Lindquist-Riemann sums where the parameters $\alpha_{i,n}$ and $\beta_{i,n}$ are found via evaluation at some sample points with $q_{i,n}\in V_{i,n}$ (see \eqref{commonsensechoice}) the quantity $\ell_n$ can alternatively be expressed as 
$$\ell_n=\max_i \tfrac{(A(q_{i,n})D^2)+(B(q_{i,n})D^2)}{n^3}|\ln((A(q_{i,n})D^2)(B(q_{i,n})D^2)/n^6)|.$$
Due to the fact that $\lim_{n\to \infty}\frac{\ln n^6}{n^3}=0$ we have that 
$$\limsup_{n\to\infty}\ell_n=\limsup_{n\to \infty}\ell_n'$$
where the quantity $\ell_n'$ is defined by 
$$\ell_n':=\max_i \tfrac{1}{n^3}\left(A(q_{i,n})D^2|\ln(B(q_{i,n})D^2)|+B(q_{i,n})D^2|\ln(A(q_{i,n})D^2)|\right).$$
From here it is easy to see that 
\begin{equation}\label{suffcondnodeepwells}
\sup\, \{(AD^2)|\ln(BD^2)|+(BD^2)|\ln(AD^2)|\}<\infty,
\end{equation}
with the supremum taken over the region where $AB\neq 0$ is a sufficient condition for such a sequence of Brill-Lindquist-Riemann sums to have shallow wells. The condition \eqref{suffcondnodeepwells} is, for example, fulfilled whenever there exists a constant $c$ such that 
$$\tfrac{1}{c}B\le A\le c B.$$

However, the long-term behavior of $\ell_n$ is quite dependent on how we choose the parameters $\alpha_{i,n}$ and $\beta_{i,n}$. We now examine some examples. 

\subsubsection{Evaluation of $A$ and $B$ at different sample points}\label{Example1}
Consider the bump function 
$$B(p)=
\begin{cases}
\exp(\frac{1}{|p|^2-1}) & \text{if}\ |p|<1\\
0 & \text{if}\ |p|\ge 1,
\end{cases}$$
which is supported in $[-1,1]^3$, and let 
$$A=\exp(-1/B).$$
In this situation the condition \eqref{suffcondnodeepwells} does hold. Yet, if we permit the evaluation of $A$ and $B$ to be at distinct sample points then for any given $L\in (0,\infty)$ we can arrange that $\lim_{n\to \infty}\ell_n=L$. Indeed, consider the parameters $\alpha_{i,n}$ and $\beta_{i,n}$ to be as in \eqref{commonsensechoice} except at one instance where\footnote{Though the value of $D$ in our example is $D=1$ we choose to keep here for dimensional reasons.}
$$\alpha_{i,n}=\tfrac{D^3}{n^3}A(1-\tfrac{1}{2n}+\tfrac{3\ln n}{2n^2}, \tfrac{\lambda_1}{2n},0),\ \ \ \beta_{i,n}=\tfrac{D^3}{n^3}B(1-\tfrac{1}{2n}, \tfrac{\lambda_2}{2n},0)$$
for some judiciously chosen fixed real values $\lambda_1$ and $\lambda_2$. A direct computation shows that 
$$\begin{aligned}
\ell_n=&\lim_{n\to \infty} \frac{B(1-\frac{1}{2n}, \frac{\lambda_2}{2n},0)}{n^3B(1-\frac{1}{2n}+\frac{3\ln n}{2n^2}, \frac{\lambda_1}{2n},0)}\\
=&\lim_{n\to \infty}\frac{1}{n^3}\exp\left(3\ln n+\frac{\lambda_1^2-\lambda_2^2}{4}+O\left(\frac{\ln n}{n}\right)\right)=\exp\left(\frac{\lambda_1^2-\lambda_2^2}{4}\right).
\end{aligned}$$
For any given $0<L<\infty$ values of $\lambda_1$ and $\lambda_2$ can be chosen in a small neighborhood of an odd integer so that $\exp\left(\frac{\lambda_1^2-\lambda_2^2}{4}\right)=L$; this then proves our claim that $\lim_{n\to \infty}\ell_n=L$. It is now also clear that if in place of $3\ln n$ we used $2\ln n$ or $4\ln n$ we would have gotten 
$$\lim_{n\to \infty}\ell_n=0 \text{\ \ and\ \ } \lim_{n\to \infty}\ell_n=\infty,$$
respectively. 

\subsubsection{Sensitivity to sample points}\label{Example2}
Example \ref{Example1} could serve as a motivation to consider Brill-Lindquist-Riemann sums in which parameters $\alpha_{i,n}$ and $\beta_{i,n}$ are chosen according to \eqref{commonsensechoice}. However, the quantity $\ell_n$ can exhibit rich behavior even in that particular context! Consider the bump function 
$$B(p)=
\begin{cases}
\exp\left(\frac{1}{1-|p|^2}\right) & \text{if}\ |p|<1\\
0 & \text{if}\ |p|\ge 1,
\end{cases}$$
consider $A=\exp\left(\frac{\ln B}{B}\right)$. Note that under such choices the supremum in \eqref{suffcondnodeepwells} is infinite:
$$\sup\left\{(AD^2)|\ln(BD^2)|+(BD^2)|\ln(AD^2)|\right\}\ge \sup (BD^2)|\ln(AD^2)| =\sup_{|p|<1} |\ln(BD^2)|=\infty.$$
Consider Brill-Lindquist-Riemann sum where $\alpha_{i,n}$ and $\beta_{i,n}$ are chosen as in \eqref{commonsensechoice} with sample points being the midpoints $p_{i,n}$ except at one location where\footnote{This example assumes we are working with $n$ such that $n^3\ge\lambda$. This is not a substantial restriction as the interesting aspects of this example increase when $\lambda\approx 0$. } 
$$q_{i,n}=\left(1-\tfrac{\lambda}{2n^3}, 0,0\right)$$
for some positive real number $\lambda$. Note that $1-|p_{i,n}|^2=\frac{K-(3/4)}{n^2}$ for some integer $K$. This observation leads to 
$$1-|p_{i,n}|^2\ge \frac{1}{4n^2} \text{\ \ and\ \ } \frac{1}{n^3}B(p_{i,n})D^2|\ln(A(p_{i,n})D^2)|=\frac{1}{n^3(1-|p_{i,n}|^2)}\le \frac{4}{n}.$$
It follows that size of $\ell_n$ in this example is dictated by the term 
$$\frac{1}{n^3}B(q_{i,n})D^2|\ln(A(q_{i,n})D^2)|=\frac{1}{n^3(1-|q_{i,n}|^2)}=\frac{1}{\lambda-\frac{\lambda^2}{4n^3}}.$$
Overall, we see that 
$$\lim_{n\to \infty}\ell_n= \tfrac{1}{\lambda}.$$
Despite \eqref{suffcondnodeepwells} failing, our sequence of Brill-Lindquist-Riemann sums has no deep wells. Much as in Example \ref{Example1} it is easy to see how our expression for $q_{i,n}$ can be altered to produce an example with shallow wells (e.g $q_{i,n}=\left(1-\tfrac{1}{2n^2}, 0, 0\right)$) or an example with deep wells (e.g $q_{i,n}=\left(1-\tfrac{1}{2n^4}, 0, 0\right)$).    

\subsubsection{Interesting midpoint example}\label{Example3}
We now examine a related example of midpoint type. Consider the bump function 
$$B(p)=
\begin{cases}
\exp((1-|p|^2)^{-3/2}) & \text{if}\ |p|<1\\
0 & \text{if}\ |p|\ge 1,
\end{cases}$$
and once again consider $A=\exp\left(\frac{\ln B}{B}\right)$. The supremum in \eqref{suffcondnodeepwells} is infinite for the same reasons as in Example \ref{Example2}. Once again, we have $1-|p_{i,n}|^2\ge \frac{1}{4n^2}$ which then further leads to 
$$\limsup_{n\to\infty}\ell_n=\limsup_{n\to\infty} \frac{1}{n^3(1-|p_{i,n}|^2)^{3/2}}\le 8.$$

Next, we prove that the value of $\limsup \ell_n$ is indeed equal to $8$. This can be done in many different ways but we are choosing a slightly more involved way which shows that such a value of $\limsup$ is not simply due to a single odd-ball point-source as it was the case in Examples \ref{Example1} and \ref{Example2}. Our motivation for doing so is revealed in Section \ref{GH:sec} below. In particular, here we have an example of a sequence of Brill-Lindquist-Riemann sums of midpoint type with no deep wells, which is not an example where we have shallow wells. 

Consider integers $n_1, n_2, ....$ of the form 
$$n_k=\frac{1}{2}\left(1+\prod_{i=1}^k(\lambda_i^2+1)^2\right),$$
where $\lambda_i$ are even integers for which $\lambda_i^2+1$ are pairwise coprime\footnote{Worse come to worse, this property can be arranged by the ``Euclidean trick" of inductively constructing $\lambda_{i+1}:=(\lambda_1^2+1)...(\lambda_i^2+1)$.}. Such values of $n_k$ are interesting to us because the values 
$$w_{j,k}:=\frac{-1+(\lambda_j^2+2\lambda_j-1)\tilde{n}_{j,k}}{2},\ z_{j,k}:=\frac{-1+(\lambda_j^2-2\lambda_j-1)\tilde{n}_{j,k}}{2},\ 1\le j\le k$$
where $\tilde{n}_{j,k}=\frac{1}{(\lambda_j^2+1)^2}\prod_{i=1}^k(\lambda_i^2+1)^2$ are such that 
$$n_k^2=(n_k-1)^2 + (n_k-1) + w_{j,k}^2 + w_{j,k} + z_{j,k}^2 + z_{j,k}+1.$$
Stated differently, if
$$p=(\tfrac{n_k-(1/2)}{n_k}, \tfrac{w_{j,k}+(1/2)}{n_k}, \tfrac{z_{j,k}+(1/2)}{n_k})$$ 
then $1-|p|^2=\tfrac{1}{4n_k^2}$. It follows (assuming $n_k$ is large so that $\alpha_{i,n_k},\beta_{i,n_k}\le 1$) that at at least $k$ distinct locations $p_{i,n_k}$ we have
$$\begin{aligned}
\ell_{i,n_k}=&\frac{1}{D}(\alpha_{i,n_k}+\beta_{i,n_k})|\ln(\alpha_{i,n_k}\beta_{i,n_k}/D^2)|\\
\ge &\frac{1}{D}\alpha_{i,n_k}|\ln(\beta_{i,n_k}/D)|+\frac{1}{D}\beta_{i,n_k}|\ln(\alpha_{i,n_k}/D)|
\ge \frac{1}{D}\beta_{i,n_k}|\ln(\alpha_{i,n_k}/D)|\\
\ge &\frac{B(p_{i,n_k})D^2}{n_k^3}|\ln A(p_{i,n_k})D^2|=\frac{1}{n_k^3(1-|p_{i,n_k}|^2)^{3/2}}=8.
\end{aligned}$$
Consequently, we have $\ell_{n_k}\ge 8$ for $n_k$ large and $\limsup \ell_n=8$. 

It is interesting to notice that the sequence $\ell_n$ does not converge: For even values of $n$ we have $1-|p_{i,n}|^2=\tfrac{K-(3/4)}{n^2}$ with $K$ an even integer, meaning that 
$$1-|p_{i,n}|^2\ge\tfrac{5}{4n^2} \text{\ \ and\ \ } \limsup_{k\to\infty}\ell_{2k}\le \tfrac{8}{5^{3/2}}.$$
In fact, by using the values $n=2n_k$ with $n_k$ as in the previous paragraph we see that 
$$\lim_{k\to \infty}\ell_{2n_k}=\tfrac{8}{5^{3/2}}.$$

After all of these explicit computations it is probably easy to see that any usage of an exponent smaller than $-3/2$ in our expression for $B$ will lead to a sequence with deep wells while the exponent larger than $-3/2$ will lead to a sequence with shallow wells.

\subsubsection{Altering parameters to obtain shallow wells}\label{wlog?sec}
Recall that our definition of Brill-Lindquist-Riemann sums leaves some room for ``error" regarding the parameters $\alpha_{i,n}$ and $\beta_{i,n}$. The example of Section \ref{Example1} is included to show just how sensitive the quantity $\ell_n$ is to choices of $\alpha_{i,n}$ and $\beta_{i,n}$. The example was intentionally written in the style which suggests that having so much freedom in choices of parameters is not necessarily a ``good thing". In contrast, here is a result which encourages us to allow for the wiggle-room in Definition \ref{BLR}.

\begin{proposition}\label{wlog?}
For all pairs $(A,B)$ of non-negative, smooth functions supported in $[-D,D]^3$ and all $\lambda>0$ there exists a Brill-Lindquist-Riemann sum with shallow wells for which $\C(\alpha, A),  \C(\beta, B) \le \lambda D$. 
\end{proposition}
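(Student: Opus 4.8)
The plan is to keep the canonical (midpoint) parameters wherever they are already safely away from $0$, and to \emph{round away from zero} on the exceptional boxes by a polynomially small amount that simultaneously fits inside the error tolerance of Definition~\ref{BLR} and is large enough to tame the logarithm appearing in $\ell_n$. Concretely, I would fix $\e_n:=\lambda D/n^4$ and define, for each $n$ and each box $V_{i,n}$,
$$\alpha_{i,n}:=\begin{cases}\max\bigl(A(p_{i,n})(D/n)^3,\ \e_n\bigr), & A(p_{i,n})+B(p_{i,n})>0,\\[3pt] 0, & A(p_{i,n})+B(p_{i,n})=0,\end{cases}$$
and symmetrically $\beta_{i,n}$ with $B$ in place of $A$. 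All these numbers are non-negative; moreover $\alpha_{i,n}+\beta_{i,n}>0$ precisely when $A(p_{i,n})+B(p_{i,n})>0$, and in that case $\alpha_{i,n},\beta_{i,n}\ge\e_n>0$, so $\P_{n,**}=\P_{n,*}$ for every $n$. This already handles half of the ``no deep wells'' requirement.

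First I would verify the control constants. Since $|\max(t,\e_n)-t|\le\e_n$ for $t\ge0$, and since in the vanishing case $\alpha_{i,n}$ matches its canonical value $0$ exactly, we get $|\alpha_{i,n}-A(p_{i,n})(D/n)^3|\le\e_n$ for all $i$ and $n$; hence $n^4|\alpha_{i,n}-A(p_{i,n})(D/n)^3|\le n^4\e_n=\lambda D$, so $\C(\alpha,A)\le\lambda D$, and likewise $\C(\beta,B)\le\lambda D$. Thus the family $\{(\alpha_{i,n},\beta_{i,n})\}$ is a legitimate Brill-Lindquist-Riemann sum in the sense of Definition~\ref{BLR}.

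Next I would bound $\ell_n$. On any $p_{i,n}\in\P_{n,*}$ we have $\e_n^2\le\alpha_{i,n}\beta_{i,n}$ from below, while from above $\alpha_{i,n}\le\|A\|(D/n)^3+\e_n\le M_A/n^3$ with $M_A:=\|A\|D^3+\lambda D$ (and similarly $\beta_{i,n}\le M_B/n^3$, $M_B:=\|B\|D^3+\lambda D$). Consequently $\lambda^2/n^8\le\alpha_{i,n}\beta_{i,n}/D^2\le M_AM_B/(D^2n^6)$, so there is a constant $C_0=C_0(\|A\|,\|B\|,D,\lambda)$ with $\bigl|\ln(\alpha_{i,n}\beta_{i,n}/D^2)\bigr|\le 8\ln n+C_0$ for all $n\ge2$. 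Since also $\alpha_{i,n}+\beta_{i,n}\le(\|A\|+\|B\|)(D/n)^3+2\e_n$, I obtain
$$\ell_n=\max_i\tfrac{1}{D}(\alpha_{i,n}+\beta_{i,n})\bigl|\ln(\alpha_{i,n}\beta_{i,n}/D^2)\bigr|\ \le\ \Bigl((\|A\|+\|B\|)\tfrac{D^2}{n^3}+\tfrac{2\lambda}{n^4}\Bigr)(8\ln n+C_0),$$
and the right-hand side tends to $0$ as $n\to\infty$. (When $\P_{n,*}=\varnothing$ the maximum is empty and $\ell_n=0$.) In particular $(\ell_n)$ is bounded — so the sequence has no deep wells — and $\ell_n\to0$, so it has shallow wells, which is exactly the assertion.

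I expect the only delicate point to be the calibration of the threshold $\e_n$: it is forced to be $O(n^{-4})$ by the $\lambda D/n^4$ error budget of Definition~\ref{BLR}, and it is precisely the \emph{polynomial} (rather than exponential) smallness of $\e_n$ that keeps $\bigl|\ln(\alpha_{i,n}\beta_{i,n}/D^2)\bigr|$ of size $O(\ln n)$ and thereby forces $\ell_n\to0$. This is the mechanism that defeats the pathologies of Sections~\ref{Example1}--\ref{Example3}, where $A,B$ were allowed to push $\alpha_{i,n}\beta_{i,n}$ below every power of $1/n$; all remaining estimates are routine bookkeeping in $\|A\|$, $\|B\|$, $D$, $\lambda$.
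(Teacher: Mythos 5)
Your proof is correct, and it implements the same underlying mechanism as the paper's — round the offending parameters up to the largest polynomial floor $\lambda D/n^4$ allowed by Definition~\ref{BLR}, so that $|\ln(\alpha_{i,n}\beta_{i,n}/D^2)|$ stays $O(\ln n)$ while $\alpha_{i,n}+\beta_{i,n}=O(n^{-3})$, forcing $\ell_n\to0$ — but you apply it uniformly rather than through the paper's three-way case split. The paper first disposes of the case where \eqref{suffcondnodeepwells} holds, then (after normalizing to $\lambda\le1$) keeps canonical values on boxes where $A(p_{i,n})B(p_{i,n})\neq0$ and the quantity in \eqref{suffcondnodeepwells} evaluated at $p_{i,n}$ is $\le 2n^2$, sets both parameters to $0$ where $A(p_{i,n})D^2$ and $B(p_{i,n})D^2$ are both $\le\lambda/n$, and in the remaining case deduces one value is below $\exp(-Cn^2)$ and lifts only that one to $\lambda D/n^4$. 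Your single rule — floor both parameters at $\e_n=\lambda D/n^4$ on every box where $A(p_{i,n})+B(p_{i,n})>0$, and set both to $0$ otherwise — achieves the same conclusion with no case analysis and no normalization on $\lambda$; it automatically yields $\P_{n,**}=\P_{n,*}$ and the $\lambda D$ bound on $\C(\alpha,A)$, $\C(\beta,B)$. What the paper's more surgical version buys is that it leaves the canonical midpoint values untouched on a larger set of boxes and zeroes out boxes where $A$, $B$ are merely small rather than only where they vanish, but none of that is needed for the statement of Proposition~\ref{wlog?}; your version is tighter.
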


\begin{proof}
Consider the quantity $\sup\, \{(AD^2)|\ln(BD^2)|+(BD^2)|\ln(AD^2)|\}$ as in \eqref{suffcondnodeepwells}. If this supremum is finite there is nothing to show: Brill-Lindquist-Riemann sums of midpoint type have shallow wells. From now on we assume that said supremum is infinite. Without loss of generality also assume $\lambda\le 1$.

Fix a value of $n$ (which is large relative to $\C$). At midpoint locations $p_{i,n}$ where $A(p_{i,n})B(p_{i,n})\neq 0$ and 
$$A(p_{i,n})D^2|\ln(B(p_{i,n}D^2)|+B(p_{i,n})D^2|\ln(A(p_{i,n}D^2)|\le 2n^2$$
simply set $\alpha_{i,n}=A(p_{i,n})(D/n)^3$ and $\beta_{i,n}=B(p_{i,n})(D/n)^3$. 
This is sufficient due to 
$$\begin{aligned}
\ell_{i,n}=&\tfrac{1}{D}(\alpha_{i,n}+\beta_{i,n})|\ln(\alpha_{i,n}\beta_{i,n}/D^2)|\\
\le&C\tfrac{\ln(n)}{n^3}+
\tfrac{1}{n^3}\left(A(p_{i,n})D^2|\ln(B(p_{i,n}D^2)|+B(p_{i,n})D^2|\ln(A(p_{i,n}D^2)|\right)\le C\tfrac{\ln(n)}{n^3}+\tfrac{2}{n}.
\end{aligned}$$

The remaining midpoint locations $p_{i,n}$ fall under one of the following two scenarios:

{\sc The case of $A(p_{i,n})D^2, B(p_{i,n})D^2\le \tfrac{\lambda}{n}$:} In this situation set $\alpha_{i,n}=\beta_{i,n}=0$.

{\sc The case when $A(p_{i,n})D^2\ge \tfrac{\lambda}{n}$ or $B(p_{i,n})D^2\ge \tfrac{\lambda}{n}$:} Without loss of generality assume that $B(p_{i,n})D^2\ge \tfrac{\lambda}{n}$. If $A(p_{i,n})\neq 0$ then we must have at least one of 
$$A(p_{i,n})D^2|\ln(B(p_{i,n})D^2)|>n^2 \text{\ \ or\ \ } B(p_{i,n})D^2|\ln(A(p_{i,n})D^2)|>n^2.$$
The inequality $A(p_{i,n})D^2|\ln(B(p_{i,n}D^2)|>n^2$ in this situation implies 
$$A(p_{i,n})D^2>n^2/\ln (n/\lambda),$$ 
which is unsustainable for $n$ which is large relative to $\tfrac{1}{\lambda}\C$. Thus, we have 
$$B(p_{i,n})D^2|\ln(A(p_{i,n})D^2)|>n^2 \text{\ \ and thus\ \ } A(p_{i,n})D^2\le \exp(-Cn^2)$$
for a constant $C$ of class $\C$. Regardless if $A(p_{i,n})\neq 0$ or not, set 
$$\alpha_{i,n}=\tfrac{\lambda D}{n^4} \text{\ \ and\ \ } \beta_{i,n}=B(p_{i,n})(D/n)^3.$$
Note that, by virtue of $\exp(-Cn^2)\ll \lambda/n$ we have $n^4|\alpha_{i,n}-A(p_{i,n})(D/n)^3|\le \lambda D$. In addition, we have
$$\ell_{i,n}=\tfrac{1}{D}(\alpha_{i,n}+\beta_{i,n})|\ln(\alpha_{i,n}\beta_{i,n}/D^2)|\le \tfrac{C}{n^3}|\ln(B(p_{i,n})D^2/n^7)|\le 8C\tfrac{\ln(n)}{n^3}.$$
Our proof is now complete. 
\end{proof}

To summarize, the spirit behind the proof of Proposition \ref{wlog?} is the following.  If our ``measured" values $\alpha_{i,n}$ or $\beta_{i,n}$ are too close to zero then (as in Examples \ref{Example1} -- \ref{Example3}) we may simple be picking up ``noise" in the geometry of $\V_{n,R}$ in the form of very long ``necks" and an, informally speaking, chaotic behavior of $\mathrm{diam}(\V_{n,R},g_n)$. Perhaps all choices of  $\alpha_{i,n}$ or $\beta_{i,n}$ which are too close to zero (in the sense that is spelled out within the proof of Proposition \ref{wlog?}) are unreliable and are to be ``rounded off". Yet, there are very meaningful and physically relevant examples which arise when one of the function $A$ or $B$ identically vanishes and where the corresponding parameters $\alpha$ or $\beta$ are identically zero. (See the discussion at the end of the Introduction.) It is for this reason that we are not content studying shallow wells only.

\subsection{Presence of deep wells}
In situations when the sequence of Brill-Lindquist-Riemann sums has deep wells, that is, in situations when the $g_n$-diameters of sets $\V_{n,R}$ are unbounded as $n\to \infty$ we are forced to study the geodesic balls $\V_{n,R,R'}$ of radius $R'$ in $\V_{n,R}$ centered at $0$:
\begin{equation}\label{vnrrprime:defn}
\V_{n,R,R'}:=\{p\in \V_{n,R}\ \big{|}\ d_{(\V_{n,R},g_n)}(0,p)<R'\}.
\end{equation}

The following lemma addresses $\V_{n,R,R'}$ when $R'\gg R$. Requiring that $R'$ be big relative to $R$ ensures that the modification we are making to $\V_{n,R}$ is concentrated where the issue with the unboundedness of the diameters arises: near the point-sources. This is one possible interpretation of the property \eqref{partiideepwell} in the following lemma, which in turn the reader may want to contrast with \eqref{towardsIFL} and its consequence
\begin{equation}\label{towardsIFL-2}
B_{\geucl}(0,R)\smallsetminus \left(\bigcup \bar{B}_{\geucl}(p_{i,n},\tfrac{D}{n^2})\right) = \V_{n,R} \smallsetminus \left(\bigcup \bar{B}_{\geucl}(p_{i,n},\tfrac{D}{n^2})\right).
\end{equation}
 
\begin{lemma}\label{deepwell}
Fix $R>\sqrt{3}D$. For each $p_{i,n}\in \P_{n,*}$ define
\begin{equation}\label{defn-si}
s_{i,n,R'}:=\tfrac{D}{2n}\exp(-R'/(\alpha_{i,n}+\beta_{i,n})).
\end{equation}
There exists a constant $C$ of class $\C$ such that 
\begin{enumerate} 
\item\label{partideepwell} $\V_{n,R,R'}\subseteq B_{\geucl}(0,R)\smallsetminus \left(\bigcup B_{\geucl}(p_{i,n}, s_{i,n,R'})\right)$
\medbreak
\item\label{partiideepwell} $B_{\geucl}(0,R)\smallsetminus \left(\bigcup B_{\geucl}(p_{i,n}, \frac{D}{n^2})\right)=\V_{n,R,R'}\smallsetminus\left(\bigcup B_{\geucl}(p_{i,n}, \frac{D}{n^2})\right)$
\end{enumerate}
for all $R'\ge CR$.
\end{lemma}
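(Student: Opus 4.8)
\textbf{Proof plan for Lemma \ref{deepwell}.}

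The plan is to exploit the fact that near each $p_{i,n}\in\P_{n,*}$ the metric $g_n$ is uniformly trapped between two Reissner--Nordstr\"om metrics, so that radial distances along the ``neck'' are governed by the elementary integral $\int (1+\tfrac{\alpha_{i,n}}{r})(1+\tfrac{\beta_{i,n}}{r})\,dr$. First I would record the two-sided bound: by part \eqref{boundednessoffactors} of Proposition \ref{BLRcontrollemma} (and the trivial lower bound $\chi_n\psi_n\ge (1+\tfrac{\alpha_{i,n}}{|x-p_{i,n}|})(1+\tfrac{\beta_{i,n}}{|x-p_{i,n}|})$) there is a constant $C_0$ of class $\C$ with
\[
\Big(1+\tfrac{\alpha_{i,n}}{|x-p_{i,n}|}\Big)^2\Big(1+\tfrac{\beta_{i,n}}{|x-p_{i,n}|}\Big)^2\geucl \ \le\ g_n\ \le\ C_0^2\Big(1+\tfrac{\alpha_{i,n}}{|x-p_{i,n}|}\Big)^2\Big(1+\tfrac{\beta_{i,n}}{|x-p_{i,n}|}\Big)^2\geucl
\]
throughout $B_{\geucl}(p_{i,n},\tfrac{D}{2n})$ (recall $\sigma_{i,n}=D/n$, so this ball lies in the regime where the $\chi_n^{(i)},\psi_n^{(i)}$ estimates apply). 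Since $\alpha_{i,n},\beta_{i,n}=O(D/n^3)$ with constant of class $\C$, both $\alpha_{i,n}$ and $\beta_{i,n}$ are $\le \tfrac{D}{4n}$ once $n$ is large relative to $\C$, so on $B_{\geucl}(p_{i,n},\tfrac{D}{2n})$ the product $(1+\tfrac{\alpha_{i,n}}{r})(1+\tfrac{\beta_{i,n}}{r})$ is within universal constants of $\tfrac{\alpha_{i,n}\beta_{i,n}}{r^2}+1$, and its radial integral from $s$ to $\tfrac{D}{2n}$ is, up to universal constants and lower-order terms, $(\alpha_{i,n}+\beta_{i,n})\ln(\tfrac{D}{2ns})$.

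\textbf{Part \eqref{partideepwell}.} Let $p\in B_{\geucl}(p_{i,n},s_{i,n,R'})$ for some $i$; I claim $d_{(\V_{n,R},g_n)}(0,p)\ge R'$, so $p\notin\V_{n,R,R'}$. Any path in $\V_{n,R}$ from $0$ to $p$ must cross the coordinate sphere $|x-p_{i,n}|=\tfrac{D}{2n}$ (since $0$ is a corner of a subdivision box, hence $|0-p_{i,n}|\ge \tfrac{D}{2n}$ by \eqref{moreseparation}, while $p$ is deep inside), and between that sphere and $|x-p_{i,n}|=s_{i,n,R'}$ it stays inside $B_{\geucl}(p_{i,n},\tfrac{D}{2n})\subseteq \V_{n,R}$ where the lower Reissner--Nordstr\"om bound on $g_n$ holds; that lower bound is spherically symmetric about $p_{i,n}$, so the path's $g_n$-length dominates the radial integral
\[
\int_{s_{i,n,R'}}^{D/(2n)}\Big(1+\tfrac{\alpha_{i,n}}{r}\Big)\Big(1+\tfrac{\beta_{i,n}}{r}\Big)\,dr\ \ge\ (\alpha_{i,n}+\beta_{i,n})\ln\!\Big(\tfrac{D/(2n)}{s_{i,n,R'}}\Big)\ =\ R',
\]
by the definition \eqref{defn-si} of $s_{i,n,R'}$ (the suppressed terms $\tfrac{D}{2n}-s_{i,n,R'}-\tfrac{2n}{D}\alpha_{i,n}\beta_{i,n}-\tfrac32\sqrt{\alpha_{i,n}\beta_{i,n}}$ are $\ge 0$ for $n$ large relative to $\C$, exactly as in the proof of Lemma \ref{necklengthlemma}). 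This gives the claimed inclusion with no dependence of the constant on $R'$; in fact one only needs $R'\ge 0$ here, so this half imposes no lower bound on $R'$.

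\textbf{Part \eqref{partiideepwell}.} The inclusion $\supseteq$ is automatic since $\V_{n,R,R'}\subseteq\V_{n,R}$ and, by \eqref{towardsIFL-2}, $\V_{n,R}\smallsetminus\bigcup B_{\geucl}(p_{i,n},\tfrac{D}{n^2})=B_{\geucl}(0,R)\smallsetminus\bigcup B_{\geucl}(p_{i,n},\tfrac{D}{n^2})$. For $\subseteq$ I must show that every $p\in B_{\geucl}(0,R)\smallsetminus\bigcup B_{\geucl}(p_{i,n},\tfrac{D}{n^2})$ lies in $\V_{n,R,R'}$, i.e. $d_{(\V_{n,R},g_n)}(0,p)<R'$, provided $R'\ge CR$. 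Using part \eqref{boundednessoffactors} of Proposition \ref{BLRcontrollemma}, $g_n$ is uniformly equivalent to $\geucl$ on $\V_{n,R}\smallsetminus\bigcup B_{\geucl}(p_{i,n},\tfrac{D}{n^2})$; joining $0$ to $p$ by a broken line segment with sides parallel to the coordinate axes that avoids all the excised balls (possible since the excised balls have radius $\tfrac{D}{n^2}\ll\tfrac{D}{n}$, far smaller than the box spacing) gives a path of $g_n$-length $\le C_1 R$ for some $C_1$ of class $\C$, exactly as in the proof of Lemma \ref{diameterlemma}. Taking $C:=C_1$ yields $d_{(\V_{n,R},g_n)}(0,p)\le C_1R\le R'$ whenever $R'\ge CR$, completing the proof.

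\textbf{Main obstacle.} The only genuinely delicate point is ensuring that the avoiding broken-line path in part \eqref{partiideepwell} actually stays inside $\V_{n,R}$ — equivalently inside $B_{\geucl}(0,R)\smallsetminus\bigcup \bar B_{\geucl}(p_{i,n},\tfrac{D}{n^2})$ — and that its length bound is uniform in $n$; this is where one needs the separation $\sigma_{i,n}=D/n$ together with $\tfrac{D}{n^2}\ll\tfrac{D}{n}$ so that small axis-parallel detours around each offending ball add only $O(\tfrac{D}{n^2})$ per ball and at most $O(n)$ such detours are encountered along any coordinate direction, for a cumulative $O(D/n)$ correction that is absorbed into $C_1R$. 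Everything else is the same spherically symmetric integral estimate already carried out in Lemmas \ref{necklengthlemma} and \ref{diameterlemma}.
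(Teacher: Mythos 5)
Your argument is correct and follows essentially the same route as the paper: for part \eqref{partideepwell} you bound $g_n$ from below by the Reissner--Nordstr\"om factor centered at $p_{i,n}$ and integrate radially from $s_{i,n,R'}$ to $D/(2n)$ (the paper bounds the integrand by $(\alpha_{i,n}+\beta_{i,n})/r$ directly, you keep the full integrand and drop the positive extras — same conclusion $=R'$); for part \eqref{partiideepwell} you invoke \eqref{towardsIFL-2} for one inclusion and the broken-axis-parallel-path length bound (as in Lemma~\ref{diameterlemma}) for the other, exactly as the paper does. One small remark: your parenthetical list of ``suppressed terms'' in part \eqref{partideepwell} is garbled — it is copied from the $2\sqrt{\alpha\beta}$-to-$D/(2n^2)$ computation in Lemma~\ref{necklengthlemma} and does not match the present limits of integration — but it is also unnecessary here, since $\int_s^t(1+\tfrac{\alpha}{r})(1+\tfrac{\beta}{r})\,dr=(t-s)+(\alpha+\beta)\ln(t/s)+\alpha\beta(\tfrac1s-\tfrac1t)$ has all three terms nonnegative with no condition on $n$.
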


\begin{proof} To prove the inclusion in part \eqref{partideepwell} consider a point within $B_{\geucl}(p_i, s_{i,n,R'})$ and a path $\gamma$ joining it and $0$. Since $s_{i,n,R'}<D/(2n)<|p_{i,n}|$ we know that a portion of $\gamma$ connects a point $q$ on Euclidean sphere $|x-p_{i,n}|=D/(2n)$ and a point $p$ on the Euclidean sphere $|x-p_i|=s_{i,n,R'}$. Using polar/spherical coordinates centered at $p_i$ we see that $|\dot{\gamma}|_{g_n}$ on the portion from $q$ to $p$ is bounded from below as follows:
$$|\dot{\gamma}|_{g_n}\ge \left(1+\frac{\alpha_{i,n}}{r}\right)\left(1+\frac{\beta_{i,n}}{r}\right)|\dot{\gamma}|_{\geucl}\ge \left(1+\frac{\alpha_{i,n}}{r}\right)\left(1+\frac{\beta_{i,n}}{r}\right)|\dot{r}|\ge \frac{\alpha_{i,n}+\beta_{i,n}}{r}|\dot{r}|.$$
Thus the length of $\gamma$ is bounded from below by 
$$\int_{r=s_{i,n,R'}}^{r=D/(2n)}\frac{\alpha_{i,n}+\beta_{i,n}}{r}\,dr=R'.$$
In other words, every path from $0$ to a point in $B_{\geucl}(p_{i,n}, s_{i,n,R'})$ has the length of at least $R'$. This completes the proof of the inclusion. 

Since $\exp(-R'/(\alpha_{i,n}+\beta_{i,n}))\le \tfrac{C}{n^3}$ for a constant $C$ of class $\C$, we see that  
$$s_{i,n,R'} < \tfrac{D}{n^2}$$
for all $n$ sufficiently large relative to $\C$, for all $i$ and all $R'\ge D$. To prove the equality \eqref{partiideepwell} we now, by virtue of part \eqref{partideepwell} of this lemma, only need to prove  
$$B_{\geucl}(0,R)\smallsetminus \left(\bigcup B_{\geucl}(p_{i,n}, \tfrac{D}{n^2})\right)\subseteq \V_{n,R,R'}$$
for some suitably large $R'$. To that end consider $p\in B_{\geucl}(0,R)\smallsetminus \left(\bigcup B_{\geucl}(p_i, \tfrac{D}{n^2})\right)$. A Euclidean broken line segment $\gamma$ largely following edges of the subdivision boxes and contained entirely in 
$\V_{n,R}$ can be constructed joining $0$ and $p$. It follows from Proposition \ref{BLRcontrollemma} that  
$$d_{(\V_{n,R},g_n)}(0,p)\le L_{g_n}(\gamma) \le CL_{\geucl}(\gamma)< 7CR  $$
for some constant $C$ of class $\C$. Taking $R'\ge 7CR$ completes the proof of the lemma. 
\end{proof}

\subsection{Volume estimates for $\V_{n,R}$ and $\V_{n,R,R'}$}
As a consequence of \eqref{inclusionsMn-ver2}, \eqref{towardsIFL-2} and Lemma \ref{deepwell} we have the following ``sandwiching" inclusions:
$$\begin{aligned}
& B_{\geucl}(0,R)\smallsetminus \left(\bigcup B_{\geucl}(p_{i,n}, \tfrac{D}{n^2})\right)\subseteq \V_{n,R}\subseteq B_{\geucl}(0,R)\smallsetminus \left(\bigcup \bar{B}_{\geucl}(p_{i,n}, \tfrac{1}{C}\sqrt{\alpha_{i,n}\beta_{i,n}})\right)\\
& B_{\geucl}(0,R)\smallsetminus \left(\bigcup B_{\geucl}(p_{i,n}, \tfrac{D}{n^2})\right)\subseteq \V_{n,R,R'}\subseteq B_{\geucl}(0,R)\smallsetminus \left(\bigcup B_{\geucl}(p_{i,n}, \hat{s}_{i,n,R'})\right)
\end{aligned}$$
where 
$$\hat{s}_{i,n,R'}:=\max\left\{\tfrac{1}{C}\sqrt{\alpha_{i,n}\beta_{i,n}}, s_{i,n,R'}\right\}.$$
For all practical purposes this means that any and all discrepancy between $\V_{n,R}$ (or $\V_{i,n,R'}$) and ``perforated" Euclidean balls is located entirely within the ``necks" of Brill-Lindquist-Riemann sums and can be easily controlled. For example, the control we have here enables us to easily estimate the volume of sets such as 
$$\V_{n,R}\cap \left(\bigcup B_{\geucl}(p_{i,n}, \tfrac{D}{n^2})\right).$$
We do so below. First, we record the following upper bound on the volume along an individual ``neck" of a general Reissner-Nordstr\"om metric. The proof of the estimate is a direct computation in spherical coordinates. 

\begin{lemma}\label{volumecomputation}
Let $0<s<t$. The volume of the region 
$$s\le |x|\le t$$
with respect the Reissner-Nordstr\"om metric
$(1+\tfrac{\alpha}{r})^2(1+\tfrac{\beta}{r})^2\geucl$
is bounded by a universal multiple of 
\begin{equation}\label{bigvolest}
t^3+ (\alpha+\beta)t^2+(\alpha+\beta)^2t
+(\alpha+\beta)^3 \ln (t/s)
+ \alpha\beta(\alpha+\beta)^2\tfrac{1}{s}+\alpha^2\beta^2(\alpha+\beta)\tfrac{1}{s^2}+
\alpha^3\beta^3\tfrac{1}{s^3}.
\end{equation}
\end{lemma}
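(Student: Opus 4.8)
The plan is a direct computation in Euclidean polar coordinates centered at the origin. Since the Reissner--Nordstr\"om metric in question is conformal to $\geucl$ with conformal factor $\Omega^2:=(1+\tfrac{\alpha}{r})^2(1+\tfrac{\beta}{r})^2$, and since we are in dimension three, its volume form is $\Omega^3\dvol=(1+\tfrac{\alpha}{r})^3(1+\tfrac{\beta}{r})^3\dvol$. Consequently the volume of the spherical shell $s\le |x|\le t$ equals
$$4\pi\int_s^t \left(1+\tfrac{\alpha}{r}\right)^3\left(1+\tfrac{\beta}{r}\right)^3 r^2\,dr.$$
First I would expand the integrand by the binomial theorem, writing $(1+\tfrac{\alpha}{r})^3(1+\tfrac{\beta}{r})^3 r^2=\sum_{0\le j,k\le 3}\binom{3}{j}\binom{3}{k}\alpha^j\beta^k\,r^{2-j-k}$, a sum of sixteen monomials.

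Next I would integrate each monomial $r^{2-j-k}$ over $[s,t]$, splitting into three regimes according to the sign of the exponent $2-j-k$. When $j+k\le 2$ the antiderivative is $\tfrac{1}{3-j-k}r^{3-j-k}$ with positive exponent, so the contribution is at most a universal multiple of $\alpha^j\beta^k\,t^{3-j-k}$ (here one uses that $t^{3-j-k}-s^{3-j-k}\le t^{3-j-k}$ because $s<t$ and the exponent is positive); summing over $j+k=0,1,2$ and using $\alpha^j\beta^k\le(\alpha+\beta)^{j+k}$ produces the terms $t^3$, $(\alpha+\beta)t^2$, $(\alpha+\beta)^2 t$. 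When $j+k=3$ the antiderivative is $\ln r$, contributing a universal multiple of $\alpha^j\beta^k\ln(t/s)$, and $\alpha^3+9\alpha^2\beta+9\alpha\beta^2+\beta^3\le 9(\alpha+\beta)^3$ gives the term $(\alpha+\beta)^3\ln(t/s)$. When $4\le j+k\le 6$ the exponent $2-j-k$ is at most $-2$, so $\int_s^t r^{2-j-k}\,dr=\tfrac{1}{j+k-3}\bigl(s^{3-j-k}-t^{3-j-k}\bigr)\le \tfrac{1}{j+k-3}\,s^{-(j+k-3)}$; grouping the resulting monomials by the value of $j+k$ (the cases $j+k=4,5,6$ involve only $\{\alpha^3\beta,\alpha^2\beta^2,\alpha\beta^3\}$, $\{\alpha^3\beta^2,\alpha^2\beta^3\}$, $\{\alpha^3\beta^3\}$ respectively) yields exactly $\alpha\beta(\alpha+\beta)^2\tfrac{1}{s}$, $\alpha^2\beta^2(\alpha+\beta)\tfrac{1}{s^2}$ and $\alpha^3\beta^3\tfrac{1}{s^3}$.

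Finally I would collect all of the above and absorb $4\pi$ together with the finitely many binomial and harmonic coefficients into a single universal constant, which is exactly the bound \eqref{bigvolest}. There is no genuine obstacle here; the only point requiring care is the elementary but slightly tedious bookkeeping of matching each of the sixteen monomials $\alpha^j\beta^k$ to the correct term on the right-hand side of \eqref{bigvolest} and of keeping track of signs when $2-j-k<0$.
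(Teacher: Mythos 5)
Your proposal is correct and follows the same route as the paper, which merely records that ``the proof of the estimate is a direct computation in spherical coordinates'' without spelling out the details; your write-up supplies exactly that computation, correctly identifying the volume form $\Omega^3\,\mathrm{dvol}_{\geucl}$ for the conformal metric $\Omega^2\geucl$ in dimension three, expanding the sixteen monomials, and matching each exponent regime to the corresponding term in \eqref{bigvolest}.
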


In the case when the sequence of Brill-Lindquist-Riemann sums does not have deep wells (that is, when $\sup_n \ell_n<\infty$) the volume of $\V_{n,R}\smallsetminus \left(\bigcup B(p_{i,n},\tfrac{D}{n^2})\right)$ can be bounded as follows:
$$\mathrm{Vol}_{g_n}\left(\V_{n,R}\cap \left(\bigcup B(p_{i,n},\tfrac{D}{n^2})\right)\right)\le \sum_i \mathrm{Vol}_{g_n}\left(\left\{\tfrac{1}{C}\sqrt{\alpha_{i,n}\beta_{i,n}}\le |x-p_{i,n}|\le \tfrac{D}{n^2}\right\}\right).$$
It follows from Proposition \ref{BLRcontrollemma} that the constant $C$ may be increased so that (over the regions involved) we have 
$\mathrm{Vol}_{g_n}\le C\,\mathrm{Vol}_{g_{\RN,i}}$ 
where $g_{\RN,i}$ is the Reissner-Nordstr\"om metric 
$$g_{\RN,i}=\left(1+\frac{\alpha_{i,n}}{r}\right)^2\left(1+\frac{\beta_{i,n}}{r}\right)^2\geucl.$$
The estimate recorded in Lemma \ref{volumecomputation}, together with the fact that the summation contains $O(n^3)$-terms and the fact that $\alpha_{i,n}$, $\beta_{i,n}$ and $\sqrt{\alpha_{i,n}\beta_{i,n}}$ are uniformly of class $\tfrac{D}{n^3}\C$, ultimately leads to: 
$$\mathrm{Vol}_{g_n}\left(\V_{n,R}\cap \left(\bigcup B(p_{i,n},\tfrac{D}{n^2})\right)\right)\le C\left(\tfrac{D^3}{n^3}+(\alpha_{i,n}+\beta_{i,n})\big{|}\ln(\tfrac{n^2 \sqrt{\alpha_{i,n}\beta_{i,n}}}{CD})\big{|}\cdot\tfrac{D^2}{n^3}\right)$$
for some (yet larger) constant $C$ of class $\C$. Due to boundedness of expressions such as $\tfrac{1}{n^3}|\ln(n/C)|$ in $n$ we obtain  
\begin{equation}\label{volest-IFL1}
\begin{aligned}
\mathrm{Vol}_{g_n}\left(\V_{n,R}\cap\left(\bigcup B(p_{i,n},\tfrac{D}{n^2})\right)\right)\le &C\left(\tfrac{D^3}{n^3}+(\alpha_{i,n}+\beta_{i,n})|\ln(\alpha_{i,n}\beta_{i,n}/D^2)|\cdot \tfrac{D^2}{n^3}\right)\\
\le &CD^3\left(\tfrac{1}{n^3}+\tfrac{\ell_n}{n^3}\right).
\end{aligned}
\end{equation}
The same reasoning also leads to 
\begin{equation}\label{volest-IFL2}
\begin{aligned}
\mathrm{Vol}_{g_n}\left(\V_{n,R,R'}\cap\left(\bigcup B(p_i,\tfrac{D}{n^2})\right)\right)\le &C\left(\tfrac{D^3}{n^3}+(\alpha_{i,n}+\beta_{i,n})|\ln(n^2s_{i,n,R'}/D)|\cdot \tfrac{D^2}{n^3}\right)\\
\le &CD^3\left(\tfrac{1}{n^3}+\tfrac{R'/D}{n^3}\right).
\end{aligned}
\end{equation}

\section{The Gromov-Hausdorff limit}\label{GH:sec}

\subsection{A review of Gromov-Hausdorff convergence}
Tubular neighborhood of radius $r$ about a subset $A$ of a metric space $(Z,d_Z)$ is defined as 
$$\mathscr{T}_r(A)=\left\{z\in Z\,\big{|}\,\exists a\in A, d_Z(a,z)<r\right\}=\bigcup_{a\in A}B_{d_Z}(a, r).$$
If $A_1\subseteq \mathscr{T}_r(A_2)$, then the entire set $A_1$ is located within $r$ from $A_2$.
Under certain assumptions, such as boundedness of $Z$ or precompactness of $A_1, A_2\subseteq Z$, it is guaranteed that 
$$A_1\subseteq \mathscr{T}_r(A_2) \text{\ \ and\ \ } A_2\subseteq \mathscr{T}_r(A_1)$$
for some real number $r$. Smallness of such a value of $r$ communicates that $A_1$ and $A_2$ are in proximity of one another. This idea motivates us to define what is called \emph{Hausdorff distance} between $A_1,A_2\subseteq Z$:
$$d_Z^H(A_1, A_2)=\inf \{r\, |\, A_1\subseteq \mathscr{T}_r(A_2), A_2\subseteq \mathscr{T}_r(A_1)\}.$$
Note that $d_Z^H$ does not capture the distance/``difference"/discrepancy between \emph{metric spaces} $(A_1, d_Z|_{A_1\times A_1})$ and $(A_2, d_Z|_{A_2\times A_2})$; indeed, these two metric spaces could be (just about) isometric and yet for some reason located far away from each other within $Z$. To address such situations it is beneficial to consider metric isometric embeddings into a variety of metric spaces $(Z,d_Z)$, and then examine the behavior of the induced Hausdorff distances. Here are some precise definitions. 

By a \emph{metric isometric embedding} $\Psi$ of a metric space $(M,d_M)$ into a metric space $(Z,d_Z)$ we mean a function $\Psi:M\to Z$ such that 
$$d_Z(\Psi(p), \Psi(q))=d_M(p,q) \text{\ \ for all\ \ } p,q\in M.$$
It is very important to notice that the concept of a metric isometric embedding is different from that of a Riemannian isometric embedding. An example which explains this distinction is the inclusion mapping $\iota:S^1 \hookrightarrow \mathbb{R}^2$ between the unit circle $(S^1,d\theta^2)$ and $(\mathbb{R}^2, \geucl)$. Although it is a Riemannian isometric embedding, $\iota$ is not a metric isometric embedding because 
$$d_{(\mathbb{R}^2, \geucl)}(p,q)=2\neq \pi=d_{(S^1,d\theta^2)}(p,q)$$
whenever $p$ and $q$ are diametrically opposed. Very roughly speaking, the distance between diametrically opposed $p$ and $q$ within $(\mathbb{R}^2, \geucl)$ is achieved by shortcutting and bleeding out of $(S^1,d\theta^2)$. In general, when dealing with open submanifolds $(M,g)$ of a Riemannian manifold $(Z,g)$ we have to be careful and not assume that the inclusion $\iota:M\hookrightarrow Z$ is a metric isometric embedding of the $(M,g)$ into $(Z,g)$. (For further insight compare with Lemma \ref{distancecomparisonlemma} below.)

The \emph{Gromov-Hausdorff distance} $d^{GH}$ between two compact metric spaces $(M_1, d_1)$ and $(M_2, d_2)$ is defined by
$$d^{GH}((M_1, d_1),(M_2, d_2))=\inf_{Z, \Psi_1, \Psi_2,} d^{H}_Z(\Psi_1(M_1), \Psi_2(M_2))$$
where the infimum is taken over all metric isometric embeddings 
$$\Psi_1:M_1\to Z \text{\ \ and\ \ } \Psi_2:M_2\to Z$$ 
of $(M_1, d_1)$ and $(M_2, d_2)$ into a common metric space $(Z,d_Z)$. It can be shown that the Gromov-Hausdorff distance between two compact metric spaces vanishes if and only if the two spaces are isometric. In fact, Gromov-Hausdorff distance equips the set of isometry (equivalence) classes of compact metric spaces with a structure of a metric space; in the literature this metric space is often referred to as \emph{the Gromov-Hausdorff space}. Gromov-Hausdorff convergence refers to convergence within the Gromov-Hausdorff space, although in practice we often talk of convergence of sequences of compact metric spaces. In other words, \emph{Gromov-Hausdorff convergence} of $(M_j,d_j)$ towards a compact $(M,d)$ means convergence relative to $d^{GH}$.

The Gromov-Hausdorff distance is described particularly well by the concept of \emph{$\e$-isometry}, which we now define. A function $F:(M_1,d_1)\to (M_2,d_2)$ is called an $\e$-isometry if: 
\begin{itemize}
\item $M_2=\mathscr{T}_{\e}(\mathrm{Im}F)$;
\medbreak
\item We have $|d_1(x,y)-d_2(F(x),F(y))|<\e$ for all $x,y\in M_1$.
\end{itemize}
It is worth emphasizing that continuity of $F$ is not a requirement here. The following two properties connect the concepts of $\e$-isometries and Gromov-Hausdorff distance $d^{GH}$.
\begin{enumerate}
\item If $d^{GH}((M_1, d_1),(M_2, d_2))<\e$ then there exists a $2\e$-isometry $$F:(M_1, d_1)\to(M_2, d_2);$$
\medbreak
\item\label{yay!} If there exists an $\e$-isometry $F:(M_1, d_1)\to(M_2, d_2)$ then 
$$d^{GH}((M_1, d_1),(M_2, d_2))<2\e.$$
\end{enumerate}
We employ property \eqref{yay!} at several places in our article. 

The (pre)compactness theorem of Gromov is another result which is highly relevant to our work. In order to state the theorem efficiently we first introduce \emph{$r$-capacity of a compact metric space} $(M,d)$: 
$$\mathrm{Cap}_{(M,d)}(r)=\max \left\{k\,\big{|}\, \exists x_1, ..., x_k\in M, \forall i\neq j, d(x_i, x_j)\ge r\right\}.$$
Informally speaking, $\mathrm{Cap}_{(M,d)}(r)$ measures how spread out $M$ is by measuring the maximal number of points we can place in $M$ at distance of at least $r$ from one another. 
(The fact that the maximal number here is achieved is a consequence of the fact that $(M,d)$ is compact.)
Alternatively, $r$-capacity can be defined as 
$$\mathrm{Cap}_{(M,d)}(r)=\max \left\{k\,\big{|}\, \exists x_1, ..., x_k\in M, i\neq j\longrightarrow B_{(M,d)}(x_i,r/2)\cap B_{(M,d)}(x_j,r/2)=\emptyset \right\}.$$
Viewed from this perspective $\mathrm{Cap}_{(M,d)}(r)$ measures the maximal number or disjoint balls of radius $r/2$ which can be placed in $M$.

\begin{theorem}[Gromov's Precompactness Theorem]\label{GPT}
A subset $\mathscr{K}$ of the Gromov-Hausdorff space is precompact if and only if 
\begin{enumerate}
\item For all $r>0$ there exists $N(r)\in \mathbb{N}$ such that $$\mathrm{Cap}_{(M,d)}(r)\le N(r)$$ for all $(M,d)\in \mathscr{K}$;
\medbreak
\item There exists $D>0$ such that $\mathrm \mathrm{diam}_{(M,d)}\le D$ for all $(M,d)\in \mathscr{K}$.
\end{enumerate}
\end{theorem}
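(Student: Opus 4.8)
The plan is to establish both directions, with the reverse implication (uniform total boundedness $\Rightarrow$ precompactness) carrying essentially all of the content. Throughout I would use the equivalence between Gromov--Hausdorff proximity and the existence of $\e$-isometries recorded above, together with the elementary observation that a \emph{maximal} $r$-separated subset of a compact metric space is automatically an $r$-net, so that the minimal cardinality of an $r$-net can always be taken $\le \mathrm{Cap}_{(M,d)}(r)$. Here $D$ denotes the uniform diameter bound and $N(r)$ the uniform capacity bound.

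\emph{Necessity.} Suppose $\mathscr{K}$ is precompact, so its closure $\overline{\mathscr{K}}$ in the Gromov--Hausdorff space is compact, hence bounded; since $(M,d)\mapsto\diam_{(M,d)}$ is $2$-Lipschitz with respect to $d^{GH}$ it is bounded on $\overline{\mathscr{K}}$, giving the uniform diameter bound. For the capacity bound, suppose for contradiction that for some $r>0$ there are $(M_j,d_j)\in\mathscr{K}$ with $\mathrm{Cap}_{(M_j,d_j)}(r)\to\infty$; pass to a subsequence with $(M_j,d_j)\to(M,d)$ for a compact $(M,d)$, and fix an integer $N>\mathrm{Cap}_{(M,d)}(r/2)$ (finite, as $M$ is compact). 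For $j$ large, $d^{GH}((M_j,d_j),(M,d))<r/8$, so there is an $(r/4)$-isometry $F_j\colon M_j\to M$, and also $\mathrm{Cap}_{(M_j,d_j)}(r)\ge N$. Choosing $r$-separated $x_1,\dots,x_N\in M_j$, the images $F_j(x_1),\dots,F_j(x_N)$ are pairwise at distance $>r-r/4>r/2$ in $M$, contradicting $N>\mathrm{Cap}_{(M,d)}(r/2)$.

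\emph{Sufficiency.} Let $(M_j,d_j)$ be an arbitrary sequence in $\mathscr{K}$; the goal is to extract a $d^{GH}$-convergent subsequence. Put $P(k):=\sum_{k'\le k}N(\tfrac1{k'})$. For each $j$ choose inside $M_j$ nested maximal $\tfrac1k$-separated sets ($k=1,2,\dots$) and organize their union into a single sequence $x_{j,1},x_{j,2},\dots$ (allowing repetitions) so that for every $k$ the finite set $\{x_{j,1},\dots,x_{j,P(k)}\}$ is a $\tfrac1k$-net of $(M_j,d_j)$. Every pairwise distance $d_j(x_{j,a},x_{j,b})$ lies in $[0,D]$, so a Cantor diagonal argument over the countably many pairs $(a,b)$ lets us pass to a subsequence, still written $(M_j,d_j)$, along which all these distances converge, to limits $\delta_{a,b}\in[0,D]$. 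The $\delta_{a,b}$ inherit the triangle inequality, hence define a pseudometric on $\{1,2,\dots\}$; quotienting by $a\sim b\iff\delta_{a,b}=0$ and completing yields a metric space $(X,d_X)$, which is compact because the images of $\{1,\dots,P(k)\}$ form a $\tfrac1k$-net of $X$ for each $k$ (so $X$ is complete and totally bounded).

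It remains to check $(M_j,d_j)\to(X,d_X)$ along this subsequence. Fix $\e>0$ and $k$ with $\tfrac3k<\e$; for $j$ large, convergence of the finitely many level-$k$ distances shows that the assignment $x_{j,a}\mapsto[a]\in X$ ($1\le a\le P(k)$) distorts pairwise distances by $<\tfrac1k$, its source is a $\tfrac1k$-net of $M_j$, and its target is a $\tfrac1k$-net of $X$; composing a nearest-net-point retraction $M_j\to\{x_{j,1},\dots,x_{j,P(k)}\}$ with this near-isometry produces a map $M_j\to X$ that one checks is an $\e$-isometry, whence by property \eqref{yay!} above $d^{GH}((M_j,d_j),(X,d_X))<2\e$ for all large $j$. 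I expect the main obstacle to be this sufficiency direction: concretely, the careful construction of uniformly bounded nested nets and their reindexing into a single point sequence amenable to the diagonal extraction, the verification that the limiting pseudometric yields a compact $X$, and the routine-but-fiddly check that a distance-almost-preserving map between $\e$-dense nets upgrades to an $\e'$-isometry of the ambient spaces with $\e'\to0$ as $\e\to0$.
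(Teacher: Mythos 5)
The paper does not actually prove this theorem: it appears in the review subsection at the start of Section~\ref{GH:sec} as a quoted classical result of Gromov, with the reader directed to \cite{ChristinaExpository} (and references therein) for proofs, so there is no internal proof to compare against. Judged on its own merits, your argument is the standard textbook proof of Gromov's precompactness theorem, and it is correct. The necessity half is handled cleanly: the $2$-Lipschitz dependence of diameter on $d^{GH}$ gives the uniform diameter bound, and your $\e$-isometry argument for the capacity bound is a correct (localized, contradiction-flavored) instance of the upper semicontinuity fact $\limsup_n\mathrm{Cap}_{(M_n,d_n)}(r)\le\mathrm{Cap}_{(M,d)}(r)$ that the paper records just below the theorem statement. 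For sufficiency, the diagonal extraction over nested maximal $\tfrac1k$-separated nets is exactly the classical Gromov argument; a maximal $\tfrac1k$-separated set is automatically $\tfrac1{k+1}$-separated and hence extends to a maximal one at the next scale, which justifies the nesting, and your $P(k)$ safely overcounts the cardinalities. The pseudometric--quotient--completion construction of $(X,d_X)$ and the verification that the net-to-net map upgrades to a $\tfrac3k$-isometry of $M_j$ into $X$ (via nearest-net-point retraction, triangle inequality, and the level-$k$ distance convergence) both check out, and property \eqref{yay!} then closes the argument. One minor point worth a sentence in a polished write-up: having shown that every sequence drawn from $\mathscr{K}$ has a $d^{GH}$-convergent subsequence, you should note that, since the Gromov--Hausdorff space is a metric space, this yields precompactness of $\mathscr{K}$ (i.e., compactness of $\overline{\mathscr{K}}$) by the usual diagonal approximation of a sequence in $\overline{\mathscr{K}}$ by one in $\mathscr{K}$. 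You correctly anticipated that the sufficiency direction carries the real content; no gap.
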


In relation to this theorem it is sometimes helpful to know how $r$-capacity and diameters behave under Gromov-Hausdorff limits. One can show that if a sequence $(M_n, d_n)$ of compact metric spaces  converges in the Gromov-Hausdorff sense to the compact metric space $(M,d)$ then 
\begin{enumerate}
\item $\limsup_{n\to \infty} \mathrm{Cap}_{(M_n, d_n)}(r)\le \mathrm{Cap}_{(M,d)}(r)$ for all $r>0$;
\medbreak
\item $\lim_{n\to \infty} \mathrm{diam}_{(M_n,d_n)} = \mathrm{diam}_{(M,d)}$.
\end{enumerate}

For further details and helpful examples the reader is referred to \cite{ChristinaExpository} and references therein. 
 
\subsection{Distance Comparison Lemma}

In our review of Gromov-Hausdorff convergence we mentioned that the inclusion $\iota:\U\hookrightarrow \V$ of an open submanifold $(\U,g)$ into a Riemannian manifold $(\V,g)$ need not be a metric isometric embedding of $(\U,g)$ into $(\V,g)$. This is particularly true when $\U$ is some kind of a perforated version $\V$. It is clear that ``perforated context" is very relevant to studies of (truncated) Brill-Lindquist-Riemann sums. In fact, we rely on the following distance comparison result at several key places in our paper. For example, the result can be used to prove that an inclusion is at least an $\e$-isometry if not a metric isometric embedding. To accommodate a variety of applications within this paper we keep the language of the lemma pretty general. Its proof is a modification of an argument used in \cite{SormaniStavrov}.

\begin{lemma}\label{distancecomparisonlemma}
Let $\V\subseteq \mathbb{R}^3$ be an open set and let $g$ be a metric on $\V$. Consider a finite union 
$$\P=\bigcup \bar{B}_{\geucl}(p_i, r_i) \text{\ \ with\ \ } r_\P\le \tfrac{1}{4}\sigma_\P,$$  
where 
$$r_{\P}:=\max_{i}r_i \text{\ \ and\ \ }\sigma_\P:=\min_{i,j}|p_i-p_j|.$$ 
Assume that $g$ is equivalent to $\geucl$ over the set $$\U:=\V\smallsetminus \P,$$ in the sense that for some constant $c$ we have $c^{-2}\geucl\le g\le c^2\geucl$ over $\U$. 
Then for all $x,y\in \U$ we have 
\begin{equation}\label{babylambda}
0\le d_{(\U, g)} (x,y)- d_{(\V,g)}(x,y)\le 2\pi c^2\,\frac{r_\P}{\sigma_\P}\,d_{(\V,g)}(x,y)+ \pi c\, r_\P.
\end{equation}
\end{lemma}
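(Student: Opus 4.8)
The left inequality $d_{(\U,g)}(x,y)\ge d_{(\V,g)}(x,y)$ is immediate, since every path in $\U$ joining $x$ and $y$ is also a path in $\V$, so the infimum defining $d_{(\V,g)}$ is taken over a larger collection of curves. The content of the lemma is the reverse estimate, and the plan is to take a near-minimizing curve $\gamma$ in $(\V,g)$ from $x$ to $y$ and surgically modify it into a curve $\tilde\gamma$ in $\U=\V\smallsetminus\P$ whose $g$-length exceeds that of $\gamma$ by no more than the stated error. The modification happens inside the balls $\bar B_{\geucl}(p_i,r_i)$: whenever $\gamma$ enters such a ball, we replace the offending sub-arc by an arc running along the boundary sphere $\{|x-p_i|=r_i\}$ (or slightly outside it, inside $\U$). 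Since $g\le c^2\geucl$ on $\U$, the $g$-length of a great-circle detour on a sphere of Euclidean radius $r_i$ is at most $c\cdot \pi r_i\le \pi c\,r_\P$, which accounts for the additive term $\pi c\,r_\P$; crucially, because $\gamma$ is a single curve it can pass through at most finitely many of the balls, but we must control the \emph{total} detour cost, not just one detour.

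First I would make precise the ``enter/exit'' bookkeeping. Fix $\varepsilon>0$ and let $\gamma:[0,L]\to\V$ be a unit-speed (w.r.t.\ $g$) curve from $x$ to $y$ with $L\le d_{(\V,g)}(x,y)+\varepsilon$. Consider the maximal open subintervals of $[0,L]$ on which $\gamma$ lies in the interior of some fixed ball $\bar B_{\geucl}(p_i,r_i)$; call these the \emph{excursions}. On each excursion with endpoints on $\{|x-p_i|=r_i\}$, replace $\gamma$ by a minimizing arc of that sphere connecting the two boundary points — of Euclidean length at most $\pi r_i$, hence $g$-length at most $\pi c\,r_i$. Here I would invoke $r_\P\le\frac14\sigma_\P$ to guarantee that the balls are pairwise disjoint (indeed $2r_\P\le\tfrac12\sigma_\P<\sigma_\P$), so the excursions into distinct balls are genuinely disjoint subintervals and there is no interference between surgeries; also, on the modified portions we stay on the boundary spheres, which lie in $\U$ (or we push them infinitesimally outward). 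The subtle point is that there may be arbitrarily many excursions into a single ball $\bar B_{\geucl}(p_i,r_i)$, so the naive bound ``(number of balls)$\times \pi c\,r_\P$'' is not available. This is the step I expect to be the main obstacle, and it is resolved exactly as in \cite{SormaniStavrov}: between consecutive excursions into the \emph{same} ball, the curve $\gamma$ must travel from the sphere $\{|x-p_i|=r_i\}$ out and back, so the portion of $\gamma$ outside the ball but between two consecutive same-ball excursions has $g$-length at least (by $g\ge c^{-2}\geucl$) $\tfrac{1}{c^2}\cdot 2r_i$ is too weak; the correct comparison is that the sphere detour of length $\le\pi c\,r_i$ replaces a sub-arc that we are discarding, and one charges the detour cost against the \emph{total} $g$-length $L$ of $\gamma$ using the ratio $r_i/\sigma_i$. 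Concretely, group the excursions; for each ball actually visited, the first visit costs an additive $\pi c\,r_i\le \pi c\,r_\P$ (this happens for at most — well, possibly many balls, so instead) — I would instead bound as follows: each sphere-arc replacement of length $\le \pi c\,r_\P$ is charged to the piece of $\gamma$ that it replaces together with an adjacent piece of $\gamma$ of Euclidean length comparable to $\sigma_\P$ (the curve had to come in from distance $\gtrsim\sigma_\P/4$ away, since outside $\P$ the balls are $\sigma_\P$-separated), giving per-detour cost $\le \big(2\pi c^2 \tfrac{r_\P}{\sigma_\P}\big)\cdot(\text{length of that replaced+adjacent piece})$, and these replaced+adjacent pieces are disjoint along $\gamma$ with total $g$-length $\le L$. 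Summing yields total detour cost $\le 2\pi c^2\tfrac{r_\P}{\sigma_\P}L$, plus a single residual $\pi c\,r_\P$ for the at-most-one incomplete excursion at an endpoint (the case where $x$ or $y$ itself sits near $\partial\P$ but in $\U$).

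Putting the pieces together, the modified curve $\tilde\gamma$ lies in $\U$, joins $x$ to $y$, and satisfies
\begin{equation*}
L_g(\tilde\gamma)\ \le\ L + 2\pi c^2\,\tfrac{r_\P}{\sigma_\P}\,L + \pi c\,r_\P.
\end{equation*}
Hence $d_{(\U,g)}(x,y)\le L_g(\tilde\gamma)\le \big(1+2\pi c^2\tfrac{r_\P}{\sigma_\P}\big)\big(d_{(\V,g)}(x,y)+\varepsilon\big)+\pi c\,r_\P$, and letting $\varepsilon\to 0$ gives
\begin{equation*}
d_{(\U,g)}(x,y)-d_{(\V,g)}(x,y)\ \le\ 2\pi c^2\,\tfrac{r_\P}{\sigma_\P}\,d_{(\V,g)}(x,y)+\pi c\,r_\P,
\end{equation*}
which is \eqref{babylambda}. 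The one place demanding genuine care is the charging argument in the previous paragraph — making rigorous that the sphere detours can be paid for by disjoint sub-arcs of $\gamma$ of controlled length, uniformly over all configurations allowed by $r_\P\le\tfrac14\sigma_\P$ — and I would lift that argument essentially verbatim from the corresponding lemma in \cite{SormaniStavrov}, adapting constants to the present normalization.
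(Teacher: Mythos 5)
Your argument is essentially the paper's: replace excursions into the balls by great-circle detours, merge consecutive excursions into the same ball into a single merged detour, and bound the number of merged detours by noting that consecutive merged detours into distinct balls are separated by a sub-arc of $\gamma$ of Euclidean length at least $\sigma_\P-2r_\P\ge\sigma_\P/2$, hence $g$-length at least $\sigma_\P/(2c)$, and these separating sub-arcs are disjoint. The only small slip is attributing the additive $\pi c\,r_\P$ to an ``incomplete excursion at an endpoint'' — since $x,y\in\U$ every excursion enters and exits $\P$, and the $+1$ is instead structural: there is exactly one more merged detour than there are transitions between distinct spheres (the paper's $Q'+1$ detours with $Q'\le\tfrac{2c}{\sigma_\P}L_g(\gamma)$), so one detour cannot be charged to a transition.
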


\begin{remark}\label{alttakegarricklemma}
The proof below basically consists of finding, for a given piece-wise smooth path $\gamma$ in $\V$ which is connecting points $x,y\in \U$, a piece-wise smooth path $\varphi$ in $\U$ which is still connecting $x,y\in \U$ and whose length with respect to $g$ satisfies
$$L_g(\varphi)-L_g(\gamma) \le 2\pi c^2\,\tfrac{r_\P}{\sigma_\P}\,L(\gamma)+ \pi c\, r_\P.$$
This is a worthy result in its own right, and we make use of it later on.
\end{remark}

\begin{proof}
Let $x,y\in \U$ and let $\gamma:[0,1]\to \V$ denote a (piece-wise smooth) path joining $x$ to $y$. As one traverses $\gamma$ from $x$ to $y$ one punctures (i.e. transversally meets, without loss of generality) a certain number $Q$ of Euclidean spheres $S_{\geucl}(p_i, r_i)$; this yields a subdivision 
$$0=t_0< t_1 \le t_2 < t_3 \le t_4 < ... < t_{2Q+1}=1$$
where the restriction $\gamma{|}_{[t_{2j-1},  t_{2j}]}$ is located inside the $j$-th sphere along $\gamma$ and where the restriction $\gamma{|}_{[t_{2j},  t_{2j+1}]}$ is located in $\V\smallsetminus \P$ (and in particular: outside of all of the spheres). Note that some of the spheres may appear more than once; in fact, $\gamma$ can immediately re-puncture $S_{\geucl}(p_i, r_i)$. (See the Figure \ref{fig5} accompanying this proof.) For this reason we distinguish the number $Q'$ of indices $j$, with $0<j<Q$, such that $\gamma(t_{2j})$ and $\gamma(t_{2j+1})$ are on distinct spheres $S_{\geucl}(p_i, r_i)$. Our next step is to control the value of $Q'$; the goal is to obtain an estimate in terms of the length $L_{g}(\gamma)$ and the separation parameter $\sigma_\P$. 

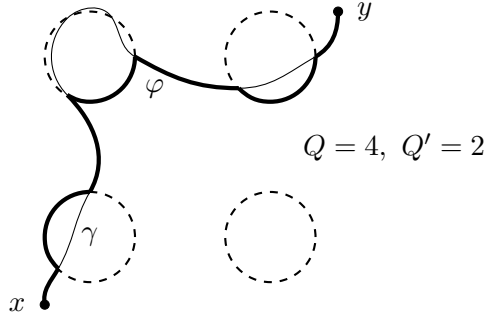
\begin{figure}
\centering
\begin{tikzpicture}[scale=0.6]
\draw[thick, dashed] (-2,0) circle (1);
\draw[thick, dashed] (2,0) circle (1);
\draw[thick, dashed] (-2,4) circle (1);
\draw[thick, dashed] (2,4) circle (1);
\draw (-3, -1.5) to [out=100, in=-120] (-2.7, -0.7) to [out=60, in =-120] (-2, 1) to [out=60, in =-45] (-2.5, 3.15) to [out =135, in =-135] (-2.5, 4.85) to [out=45, in =120] (-1.5, 4.85) to [out=-60, in=150] (-1, 4) to [out=-30, in=180] (1.3, 3.3) to [out =0, in =-150] (3,4) to [out=30, in =-90] (3.5,5);
\draw[ultra thick] (-3, -1.5) to [out=100, in=-120] (-2.7, -0.7); 
\draw[ultra thick] (-2.7, -0.7) to [out=135, in =-90] (-3, 0) to [out=90, in =-180] (-2, 1);
\draw[ultra thick] (-2, 1) to [out=60, in =-45] (-2.5, 3.15);
\draw[ultra thick]  (-2.5, 3.15) to [out=-30, in =-180] (-2, 3) to [out=0, in =-90] (-1, 4);
\draw[ultra thick] (-1, 4) to [out=-30, in=180] (1.3, 3.3);
\draw[ultra thick] (1.3, 3.3) to [out=-45, in =-180] (2, 3) to [out =0, in =-90] (3, 4);
\draw[ultra thick] (3,4) to [out=30, in =-90] (3.5,5);
\draw[fill] (-3, -1.5) circle (0.1);
\node[left] at (-3.2, -1.5) {$x$};
\draw[fill] (3.5, 5) circle (0.1);
\node[right] at (3.7, 5) {$y$};
\node at (-2,0) {$\gamma$};
\node[right] at (-1, 3.3) {$\varphi$};
\node[right] at (2.5, 2) {$Q=4,\ Q'=2$};
\end{tikzpicture}
\caption{Illustration for Lemma \ref{distancecomparisonlemma}}\label{fig5}
\end{figure}

Let $0< j< Q$ be an index such that $\gamma(t_{2j})$ and $\gamma(t_{2j+1})$ are on distinct spheres $S_{\geucl}(p_i, r_i)$. As $g$ is equivalent to $\geucl$ and as $r_i\le r_\P\le \tfrac{1}{4}\sigma_\P$ for all $i$ we see that
$$L_{g}\left(\gamma\big{|}_{[t_{2j}, t_{2j+1}]}\right)\ge \tfrac{1}{c}\,L_{\geucl}\left(\gamma\big{|}_{[t_{2j}, t_{2j+1}]}\right)\ge \tfrac{1}{c}\left(\sigma_\P-2r_\P\right)\ge \tfrac{\sigma_\P}{2c}.$$
It follows that 
$$L_{g}(\gamma)\ge Q'\tfrac{\sigma_\P}{2c}, \text{\ \ i.e\ \ }
Q'\le \tfrac{2c}{\sigma_\P} L_{g}(\gamma).$$ 

Next, consider a piece-wise smooth path $\varphi$ joining $x$ and $y$, which lies entirely in $\U$ and consists of: 
\begin{itemize}
\item The restrictions $\gamma\big{|}_{[t_0, t_1]}$ and $\gamma\big{|}_{[t_{2Q}, t_{2Q+1}]}$, 
\medbreak
\item $Q'$ restrictions $\gamma\big{|}_{[t_{2j}, t_{2j+1}]}$, joining  distinct spheres $S_{\geucl}(p_i, r_i)$, and 
\medbreak
\item In the case when $Q>0$: $Q'+1$ detours joining $\gamma(t_{2j-1})$ and some $\gamma(t_{2k})$ along a single sphere $S_{\geucl}(p_i, r_i)$. We may assume the detours are (at most semi-) circular in Euclidean sense.
\end{itemize}
The distinction between paths $\gamma$ and $\varphi$ is exactly in these $Q'+1$ detours.  Using the equivalence of $g$ to $\geucl$ we obtain the following bound on the total $g$-length of all the detours: 
$$\pi (Q'+1)\, c\, r_\P\le  2\pi c^2\,\tfrac{r_\P}{\sigma_\P} L_{g}(\gamma)+\pi c\,r_\P.$$ 
Adding the contributions of the remaining pieces of $\varphi$ (which by construction are portions of $\gamma$) we obtain
\begin{equation}\label{pathineq}
L_{g}(\varphi)\le L_{g}(\gamma)+2\pi  c^2\,\tfrac{r_\P}{\sigma_\P} L_{g}(\gamma)+\pi c\,r_\P.
\end{equation}
Taking infimums of both sides of \eqref{pathineq} with respect to $\gamma$ (and consequently $\varphi$)  produces 
$$d_{(\U, g)} (x,y)\le d_{(\V,g)}(x,y)+2\pi c^2\,\tfrac{r_\P}{\sigma_\P} d_{(\V,g)}(x,y)+ \pi c\,r_\P.$$
Observing that $d_{(\V, g)}(x,y)\le d_{(\U, g)} (x,y)$ completes the proof of \eqref{babylambda}.
\end{proof}

\subsection{Theorem \ref{GHduhduhduh}: convergence of $(\U_{n,R},g_n)$ towards $(B_{\geucl}(0,R),g)$}\label{DUH}

We now apply Lemma \ref{distancecomparisonlemma} to $\U=\U_{n,R}$ and $\V=B_{\geucl}(0,R)$ where 
$$\U_{n,R}:=B_{\geucl}(0,R)\smallsetminus \left(\bigcup_i B_{\geucl}(p_{i,n}, \tfrac{D}{n^2})\right).$$
Note that in this setting \eqref{usedtobe39} holds, and that 
$$r_\P=D/n^2,\ \ \sigma_\P=D/n.$$
Since $\tfrac{r_\P}{\sigma_\P}, r_\P\to 0$ while $\mathrm{diam}(B_{\geucl}(0,R),g)$ is finite, fixed and of class $\C[R]$, the quantity on the right hand side of \eqref{babylambda} can be made as small as desired by taking $n$ to be sufficiently large relative to $\C[R]$. More specifically, it follows that the inclusion mapping 
$$\iota_n:\U_{n,R}\hookrightarrow B_{\geucl}(0,R)$$ 
between the metric spaces $(\U_{n,R},g)$ and $(B_{\geucl}(0,R),g)$ satisfies at least one of the two conditions of being a $\tfrac{\C[R]}{n}$-isometry. 

To see that this mapping indeed is a $\tfrac{\C[R]}{n}$-isometry when $n$ is large, note that for each point $p\in B(p_{i,n}, \tfrac{D}{n^2})$ there is a point $p'\in \U_{n,R}$ such that $d_{(B_{\geucl}(0,R),g)}(p,p')\le CD/n^2$ with $C$ is of class $\C$. For example, we can take the point $p'$ to be on the Euclidean sphere of radius $2D/n^2$ centered at $p_{i,n}$ so that $p_{i,n}$, $p$ and $p'$ are collinear. We now have 
$$B_{\geucl}(0,R)=\mathscr{T}_{CD/n^2}(\U_{n,R})$$ 
within the metric space $(B_{\geucl}(0,R),g)$. Overall, it follows that $\iota_n:\U_{n,R}\to B_{\geucl}(0,R)$ is a $\tfrac{\C[R]}{n}$-isometry and that the sequence of metric spaces $(\U_{n,R},g)$ converges to $(B_{\geucl}(0,R),g)$ in the Gromov-Hausdorff sense. 

In part \eqref{qualityofapproximation} of Proposition \ref{BLRcontrollemma} we saw that 
$$g_n=(\chi_n\psi_n)^2\geucl \approx (\chi\psi)^2\geucl=g$$
over $\U_{n,R}$. It is reasonable to expect that such proximity of the metrics $g_n$ and $g$ implies the proximity of metric spaces $(\U_{n,R},g_n)$ and  $(\U_{n,R},g)$ in the Gromov-Hausdorff sense. The following lemma quantifies this particular point. Once again, we keep the language of the lemma general because of its further applications within this paper.

\begin{lemma}\label{GHduh}
Suppose that $g$ and $h$ are two Riemannian metrics on $\U\subseteq \mathbb{R}^3$. 
If $\gamma$ is any piece-wise smooth path in $\U$ then 
$$L_h(\gamma)\le L_g(\gamma)\left(1+\tfrac{1}{2}\|\mathrm{Id}-g^{-1}h\|_{L^\infty(\U,\geucl)}\right).$$
In particular, we have 
$$d_{(\U,h)}(x,y)-d_{(\U,g)}(x,y)\le \tfrac{d_{(\U,g)}(x,y)}{2}\|\mathrm{Id}-g^{-1}h\|_{L^\infty(\U,\geucl)}$$
for all $x,y\in\U$.
\end{lemma}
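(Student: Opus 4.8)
The plan is to reduce the statement to a pointwise comparison of the norms $|\cdot|_h$ and $|\cdot|_g$ on tangent vectors, and then to integrate along paths and pass to the infimum. First I would introduce the field of endomorphisms $S:=g^{-1}h-\mathrm{Id}$ of the tangent spaces of $\U$, characterized by $g(Sv,w)=h(v,w)-g(v,w)$. Because $h$ is symmetric, $S_p$ is $g$-self-adjoint at every point $p$, hence diagonalizable over $\R$ with a $g$-orthonormal eigenbasis $e_1,e_2,e_3$ and real eigenvalues $\mu_1,\mu_2,\mu_3$; moreover each $|\mu_j|$ is at most the spectral radius of $S_p$, which in turn is bounded by any operator norm of $S_p$, in particular by $\|S_p\|_{\geucl}$ (the eigenvalues being independent of the metric used to measure them). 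Writing $\e:=\|\mathrm{Id}-g^{-1}h\|_{L^\infty(\U,\geucl)}=\sup_p\|S_p\|_{\geucl}$ and expanding an arbitrary $v\in T_p\U$ as $v=\sum_j c_je_j$, one gets
$$h(v,v)=\sum_j(1+\mu_j)c_j^2\le (1+\e)\sum_j c_j^2=(1+\e)\,g(v,v).$$

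Next I would take square roots and use the elementary inequality $\sqrt{1+t}\le 1+\tfrac12 t$, valid for $t\ge 0$, to obtain the pointwise bound $|v|_h\le (1+\tfrac12\e)\,|v|_g$ for every tangent vector $v$. Integrating this along an arbitrary piece-wise smooth path $\gamma$ in $\U$ yields
$$L_h(\gamma)=\int|\dot\gamma|_h\le (1+\tfrac12\e)\int|\dot\gamma|_g=(1+\tfrac12\e)\,L_g(\gamma),$$
which is the first assertion of the lemma (the constant $\tfrac12\|\mathrm{Id}-g^{-1}h\|_{L^\infty(\U,\geucl)}$ being exactly $\tfrac12\e$).

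For the distance statement I would note that $d_{(\U,h)}(x,y)$ and $d_{(\U,g)}(x,y)$ are the infima of $L_h$ and $L_g$, respectively, over the \emph{same} family of piece-wise smooth paths in $\U$ joining $x$ to $y$. Applying the length inequality to each such path and taking the infimum gives $d_{(\U,h)}(x,y)\le(1+\tfrac12\e)\,d_{(\U,g)}(x,y)$, and subtracting $d_{(\U,g)}(x,y)$ from both sides produces the claimed bound. The only point requiring genuine care is the identification used in the first paragraph: the Rayleigh quotient $g(S_pv,v)/g(v,v)$ must be controlled by the \emph{Euclidean} operator norm $\|S_p\|_{\geucl}$ rather than by some $g$-dependent norm, and it is precisely the $g$-self-adjointness of $S_p$ together with the basis-independence of the spectrum that makes this legitimate; everything else is a routine estimate.
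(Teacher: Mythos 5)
Your proof is correct and follows essentially the same route as the paper's: a pointwise bound $|v|_h \le (1+\tfrac12\e)|v|_g$ obtained from a Rayleigh-quotient estimate for $S = g^{-1}h - \mathrm{Id}$, followed by $\sqrt{1+t}\le 1+\tfrac12 t$, integration along paths, and passing to infimums. You also explicitly fill in a detail the paper's one-line estimate glosses over, namely that the eigenvalues of the $g$-self-adjoint operator $S_p$ are controlled by the \emph{Euclidean} operator norm $\|S_p\|_{\geucl}$ via the metric-independence of the spectral radius, which is what justifies the stated norm $\|\mathrm{Id}-g^{-1}h\|_{L^\infty(\U,\geucl)}$ appearing in the lemma.
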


\begin{proof}
Suppose $x,y\in \U$ and let $\gamma:[0,L]\to \U$ denote a (piece-wise smooth) path joining $x$ to $y$. Without loss of generality we may assume that $|\dot{\gamma}|_g=1$ so that $L_g(\gamma)=L$. Since
$$\left||\dot{\gamma}|_g^2-|\dot{\gamma}|_h^2\right|\le \|\mathrm{Id}-g^{-1}h\|_{L^\infty}\cdot |\dot{\gamma}|^2_{g}$$
we have that
$$|\dot{\gamma}|_h\le \sqrt{1+\|\mathrm{Id}-g^{-1}h\|_{L^\infty}}\le 1+\tfrac{1}{2}\|\mathrm{Id}-g^{-1}h\|_{L^\infty}.$$
Claims of our lemma follow after applying integration and taking infimums over $\gamma$.
\end{proof} 

We are about to apply Lemma \ref{GHduh} to $\U=\U_{n,R}$ equipped with metrics $g_n\ge \geucl$ and $g$. It follows from \eqref{babylambda} and the bound on the diameter of $(B_{\geucl}(0,R), g)$ that there is a bound of class $\C[R]$ on $\mathrm{diam}(\U_{n,R},g)$. Given the nature of convergence $g_n\to g$ (Proposition \ref{BLRcontrollemma}) over $\U_{n,R}$, the metrics $g_n$ can be bounded by a (uniform and of class $\C^+$) multiple of $g$. Thus there is a uniform bound of class $\C^+[R]$ on all $\mathrm{diam}(\U_{n,R},g_n)$. Lemma \ref{GHduh}, together with the understanding that $\|g_n-g\|_{L^\infty}=O(\tfrac{1}{n})$ based on Proposition \ref{BLRcontrollemma}, implies 
$$\left|d_{(\U_{n,R},g_n)}(x,y)-d_{(\U_{n,R},g)}(x,y)\right|\le \tfrac{C}{n} \text{\ \ for all\ \ }x,y\in \U_{n,R}$$
for some $C$ of class $\C^+[R]$. In particular, we see that the identity mapping on $\U_{n,R}$ serves as an $\frac{\C^+[R]}{n}$-isometry between $(\U_{n,R},g_n)$ and $(\U_{n,R},g)$. The fact that $(\U_{n,R},g)\to (B_{\geucl}(0,R),g)$ in the Gromov-Hausdorff sense now implies that metric spaces $(\U_{n,R},g_n)$ converge to $(B_{\geucl}(0,R),g)$. We have just proved Theorem \ref{GHduhduhduh}. \qed

\subsection{Gromov-Hausdorff convergence in the case of shallow wells}\label{GHthm}

This section is dedicated to the proof of Theorem \ref{GHthm1}. In light of the (proof of) Theorem \ref{GHduhduhduh} (see Section \ref{DUH} above) it remains to address the proximity of $(\U_{n,R},g_n)$ and $(\V_{n,R},g_n)$ as metric spaces. We show that the Gromov-Hausdorff distance between the two can be made apropriately small by proving that the inclusion 
$\iota_n:\U_{n,R}\hookrightarrow \V_{n,R}$ is a $C(\frac{1}{n}+\ell_n)$-isometry with $C$ of class $\C[R]$. The condition that 
$$|d_{(\U_{n,R},g_n)}(x,y)-d_{(\V_{n,R},g_n)}(x,y)|<C\tfrac{1+\ell_n}{n} \text{\ \ for all\ \ } x,y\in \U_{n,R}$$
is once again a consequence of Lemma \ref{distancecomparisonlemma}: we use 
$$r_\P=D/n^2,\ \ \sigma_\P=D/n,$$
and the fact that $\mathrm{diam}(\V_{n,R},g_n)$ is bounded by a $\C$-multiple of $R+\ell_n D< R(1+\ell_n)$. Thus it remains to show that 
$$\V_{n,R}=\mathscr{T}_{C(\frac{1}{n}+\ell_n)}(\U_{n,R})$$
within $(\V_{n,R},g_n)$. 

Consider $p\in \V_{n,R}\cap B_{\geucl}(p_{i,n},\tfrac{D}{n^2})$ for some $i$. Let the point $p'$ be the location where the ray from $p_{i,n}$ towards $p$ pierces the Euclidean sphere of radius $\tfrac{2D}{n^2}$ centered at $p_{i,n}$. By Lemma \ref{necklengthlemma} we have 
$$d_{(\V_{n,R},g_n)}(p,p')\le CD(\tfrac{1}{n^2}+\ell_n)$$
for some constant $C$ of class $\C$. It now follows that $\V_{n,R}=\mathscr{T}_{(\frac{1}{n}+\ell_n)\C[R]}(\U_{n,R})$, and that $\iota_n$ is a $(\frac{1}{n}+\ell_n)\C[R]$-isometry. \qed

\subsection{A non-example of Gromov-Hausdorff convergence}\label{GHdne}

In situations where we do not have shallow wells the Gromov-Hausdorff convergence is generally speaking not expected. To understand the reasons behind this consider Example \ref{Example3}. Our analysis of this example presented an explicit subsequence, indexed by $n_k$, of the sequence of Brill-Lindquist-Riemann sums of midpoint type with at least $k$ distinct locations $p_{i,n_k}$ where 
$$\ell_{i,n_k}\ge 8.$$
For each such $p_{i,n_k}$ consider a point $q_{i,n_k}\in \V_{n,R}$ such that $|q_{i,n_k}-p_{i,n_k}|=2\sqrt{\alpha_{i,n_k}\beta_{i,n_k}}$. Our next goal is to show that for each such $p_{i,n_k}$ the geodesic ball $B_{g_n}(q_{i,n_k}, \tfrac{2D}{3})$ in $\V_{n_k,R}$ is contained within $B_{\geucl}(p_{i,n_k},\tfrac{D}{2n_k^2})$. To this end it suffices to argue that for each $q'\in \V_{n,R}$ with $|q'-p_{i,n_k}|=D/(2n_k^2)$ we have $d_{(\V_{n_k,R},g_{n_k})}(q_{i,n_k},q')>2D/3$, at least if $k$ is really large. 

Let $q$ be collinear with and between $p_{i,n_k}$ and $q'$ with  $|q-p_{i,n_k}|=2\sqrt{\alpha_{i,n_k}\beta_{i,n_k}}$. It follows from Lemma \ref{necklengthlemma} that 
$$d_{(\V_{n_k,R},g_{n_k})}(q,q')\ge \tfrac{D}{12}\ell_{i,n_k}=\tfrac{3D}{4}.$$
On the other hand, Remark \ref{goingaround} gives us an estimate 
$$d_{(\V_{n_k,R},g_{n_k})}(q_{i,n_k},q)=O(D/n_k^2)$$ 
with the implied proportionality constant of class $\C$. The claim that 
$$d_{(\V_{n_k,R},g_{n_k})}(q_{i,n_k},q')>2D/3$$ 
for large $k$ is now a consequence of the triangle inequality applied to points $q_{i,n_k}$, $q$ and $q'$. 

Ultimately, we see that $\mathrm{Cap}_{(\V_{n_k,R},g_{n_k})} (2D/3)\ge k$ and as a result the sequence $(\V_{n_k,R},g_{n_k})$ cannot converge in the Gromov-Hausdorff sense. (In fact, it cannot even have any convergent subsequences!)

\subsection{Gromov-Hausdorff limit may depend on the choice of sample points in \eqref{commonsensechoice}}\label{dependencyonsamplepoints}
Admittedly, there are situations when we do not have shallow wells and yet we do have Gromov-Hausdorff convergence. The example we present here is based on Example \ref{Example2} for a fixed value of $\lambda$ though we could have equally made use of Example \ref{Example1}. Ultimately, the lesson we learn here is that in situations where neither deep nor shallow wells occur the Gromov-Hausdorff limit may highly depend on the procedure used to find the exact value of the parameters $\alpha_{i,n}$ and $\beta_{i,n}$.

Our analysis of Example \ref{Example2} and its Gromov-Hausdorff limit revolves around the set 
$$\W_{n,R}:=\U_{n,R}\cup L_{i,n}$$
where
$$L_{i,n}:=\{(1-t)p_{i,n}+tq_{i,n}\,\big{|}\,2\sqrt{\alpha_{i,n}\beta_{i,n}}\le t|q_{i,n}-p_{i,n}| \le D/n^2\}.$$
Informally speaking, the set $\W_{n,R}$ is formed by adding a line going down the neck at $p_{i,n}$ to $\U_{n,R}$. We begin by showing that the inclusion $\iota_n:(\W_{n,R},g_n)\to (\V_{n,R},g_n)$ is an $\e$-isometry when $n$ is sufficiently large. What we are taking advantage of here is the fact that ``neck" at $p_{i,n}$ is thin enough so that the sequence of metric spaces $(\mathcal{B}_{i,n},g_n)$ where 
\begin{equation}\label{defnmathcalB}
\mathcal{B}_{i,n}:=\bar{B}_{\geucl}(p_{i,n},\tfrac{D}{n^2})\cap \V_{n,R},
\end{equation}
can be shown to converge in Gromov-Hausdorff sense to a line segment. 

Next, we create an $\e$-isometry between $(\W_{n,R},g_n)$ and the metric space defined as follows: Let 
$$\W_{\infty,R}=B_{\geucl}(0,R)\times \{0\}\cup {(1,0,0)}\times [0,L]\subseteq B_{\geucl}(0,R)\times [0,L]$$ be the set formed by attaching a line segment of length 
$$L=\tfrac{1}{2\lambda}\chi(1,0,0)$$ 
to the Euclidean ball at $(1,0,0)$. Consider the 
taxi-cab-style metric on  $B_{\geucl}(0,R)\times [0,L]$ given by 
$$d_\infty((x,t), (y,s))=d_g(x,y)+|t-s|$$
and its restriction (which we also denote by $d_\infty$) to $\W_{\infty,R}$. 

Ultimately, the point is that the sequence of metric spaces $(\V_{n,R},g_n)$ converges in the Gromov-Hausdorff sense to $(\W_{\infty,R}, d_\infty)$. From the technical perspective the crux of our argument lies in the following lemma.

\begin{lemma}\label{lastlemma?}
Adopt the notation of Example \ref{Example2}, fix $\lambda>0$ and restrict your attention to values of $n$ for which $n^3\ge \lambda$. There exists a constant $C$ of class $\C$ such that for all $n$ which are large relative to $\C$ the following holds: given a piece-wise smooth path $\gamma$ in $\V_{n,R}$ connecting points $x,y\in \W_{n,R}$ there exists a piece-wise smooth path $\varphi$ in $\W_{n,R}$ connecting $x$ and $y$ whose length with respect to $g_n$ satisfies
$$L_{g_n}(\varphi)-L_{g_n}(\gamma) \le C\left(\tfrac{1}{n}L_{g_n}(\gamma)+\tfrac{D}{n^2}(1+\tfrac{1}{\lambda})\right).$$
\end{lemma}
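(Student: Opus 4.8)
\textbf{Proof proposal for Lemma \ref{lastlemma?}.}

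The plan is to adapt the path-surgery argument from Lemma \ref{distancecomparisonlemma} (see also Remark \ref{alttakegarricklemma}), but to keep the single ``distinguished'' neck at $p_{i,n}$ --- the one carrying the added line segment $L_{i,n}$ --- and to reroute the path $\gamma$ onto $\W_{n,R}$ rather than onto $\U_{n,R}$. First I would decompose $\gamma$ according to its excursions into the balls $B_{\geucl}(p_{j,n},\tfrac{D}{n^2})$. For every index $j\neq i$ the treatment is exactly as in Lemma \ref{distancecomparisonlemma}: since $g_n$ is uniformly equivalent to $\geucl$ over $\U_{n,R}$ by Proposition \ref{BLRcontrollemma}, each such excursion is replaced by a (Euclidean-)semicircular detour along $S_{\geucl}(p_{j,n},\tfrac{D}{n^2})$, and the number $Q'$ of segments of $\gamma$ that travel between distinct spheres is bounded by $\tfrac{2c}{\sigma_\P}L_{g_n}(\gamma)=O(\tfrac{n}{D})L_{g_n}(\gamma)$ because $\sigma_\P=D/n$. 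The total $g_n$-length of those detours is then $O(1)\cdot\tfrac{r_\P}{\sigma_\P}L_{g_n}(\gamma)+O(r_\P)=O(\tfrac1n)L_{g_n}(\gamma)+O(\tfrac{D}{n^2})$, which is absorbed into the $\tfrac{1}{n}L_{g_n}(\gamma)$ and $\tfrac{D}{n^2}$ terms of the claimed bound.

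Second, and this is the genuinely new part, I would handle the excursions of $\gamma$ into the distinguished ball $\mathcal{B}_{i,n}=\bar B_{\geucl}(p_{i,n},\tfrac{D}{n^2})\cap\V_{n,R}$. Here we are \emph{not} allowed to simply detour around $S_{\geucl}(p_{i,n},\tfrac{D}{n^2})$; instead we push the relevant piece of $\gamma$ onto the radial segment $L_{i,n}$. Each maximal sub-arc of $\gamma$ inside $\mathcal{B}_{i,n}$ enters at some point $a$ and exits at some point $b$ on the sphere $S_{\geucl}(p_{i,n},\tfrac{D}{n^2})$ (or at an endpoint $x$ or $y$, which by hypothesis already lies in $\W_{n,R}$ and hence either on that sphere or on $L_{i,n}$ itself). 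Replace this sub-arc by: a (semicircular-in-$\geucl$) arc along $S_{\geucl}(p_{i,n},\tfrac{D}{n^2})$ from $a$ to the foot of $L_{i,n}$, then the portion of $L_{i,n}$ required, then back out. The key point --- which is precisely what Lemma \ref{necklengthlemma} and Remark \ref{goingaround} are designed to give --- is that the $g_n$-length of everything one can do inside $\mathcal{B}_{i,n}$ is controlled: the outer spherical connector costs $O(\tfrac{D}{n^2})$ by Remark \ref{goingaround}, while traversing part or all of $L_{i,n}$ costs at most $d_{g_n}$ along the neck, which by Lemma \ref{necklengthlemma}(2) is $O(D(\tfrac{1}{n^2}+\ell_{i,n}))$. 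In Example \ref{Example2} the distinguished source has $\ell_{i,n}\to \tfrac1\lambda$ (and all other sources have $\ell_{j,n}=O(1/n)$), so this contributes $O(\tfrac{D}{n^2}(1+\tfrac1\lambda))$, matching the last term of the asserted estimate; crucially $\gamma$ itself, staying in $\V_{n,R}$, already paid \emph{at least} the one-sided cost of descending however far it descended into the neck (by the lower bound in Lemma \ref{necklengthlemma}(1)), so the \emph{difference} $L_{g_n}(\varphi)-L_{g_n}(\gamma)$ picks up at most the $O(\tfrac{D}{n^2}(1+\tfrac1\lambda))$ reconnection overhead, not the full neck length.

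Third, I would assemble the global path $\varphi$ by concatenating the untouched portions of $\gamma$ (those lying in $\U_{n,R}$, hence in $\W_{n,R}$), the $O(\tfrac{n}{D}L_{g_n}(\gamma))$-many detours around the non-distinguished spheres, and the finitely many reroutings through $L_{i,n}$, then sum the length contributions and take the infimum over $\gamma$ exactly as at the end of the proof of Lemma \ref{distancecomparisonlemma}. The main obstacle, and the step deserving the most care, is the bookkeeping for the distinguished neck: one must verify that the number of times $\gamma$ can enter and leave $\mathcal{B}_{i,n}$ is also $O(\tfrac{n}{D}L_{g_n}(\gamma))$ (again because leaving and re-entering forces $\gamma$ to cross the gap of size $\sim\sigma_\P$ between $p_{i,n}$ and its neighbors, each such crossing costing $\gtrsim\sigma_\P/c$ in $g_n$-length), and that the per-reentry overhead of the spherical connector to the foot of $L_{i,n}$ really is $O(\tfrac{D}{n^2})$ uniformly --- for which one invokes Remark \ref{goingaround} together with the inclusions \eqref{inclusionsMn-ver2} that keep $\mathcal{B}_{i,n}$ pinched between $\{|x-p_{i,n}|\ge\tfrac1C\sqrt{\alpha_{i,n}\beta_{i,n}}\}$ and $\{|x-p_{i,n}|\le\tfrac{D}{n^2}\}$. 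Everything else is a routine repetition of the surgery in Lemma \ref{distancecomparisonlemma}.
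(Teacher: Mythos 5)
Your rerouting construction — detour around all non-distinguished spheres as in Lemma \ref{distancecomparisonlemma}, and push the excursions into the distinguished ball $\mathcal{B}_{i,n}$ onto $L_{i,n}$ via a spherical connector — is the right picture, and your bookkeeping of the excursion counts is sound. The gap is in the heart of the argument: the step where you claim that $\gamma$ ``already paid at least the one-sided cost of descending'' (lower bound of Lemma \ref{necklengthlemma}(1)) while $\varphi$ pays at most the upper bound of Lemma \ref{necklengthlemma}(2), so the difference is only ``reconnection overhead.'' This does not give the stated estimate, because the lower and upper bounds in Lemma \ref{necklengthlemma} have \emph{different multiplicative constants}: the lower bound has coefficient $1/12$ (coming from the comparison metric $(1+\alpha/r)^2(1+\beta/r)^2\geucl$), while the upper bound has a constant $C$ of class $\C$ (which in essence contains a factor like $\hat{\chi}_n^{(i)}\hat{\psi}_n^{(i)}\ge 1$ from Proposition \ref{BLRcontrollemma}). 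The difference between the upper and lower bounds for a full-depth traversal is therefore of order $D\,\ell_{i,n}\approx D/\lambda$, not $\frac{D}{n^2}(1+\frac{1}{\lambda})$. Your argument thus loses the crucial $O(1/n^2)$ multiplicative gain.

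The paper's proof gets that gain by a different comparison. Inside $\mathcal{B}_{i,n}$ it introduces the \emph{exact} spherically symmetric Reissner--Nordstr\"om-like metric
$g_{\RN,i}:=\bigl(\hat{\chi}_n^{(i)}+\tfrac{\alpha_{i,n}}{|x-p_{i,n}|}\bigr)^2\bigl(\hat{\psi}_n^{(i)}+\tfrac{\beta_{i,n}}{|x-p_{i,n}|}\bigr)^2\geucl$
and observes, using Proposition \ref{BLRcontrollemmaC1} and the Mean Value Theorem, that $\|\mathrm{Id}-g_{\RN,i}^{-1}g_n\|\le C/n^2$ there (note the $\hat{\chi}_n^{(i)}$, $\hat{\psi}_n^{(i)}$ background constants are what make this $O(1/n^2)$ rather than $O(1)$). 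Because $g_{\RN,i}$ is spherically symmetric about $p_{i,n}$, the radial segment $L_{i,n}$ is a genuine $g_{\RN,i}$-length minimizer between points on it, so $L_{g_{\RN,i}}(\varphi)\le L_{g_{\RN,i}}(\gamma)$. Passing through this minimizing property and the $\bigl(1+O(1/n^2)\bigr)$-equivalence of $g_n$ and $g_{\RN,i}$ (via Lemma \ref{GHduh}) gives
$(1-C/n^2)L_{g_n}(\varphi)\le L_{g_n}(\gamma)$,
and then the $O(D(1+\frac{1}{\lambda}))$ bound on $L_{g_n}(\varphi)$ from Lemma \ref{necklengthlemma}(2) yields the required $L_{g_n}(\varphi)-L_{g_n}(\gamma)\le \frac{CD}{n^2}(1+\frac{1}{\lambda})$. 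That is the mechanism you are missing: it is a minimizer-to-minimizer comparison against $g_{\RN,i}$, not an upper-versus-lower bound trade-off, that supplies the $1/n^2$.
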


\begin{proof}
If $x,y\in \U_{n,R}$ then our claim is a consequence of Remark \ref{alttakegarricklemma}. Thus it suffices to focus on paths $\gamma$ located entirely in $\mathcal{B}_{i,n}$ (see \eqref{defnmathcalB} above) and whose endpoints $x$ and $y$ satisfy one of the following:
\begin{itemize}
\item $x,y\in L_{i,n}$;
\medbreak
\item $x\in L_{i,n}$ while $|y-p_{i,n}|=\tfrac{D}{n^2}$.
\end{itemize}
We proceed by investigating these two cases individually.

{\sc The case of $x,y\in L_{i,n}$:} Recall from Lemma \ref{BLRcontrollemmaC1} that 
$|d\chi_n^{(i)}|$ and $|d\psi_n^{(i)}|$ are bounded by a constant of class $\C$ over the entire ball $B_{\geucl}(p_{i,n},\tfrac{1}{n^2})$. In particular, the Mean Value Theorem implies 
$$\left|\chi_n^{(i)}(x)-\left(\hat{\chi}_n^{(i)} +\frac{\alpha_{i,n}}{|x-p_{i,n}|}\right)\right| \le  C\frac{1}{n^2},\ \ x\in \mathcal{B}_{i,n},$$
as well as a similar estimate for $\psi_n^{(i)}$. This motivates the consideration of the metric 
\begin{equation}\label{defngRN}
g_{\RN,i}:=\left(\hat{\chi}_n^{(i)} +\frac{\alpha_{i,n}}{|x-p_{i,n}|}\right)^2\left(\hat{\psi}_n^{(i)} +\frac{\beta_{i,n}}{|x-p_{i,n}|}\right)^2\geucl.
\end{equation}
Note that we have 
$$(1-\tfrac{C}{n^2})^4 g_{\RN,i}\le g_n\le (1+\tfrac{C}{n^2})^4 g_{\RN,i},$$
or in other words: 
$$\|\mathrm{Id}-g_{\RN,i}^{-1}g_n\|\le \tfrac{C}{n^2}$$ 
for some (potentially larger) constant $C$.
Due to spherical symmetry of $g_{\RN,i}$, the length minimizer $\varphi$ between points $x,y\in L_{i,n}$ lies within $L_{i,n}$. Together with Lemma \ref{GHduh} we have 
$$\begin{aligned}
L_{g_n}(\varphi)\le &L_{g_{\RN,i}}(\varphi) \left(1+C/(2n^2))\right)\\
\le & L_{g_{\RN,i}}(\gamma) \left(1+C/(2n^2)\right)\le L_{g_n}(\gamma)\left(1+C/(2n^2)\right)^2
\end{aligned}$$
and consequently
$$(1-C/n^2)L_{g_n}(\varphi)\le L_{g_n}(\gamma).$$
Since $L_{g_n}(\varphi)\le CD(\tfrac{1}{n^2}+\ell_n)$ by Lemma \ref{necklengthlemma}, we have 
$L_{g_n}(\varphi)\le CD\left(1+\tfrac{1}{\lambda}\right)$.  It then further follows that 
$$L_{g_n}(\varphi)- L_{g_n}(\gamma)\le \tfrac{CD}{n^2}\left(1+\tfrac{1}{\lambda}\right).$$

{\sc The case of $x\in L_{i,n}$ and $|y-p_{i,n}|=1/n^2$:} Let $y'\in L_{i,n}$ be such that $|y'-p_{i,n}|=D/n^2$. Consider the path $\tilde{\gamma}$ obtained from $\gamma$ by appending the circular arc between $y$ and $y'$. It follows from Remark \ref{goingaround} that 
$$L_{g_n}(\tilde{\gamma})\le  L_{g_n}(\gamma) +\tfrac{CD}{n^2}.$$
Consider the path $\varphi$ formed by taking $\tilde{\varphi}$ connecting $x$ and $y'$ as in the previous case and appending the circular arc connecting $y'$ back to $y$ to it.
Applying Remark \ref{goingaround} once again we obtain 
$$L_{g_n}(\varphi)-L_{g_n}(\gamma)\le L_{g_n}(\tilde{\varphi})+\tfrac{CD}{n^2}-(L_{g_n}(\tilde{\gamma})-\tfrac{C}{n^2})\le \tfrac{CD}{n^2}\left(1+\tfrac{1}{\lambda}\right) +\tfrac{3CD}{n^2}.$$
This observation completes our proof.
\end{proof}

Let $$\iota_n:(\W_{n,R},g_n)\to (\V_{n,R},g_n)$$ denote the natural inclusion. It follows from Lemma \ref{diameterlemma} and Lemma \ref{lastlemma?} that 
$$d_{(\W_{n,R},g_n)}(x,y)-d_{(\V_{n,R},g_n)}(x,y)\le C\left(\tfrac{1}{n}(R+\tfrac{D}{\lambda})+\tfrac{D}{n^2}(1+\tfrac{1}{\lambda})\right)\le \tfrac{2CR}{n}\left(1+\tfrac{1}{\lambda}\right)$$
for all $x,y\in \W_{n,R}$. Temporarily set\footnote{Note that there is no harm in replacing $1+\tfrac{1}{\lambda}$ by $\lambda+\tfrac{1}{\lambda}$ as we are already assuming $n$ is large relative to $\lambda$, $n^3\ge \lambda$.} $\e=\tfrac{2CR}{n}\left(1+\tfrac{1}{\lambda}\right)$. The inclusion $\iota_n$ can be thought of as an $\e$-isometry provided we can show that 
$$\V_{n,R}=\mathscr{T}_\e(\W_{n,R}).$$ 
Recall that $\ell_{j,n}=O(\tfrac{1}{n})$ when $j\neq i$. For the reasons presented in the proof of Theorem \ref{GHthm1} in Section \ref{GHthm} we know that for all $p\in \mathcal{B}_{j,n}$ (see \eqref{defnmathcalB}) there exists a point $p'\in \U_{n,R}$ such that 
$$d_{(\V_{n,R}, g_n)}(p,p')<\tfrac{CD}{n}.$$ 
Thus it suffices to prove that for each point 
$p$ in $\mathcal{B}_{i,n}$  there exists a point $p''\in L_{i,n}$ for which $d_{(\V_{n,R},g_n)}(p,p'')\le \tfrac{CD}{n}$. So, let 
$p\in \mathcal{B}_{i,n}$. If $|p-p_{i,n}|< 2\sqrt{\alpha_{i,n}\beta_{i,n}}$ consider in addition the point $p'$ where the ray from $p_{i,n}$ to $p$ pierces the Euclidean sphere $S_{\geucl}(p_{i,n},2\sqrt{\alpha_{i,n}\beta_{i,n}})$; for convenience define $p'=p$ whenever $|p-p_{i,n}|\ge 2\sqrt{\alpha_{i,n}\beta_{i,n}}$. Note that 
$$d_{(\V_{n,R},g_n)}(p,p')\le 12(\alpha_{i,n}+\beta_{i,n})=O(D/n^3),$$
as in Remark \ref{goingaround}. Next, consider $p''\in L_{i,n}$ such that $|p''-p_{i,n}|=|p'-p_{i,n}|$. 
Remark \ref{goingaround} further implies $d_{(\V_{n,R},g_n)}(p',p'')=O(D/n^2)$ and, by the triangle inequality, 
$$d_{(\V_{n,R},g_n)}(p,p'')\le \tfrac{CD}{n^2}\le \tfrac{CD}{n}.$$ 
We are now in position to conclude that the inclusion $\iota_n$ is an $\e$-isometry. 
\bigbreak
Next, note that the length of $L_{i,n}$ with respect to the metric $g_{\RN,i}$ defined in \eqref{defngRN} is 
$$\int_{r=2\sqrt{\alpha_{i,n}\beta_{i,n}}}^{r=D/n^2} \left(\hat{\chi}_n^{(i)} +\frac{\alpha_{i,n}}{r}\right)\left(\hat{\psi}_n^{(i)} +\frac{\beta_{i,n}}{r}\right)\,dr=-\tfrac{1}{2}\beta_{i,n}\hat{\chi}_n^{(i)}\ln (\alpha_{i,n}/D) +O(\tfrac{D}{n^2}).$$
The functions $A$ and $B$ in Example \ref{Example2}, as well as the values of $\alpha_{i,n}$ and $\beta_{i,n}$, are chosen precisely so that 
$$-\beta_{i,n}\ln (\alpha_{i,n}/D)=\frac{D}{n^3(1-|q_{i,n}|^2)}=\frac{D}{\lambda}+O\left(\frac{D}{n^3}\right).$$
Given the estimates on $\chi_n^{(i)}-\chi$ of Proposition \ref{BLRcontrollemma}, the length of $L_{i,n}$ with respect to $g_{\RN,i}$ behaves as 
$$L+O\left(\tfrac{D}{n}(1+\tfrac{1}{\lambda})\right) \text{\ \ where\ \ } L=\tfrac{1}{2\lambda}\chi(1,0,0).$$
In fact, the same statement applies to the length of $L_{i,n}$ with respect to $g_n$ due to Lemma \ref{GHduh} and the approximation $g_n\approx g_{\RN,i}$ used in the proof of Lemma \ref{lastlemma?}.
Consequently, both the sequence $(L_{i,n},g_n)$ and the sequence $(L_{i,n},g_{\RN,i})$ converge in the Gromov-Hausdorff sense to the line segment $[0,L]$. For the remainder of the proof let $F_L$ denote the  $\tfrac{D}{n}(1+\tfrac{1}{\lambda})$-isometry between $(L_{i,n},g_n)$ and the line segment $[0,L]$ given by 
$$F_L(y)=\min\{L_{g_n}(\gamma_y),L\} \text{\ \ with\ \ } \gamma_y:[|y-p_{i,n}|, D/n^2]\to L_{i,n},\ \ \gamma_y(t)=p_{i,n}+t |y-p_{i,n}|.$$
Note that $F_L(z_{i,n})=0$ for $\{z_{i,n}\}=L_{i,n}\cap \U_{n,R}$. 

Finally, recall from the proof of Theorem \ref{GHduhduhduh} in Section \ref{DUH} that the inclusion mapping $\U_{n,R}\hookrightarrow B_{\geucl}(0,R)$ is an $\tfrac{\C^+[R]}{n}$-isometry between $(\U_{n,R}, g_n)$ and $(B_{\geucl}(0,R), g)$. We use this fact to show that the mapping $F:\W_{n,R}\to \W_{\infty,R}$ given by 
$$F(x)=\begin{cases}
x\times \{0\} &\text{\ \ if\ \ } x\in \U_{n,R};\\
\left((1,0,0), F_L(x)\right) &\text{\ \ if\ \ } x\in L_{i,n}\smallsetminus \U_{n,R}
\end{cases}$$
is an $\e$-isometry with $\e$ of class $\e=\tfrac{\C^+[R]}{n}(\lambda+\tfrac{1}{\lambda})$. 
Our proof is going to be complete after we prove the estimate 
$$|d_{(\W_{n,R},g_n)}(x,y)-d_\infty(F(x), F(y))|<\e$$
in the case when $x\in \U_{n,R}$ and $y\in L_{i,n}$.  Since 
$$\begin{aligned}
d_{(\W_{n,R},g_n)}(x,y)=&d_{(\U_{n,R},g_n)}(x,z_{i,n})+d_{(L_{i,n},g_n)}(z_{i,n},y)\\
d_\infty(F(x),F(y))=&d_{(B_{\geucl},g)}(x,(1,0,0))+\left|F_L(z_{i,n})-F_L(y)\right|,
\end{aligned}$$
it suffices to show that $d_g(z_{i,n}, (1,0,0))$ can be made appropriately small. This is immediate from the observation that 
$$d_g(z_{i,n}, (1,0,0))\le Cd_{\geucl}(z_{i,n}, (1,0,0))\le CD\tfrac{\lambda}{n^3}.$$ 

\subsection{Gromov-Hausdorff convergence in presence of deep wells}
At the end of Example \ref{Example2} we mentioned a possibility to alter the sample point $q_{i,n}$ so that the resulting sequence of Brill-Lindquist-Riemann sums has deep wells:
$$q_{i,n}=(1-\tfrac{1}{2n^4},0,0).$$
In that particular context we are able to repeat the argument of Section \ref{dependencyonsamplepoints} to prove that the sequence of metric spaces $(\V_{n,R,R'},g_n)$ converges in the Gromov-Hausdorff sense to the $B_{\geucl}(0,R)\times \{0\}\cup {(1,0,0)}\times [0,R']$ equipped with the $d_\infty((x,t), (y,s))=d_g(x,y)+|t-s|$. One could argue that this example was easy to produce because it only really featured one deep well. 

On the other extreme end there are examples where every ``neck" is a cylindrical end, i.e examples with $A\equiv 0$ or $B\equiv 0$. Those kinds of situations are more akin to Section \ref{GHdne} where we established non-existence of the Gromov-Hausdorff limit. Specifically, we can use ideas of Section \ref{GHdne} to show that $\mathrm{Cap}_{(\V_{n,R,R'},g_n)} (R'/2)\to \infty$ as $n\to \infty$, which in turn implies non-existence of the Gromov-Hausdorff limit.

\section{The intrinsic flat limit}\label{IFL:sec}

\subsection{A review of the intrinsic flat limit}
Hausdorff distance rests on the concept of point-wise distance between elements of two sets. On the other hand, one might hope for a weaker (pun intended!) approach where the distance between two sets is captured by some genre of ``volume" needed to transition from one set to the next. For example, we might benefit from having a concept of distance between two curves in $\mathbb{R}^3$ with shared endpoints based on surface areas of possible ``fillers". Such a concept of distance would be more tolerant of occasional ``spikes" and as such it would be far more suitable for applications to Brill-Lindquist-Riemann sums, especially in the presence of deep wells. 

One such concept of distance appears in the work of H. Whitney \cite{Whitney} in relation to what is called flat norm. Subsequently, H. Federer and W. H. Flemming in \cite{FF60} introduced the concept of integral currents as part of their framework for $k$-dimensional integration in $\mathbb{R}^n$, now the cornerstone of what we call geometric measure theory; they also broadened the concept of Whitney's flat distance to apply to integral currents. The work is technical and we shall not go into any of its details. For us here it is sufficient to know that compact oriented submanifolds of $\mathbb{R}^n$ are integral currents and that the concept of mass $\mathbf{M}$ of an integral current generalizes the concept of volume of the submanifold. Whitney's flat distance between two integral currents $M_1$ and $M_2$ is identified in \cite{FF60} as 
$$\inf_{Q,R}\{\mathbf{M}(Q)+\mathbf{M}(R)\,\big{|}\, M_1=M_2+Q+\partial R\}.$$
This is illustrated on Figure \ref{fig4}, where $M_1$ and $M_2$ are the base and the lid respectively, $R$ is the higher dimensional filler and $Q$ consists of the surface wrapping around $R$ and the two (appropriately oriented) components protruding to the right. The idea here is that if $M_1\approx M_2$ then a filler with small volume and small leftover surface area can be found. All of these concepts have since been extended from $\mathbb{R}^n$ to general metric spaces, e.g the work of \cite{AK}.

\begin{figure}
\centering
\begin{tikzpicture}[scale=0.9]
\draw[thick] (8, 1.5) to [out=105, in=-180] (9, 2.5) to [out=0, in=225] (12, 2.3);

\draw[thick] (12.5, 2.3) to [out=-10, in=225] (13.8, 2.75) to [out=-135, in=30] (13, 1.5);

\draw[thick] (11.75, 1.9) to [out=0, in=180] (12.25, 3) to [out=0, in=-180] (12.75, 1.8);

\path [fill=lightgray] (13,1.5) to [out =45, in=-90] (15.75, 2) to [out =90, in =45] (13.8, 2.75) to [out=-135, in=30] (13, 1.5);

\path [fill=lightgray] (13,0.5) to [out =30, in=-90] (14.45, -0.35) to [out =90, in =-60] (13.8,1.75) to [out=-135, in=30] (13, 0.5);

\path [fill=lightgray] (13, 1.5) to [out=-90, in =60] (13,1) to [out=-120, in =90] (13,0.5) to [out =30, in =-135] (13.8, 1.75) to [out =90, in=-120] (13.8, 2.25) to [out =60, in =-90] (13.8, 2.75) to [out =-135, in =30] (13, 1.5);

\draw [thick] (13,1.5) to [out =45, in=-90] (15.75, 2) to [out =90, in =45] (13.8,2.75) to [out=-135, in=30] (13, 1.5);

\draw [thick] (13, 1.5) to [out=-90, in =60] (13,1) to [out=-120, in =90] (13,0.5);

\draw [thick] (13,0.5) to [out =30, in=-90] (14.45, -0.35) to [out =90, in =-60] (13.8,1.75) to [out=-135, in=30] (13, 0.5);

\draw[dashed] (13.8, 1.75) to [out =90, in=-120] (13.8, 2.25) to [out =60, in =-90] (13.8, 2.75);

\draw[thick] (13.8, 2.75) to [out =-135, in =30] (13, 1.5);

\path [fill=lightgray] (8,1.5) to [out=0, in=-150] (8.5,1.75) to [out=30, in=-180] (13, 1.5) to [out=-90, in =60] (13,1) to [out=-120, in =90] (13,0.5) to [out =-180, in=30] (8.5, 0.75) to [out=-150, in=0] (8, 0.5)  to [out =105, in=-75] (8, 1.5);

\draw [thick] (8,1.5) to [out=0, in=-150] (8.5,1.75) to [out=30, in=-180] (13, 1.5) to [out=-90, in =60] (13,1) to [out=-120, in =90] (13,0.5) to [out =-180, in=30] (8.5, 0.75) to [out=-150, in=0] (8, 0.5)  to [out =105, in=-75] (8, 1.5);

\draw[dashed] (8, 0.5) to [out=95, in=-180] (9, 1.5) to [out=0, in=225] (13.8, 1.75);

\draw[thick] (13.8,1.75) to [out=-135, in=30] (13, 0.5);

\draw (7.5,0.5) to (7.5,1.75);

\draw (7.3, 0.7) to (7.5, 0.5) to (7.7, 0.7);

\draw (7.3, 1.55) to (7.5, 1.75) to (7.7, 1.55);

\node[left] at (7.4, 1) {$R$};

\node[right] at (13.75, 2.25) {$Q$};

\node[right] at (13.6, 0.75) {$Q$};

\node[below] at (9.2, 2.62) {$M_2$};
\node[below] at (9.4, 1.18) {$M_1$};
\end{tikzpicture}
\caption{Illustration of the concept of flat distance}\label{fig4}
\end{figure}
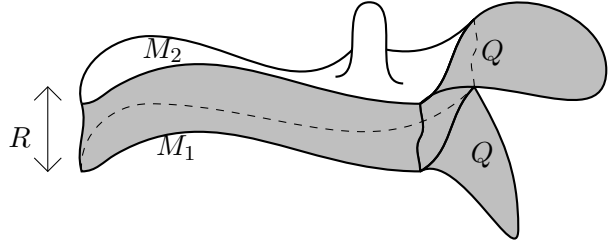

What Gromov-Hausdorff distance is to Hausdorff distance the concept of intrinsic flat distance is to flat distance. The intrinsic flat distance between two oriented Riemannian
manifolds with boundary was introduced in the joint work of C. Sormani and S. Wenger \cite{SW-JDG}. This distance is measured by first viewing each of the two manifolds as an integral current, pushing forward these integral currents into a common complete metric space via distance preserving maps, and then measuring the flat distance between the two push forwards; one takes the infimum over all distance preserving maps into all complete metric spaces. In practice it is often possible to estimate the intrinsic flat distance by only using notions from Riemannian geometry. A particularly easy-to-use estimate was proven by S. Lakzian and C. Sormani in \cite{LS13}. For the convenience of the reader we state the theorem of Lakzian and Sormani in full.

\begin{theorem} \label{thm-subdiffeo} 
Suppose $(M_1,g_1)$ and $(M_2,g_2)$ are oriented
precompact Riemannian manifolds with diffeomorphic subregions $U_i \subset M_i$.
Identifying $U_1=U_2=U$ assume that
on $W$ we have
\begin{equation}\label{thm-subdiffeo-1}
g_1 \le (1+\varepsilon)^2 g_2 \textrm{ and }
g_2 \le (1+\varepsilon)^2 g_1
\end{equation}
Then
\begin{eqnarray*}
d_{\mathcal{IF}}(M_1, M_2) &\le&
\left(2\bar{h} + a\right) \Big(
\Vol_{g_1}(U)+\Vol_{g_2}(U)+\Area_{g_1}(\partial U)+\Area_{g_2}(\partial U)\Big)\\
&&+\Vol_{g_1}(M_1\setminus U)+\Vol_{g_2}(M_2\setminus U).
\end {eqnarray*}
where
\begin{equation} \label{thm-subdiffeo-3}
a>\frac{\arccos(1+\varepsilon)^{-1} }{\pi}D,
\end{equation}
\begin{equation} \label{thm-subdiffeo-5}
\bar{h}= \max\{\sqrt{2\lambda D },  \sqrt{\varepsilon^2 + 2\varepsilon} \; D \}
\end{equation}
where\footnote{The use of $1$ in the definition of $\diam$ is meant to be accompanied with units of length.}
\begin{equation} \label{DU}
D= \max\{1,\diam(M_1), \diam(M_2)\},
\end{equation}
and 
\begin{equation} \label{lambda}
\lambda=\sup_{x,y \in U}
|d_{M_1}(x,y)-d_{M_2}(x,y)|.
\end{equation}
\end{theorem}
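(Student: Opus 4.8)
The statement to prove is Theorem~\ref{thm-subdiffeo} (the Lakzian--Sormani estimate), but since this is quoted verbatim as a known theorem from \cite{LS13}, the natural ``proof'' here is a proof sketch that reconstructs the argument rather than a fully original one. The plan is to build a common metric space containing distance-preserving copies of $(M_1,g_1)$ and $(M_2,g_2)$ by gluing along the identified subregion $U$, and then exhibit explicit integral current fillers whose masses are controlled by the volumes, areas, and the distance discrepancy $\lambda$.

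First I would set up the hypotheses carefully: on the diffeomorphic subregions, identified as $U$, the bi-Lipschitz condition \eqref{thm-subdiffeo-1} means the identity map $U\to U$ distorts the two length metrics by a factor at most $1+\varepsilon$. The key preliminary step is to construct a metric space $Z$ in which both $M_1$ and $M_2$ embed isometrically (in the metric, i.e.\ distance-preserving, sense) and in which the two copies of $U$ are ``close.'' Following Sormani--Wenger, one takes $Z = M_1 \sqcup M_2$ with a metric that is $d_{M_1}$ on $M_1$, $d_{M_2}$ on $M_2$, and for a point $x\in M_1$ and $y\in M_2$ is the infimum over $u\in U$ of $d_{M_1}(x,u) + h + d_{M_2}(u,y)$, where $h$ is a ``hamburger height'' parameter large enough to guarantee that the restrictions to $M_1$ and $M_2$ remain distance-preserving. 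The technical content is that one needs $h \ge \bar h$ as in \eqref{thm-subdiffeo-5}: the term $\sqrt{\varepsilon^2+2\varepsilon}\,D$ accounts for the metric distortion on $U$ arising from \eqref{thm-subdiffeo-1}, while the term $\sqrt{2\lambda D}$ accounts for the discrepancy \eqref{lambda} between the two intrinsic distances on the common region. Checking that with this choice of $h$ the maps $M_i \hookrightarrow Z$ are genuinely distance-preserving is the first real lemma.

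Next I would estimate the flat distance between the push-forwards $\iota_{1\#}[M_1]$ and $\iota_{2\#}[M_2]$ in $Z$. The strategy is to write $[M_1] - [M_2] = Q + \partial R$ with explicit $Q$ and $R$. The filler $R$ is (roughly) the product-type region: over the identified part $U$ one fills the ``gap'' between the two copies, which is a region of height $\sim \bar h$ over $U$, contributing mass at most $\bar h \cdot \big(\Vol_{g_1}(U) + \Vol_{g_2}(U)\big)$ up to constants, plus the analogous ``sides'' wrapping around $\partial U$, contributing $\sim (\bar h + a)\big(\Area_{g_1}(\partial U) + \Area_{g_2}(\partial U)\big)$, where the extra constant $a$ from \eqref{thm-subdiffeo-3} comes from the cone-like distortion near the boundary (the $\arccos$ reflects the angle defect induced by the $(1+\varepsilon)$-bi-Lipschitz identification). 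The leftover current $Q$ contains the parts of $M_1$ and $M_2$ not covered by $U$, contributing exactly $\Vol_{g_1}(M_1\setminus U) + \Vol_{g_2}(M_2\setminus U)$. Summing mass$(Q) + $mass$(R)$ and taking the infimum over all such constructions and all admissible $h$ yields the stated bound, with the factor $2\bar h + a$ absorbing the constants.

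The main obstacle, and the part requiring genuine care rather than bookkeeping, is the verification that the glued metric on $Z$ restricts to the correct intrinsic distances on $M_1$ and $M_2$ \emph{simultaneously} --- that is, that a path which dips into the ``hamburger bun'' region and comes back out can never be a shortcut. This is precisely where the precise form of $\bar h$ in \eqref{thm-subdiffeo-5} is forced: one must show that any such detour pays a length penalty of at least $2\bar h$, which must dominate both the worst-case metric distortion $\sqrt{\varepsilon^2+2\varepsilon}\,D$ from \eqref{thm-subdiffeo-1} and the worst-case distance discrepancy, controlled via $\lambda$ and $D$ by an elementary but slightly delicate inequality of the form ``if two metrics on a set differ pointwise-on-distances by at most $\lambda$ and are each bounded by $D$, then a path of the form (go out, cross, come back) is not short provided the crossing height exceeds $\sqrt{2\lambda D}$.'' Once this length-comparison lemma is in place, the construction of $Q$ and $R$ and the mass estimates are routine geometric measure theory, and I would simply cite \cite{SW-JDG, AK} for the underlying integral-current machinery. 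Since the theorem is quoted from \cite{LS13}, in the paper itself I would present only this sketch and refer the reader to \cite{LS13} for the complete argument.
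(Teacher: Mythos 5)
The paper gives no proof of Theorem~\ref{thm-subdiffeo}: it is quoted verbatim from Lakzian--Sormani~\cite{LS13}, with the authors explicitly stating it ``for the convenience of the reader'' as a tool to be applied later. Your sketch is a reasonable reconstruction of the~\cite{LS13} gluing argument --- the filling region of height $\bar h$ over $U$, the length-comparison lemma certifying that the inclusions of $M_1$ and $M_2$ into the glued space are distance-preserving, and the mass estimates on the explicit $Q$ and $R$ --- and your closing remark that one would cite~\cite{LS13} rather than reprove is exactly what the paper does.
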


\subsection{Estimates on the intrinsic flat distance}

Estimation of $\lambda$ is probably the most delicate step in any application of Theorem \ref{thm-subdiffeo}. Thankfully, for our purposes here we already addressed this parameter!  This was done in Lemma \ref{distancecomparisonlemma} of Section \ref{GH:sec}. Keeping with the spirit of Lemma \ref{distancecomparisonlemma} we continue by developing a general result about intrinsic flat distance among ``perforated spaces"; we then apply it to sequences of Brill-Lindquist-Riemann sums.

\begin{proposition} \label{IFL-thm}
Let $g$ be a metric which is equivalent to $\geucl$ over $\mathbb{R}^3$, i.e a metric such that $c^{-2}\geucl\le g\le c^2\geucl$ over $\mathbb{R}^3$ for some constant $c$. Let $\V\subseteq \mathbb{R}^3$ be a connected open subset and let $g_\V$ be a metric on $\V$. Consider a finite union 
$$\P=\bigcup \bar{B}_{\geucl}(p_i, r_i) \text{\ \ with\ \ } r_\P\le \tfrac{1}{4}\sigma_\P,$$  
where 
$$r_{\P}:=\max_{i}r_i \text{\ \ and\ \ }\sigma_\P:=\min_{i,j}|p_i-p_j|.$$ 
Finally, assume that there is some fixed $R\gg 1$ such that 
$$\V\smallsetminus \P= B_{\geucl}(0,R)\smallsetminus \P$$
along with $\P\subseteq B_{\geucl}(0,R)$. Then  
$$d_{\mathcal{IF}}((\V,g_\V), (B_{\geucl}(0,R),g))\le \e+\e\,\mathrm{Area}_{\geucl}(\partial \P)\cdot (R+\mathrm{diam}(\V,g_\V))+\mathrm{Vol}_{g_\V}(\V\cap \P),$$
where $\e$ denotes a quantity which can be made arbitrarily small by making 
$$r_\P,\ \ \frac{r_\P}{\sigma_\P}, \text{\ \ and\ \ }  \|g_\V-g\|_{L^\infty(\V\smallsetminus\P,\geucl)}$$
appropriately small. 
\end{proposition}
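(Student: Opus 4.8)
The strategy is to apply the Lakzian--Sormani estimate (Theorem \ref{thm-subdiffeo}) with $M_1=(\V,g_\V)$, $M_2=(B_{\geucl}(0,R),g)$, and the common diffeomorphic subregion
$$U:=\V\smallsetminus\P=B_{\geucl}(0,R)\smallsetminus\P,$$
equipped with the two metrics $g_\V$ and $g$. First I would verify the hypothesis \eqref{thm-subdiffeo-1}: over $U$ we have $g_\V=g+(g_\V-g)$, so $\|g_\V-g\|_{L^\infty(U,\geucl)}$ small together with $c^{-2}\geucl\le g\le c^2\geucl$ gives $g_\V\le(1+\e_1)^2g$ and $g\le(1+\e_1)^2g_\V$ for an $\e_1$ controlled by $\|g_\V-g\|_{L^\infty}$. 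The diameter of $(B_{\geucl}(0,R),g)$ is bounded by a constant depending only on $R$ and $c$, and the diameter of $(\V,g_\V)$ appears explicitly in the statement; thus $D$ in \eqref{DU} is controlled. Next, I would bound the parameter $\lambda$ of \eqref{lambda}. This is exactly where Lemma \ref{distancecomparisonlemma} enters: applied on $U$ (with $c$ there being the equivalence constant between $g_\V$ and $\geucl$ over $U$, which is controlled since $g_\V\approx g$ and $g\approx\geucl$), it yields
$$0\le d_{(U,g_\V)}(x,y)-d_{(\V,g_\V)}(x,y)\le 2\pi c^2\tfrac{r_\P}{\sigma_\P}d_{(\V,g_\V)}(x,y)+\pi c\,r_\P,$$
and similarly for $(U,g)$ versus $(B_{\geucl}(0,R),g)$. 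Combining these with the metric comparison $|d_{(U,g_\V)}-d_{(U,g)}|\le\tfrac12\|{\rm Id}-g^{-1}g_\V\|_{L^\infty}\mathrm{diam}$ from Lemma \ref{GHduh}, one gets $\lambda\le\e$ where $\e\to0$ as $r_\P$, $r_\P/\sigma_\P$ and $\|g_\V-g\|_{L^\infty}$ shrink.

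\textbf{Assembling the bound.} With $\lambda$, $\e_1$ (hence $a$ and $\bar h$ in \eqref{thm-subdiffeo-3}--\eqref{thm-subdiffeo-5}) all small, Theorem \ref{thm-subdiffeo} gives
$$d_{\mathcal{IF}}\le(2\bar h+a)\bigl(\mathrm{Vol}_{g_\V}(U)+\mathrm{Vol}_g(U)+\mathrm{Area}_{g_\V}(\partial U)+\mathrm{Area}_g(\partial U)\bigr)+\mathrm{Vol}_{g_\V}(\V\smallsetminus U)+\mathrm{Vol}_g(B_{\geucl}(0,R)\smallsetminus U).$$
The volume terms $\mathrm{Vol}_{g_\V}(U)$, $\mathrm{Vol}_g(U)$ are bounded by $c^3\mathrm{Vol}_{\geucl}(B_{\geucl}(0,R))$, a constant depending only on $R$ and $c$; and $\mathrm{Vol}_g(B_{\geucl}(0,R)\smallsetminus U)=\mathrm{Vol}_g(B_{\geucl}(0,R)\cap\P)\le c^3\mathrm{Vol}_{\geucl}(\P)$, which is small since $\mathrm{Vol}_{\geucl}(\P)\le\tfrac{4\pi}{3}(\#\text{balls})\,r_\P^3$ and one may absorb it into $\e$ (or bound it by $r_\P\cdot\mathrm{Area}_{\geucl}(\partial\P)$). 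The boundary term $\partial U=\partial\P$ (the outer sphere $S_{\geucl}(0,R)$ contributes nothing since $M_1$ and $M_2$ agree there, or is handled as part of $\partial M_i$); its $g$- and $g_\V$-areas are bounded by $c^2\mathrm{Area}_{\geucl}(\partial\P)$. Since $2\bar h+a=O(\sqrt\lambda+\sqrt{\e_1})+O(\e_1)\cdot D$ and $D=O(R+\mathrm{diam}(\V,g_\V))$, the product $(2\bar h+a)\cdot\mathrm{Area}\le\e\,\mathrm{Area}_{\geucl}(\partial\P)\cdot(R+\mathrm{diam}(\V,g_\V))$ after renaming the small factor. The leftover term $\mathrm{Vol}_{g_\V}(\V\smallsetminus U)=\mathrm{Vol}_{g_\V}(\V\cap\P)$ is kept as is, producing precisely the asserted inequality.

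\textbf{Main obstacle.} The delicate point is keeping the equivalence constant between $g_\V$ and $\geucl$ over $U$ uniformly controlled: Lemma \ref{distancecomparisonlemma} requires such a constant $c$, but $g_\V$ is only assumed close to $g$ in $L^\infty$, and $g$ is equivalent to $\geucl$ globally on $\mathbb{R}^3$ with constant $c$. One must check that $\|g_\V-g\|_{L^\infty(U,\geucl)}$ being small forces $g_\V$ to be equivalent to $\geucl$ over $U$ with a constant close to $c$ (say $2c$), so that the hypotheses of Lemmas \ref{distancecomparisonlemma} and \ref{GHduh} are met with controlled constants; this is straightforward but must be done carefully so that the various ``$\e$''s genuinely go to zero under the stated smallness assumptions and no circularity (e.g.\ involving $\mathrm{diam}(\V,g_\V)$, which is allowed to be large) creeps in. A secondary bookkeeping issue is the treatment of the outer boundary sphere $S_{\geucl}(0,R)$: one should either note that $M_1$ and $M_2$ share this boundary component and the Lakzian--Sormani theorem accommodates manifolds with boundary, or enlarge the ambient ball slightly; in either case it does not affect the final estimate since $\partial\P$ is the only discrepancy between the two spaces.
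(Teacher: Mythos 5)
Your proof proposal follows exactly the paper's route: apply the Lakzian--Sormani estimate (Theorem \ref{thm-subdiffeo}) to the common subregion $U=\V\smallsetminus\P=B_{\geucl}(0,R)\smallsetminus\P$, control the parameter $\lambda$ via Lemma \ref{distancecomparisonlemma} in two directions (filling $\P$ within $B_{\geucl}(0,R)$ for $g$, and filling $\P$ within $\V$ for $g_\V$) together with Lemma \ref{GHduh} for the metric swap $g\leftrightarrow g_\V$, and then read off the area/volume contributions from $\partial\P$ and $\P$. That is the paper's argument, and the organization of the final bookkeeping (keeping $\mathrm{Vol}_{g_\V}(\V\cap\P)$ as an explicit term, absorbing the small $\P$-volumes into $\e$, bounding the $\partial U$-areas by $c^2\mathrm{Area}_{\geucl}(\partial\P)$ plus the outer sphere) also matches.

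One substantive slip, though: you claim ``one gets $\lambda\le\e$.'' That is not correct, and it is precisely the subtlety your ``Main obstacle'' paragraph worries about. When you apply Lemma \ref{distancecomparisonlemma} to $(\V,g_\V)$ you get the bound
$$0\le d_{(U,g_\V)}(x,y)-d_{(\V,g_\V)}(x,y)\le 2\pi c^2\tfrac{r_\P}{\sigma_\P}\,d_{(\V,g_\V)}(x,y)+\pi c\,r_\P,$$
and the only available bound on $d_{(\V,g_\V)}(x,y)$ is $\mathrm{diam}(\V,g_\V)$, which the proposition allows to be arbitrarily large. So the correct conclusion is $\lambda\le\e+\e\,\mathrm{diam}(\V,g_\V)$, as in the paper. (In contrast, the first comparison can use the uniform bound $d_{(B_{\geucl}(0,R),g)}\le 2cR$, which is why the asymmetry appears.) Your final inequality is nonetheless salvaged: plugging $\lambda\le\e+\e\,\mathrm{diam}$ into $\bar h\ge\sqrt{2\lambda D}$ with $D=O(R+\mathrm{diam})$ still yields $\bar h=O(\sqrt{\e}\,(R+\mathrm{diam}))$, the same order as if $\lambda\le\e$, so the product with the areas remains $\e\,\mathrm{Area}_{\geucl}(\partial\P)(R+\mathrm{diam}(\V,g_\V))$. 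Related sloppiness: $a\sim\sqrt{\e_1}D$ (not $\e_1 D$, since $\arccos(1+\e_1)^{-1}\sim\sqrt{2\e_1}$) and $\bar h\sim\sqrt{\lambda D}$, not $\sqrt\lambda$; the $D$ factor must be carried along everywhere, which is exactly why $\mathrm{diam}(\V,g_\V)$ appears only linearly in the end rather than quadratically.
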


\begin{proof}
Throughout this proof we let $\e$ denote quantities which remain controlled by $r_\P$, $\frac{r_\P}{\sigma_\P}$, and  $\|g_\V-g\|_{L^\infty(\V\smallsetminus\P,\geucl)}$ in the sense spelled out in the statement of the lemma; note that by doing so we permit the exact expression and units for $\e$ to change from term to term and line to line. 

The overall plan is to apply Theorem \ref{thm-subdiffeo} to 
$$\U:=\V\smallsetminus \P=B_{\geucl}(0,R)\smallsetminus \P\subseteq \V\cap B_{\geucl}(0,R).$$  
Without any loss of generality we assume throughout this proof that   
\begin{equation}\label{g-bds-2}
(1+\e)^{-2}g\le g_{\V}\le (1+\e)^2 g \text{\ \ and\ \ }
c^{-2}\,\geucl\le g,g_\V \le c^2\, \geucl \text{\ \ over\ \ }\U.
\end{equation}

We start by controlling the parameter $\lambda$ of Theorem \ref{thm-subdiffeo}. Let $x,y\in\U$. To compare $d_{(B_{\geucl},g)}(x,y)=d_{(\V\cup\P,g)}(x,y)$ to $d_{(\V,g_\V)}(x,y)$ we take the following steps:

\begin{itemize}
\item We first compare $d_{(\V\smallsetminus \P, g)} (x,y)$ to $d_{(\V\cup\P,g)}(x,y)$ through an application of Lemma \ref{distancecomparisonlemma}: equivalency of $g$ and $\geucl$ yields a uniform bound 
$$d_{(\V\cup\P,g)}(x,y)\le 2cR$$
which further implies
$$0\le d_{(\V\smallsetminus \P, g)} (x,y)- d_{(\V\cup\P,g)}(x,y)\le 4\pi c^3\,\frac{r_\P}{\sigma_\P}\,R+ \pi c\, r_\P\le \e.$$
\medbreak
\item Next, we examine the proximity of $d_{(\V\smallsetminus\P, g_\V)}(x,y)$ to $d_{(\V\smallsetminus\P, g)}(x,y)$ as a consequence of the proximity $g_\V\approx g$ stated in \eqref{g-bds-2}. Specifically, taking infimums of
$$(1-\e)L_{g}(\gamma)\le L_{g_\V}(\gamma)\le (1+\e)L_{g}(\gamma)$$
over curves $\gamma$ in $\V\smallsetminus\P$ and using the uniform bound 
$$d_{(\V\smallsetminus \P, g)} (x,y) \le d_{(\V\cup\P,g)}(x,y)+\e\le 2c\,R+\e$$
proves that
$$
\left|d_{(\V\smallsetminus\P, g_\V)}(x,y)-d_{(\V\smallsetminus\P, g)}(x,y)\right|\le \e
$$
holds uniformly for all $x,y\in \U$. 
\medbreak
\item Finally, we relate $d_{(\V\smallsetminus \P, g_\V)} (x,y)$ to $d_{(\V,g_\V)}(x,y)$ through Lemma \ref{distancecomparisonlemma}:
$$\begin{aligned}
0\le d_{(\V\smallsetminus\P, g_\V)} (x,y)- d_{(\V,g_\V)}(x,y)\le &2\pi c^2\,\frac{r_\P}{\sigma_\P}\,d_{(\V,g_\V)}(x,y)+ \pi c\, r_\P\\
\le &\e+\e\,\diam(\V,g_\V).
\end{aligned}$$
\end{itemize}
 
Combining all of the above proves that 
$$\lambda:=\sup_{x,y\in \U} |d_{(\V, g_\V)}(x,y)-d_{(B_{\geucl}(0,R), g)}(x,y)|\le \e+\e\,\diam(\V,g_\V).$$

We continue by addressing the parameters $D$, $\bar{h}$ and $a$ of Theorem \ref{thm-subdiffeo}. Due to equivalence \eqref{g-bds-2} we have 
$$\mathrm{diam}(B_{\geucl}(0,R), g)\le 2cR.$$
Assuming $R\gg 1$ we see that $D=O(R+\diam(\V,g_\V))$ so that 
$$2\bar{h}+a=\e+\e\,\diam(\V,g_\V).$$
 
It remains to address area and volume estimates needed for applications of Theorem \ref{thm-subdiffeo}.
\begin{itemize}
\item {\it Boundedness of $\Vol_{g_\V}(\U)$ and $\Vol_g(\U)$.} The bound $\Vol_g(\U)\le (4/3)c^3\pi R^3$ is immediate from $\U\subseteq B_{\geucl}(0, R)$. The bound on $\Vol_{g_\V}(\U)$ is then a consequence of
$$|\Vol_{g_\V}(\U)-\Vol_g(\U)|\le \e,$$
which in turn holds because $\U=\V\smallsetminus \P$.
\medbreak
\item {\it Boundedness of $\Area_{g_\V}(\partial \U)$ and $\Area_{g}(\partial \U)$.} We once again have 
$$|\Area_{g_\V}(\partial \U)- \Area_{g}(\partial \U)|<\e$$ 
and so it suffices to provide a bound on $\Area_{g}(\partial \U)$. To that end observe that 
$$\partial \U\subseteq S_{\geucl}(0,R) \cup \partial \P$$
and that 
$$\Area_{g}(\partial \P)\le c^2\Area_{\geucl}(\partial\P).$$
\medbreak
\item {\it Smallness of $\Vol_{g_\V}(\V\smallsetminus \U)$ and $\Vol_g(B_{\geucl}(0,R)\smallsetminus \U)$.} To address these volumes observe that 
$$\V\smallsetminus \U\subseteq \V\cap \P \text{\ \ and\ \ } B_{\geucl}(0,R)\smallsetminus \U\subseteq \P$$
and that 
$$\Vol_g(\P)\le c^3 \Vol_{\geucl}(\P)\le \tfrac{1}{3} c^3 r_\P\Area_{\geucl}(\partial \P)\le\e\cdot \Area_{\geucl}(\partial \P).$$
\end{itemize}
The claim of our lemma is now immediate from Theorem \ref{thm-subdiffeo}. 
\end{proof}

\subsection{The intrinsic flat limit of sequences of Brill-Lindquist-Riemann sums}

To prove Theorem \ref{BIGMAMMA} we simply apply Proposition \ref{IFL-thm}. In the setting where there are no deep wells we use $\V_n=\V_{n,R}$ and 
$$r_n=\tfrac{D}{n^2}\to 0,\ \ \tfrac{r_n}{\sigma_n}=\tfrac{1}{n}\to 0,\ \ \mathrm{Area}_{\geucl}(\partial \P_n)=32\pi n^3(\tfrac{D}{n^2})^2=32\pi \tfrac{D^2}{n}\to 0.$$
That the Theorem \ref{IFL-thm} applies to $\V_n=\V_{n,R}$ is a consequence of \eqref{towardsIFL-2}.
Recall that 
$$\|g_n-g\|_{L^\infty(\V_{n,R}\smallsetminus \P_n)}\to 0$$
due to Proposition \ref{BLRcontrollemma}. 
By the very assumption of there being no deep wells we have that 
$\mathrm{diam}(\V_{n,R}, g_n)=O(1)$
while 
$\Vol_{g_n}(\V_{n,R}\cap \P_n)\to 0$
is a direct consequence of \eqref{volest-IFL1}.
The proof in the case of deep wells is exactly the same except that we use $\V_n=\V_{n,R,R'}$. Theorem \ref{IFL-thm} applies in this situation because of Lemma \ref{deepwell}. The diameter estimate is replaced by $\mathrm{diam}(\V_{n,R,R'}, g_n)\le 2R'$ while the volume estimate is obtained as a consequence of \eqref{volest-IFL2}.

\bibliographystyle{plain}
\bibliography{2014}

\begin{thebibliography}{99}

\bibitem{toby1} Aldape, T. {\it Existence and Uniqueness of Solutions of an Einstein-Maxwell PDE System}, Rose-Hulman Undergrad. Math J., {\bf{18}}, no.1 (2017).

\bibitem{AK} Ambrosio, L.; Kirchheim B., {\it Currents in metric spaces}, Acta Math., {\bf 185}, no. 1, 1--80 (2000).

\bibitem{ABBA} Allen, B.; Burtscher, A., {\it Properties of the null distance and spacetime convergence,} 	arXiv:1909.04483 [math.DG], to appear in Int. Math. Res. Not. 

\bibitem{MAnderson} Anderson, M. T., {\it Convergence and rigidity of manifolds under Ricci curvature bounds,} Invent. Math., \textbf{102}, no.2, 429 -- 445 (1990).
     		
\bibitem{NI} Benjamin, N.; Stavrov Allen, I., {\it The effects of self-interaction on constructing relativistic point particles,} Gen. Relativity Gravitation, \textbf{50}, no.4 (2018).

\bibitem{BL}  Brill, D.; Lindquist, R. {\it Interaction Energy in Geometrostatics}, Physical Review 131(1) 471-476 (1963).

\bibitem{CD} Chru\'sciel, P. T.; Delay, E., {\it On mapping properties of the general relativistic constraints operator in weighted function spaces, with applications},  M\'em. Soc. Math. Fr. (N.S.)  No. 94  (2003).

\bibitem{CRT} Chru\'{s}ciel. P. T.; Reall H. S.; Tod P., {\it On Israel-Wilson-Perj\'{e}s black holes}, Class. Quantum Gravity {\bf 23}, 2519-2540 (2006).

\bibitem{corvino} Corvino, J., {\it Scalar curvature deformation and a gluing construction for the Einstein constraint equations}  Comm. Math. Phys.  {\bf 214}  no. 1, 137--189 (2000).

\bibitem{cosmology} Ellis, G.F.R.; Maartens R.; MacCullum M.A.H., {\it Relativistic Cosmology}, Cambridge University Press (2012).

\bibitem{FF60} Federer, H.; Fleming, W.H.; {\it Normal and integral currents}, Ann. of Math. {\bf 72} no.2,  458--520 (1960).

\bibitem{GHHP} Gibbons, G.W.; Hawking, S.W.; Horowitz, G.T.; Perry, M.J. {\it Positive mass theorems for black holes}, Comm. Math. Phys. {\bf 88}, 295-308 (1983).

\bibitem{sb} Gralla, S.; Wald, R., {\it A rigorous derivation of gravitational self-force}, Class. Quantum Grav. {\bf 25} (2008).

\bibitem{Xdust1} G\"urses, M., {\it Sources for the Majumdar-Papapetrou space-times}, Phys. Rev. D (3) {\bf 58} , no. 4 (1998).

%\bibitem{Xdust2} G\"urses, M., {\it Extremely charged static dust distributions in general relativity}, Current topics in mathematical cosmology (Potsdam, 1998), 425 -- 432, World Sci. Publ., River Edge, NJ (1998) 

\bibitem{HartleHawking} Hartle, J. B.; Hawking, S. W.; {\it Solutions of the Einstein-Maxwell equations with many black holes}, Comm. Math. Phys. {\bf 26}, no. 2, 87 -- 101, (1972). 

\bibitem{jim} Isenberg, J., {\it Constant mean curvature solutions of the Einstein constraint equations on closed manifolds}, Class. Quantum Grav. {\bf 12} (1995).

\bibitem{KW1} Khuri, M.; Weinstein, G., {\it Rigidity in the positive mass theorem with charge}, J. Math. Phys. {\bf 54} , no. 9 (2013).

%\bibitem{KWY} Khuri, M.; Weinstein, G.; Yamada, S.; {\it Proof of the Riemannian Penrose Inequality with Charge for Multiple Black Holes}, J. Differential Geom. {\bf 106}, 451-498 (2017)

\bibitem{KRS} Klainerman, S.; Rodnianski, I.; Szeftel, J.; {\it The resolution of the bounded $L^2$ curvature conjecture in general relativity}, Bull. Braz. Math. Soc. (N.S.) {\bf 47}, no. 2, 445--456, (2016). 

\bibitem{LS13} Lakzian, S.;  Sormani, C., {\it Smooth convergence away from singular sets}, Comm. Anal. Geom., {\bf 21}, no.1, 39-104 (2013).

\bibitem{LeeSormani} Lee, D. A.; Sormani, C., {\it Stability of the positive mass theorem for rotationally symmetric Riemannian manifolds}, J. Reine Angew. Math., {\bf 686}, 187--220 (2014).

\bibitem{Noldus} Noldus, J.; {\it A Lorentzian Gromov-Hausdorff notion of distance}, Class. Quantum Grav. {\bf 21} (2004).

\bibitem{Ringstrom} Ringstr\"{o}m, H., {\it The Cauchy problem in general relativity}, ESI Lectures in Mathematics and Physics, European Mathematical Society, Z\"{u}rich, (2009).

\bibitem{ChristinaExpository} Sormani, C. {\it How Riemannian manifolds converge}, Metric and differential geometry, 91-117, Progr. Math., {\bf 297}, Birkh\"auser/Springer, (2012).

\bibitem{SormaniStavrov} Sormani, C.; Stavrov Allen, I. {\it Geometrostatic manifolds of small ADM mass}, Comm. Pure Appl. Math., \textbf{72}, no. 6, 1243-1287 (2019).

\bibitem{SW-JDG} Sormani, C., Wenger, S., {\it The intrinsic flat distance between Riemannian manifolds and other integral current spaces}, J. Differential Geom., {\bf 87}, no.1, 117 - 199 (2011).

\bibitem{Gluing1} Stavrov Allen, I., {\it A gluing construction regarding point particles in general relativity}, Ann. Henri Poincar\'e {\bf 10} (2010).

\bibitem{Whitney} Whitney, H., {\it Geometric integration theory}, Princeton University Press, Princeton, N. J. (1957).


\end{thebibliography}

\end{document}